\newcommand{\IR}{{\mathbb R}}
\newcommand{\IZ}{{\mathbb Z}}
\newcommand{\R}{{\mathbb R}}
\newcommand{\N}{{\mathbb N}}
\newcommand{\I}{{\mathcal I}}
\newcommand{\J}{{\mathcal J}}
\newcommand{\G}{{\mathcal G}}
\newcommand{\Ncal}{\mathcal{N}}
\newcommand {\nin}{\noindent}
\newcommand {\menos}{\backslash}
\newcommand {\gd}{\displaystyle}
\newcommand {\con}{\subset}
\newcommand {\rt}{\rightarrow}
\newcommand{\eps}{\varepsilon}
\newcommand{\spann}{\text{span}}
\newcommand{\ip}[2]{\langle #1,  #2 \rangle}
\numberwithin{equation}{section}
\newtheorem{theorem}{Theorem}[section]
\newtheorem{corollary}[theorem]{Corollary}
\newtheorem{lemma}[theorem]{Lemma}
\newtheorem{proposition}[theorem]{Proposition}
\theoremstyle{definition}
\newtheorem{definition}[theorem]{Definition}
\newtheorem{remark}[theorem]{Remark}
\begin{document}

\title[Hamiltonian elliptic systems: a guide to variational frameworks]{Hamiltonian elliptic systems: a guide to variational frameworks}

\dedicatory{Ao Miguel Ramos, que tanto nos ensinou}


%
\author{Denis Bonheure}
\address{Denis Bonheure \newline \indent D{\'e}partement de Math{\'e}matique \newline \indent  Universit{\'e} libre de Bruxelles \newline \indent CP 214,  Boulevard du Triomphe, B-1050 Bruxelles, Belgium}
\email{denis.bonheure@ulb.ac.be}

\author{Ederson Moreira dos Santos}
\address{Ederson Moreira dos Santos \newline \indent Instituto de Ciências Matemáticas e de Computação \newline \indent Universidade de São Paulo \newline \indent
Caixa Postal 668, CEP 13560-970 - S\~ao Carlos - SP - Brazil}
\email{ederson@icmc.usp.br}

\author{Hugo Tavares}
\address{Hugo Tavares \newline \indent  Center for Mathematical Analysis, Geometry and Dynamical Systems  \newline \indent Mathematics Department \newline \indent Instituto Superior T\'ecnico, Universidade de Lisboa \newline \indent Av. Rovisco Pais, 1049-001 Lisboa, Portugal}
\email{htavares@math.ist.utl.pt}
\date{\today}




\begin{abstract}
Consider a Hamiltonian elliptic system of type
\begin{equation*}
\left\{
\begin{array}{ll}
-\Delta u=H_{v}(u,v) & \text{ in } \Omega\\
 -\Delta v=H_{u}(u,v) & \text{ in } \Omega\\
 u,v=0  & \text{ on } \partial \Omega
 \end{array}
\right.
\end{equation*}
where $H$ is a power-type nonlinearity, for instance \[H(u,v)= |u|^{p+1}/(p+1)+|v|^{q+1}/(q+1),\] having subcritical growth, and $\Omega$ is a bounded domain of $\R^N$, $N\geq 1$. The aim of this paper is to give an overview of the several variational frameworks that can be used to treat such a system.  Within each approach, we address existence of solutions, and in particular of ground state solutions. Some of the available frameworks are more adequate to derive certain qualitative properties; we illustrate this in the second half of this survey, where we also review some of the most recent literature dealing mainly with symmetry, concentration, and multiplicity results. This paper contains some original results as well as new proofs and approaches to known facts.
\end{abstract}



\keywords{Hamiltonian elliptic systems, subcritical elliptic problems, qualitative properties, dual method, reduction by inversion, Lyapunov-Schmidt reduction, symmetry properties, multiplicity results, positive and sign-changing solutions, ground state solutions, strongly indefinite functionals.}

\maketitle
\tableofcontents

\section{Introduction}

Consider the problem
\begin{equation}\label{eq:main_system}
\left\{
\begin{array}{ll}
-\Delta u=H_{v}(u,v) & \text{ in } \Omega\\
 -\Delta v=H_{u}(u,v) & \text{ in } \Omega\\
 u,v=0  & \text{ on } \partial \Omega
 \end{array}
\right.
\end{equation}
where the coupling between the two equations is made through a Hamiltonian $H$ of the form $H(u,v) = |u|^{p+1}/(p+1) + |v|^{q+1}/(q+1)$, and $\Omega\subset \R^N$ is a \emph{bounded} domain, $N\geq 1$. In the literature these systems are usually referred to as elliptic systems of Hamiltonian type. It is also said that the equations are strongly coupled, in the sense that $u\equiv 0$ if and only if $v\equiv 0$; moreover, as we will see along this paper, many other properties are shared by the components of each solution pair.

The study of such system can be made through the use of variational methods. Unlike in the case of gradient systems where the choice of the energy functional associated to the problem is straightforward, in the case of Hamiltonian systems like \eqref{eq:main_system} there are several variational approaches available, each one with its advantages and disadvantages. The aim of this paper is to give an overview of several of these variational frameworks emphasizing that even if almost all of them are suitable to obtain existence and multiplicity theorems, some of them are more adequate to derive certain qualitative properties of the solutions. We also review some of the recent literature, complementing and updating in this way the surveys \cite[Section 3]{deFigueiredo} and \cite[Section 4]{Ruf} with only a few overlaps.  For instance, one of our main interests consists in the variational characterization of ground state solutions and on Nehari type approaches. These topics are not covered in \cite{deFigueiredo,Ruf}.  We also emphasize that in comparison to \cite{deFigueiredo,Ruf}, we focus on the simplest case where $H$ is a sum of pure powers in order to grasp the main ideas and to avoid too much technicalities.

As we already stated, we will focus on the model case 
\begin{equation}\label{Hamiltonian}
H(u,v) = \frac{1}{p+1} |u|^{p+1} + \frac{1}{q+1} |v|^{q+1},
\end{equation}%
so that the system becomes
\begin{equation}\label{eq:main_system_particularcase}
\left\{
\begin{array}{ll}
-\Delta u=|v|^{q-1}v & \text{ in } \Omega\\
-\Delta v=|u|^{p-1}u & \text{ in } \Omega\\
 u,v=0  & \text{ on } \partial \Omega.
 \end{array}
\right.
\end{equation}
The assumptions on the \emph{positive} powers $p$ and $q$ will be discussed in a while.
Formally, the equations in \eqref{eq:main_system} are the Euler-Lagrange equations of the action functional
\begin{equation}\label{eq:usual_action_functional}
(u,v)\mapsto \int_\Omega \ip{\nabla u}{\nabla v}\, dx-\int_\Omega H(u,v) \, dx.
\end{equation}
We will use the notation 
\begin{equation}\label{eq:quadraticpart}
\mathcal Q(u) =  \int_\Omega \ip{\nabla u}{\nabla v}\, dx
\end{equation}
to denote the quadratic part of the functional, while $\langle \cdot,\cdot \rangle$ denotes the canonical inner product of $\R^N$. An important question is to decide in which space the functional should be defined. A first natural choice could be to work with $(u,v)\in H^1_0(\Omega)\times H^1_0(\Omega)$. In order to define the functional in $H^1_0(\Omega)\times H^1_0(\Omega)$, we need to assume that
\begin{equation*}\label{eq:p+1,q+1<2^*}
(p+1)(N-2),\ (q+1)(N-2)\leq 2N,
\end{equation*}
whereas the strict inequality, for $N\geq 3$, is required in order to get compactness properties. However, as was simultaneously observed in \cite{deFigueiredoFelmer,HulshofvanderVorst}, this is too restrictive; indeed, the correct notion of \emph{subcriticality} associated to \eqref{eq:main_system} is 
\begin{equation}\label{eq:p_and_q_subcritical}
\frac{1}{p+1}+\frac{1}{q+1}>\frac{N-2}{N},
\end{equation}
while \emph{criticality} corresponds to $(p,q)$ lying on the so called \emph{critical hyperbola}:
\begin{equation}\label{eq:critical_hyperbola}
\frac{1}{p+1}+\frac{1}{q+1}=\frac{N-2}{N}.
\end{equation}

\begin{center}
\includegraphics[scale=.26]{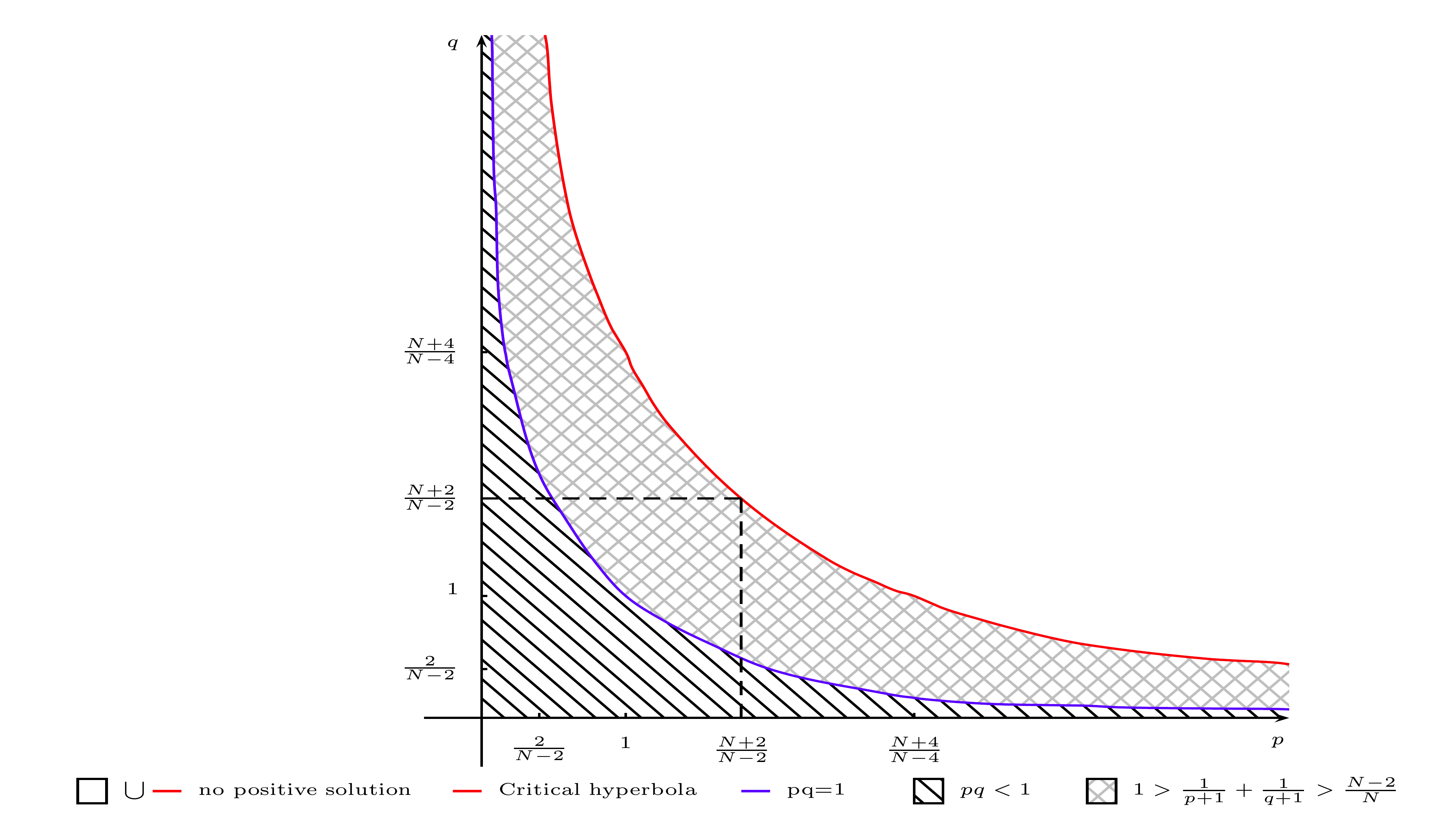}
\end{center}
\vspace{-.2cm}

Following  the aforementioned papers, we can motivate this fact at least in two different ways. First, if $q=1$, the system \eqref{eq:main_system_particularcase} reduces to the fourth order problem
\begin{equation*}
\left\{
\begin{array}{ll}
\Delta^2 u=|u|^{p-1}u & \text{ in } \Omega\\
 u,\Delta u=0  & \text{ on } \partial \Omega,
 \end{array}
\right.
\end{equation*}
whose critical exponent $p+1$ is given by $2N/(N-4)$, which is larger than $2N/(N-2)$. Observe that this is consistent with the choice of $q=1$ in \eqref{eq:critical_hyperbola}. On the other hand, the critical hyperbola also arises in the generalized Poho\u zaev identity due to Pucci and Serrin \cite{pucci-serrin}, Clément et al. \cite{ClementdeFigueiredoMitidieri}, Mitidieri \cite{Mitidieri}, van der Vorst \cite{vanderVorst1}, Peletier and van der Vorst \cite{PeletiervanderVorst}; in case of \eqref{eq:main_system_particularcase}, this identity reads as
\[
\left(\frac{N}{p+1}-\alpha\right)\int_\Omega |u|^{p+1}\, dx+\left(\frac{N}{q+1}-(N-2-\alpha)\right)\int_\Omega |v|^{q+1}\, dx=\int_{\partial \Omega} \frac{\partial u}{\partial \nu}\frac{\partial v}{\partial \nu}\, d\sigma
\]
for every $\alpha>0$. By choosing first $\alpha=N/(p+1)$, one shows that the system \eqref{eq:main_system_particularcase} does not have positive solutions on star shaped domains if $(p,q)$ lies on or above the critical hyperbola, namely if
\[
\frac{1}{p+1}+\frac{1}{q+1}\leq\frac{N-2}{N}.
\]
The previous arguments show that one should aim at working with $(p,q)$ satisfying \eqref{eq:p_and_q_subcritical}. However, under such assumption, it may happen that for instance (for $N\geq 3$) $p<\frac{N+2}{N-2}<q$, and thus the action functional may not be well defined on $H^1_0(\Omega)\times H^1_0(\Omega)$. This fact is the first reason why, in the literature, several, though equivalent, variational approaches are considered.

One of the facts that will come out of our exposition is that the more general and meaningful notion of \emph{superlinearity} is not $p,q>1$, both nonlinearities superlinear, but rather
\[
\frac{1}{p+1}+\frac{1}{q+1}<1\quad \text{or, equivalently, } \quad pq>1.
\]
This allows for instance $p<1<q$. Both the conditions $pq>1$ and \eqref{eq:p_and_q_subcritical} are strongly related with the strong coupling in \eqref{eq:main_system_particularcase}; the ideia is that one of the exponents can go ``slightly'' outside the interval $(1,2^*-1)$, as long as the other one compensates it.

\medbreak

In the first sections of this paper, namely from  Section \ref{section:direct_approaches} to Section \ref{sec:Reduced Functional}, we overview several variational frameworks which have been used in the literature to deal with Hamiltonian systems. All these approaches can also be used under Neumann boundary conditions. Here, for simplicity, we have decided to deal only with homogeneous Dirichlet conditions like in \eqref{eq:main_system}. Let us describe the content of these sections. The bibliographic references to each method and result can be found in the corresponding section.

We start in Section \ref{section:direct_approaches} by reviewing two possible frameworks built directly in the functional \eqref{eq:usual_action_functional}. For this reason, we shall call them \emph{direct approaches}. The first one consists in using the Sobolev spaces $W^{1,s}\times W^{1,\frac{s-1}{s}}$, for some suitable $s>1$. For $s\neq 2$ this is not a Hilbert space and hence this approach is rarely used to prove existence results. Nevertheless, it allows one to give a definition of ground state solution for every $(p,q)$ subcritical, and it is also useful when proving energy estimates. The second direct approach deals with fractional Sobolev spaces, with the definitive advantage of providing an Hilbertian framework. 

It will become clear from the direct approaches that another difficulty when dealing with \eqref{eq:usual_action_functional} is the fact that $\mathcal Q$, its quadratic part, is strongly indefinite in the sense that it is positive and negative respectively in two infinite dimensional subspaces which split the function space in two. Moreover, \eqref{eq:usual_action_functional} does not have a mountain pass geometry; in particular, the origin $(0,0)$ is not a local minimum. Instead, one has to rely in other linking theorems of more complicated nature. Another related issue is the fact that the usual Nehari manifold is not suitable to describe the ground state level. 

One alternative to get rid of the indefinite character of \eqref{eq:usual_action_functional} is to use the \emph{dual method}, which we describe in Section \ref{sec:DualMethod}. In an informal basis, the method consists in taking the inverse of the Laplace operator, rewriting the system as
\[
(-\Delta)^{-1}(|v|^{q-1}v)=u,\qquad (-\Delta)^{-1}(|u|^{p-1}u)=v.
\]
and defining $w_1=|u|^{p-1}u$, $w_2= |v|^{q-1}v$, which leads to
\begin{equation*}
(-\Delta)^{-1}w_2=|w_1|^{\frac{1}{p}-1}w_1,\qquad (-\Delta)^{-1}w_1=|w_2|^{\frac{1}{q}-1}w_2.
\end{equation*}
The associated energy functional, defined in a suitable product of Lebesgue spaces, has a mountain pass geometry.

Finally, other possibility is to reduce the problem to a scalar one (cf. Section \ref{sec:QuartaOrdem} and \ref{sec:Reduced Functional}). In Section \ref{sec:QuartaOrdem} we explain the \emph{reduction by inversion}, which heuristically consists in taking $v:=|\Delta u|^{\frac{1}{q}-1}(-\Delta u)$ and replacing it in the second equation of \eqref{eq:main_system_particularcase}, leading to the single equation problem of higher order
\begin{equation*}
\left\{
\begin{array}{rcll}
\Delta \left( | \Delta u |^{\frac{1}{q} -1}\Delta u \right) & = & | u |^{p-1}u & \hbox{in}\,\,\Omega,\\
u , \Delta u & = & 0 & \hbox{on}\,\,\partial \Omega.
\end{array}
\right.
\end{equation*}
This approach allows to deal with the sublinear case $pq<1$ as well, and reduces the problem of ground state solutions to the easier study of finding solutions which achieve the best constant of a related Sobolev embedding.

In Section \ref{sec:Reduced Functional}, for the case $p,q>1$, we introduce a Nehari type manifold of infinite codimension in order to characterize with a minimization problem the ground state level. Moreover, by exploiting the properties of \eqref{eq:usual_action_functional} on the pairs of type $(u,u)$ and $(u,-u)$, one can find, for each $u$, a function $\Psi_u$ so that the energy \eqref{eq:usual_action_functional} calculated on $(u+\Psi_u,u-\Psi_u)$ once again displays a mountain pass geometry, and its critical points (in $u$) correspond to solutions of the original system. This approach can be thought as being a \emph{Lyapunov-Schmidt type reduction}.

\medbreak

By using either the dual or the reduction by inversion method, one can prove in a relatively easy way positivity and symmetry properties for ground state solutions. However, it seems that this result does not follow easily with the other methods. Hence, one of the interesting things about all these approaches is that each method is more suitable to prove certain properties of the solutions. We illustrate this in a deeper way in the second part of the paper, from Section \ref{sec:more_on_symmetry} on, where we survey some recent literature, highlighting for each stated result the most suitable framework. In Section \ref{sec:more_on_symmetry} we combine the reduction by inversion approach with some arguments based on polarization of functions to prove symmetry properties of ground state solutions for two classes of systems. In particular, we solve an open problem, cf. \cite[p. 451]{BonheureSantosRamosTAMS}, about the radial symmetry of ground state solutions of a system posed on $\R^N$. For the so called Hénon-type system, we prove the foliated Schwarz symmetry of the ground state solution, as well as we present a result about symmetry breaking. Section \ref{sec:concentration} reviews the existing concentration results available for \eqref{eq:main_system}; there, the chosen method is the Lyapunov-Schmidt type reduction. After that, in Section \ref{sec:MultiplicityResults} we show how to obtain infinitely many solutions (by three different methods: the two reductions and a Galerkin type method) in both the symmetric case \eqref{eq:main_system_particularcase} as well as in the perturbation from symmetry problem. Finally, the last section is about sign-changing solutions; Subsection \ref{sec:lens}  deals with a very recent result of existence and symmetry properties of least energy nodal solutions via dual method, while Subsection \ref{sec:infinitely_sign_changing} is about the existence of infinitely many sign-changing solutions of \eqref{eq:main_system_particularcase} via the Lyapunov-Schmidt type reduction. 

To sum up, one can say in conclusion that it seems that the direct approaches are harder to apply to \eqref{eq:main_system} and have been less used in the past, mainly because it is hard to deal directly with the strongly indefinite functional \eqref{eq:usual_action_functional}. The dual method seems to be more adapted to prove sign and symmetry results; the reduction by inversion to prove sign, symmetry and multiplicity results, while the Lyapunov-Schmidt type reduction is useful in proving concentration and multiplicity results.
\medbreak

We finish this introduction by stressing that, although this paper is mainly a survey, it contains some original results, proofs, and computations that have not appeared elsewhere. As an example we refer to:
\begin{enumerate}[-]
\item the proof that the standard Nehari manifold cannot be used to define the ground state level (Proposition \ref{nehari nao funciona});
\item the fact that the functional associated to the dual method in Section \ref{sec:DualMethod} satisfies the Palais Smale condition (Proposition \ref{psconditionphi});
\item a simple proof for the radial symmetry of ground state solutions by using the dual method and under the mere hypothesis \eqref{eq:(p,q)_for_Ederson_part}, which includes cases with $p<1$ or $q<1$ (Theorem \ref{thm:qualitativeprop_dual});
\item comprehensive proofs of the several characterizations of least energy level in Section \ref{sec:Reduced Functional};
\item we solve an open problem, cf. \cite[p. 451]{BonheureSantosRamosTAMS}, about the radial symmetry of ground state solutions of a system posed on $\R^N$ (Theorem \ref{Th:radialsymmetryRN});
\item with respect to the existing bibliography, we prove the concentration results in Section \ref{sec:concentration} under more general assumptions on the nonlinearities.
\end{enumerate}






\noindent \textbf{Notations.} We will denote the $L^r$-norm by $\|u\|_{r}:=\left(\int_\Omega |u|^r\, dx\right)^{1/r}$. We will always assume $N\geq 1$, except it is specifically mentioned, and define $2^*=+\infty$ if $N=1,2$; $2^*=2N/(N-2)$ otherwise.


\section{Direct approaches}\label{section:direct_approaches}

In this section we present two approaches built directly on the action functional \eqref{eq:usual_action_functional}. In Subsection \ref{subsec:W^1s} we use the spaces $W^{1,s}\times W^{1,\frac{s-1}{s}}$, while in Subsection \ref{subset:Fractional} we deal with fractional Sobolev spaces. We also make some remarks concerning \emph{least energy solutions}. In order to simplify the presentation, throughout this section we focus on the model case \eqref{Hamiltonian}, so that the system in consideration is \eqref{eq:main_system_particularcase}.
%

\subsection{The $W^{1,s}\times W^{1,\frac{s}{s-1}}$ framework} \label{subsec:W^1s}

Having in mind the goal of finding a space in which \eqref{eq:usual_action_functional} is well defined for $(p,q)$ lying bellow the critical hyperbola, following for instance \cite[Section 1]{ClementvanderVorst} (see also \cite[Section 2]{BonheureSantosRamosJFA}), we observe that for every $s>1$,
\[
\left|\int_\Omega \langle \nabla u,\nabla v\rangle dx \right| \le \|\nabla u\|_{L^{s}(\Omega)}\|\nabla v\|_{L^{\frac{s}{s-1}}(\Omega)}.
\]
Therefore the quadratic part $\mathcal Q$ of \eqref{eq:usual_action_functional} is well defined on the product $W^{1,s}_0(\Omega)\times W^{1,\frac{s}{s-1}}_0(\Omega)$, and if for some $s>1$,  one has the embeddings 
\begin{equation}\label{eq:W^s_0_embeddings}
W^{1,s}_0(\Omega)\hookrightarrow   L^{p+1}(\Omega),\qquad W^{1, \frac{s}{s-1}}_0(\Omega)\hookrightarrow L^{q+1}(\Omega),
\end{equation}
the integral of the Hamiltonian in \eqref{eq:usual_action_functional} is finite. Let us suppose without loss of generality that $p\geq q$. One easily sees that the previous embeddings are continuous and compact whenever $s>1$ is such that
\[
(N-s)(p+1)<sN,\qquad \left(N-\frac{s}{s-1}\right)(q+1)<\frac{sN}{s-1},
\]
or, equivalently,
\[
N(p+1)<s(N+p+1),\qquad s((N-1)(q+1)-N)<N(q+1).
\]
There are now two possibilities: either $(N-1)(q+1)-N\leq 0$ and we can take any suitable large $s$, or $q+1>N/(N-1)$ and we can choose $s$ satisfying
\[
1<\frac{N(p+1)}{N+p+1}<s<\frac{N(q+1)}{(N-1)(q+1)-N}
\]
under the mere assumption that
\begin{equation}\label{eq:(p,q)_for_W^1,s_0}
p,q>0,\qquad \frac{1}{p+1}+\frac{1}{q+1}>\frac{N-2}{N}. \tag{H1}
\end{equation}
\vspace{-.5cm}
 \begin{center}
 \includegraphics[scale=.26]{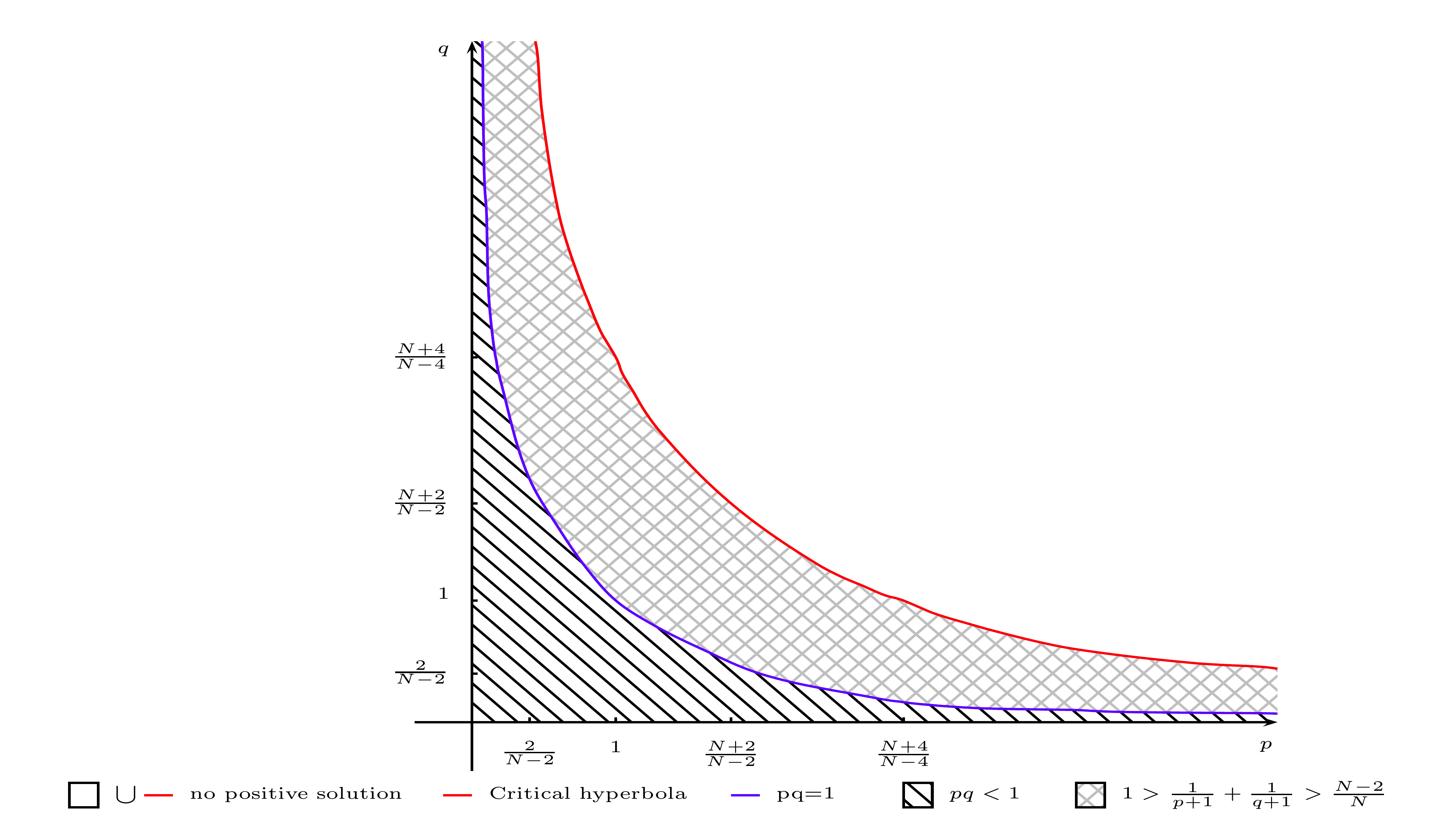}
 \end{center}
\vspace{-.2cm}

In conclusion, for $(p,q)$ satisfying \eqref{eq:(p,q)_for_W^1,s_0}, we can choose $s>1$ so that the embeddings \eqref{eq:W^s_0_embeddings} are continuous and compact, and in particular we can define the action functional $\mathcal G_{s}:W^{1, s}_{0}(\Omega)\times W^{1, \frac{s}{s-1}}_{0}(\Omega)\to \R$ by
\begin{equation}\label{indefh3}
(u,v)\mapsto \mathcal G_{s}(u,v) = \int_{\Omega}\langle \nabla u,\nabla v\rangle dx-\int_{\Omega}H(u,v) dx.
\end{equation}
In this framework, a \emph{weak solution} of \eqref{eq:main_system} is a critical point of $\mathcal G_s$, i.e.
a couple $(u,v)\in W^{1, s}_{0}(\Omega)\times W^{1, \frac{s}{s-1}}_{0}(\Omega)$ such that
\[
\G_{s}'(u,v)(\varphi,\psi)=\int_{\Omega}\left( \langle \nabla u, \nabla \psi\rangle +
\langle \nabla v, \nabla \varphi\rangle -  H_{u}(u,v)\varphi -  H_{v}(u,v)\psi \right) dx= 0,
\]
for every $(\varphi,\psi) \in W^{1, s}_{0}(\Omega)\times W^{1, \frac{s}{s-1}}_{0}(\Omega)$. Observe that if we assume $(p+1)(N-2),(q+1)(N-2)<2N$, then we can choose $s=2$ and $H^1_0(\Omega)\times H^1_0(\Omega)$ is an agreeable framework.

It is clear that $(0,0)$ is not a local minimum of $\G_s$. Indeed, the quadratic part $\mathcal Q$ is indefinite since $\mathcal Q(u,u)$ is positive definite whereas $\mathcal Q(u,-u)$ is negative definite for $u\in H^1_0(\Omega)$. This implies that the functional $\G_s$ does not display a mountain pass geometry. Moreover, $W^{1, s}_{0}(\Omega)\times W^{1, s/(s-1)}_{0}(\Omega)$ is not a Hilbert space if $s \neq 2$, which makes linking theorems as the one by Benci and Rabinowitz \cite{BenciRabinowitz} not applicable (we refer to the next subsection for a different framework which allows the use of linking theorems). Due to this fact, it is quite involved to show existence results using directly the functional $\G_s$. We refer to \cite{deFigdoORuf} or \cite[Section 5]{Ruf} for an approach in that direction. On the other hand, once one knows that a solution actually exists (for example through other approaches), one can then use $\G_s$ to obtain energy estimates. In \cite[Section  2]{BonheureSantosRamosJFA}, for instance, this framework has proved itself to be useful in estimating the level of \emph{least energy solutions}, also called $\emph{ground state}$ solutions. These can be defined as pairs $(u,v)$ achieving
\[
c_{s}(\Omega):=\inf \{ \G_s(u,v):\ (u,v)\in W^{1,s}_0(\Omega)\times W^{1,\frac{s}{s-1}}(\Omega),\ (u,v)\neq (0,0), \  \G_s'(u,v)=0\}.
\]
A priori this level could depend on $s$, but this turns out not to be the case.  Indeed, arguing as in \cite[Proposition 2.1]{BonheureSantosRamosTAMS}, see also \cite[Theorem 1]{Sirakov} or Subsection \ref{sec:p_diferente_q} ahead, one shows with a standard bootstrap that the weak solutions of \eqref{eq:main_system} are classical solutions, so that the numbers $c_s(\Omega)$ are independent of the particular choice of $s$. Throughout this paper we will denote the ground state level simply by $c(\Omega)$.

In the case of a single equation or when dealing with gradient systems, one possible characterization of the ground state level is through the minimization of the energy functional on the so called \emph{Nehari manifold}. For Hamiltonian systems, this turns out to be \emph{unsuccessful}, and we illustrate this fact in the superlinear case $pq>1$.

If $(u,v)$ is a weak solution of \eqref{eq:main_system_particularcase}, we have $\int_\Omega |u|^{p+1}\, dx=\int_\Omega |v|^{q+1}\, dx$. Moreover, using the fact that $\G_s'(u,v)(u,v)=0$, we infer that
\[
2\int_{\Omega}\langle \nabla u, \nabla v\rangle dx  = \int_{\Omega} |u|^{p+1} dx + \int_{\Omega} |v|^{q+1} dx.
\]
Assuming $pq> 1$, we deduce that for a nontrivial weak solution $(u,v)$ of \eqref{eq:main_system_particularcase}, we have
\begin{align*}
\G_s(u,v) &= \frac{p-1}{2(p+1)} \int_\Omega  |u|^{p+1} dx + \frac{q-1}{2(q+1)} \int_\Omega  |v|^{q+1} dx\\
		&=\frac{pq-1}{(p+1)(q+1)}\int_\Omega |u|^{p+1}\, dx>0.
\end{align*}
If \eqref{eq:(p,q)_for_W^1,s_0} holds, we have all the required compactness to prove that $c_{s}(\Omega)$ is achieved
as soon as one can prove that $\G_{s}$ has at least one critical point and we therefore deduce that $c(\Omega)=c_{s}(\Omega)>0$. 

Next, we define the Nehari manifold $\Ncal_{\G_s}$ as usual by 
\[
\Ncal_{\G_s}:=\{(u,v)\in W^{1,s}_{0}(B)\times W^{1,\frac{s}{s-1}}_{0}(B)\mid (u,v) \ne (0,0)\ \text{ and }\ \G_s'(u,v)(u,v) = 0\}.
\]
In contrast with the case of a single equation or gradient systems, the origin $(0,0)$ turns out to be adherent to $\Ncal_{\G_s}$. This means $\inf_{\Ncal_{\G_s}}\G_s\le 0$, and therefore $\inf_{\Ncal_{\G_s}} \G_s$ \emph{cannot be a critical level associated to a nontrivial critical point}! Since this fact seems not so well known by the community and has been misused, we state it as a proposition for completeness.
\begin{proposition}\label{nehari nao funciona}
Assume \eqref{eq:(p,q)_for_W^1,s_0} and $pq> 1$ hold. Then $(0,0)$ is an adherent point of $\Ncal_{\G_s}$ and 
$\inf_{\Ncal_{\G_s}}\G_s \leq 0$.
\end{proposition}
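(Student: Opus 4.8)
The plan is to construct an explicit one-parameter family of points of $\Ncal_{\G_s}$ accumulating at $(0,0)$. Once such a family is at hand, the bound $\inf_{\Ncal_{\G_s}}\G_s\le 0$ follows immediately: under \eqref{eq:(p,q)_for_W^1,s_0} the functional $\G_s$ is continuous on $W^{1,s}_{0}(\Omega)\times W^{1,\frac{s}{s-1}}_{0}(\Omega)$ (the quadratic part is a bounded bilinear form by H\"older's inequality, and $\int_\Omega H(u,v)\,dx$ is controlled by the embeddings \eqref{eq:W^s_0_embeddings}) and $\G_s(0,0)=0$, so $\G_s$ evaluated along the family tends to $0$.

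To build the family I would fix any $\varphi\in C^\infty_c(\Omega)\setminus\{0\}$ and restrict attention to pairs of the form $(a\varphi,b\varphi)$, $a,b>0$. With $K:=\int_\Omega|\nabla\varphi|^2\,dx>0$, $P:=\int_\Omega|\varphi|^{p+1}\,dx>0$ and $Q:=\int_\Omega|\varphi|^{q+1}\,dx>0$, the formula for $\G_s'$ recalled above gives $\G_s'(a\varphi,b\varphi)(a\varphi,b\varphi)=2Kab-a^{p+1}P-b^{q+1}Q$, so that $(a\varphi,b\varphi)\in\Ncal_{\G_s}$ precisely when $2Kab=a^{p+1}P+b^{q+1}Q$. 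The key idea — and the only step that needs a little thought — is to look for a solution of the form $b=\lambda\,a^{1/q}$: after dividing by $a^{1+1/q}$ the constraint becomes
\[
2K\lambda=a^{\,p-1/q}P+\lambda^{q+1}Q .
\]
This scaling is forced by the requirement that $2Kab$ and $b^{q+1}Q$ be homogeneous of the same degree in $a$, and it is exactly here that $pq>1$ is used: it guarantees $p-1/q>0$, so $a^{\,p-1/q}P\to0$ as $a\to0^+$ and the limiting equation $2K\lambda=\lambda^{q+1}Q$ has the positive root $\lambda=(2K/Q)^{1/q}$.

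To make this rigorous I would solve the displayed equation for each small $a>0$ by a one-variable argument. The function $g_a(\lambda):=2K\lambda-a^{\,p-1/q}P-\lambda^{q+1}Q$ satisfies $g_a(0)<0$, is strictly increasing on $(0,\lambda_*)$ and strictly decreasing on $(\lambda_*,+\infty)$, where $\lambda_*:=\big(2K/((q+1)Q)\big)^{1/q}$, and $g_a(\lambda_*)=\tfrac{2Kq}{q+1}\lambda_*-a^{\,p-1/q}P$; since $\tfrac{2Kq}{q+1}\lambda_*$ is a fixed positive constant, $g_a(\lambda_*)>0$ for all sufficiently small $a>0$, and the intermediate value theorem yields $\lambda(a)\in(0,\lambda_*)$ with $g_a(\lambda(a))=0$. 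Then $(u_a,v_a):=\big(a\varphi,\ \lambda(a)a^{1/q}\varphi\big)$ is a nontrivial element of $\Ncal_{\G_s}$, and since $0<\lambda(a)a^{1/q}\le\lambda_*a^{1/q}\to0$ as $a\to0^+$, both $u_a$ and $v_a$ tend to $0$ in the respective Sobolev spaces. Hence $(0,0)\in\overline{\Ncal_{\G_s}}$ and $\inf_{\Ncal_{\G_s}}\G_s\le\lim_{a\to0^+}\G_s(u_a,v_a)=0$.

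I do not expect any genuine obstacle; the argument is elementary once the scaling $b\sim a^{1/q}$ has been spotted. For completeness I would note that when moreover $p,q>1$, the manipulation carried out before the statement gives, for every $(u,v)\in\Ncal_{\G_s}$, $\G_s(u,v)=\tfrac{p-1}{2(p+1)}\int_\Omega|u|^{p+1}\,dx+\tfrac{q-1}{2(q+1)}\int_\Omega|v|^{q+1}\,dx\ge0$, so that in that range one actually has $\inf_{\Ncal_{\G_s}}\G_s=0$; but the proposition only claims the inequality.
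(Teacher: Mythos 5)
Your construction is correct and follows the same basic strategy as the paper's proof: restrict to pairs $(a\varphi,b\varphi)$ built from a single fixed test function, solve the one-dimensional Nehari constraint $2Kab=a^{p+1}P+b^{q+1}Q$, and let the pair tend to $(0,0)$. The only real difference is parametrization. The paper writes the pair as $(tu,t\lambda u)$, sends $\lambda\to+\infty$, and has to split the verification that a suitable $t_\lambda$ exists into three cases ($p>1$, $p=1$, $p<1$); your ansatz $b=\lambda a^{1/q}$ reduces the constraint to $2K\lambda=a^{\,p-1/q}P+\lambda^{q+1}Q$, where the hypothesis $pq>1$ enters exactly once (to give $p-1/q>0$) and a single intermediate-value argument covers all $p>0$ uniformly. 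This is a modest but genuine streamlining; both proofs then conclude identically, since $a\to0^+$ forces both components to $0$ in norm and continuity of $\G_s$ gives $\inf_{\Ncal_{\G_s}}\G_s\le\G_s(0,0)=0$. Your closing observation that $\inf_{\Ncal_{\G_s}}\G_s=0$ when $p,q>1$ is also correct (it uses only the constraint $\G_s'(u,v)(u,v)=0$, not the full Euler--Lagrange equations), and it sharpens the statement in that range.
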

\begin{proof}
Suppose for instance that $q >1$. Let $u\in C^\infty_\textrm{c}(\Omega)$ be a positive function, and $\lambda,t>0$. Then $\G_s'(tu,t\lambda u)(tu,t\lambda u)=0$ means
\[
\varphi_\lambda(t):=t^{p-1}\|u\|_{p+1}^{p+1}+t^{q-1} \lambda^{q+1}\|u\|^{q+1}_{q+1}=2\lambda \|\nabla u\|_2^2.
\]
Observe that $\varphi_\lambda(t)\to +\infty$ as $t\to +\infty$, whatever $\lambda>0$ and $p>0$ are fixed.

\medbreak

\noindent{\it Claim : for each $\lambda>0$ large enough, there exists a unique $t_\lambda>0$ such that
\begin{equation}\label{eq:tu,tlambdau_in_N}
\varphi_\lambda(t_\lambda)=2\lambda\|\nabla u\|_2^2,\ \text{ or equivalently},\  (t_\lambda u,t_\lambda \lambda u)\in \Ncal_{\G_s}.
\end{equation}}%
If $p>1$, then for each $\lambda>0$ we have $\varphi_\lambda(0)=0$. Thus the claim follows easily from the continuity of $\varphi_\lambda$. For $p=1$ one can argue in an analogous way for each $\lambda$ satisfying $2\lambda\|\nabla u\|_2^2>\|u\|_{p+1}^{p+1}$, since in such case $\varphi_\lambda(0)<2\lambda \|\nabla u\|_2^2$. Finally, when $p<1$, we can take $\lambda>0$ such that
\[
2\lambda^{\frac{pq-1}{q-p}}\|\nabla u\|_2^2>\|u\|_{p+1}^{p+1}+\|v\|_{q+1}^{q+1},
\]
which leads to $\varphi_{\lambda}(\lambda^{-(q+1)/(q-p)})<2\lambda \|\nabla u\|_2^2$, and we conclude as before.

%

\medbreak

\noindent{\it Conclusion :} the identity \eqref{eq:tu,tlambdau_in_N}  implies in particular that
\[
0<(t_\lambda \lambda)^{q-1}\|u\|_{q+1}^{q+1}\leq \frac{2\|\nabla u\|_2^2}{\lambda}\to 0 \qquad \text{ as } \lambda \to +\infty.
\]
Hence $t_\lambda \lambda\to 0$, and $\|(t_\lambda u,t_\lambda \lambda u)\|_{W^{1,s}_0 \times W^{1, \frac{s}{s-1}}_0}\to 0$ as $\lambda \to +\infty$, so that the proof is complete.

\end{proof}

In Section \ref{sec:Reduced Functional}, we will define a suitable Nehari type set (of infinite codimension). Namely, by imposing the relations 
$$\G_{s}'(u,v)(u+\phi,v-\phi)=0$$
for every direction $\phi$, we will recover that the minimum on such a set corresponds to the ground energy level. A different route will also be considered in Sections \ref{sec:DualMethod} and \ref{sec:QuartaOrdem} where we provide two other ways to recover a characterization of the ground state level as the minimum on a standard Nehari manifold.

\subsection{Using fractional Sobolev spaces} \label{subset:Fractional}

In this section we describe the variational approach based on the use of fractional Sobolev spaces, following \cite{deFigueiredoFelmer,HulshofvanderVorst}. We recall that, in order to simplify the computations, we still assume $H(u,v)=|u|^{p+1}/(p+1)+|v|^{q+1}/(q+1)$, and refer to the above mentioned papers for more general statements. The following approach will yield an existence result for $(p,q)$ such that
\begin{equation}\label{eq:p_and_q_for_E^s_approach}
p,q>0, \quad 1>\frac{1}{p+1}+\frac{1}{q+1}>\frac{N-2}{N},\quad p(N-4),q(N-4)< N+4. \tag{H2}
\end{equation}
Recall that $1/(p+1)+1/(q+1)<1$ is equivalent to $pq>1$, which corresponds to the notion of superlinearity in the context of elliptic Hamiltonian systems.

\begin{center}
\includegraphics[scale=.26]{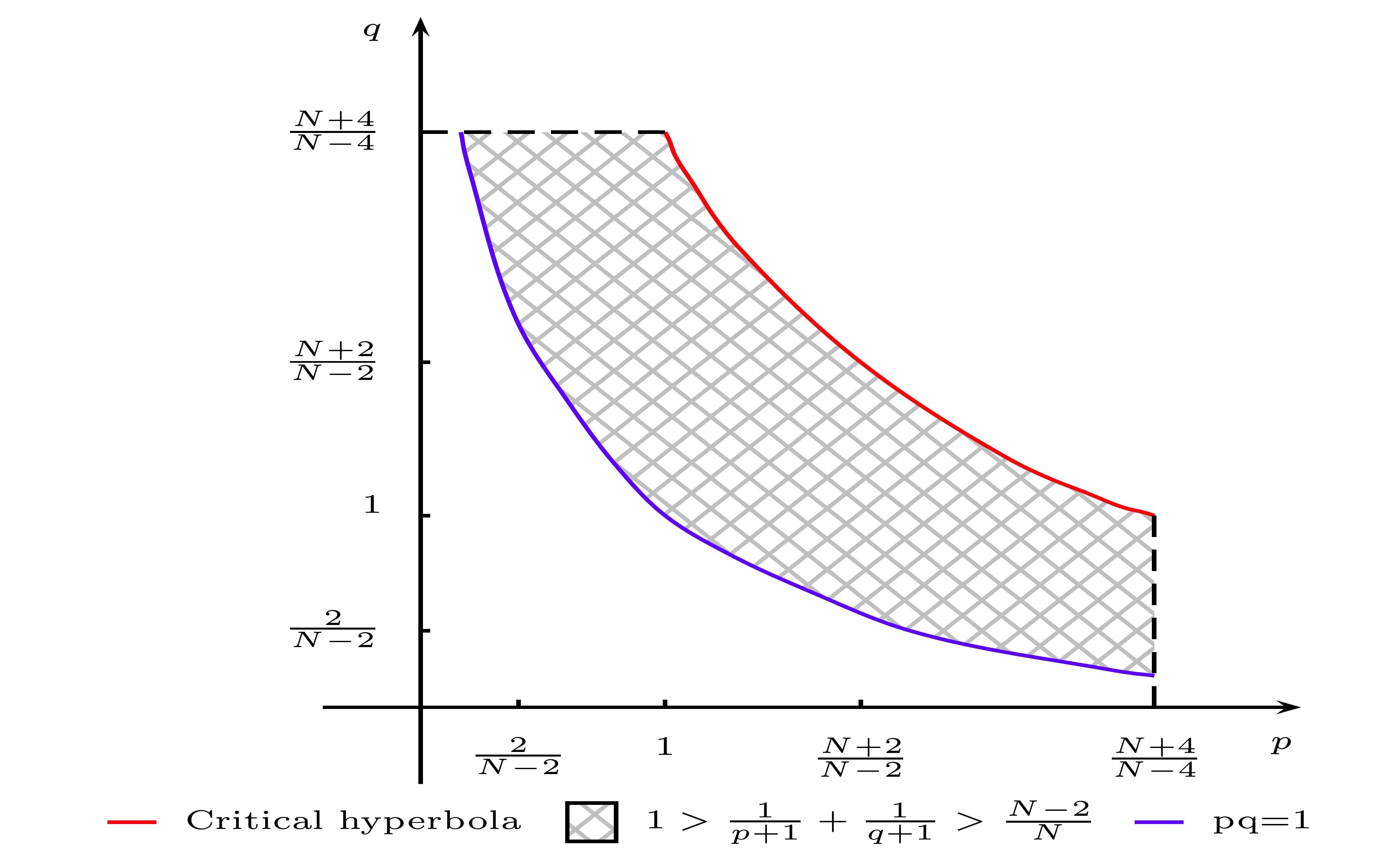}
\end{center}
\vspace{-.3cm}

\begin{remark}The references \cite{deFigueiredoFelmer,HulshofvanderVorst} were published contemporaneously, and the techniques share much similarities. However, in \cite{HulshofvanderVorst} the proof is done for the more restrictive case
\[
p,q>1, \qquad \frac{1}{p+1}+\frac{1}{q+1}>\frac{N-2}{N}
\]
(both nonlinearities need to be superlinear), while the observation that one can actually treat the more general case \eqref{eq:p_and_q_for_E^s_approach} is done in \cite{deFigueiredoFelmer}. For more precise details check the proof of {\rm Theorem \ref{thm:Existence_with_E^s}} ahead. 
\end{remark}
For $N\geq 3$, the fact that $(p,q)$ lies below the critical hyperbola may yield that (for instance) $q<2^*-1<p$. In such a case, we cannot define the action functional \eqref{eq:usual_action_functional} in $H^1_0(\Omega)\times H^1_0(\Omega)$, and the idea is to impose a priori more regularity on $u$ and less on $v$, keeping at the same time an Hilbertian framework. Having this in mind, let us introduce the fractional Sobolev spaces $E^s(\Omega)$. 

Let $(\phi_n)_n$ be the sequence of $L^2$--normalized eigenfunctions of $(-\Delta,H^1_0(\Omega))$, with corresponding eigenvalues $(\lambda_n)_n$. It is well know that each $u\in L^2(\Omega)$ coincides with its Fourier series 
$$u=\sum_{n=1}^\infty a_n \phi_n,$$ 
with $a_n:=\int_\Omega u\phi_n\, dx$. For $s>0$, we can therefore define the operator $A^s:=(-\Delta)^{s/2}: E^s(\Omega)\to L^2(\Omega)$, where 
\begin{equation}\label{eq:Es}
 E^s(\Omega)=\left\{u=\sum_{n=1}^\infty a_n \phi_n\in L^2(\Omega):\ \|u\|_{E^s(\Omega)}^2:=\sum_{n=1}^\infty \lambda_n^s a_n^2<\infty   \right\},
\end{equation}
and 
\[
A^su=A^s\left( \sum_{n=1}^\infty a_n \phi_n \right):=\sum_{n=1}^\infty \lambda_n^{s/2}a_n \phi_n.
\]
We endow $E^s(\Omega)$ with the inner product
\[
\langle u,v\rangle_{E^s(\Omega)}:=\int_\Omega A^s u A^s v\, dx,\qquad \forall u,v\in E^s(\Omega),
\]
so that $E^s(\Omega)$ is an Hilbert space with the Hilbertian norm $\|u\|_{E^s}=\|A^s u\|_2$. We denote by $A^{-s}$ the inverse of the operator $A^s$. Observe that  $E^2(\Omega)=H^2(\Omega)\cap H^1_0(\Omega)$ and $A^2 =-\Delta$, while $E^1(\Omega)=H^1_0(\Omega)$.

Suppose for the moment that 
\begin{equation}\label{eq:p,q_auxiliary_E^s}
p,q>0, \quad \frac{1}{p+1}+\frac{1}{q+1}>\frac{N-2}{N}, \quad p(N-4), q(N-4)<N+4.
\end{equation} 
Then
\[
0<N\left(\frac{1}{q+1}-\frac{N-4}{2N}\right),\qquad N\left(\frac{1}{2}-\frac{1}{p+1}\right)<2,
\]
and
\[
N\left(\frac{1}{2}-\frac{1}{p+1}\right)<N\left(\frac{1}{q+1}-\frac{N-4}{2N}\right),
\]
whence we can take $0<s<2$ such that
\[
N\left(\frac{1}{2}-\frac{1}{p+1}\right)<s<N\left(\frac{1}{q+1}-\frac{N-4}{2N}\right).
\]
This last statement is equivalent to
\[
(p+1)(N-2s)<2N,\qquad (q+1)(N-2(2-s))<2N.
\]
Thus, under this choice, we have the compact embeddings (see \cite[Theorem 1.1]{deFigueiredoFelmer}):
\[
E^s(\Omega)\hookrightarrow L^{p+1}(\Omega),\qquad E^{2-s}(\Omega)\hookrightarrow L^{q+1}(\Omega).
\]
To simplify the notation, we set $t=2-s$ and $E_s:=E^s(\Omega)\times E^t(\Omega)$. The previous embeddings imply that the energy functional
\begin{equation}\label{eq:the_use_of_s}
\I_s:E_s \to \R,\qquad  \I_s(u,v)=\int_\Omega A^s u A^{t}v\, dx-\int_\Omega H(u,v)\, dx
\end{equation}
is a well defined $C^1$--functional for $(p,q)$ as in \eqref{eq:p,q_auxiliary_E^s}. Then $(u,v)\in E_s$ is a critical point of $I_s$ if and only if
\[
 \I'_s(u,v)(\varphi,\psi)=\int_\Omega (A^s u A^{t}\psi+A^s\varphi A^{t}v-H_{u}(u,v)\varphi-H_{v}(u,v)\psi)\, dx=0,
\] 
for every $(\varphi, \psi) \in E_s$ that is, $(u,v)$ is a solution of
\[
-\Delta u=H_{v}(u,v) \text{ in } E^{-t}(\Omega),\qquad -\Delta v=H_{u}(u,v) \text{ in } E^{-s}(\Omega).
\]
This is the notion of \emph{weak solution} in this context. It is proved in \cite[Theorem 1.2]{deFigueiredoFelmer} that weak solutions are \emph{strong solutions}, in the sense that
\[
u\in W^{2,\frac{q+1}{q}}(\Omega)\cap W^{1,\frac{q+1}{q}}_0(\Omega),\qquad v\in W^{2,\frac{p+1}{p}}(\Omega)\cap W^{1,\frac{p+1}{p}}_0(\Omega)
\]
and they satisfy the system \eqref{eq:main_system} pointwise for a.e. $x\in \Omega$. By using a bootstrap argument and elliptic regularity theory \cite{GilbargTrudinger}, see also Subsection \ref{sec:p_diferente_q} ahead, one proves in a standard way that weak solutions are in fact classical solutions.  

In order to obtain the existence of nontrivial solutions under \eqref{eq:p_and_q_for_E^s_approach} we need some preliminaries. First observe that the functional $\I_s$ may be written in the form 
\begin{equation*}\label{eq:functional:alternative_form_with_L}
\I_s(u,v)=\frac{1}{2}\langle L_s(u,v),(u,v)\rangle_{E_s}-\int_\Omega H(u,v)\, dx,
\end{equation*}
where $L_s:E_s\to E_s$ is the self-adjoint bounded linear operator defined by the condition
\begin{equation*}
\langle L_s(u,v),(\varphi,\psi) \rangle_{E}=\int_\Omega (A^s uA^{t}\psi+A^s\varphi A^{t} v)\, dx,
\end{equation*}
having the explicit formula
\[
L_s(u,v)=(A^{-s}A^{t}v,A^{-t}A^su).
\]
The space $E_s$ decomposes in $E_s^+\oplus E_s^-$, with
\[
E_s^+=\{(u,A^{-t}A^s u):\  u\in E^s(\Omega)\}, \qquad  E_s^{-}=\{(u,-A^{-t}A^s u):\ u\in E^s(\Omega)\},
\]
writing $(u,v) \in E_s$ as
\[
(u,v) = \left( \frac{u + A^{-s}A^t v}{2}, \frac{v + A^{-t}A^{s}u}{2}\right) + \left( \frac{u - A^{-s}A^tv}{2}, \frac{v - A^{-t}A^su}{2} \right).
\]
Observe that both $E_s^+$ and $E_s^-$ are infinite dimensional, and the quadratic part $\int_\Omega A^s uA^t v\, dx$ is positive on $E_s^+$, negative on $E_s^-$. In the literature, this type of geometry is referred to as \emph{strongly indefinite}. 

\begin{lemma}\label{I_s_satisfies_PS}
Under \eqref{eq:p_and_q_for_E^s_approach}, the functional $\I_s$ satisfies the Palais-Smale condition.
\end{lemma}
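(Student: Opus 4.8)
The strategy is the classical scheme for strongly indefinite functionals: take a Palais-Smale sequence $(u_n,v_n)$, show it is bounded in $E_s$, and then upgrade weak convergence to strong convergence using the compact embeddings $E^s(\Omega)\hookrightarrow L^{p+1}(\Omega)$ and $E^t(\Omega)\hookrightarrow L^{q+1}(\Omega)$ together with the splitting $E_s=E_s^+\oplus E_s^-$.

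First I would establish boundedness. Let $(u_n,v_n)$ satisfy $\I_s(u_n,v_n)\to c$ and $\I_s'(u_n,v_n)\to 0$ in $E_s^*$. The standard manipulation is to combine $\I_s(u_n,v_n)-\tfrac{1}{\theta}\I_s'(u_n,v_n)(u_n,v_n)$ for a suitable $\theta$; since $H$ is a sum of pure powers, $H_u(u,v)u+H_v(u,v)v=|u|^{p+1}+|v|^{q+1}$ while $H(u,v)=\tfrac{1}{p+1}|u|^{p+1}+\tfrac{1}{q+1}|v|^{q+1}$, so the quadratic part cancels if one subtracts $\tfrac12 \I_s'(u_n,v_n)(u_n,v_n)$, giving
\[
\I_s(u_n,v_n)-\tfrac12\I_s'(u_n,v_n)(u_n,v_n)=\Bigl(\tfrac12-\tfrac{1}{p+1}\Bigr)\|u_n\|_{p+1}^{p+1}+\Bigl(\tfrac12-\tfrac{1}{q+1}\Bigr)\|v_n\|_{q+1}^{q+1}.
\]
If both $p,q>1$ this immediately bounds $\|u_n\|_{p+1}$ and $\|v_n\|_{q+1}$. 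Under the weaker hypothesis \eqref{eq:p_and_q_for_E^s_approach} one of the exponents may satisfy $p<1$, in which case one coefficient is negative; the remedy is to test with a different multiplier, namely to use $\I_s'(u_n,v_n)(u_n,\mu v_n)$ or $\I_s'(u_n,v_n)(\alpha u_n,\beta v_n)$ with $\alpha,\beta$ chosen so that both resulting coefficients of $\|u_n\|_{p+1}^{p+1}$ and $\|v_n\|_{q+1}^{q+1}$ are strictly positive — this is possible precisely because $pq>1$, i.e. $\tfrac{1}{p+1}+\tfrac{1}{q+1}<1$, which is exactly the superlinearity encoded in \eqref{eq:p_and_q_for_E^s_approach}. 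Once $\|u_n\|_{p+1}$ and $\|v_n\|_{q+1}$ are bounded, I recover the $E_s$-bound by testing $\I_s'(u_n,v_n)$ against $(u_n^+ - u_n^-)$ where $(u_n^\pm)$ are the projections onto $E_s^\pm$: this produces $\|(u_n,v_n)\|_{E_s}^2$ on the left up to lower-order terms controlled by the $L^{p+1}\times L^{q+1}$ bounds and the continuity of the embeddings.

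Next, passing to a subsequence, $(u_n,v_n)\rightharpoonup (u,v)$ in $E_s$, strongly in $L^{p+1}(\Omega)\times L^{q+1}(\Omega)$ by compactness, and a.e. Write $(u_n,v_n)=z_n^+ + z_n^-$ and $(u,v)=z^+ + z^-$ along $E_s=E_s^+\oplus E_s^-$. On $E_s^+$ the quadratic form $\langle L_s\cdot,\cdot\rangle$ is an equivalent inner product, so $\langle L_s z_n^+, z_n^+ - z^+\rangle_{E_s} = \I_s'(u_n,v_n)(z_n^+ - z^+) + \int_\Omega (H_u(u_n,v_n),H_v(u_n,v_n))\cdot(z_n^+-z^+)\,dx$; the first term $\to 0$ since $\I_s'\to 0$ and $z_n^+-z^+$ is bounded, and the second $\to 0$ because $H_u(u_n,v_n)=|u_n|^{p-1}u_n\to |u|^{p-1}u$ in $L^{(p+1)/p}(\Omega)$ (dominated/continuity of Nemytskii operator, $(p+1)/p$ being the conjugate of $p+1$) and likewise for $H_v$, paired with the strongly convergent first components of $z_n^+-z^+$ in $L^{p+1}\times L^{q+1}$. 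Hence $z_n^+\to z^+$ strongly in $E_s$; the argument on $E_s^-$ is identical with a sign. Therefore $(u_n,v_n)\to(u,v)$ in $E_s$.

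**Main obstacle.** The delicate point is the boundedness step under the full generality of \eqref{eq:p_and_q_for_E^s_approach} rather than the restrictive $p,q>1$: when one exponent drops below $1$ the naive $\I_s - \tfrac12\I_s'$ trick fails and one must cleverly choose the test-function multipliers $(\alpha u_n,\beta v_n)$ so that the quadratic parts still cancel (forcing $\alpha+\beta=1$, say, after normalization of the bilinear form) while both remaining superlinear coefficients stay positive — verifying that such $\alpha,\beta$ exist is a short but essential computation that uses $pq>1$ in an essential way, and this is precisely the improvement of \cite{deFigueiredoFelmer} over \cite{HulshofvanderVorst} alluded to in the Remark. The compactness step, by contrast, is routine once the compact embeddings and the $E_s^+\oplus E_s^-$ decomposition are in hand.
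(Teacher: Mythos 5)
Your argument is correct and follows essentially the same route as the paper: the paper bounds the nonlinear terms by subtracting $\I_s'(u_n,v_n)\bigl(\tfrac{q+1}{p+q+2}u_n,\tfrac{p+1}{p+q+2}v_n\bigr)$, which is exactly your multiplier choice with $\alpha+\beta=1$ and both coefficients positive thanks to $pq>1$, and then recovers the $E_s$-bound by testing $\I_s'$ against $(A^{-s}A^tv_n,A^{-t}A^su_n)$, which is precisely your $z_n^+-z_n^-$. The compactness step via the compact embeddings and the $E_s^+\oplus E_s^-$ splitting is also the one the paper (more tersely) indicates.
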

\begin{proof}
 Let $(u_n,v_n)\in E_s$ be so that $\I_s(u_n,v_n)$ is bounded and $\I_s'(u_n,v_n)\to 0$. Then we have
\begin{multline*}
0<\frac{pq-1}{p+q+2}\int_\Omega\left(\frac{|u_n|^{p+1}}{p+1}+\frac{|v_n|^{q+1}}{q+1}\right)\, dx\\
=\I_s(u_n,v_n)-\I_s'(u_n,v_n)\left(\frac{q+1}{p+q+2}u_n,\frac{p+1}{p+q+2}v_n\right)\leq C+ \epsilon_n\|(u_n,v_n)\|_{E_s},
\end{multline*}
with $0 < \eps_n \rightarrow 0$ as $n\to \infty$. Thus
\begin{multline*}
\|(u_n,v_n)\|^2_{E_s} = \int_\Omega (|u_n|^pA^{-s}A^tv_n+|v_n|^qA^{-t}A^su_n)\, dx\\
+\I_s'(u_n,v_n)(A^{-s}A^tv_n,A^{-t}A^su_n)\\
\leq\left(\int_\Omega |u_n|^{p+1}\, dx\right)^\frac{p}{p+1}\|A^{-s}A^t v_n\|_{p+1}\\
+\left(\int_\Omega |v_n|^{q+1}\, dx\right)^\frac{q}{q+1}\|A^{-t}A^s u_n\|_{q+1} +\epsilon_n\|(u_n,v_n)\|_{E_s}\\
\leq  C\|(u_n,v_n)\|_{E_s} \left(C+ \epsilon_n\|(u_n,v_n)\|_{E_s} \right)^{\frac{p}{p+1}} \\
+  C\|(u_n,v_n)\|_{E_s} \left(C+ \epsilon_n\|(u_n,v_n)\|_{E_s} \right)^{\frac{q}{q+1}} +\epsilon_n\|(u_n,v_n)\|_{E_s} \\
\leq C\|(u_n,v_n)\|_{E_s}+ \epsilon_n \|(u_n,v_n)\|_{E_s}+ \epsilon_n \|(u_n,v_n)\|_{E_s}^2.
\end{multline*}
Since $\{(u_n,v_n)\}_n$ is bounded in $E_s$, up to a subsequence, $(u_n,v_n)$ weakly converges to some $(u,v)\in E_s$. The convergence is actually strong and this can be deduced from a careful analysis of the convergence 
\[
\I_s'(u_n,v_n)(A^{-s}A^t(v_n-v),A^{-t}A^s(u_n-u))\to 0.
\]
\end{proof}

\begin{theorem}\label{thm:Existence_with_E^s}
Take $(p,q)$ satisfying \eqref{eq:p_and_q_for_E^s_approach}. Then \eqref{eq:main_system} admits a nontrivial classical solution.
\end{theorem}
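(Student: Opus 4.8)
The plan is to apply a linking theorem for strongly indefinite functionals to the functional $\I_s$ defined in \eqref{eq:the_use_of_s}, exploiting the decomposition $E_s = E_s^+ \oplus E_s^-$ together with the Palais--Smale condition already established in Lemma \ref{I_s_satisfies_PS}. The natural tool here is the Benci--Rabinowitz linking theorem \cite{BenciRabinowitz} (or a variant thereof), which is applicable precisely because $E_s$ is a \emph{Hilbert} space --- this being the whole point of passing to the fractional framework rather than using the $W^{1,s}\times W^{1,s/(s-1)}$ setting of Subsection \ref{subsec:W^1s}. So the first step is to verify the geometric hypotheses of that theorem.

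Concretely, I would first check the local linking geometry at the origin: since the quadratic part $\int_\Omega A^s u A^t v\, dx$ is positive on $E_s^+$ and negative on $E_s^-$, and since $H$ is superquadratic (here is where $pq>1$, i.e. $1/(p+1)+1/(q+1)<1$, from \eqref{eq:p_and_q_for_E^s_approach} enters), one shows that $\I_s$ is bounded below by a positive constant on a small sphere $\partial B_\rho \cap E_s^+$ while $\I_s \le 0$ on $E_s^-$. For the linking, fix a nonzero $e \in E_s^+$ and consider the boundary of the set $Q = \{ te + w : w \in E_s^-,\ \|w\| \le R,\ 0 \le t \le R \}$; using the compact embeddings $E^s \hookrightarrow L^{p+1}$, $E^t \hookrightarrow L^{q+1}$ and the homogeneity of $H$, one shows $\I_s \le 0$ on $\partial Q$ provided $R$ is large enough, so that $\partial Q$ and $\partial B_\rho \cap E_s^+$ link. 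The subtlety in this step is the \emph{asymmetry} of the two powers $p$ and $q$: the estimates showing $\I_s \to -\infty$ along rays must be carried out carefully since one component may grow faster than the other, but the bound obtained in the Palais--Smale argument of Lemma \ref{I_s_satisfies_PS} (the identity isolating $\frac{pq-1}{p+q+2}\int_\Omega(\frac{|u|^{p+1}}{p+1}+\frac{|v|^{q+1}}{q+1})\,dx$) is exactly the algebraic fact that makes this work, and the same device controls the geometry here.

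With the linking geometry in hand and the Palais--Smale condition from Lemma \ref{I_s_satisfies_PS}, the linking theorem produces a critical point $(u,v) \in E_s$ of $\I_s$ at a positive critical level $c \ge \inf_{\partial B_\rho \cap E_s^+} \I_s > 0$; in particular $(u,v) \neq (0,0)$ since $\I_s(0,0) = 0 < c$. This is a nontrivial weak solution in the sense described before the lemma, i.e. a strong solution with $u \in W^{2,(q+1)/q}(\Omega) \cap W^{1,(q+1)/q}_0(\Omega)$ and $v \in W^{2,(p+1)/p}(\Omega)\cap W^{1,(p+1)/p}_0(\Omega)$ by \cite[Theorem 1.2]{deFigueiredoFelmer}. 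Finally, to upgrade to a classical solution one runs the standard bootstrap: starting from the integrability just quoted, repeatedly apply $L^r$ elliptic estimates \cite{GilbargTrudinger} to each equation in \eqref{eq:main_system}, using the subcriticality \eqref{eq:p_and_q_for_E^s_approach} to gain integrability at each step, until $u,v$ lie in $C^{2,\alpha}(\overline\Omega)$ (as indicated in Subsection \ref{sec:p_diferente_q} ahead); Schauder theory then closes the argument.

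I expect the main obstacle to be the \emph{verification of the linking geometry under the weak hypothesis \eqref{eq:p_and_q_for_E^s_approach}}, which allows $p<1$ or $q<1$ so that one of the two nonlinearities may fail to be superlinear individually. In the symmetric-looking case $p,q>1$ of \cite{HulshofvanderVorst} the estimates are routine, but in the general case one must argue in terms of the combined quantity $pq>1$ and choose the direction $e$ and the box $Q$ so that the negative contribution of $-\int_\Omega H$ dominates along $\partial Q$ even when, say, the $v$-component only contributes a sublinear term --- this is precisely the point highlighted in the remark before the theorem as the improvement of \cite{deFigueiredoFelmer} over \cite{HulshofvanderVorst}, and where the proof must be most careful. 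The Palais--Smale verification, by contrast, has already been done in Lemma \ref{I_s_satisfies_PS}, and the regularity bootstrap is standard.
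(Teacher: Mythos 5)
Your overall strategy (linking for the strongly indefinite functional $\I_s$ in the Hilbert space $E_s$, Palais--Smale from Lemma \ref{I_s_satisfies_PS}, then a regularity bootstrap) is the paper's, and you correctly sense that the geometry is the delicate point. But the specific geometry you propose does not work under the full hypothesis \eqref{eq:p_and_q_for_E^s_approach}, and the device that fixes it is missing from your argument. You claim that $\I_s\ge\alpha>0$ on a small sphere $\partial B_\rho\cap E_s^+$, attributing this to $pq>1$. On $E_s^+$ one has, for $\|u\|_{E^s}=\rho$,
\[
\I_s(u,A^{-t}A^su)\;=\;\rho^2-\frac{1}{p+1}\|u\|_{p+1}^{p+1}-\frac{1}{q+1}\|A^{-t}A^su\|_{q+1}^{q+1},
\]
and the last term is genuinely of order $\rho^{q+1}$ for suitable $u$ on the sphere. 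If $q\le 1$, which \eqref{eq:p_and_q_for_E^s_approach} allows, then $q+1\le 2$ and this term dominates $\rho^{2}$ as $\rho\to 0$, so $\I_s$ is \emph{negative} somewhere on every small sphere in $E_s^+$: the local positivity you assert simply fails. The condition $pq>1$ does not rescue the standard sphere; it only becomes usable after the two components are rescaled differently.

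That anisotropic rescaling is exactly what the paper (following \cite{deFigueiredoFelmer}) does and what you would need to supply: replace $\partial B_\rho\cap E_s^+$ by the weighted set $S_\rho=\{(\rho^q u,\rho^p A^{-t}A^su):\|u\|_{E^s}=\rho\}$, on which $\I_s\ge\rho^{p+q+2}(1-C\rho^{pq-1})>0$ for small $\rho$ using only $pq>1$; and correspondingly replace your box by $Q_{\sigma,M}=\{(\sigma^q(tw+\phi),\sigma^pA^{-t}A^s(tw-\phi)):0\le t\le\sigma,\ \|\phi\|_{E^s}\le M\}$. One must then check that these deformed sets still link, which is why the paper invokes Felmer's version \cite[Theorem 3.1]{Felmer} of the linking theorem rather than applying \cite[Theorem 0.1]{BenciRabinowitz} directly to your $S$ and $Q$ --- the latter route is precisely the one of \cite{HulshofvanderVorst} and works only under the stronger assumption $p,q>1$. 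The remaining ingredients of your write-up, the Palais--Smale verification and the bootstrap to classical regularity, do match the paper.
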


\begin{proof}[Sketch of the proof]
We follow \cite{deFigueiredoFelmer}, where more general nonlinearities are considered.

\medbreak

\noindent{\it Step 1.} Definition of the set $S$. Given $\rho>0$, by using the embeddings \eqref{eq:W^s_0_embeddings}, we have that, for $\|u\|_{E^s}=\rho$,
\[
\I_s(\rho^qu,\rho^p A^{-t}A^su)\geq \rho^{p+q+2}-C\rho^{(p+1)(q+1)}=\rho^{p+q+2}(1-C\rho^{pq-1})
\]
for some $C>0$ independent of $\rho$. Define the set
\[
S=S_\rho=\{(\rho^qu,\rho^pA^{-t}A^su):\ \|u\|_{E^s}=\rho\}.
\]
Then there exists a constant $\alpha>0$ such that $\I_s|_{S}\geq \alpha>0$ whenever $\rho>0$ is taken sufficiently small.

\medbreak

\noindent{\it Step 2.} Definition of the set $Q$. Let $w$ be any eigenfunction of $(-\Delta, H^1_0(\Omega))$. Given constants $\sigma,M>0$, define the set
\[
Q=Q_{\sigma,M}=\{(\sigma^q(tw+\phi),\sigma^pA^{-t}A^s(tw-\phi)): \ 0\leq t\leq \sigma,\ 0\leq \|\phi\|_{E^s}\leq M\}.
\]
Observing that
\begin{multline*}
\I_s(\sigma^q(tw+\phi),\sigma^pA^{-t}A^s(tw-\phi))=\sigma^{p+q}t^2 \|w\|_{E^s}^2-\sigma^{p+q}\|\phi\|_{E^s}^2\\
-\frac{\sigma^{q(p+1)}}{p+1}\int_\Omega |tw+\phi|^{p+1}\, dx-\frac{\sigma^{p(q+1)}}{q+1}\int_\Omega |A^{-t}A^s(tw-\phi)|^{q+1}\, dx,
\end{multline*}
and $pq>1$, it can be proved that $\I_s|_{\partial Q}\leq 0$ for sufficiently large  $\sigma,M>0$ (cf. \cite[Section 3]{deFigueiredoFelmer}).

\medbreak

\noindent{\it Step 3.} Conclusion. It can be proved that $Q$ and $S$ \emph{link}\footnote{In the sense of equation (3.7) in \cite{Felmer}.}, and one can apply the linking theorem of Benci and Rabinowitz \cite[Theorem 0.1]{BenciRabinowitz} in a version due to Felmer \cite[Theorem 3.1]{Felmer} (see also \cite{HulshofvanderVorst} where, under the additional assumption that $p,q>1$, \cite[Theorem 0.1]{BenciRabinowitz} is applied directly with $S:=\partial B_\rho(0)\cap E^+$, $Q:=\R(w,A^{-t}A^sw)\oplus E^-$).
\end{proof}

Once we have the existence of at least one solution, the existence of a solution with least energy follows from a compactness argument.

\begin{corollary}\label{coro:_least_energy_achieved}
Take $(p,q)$ satisfying \eqref{eq:p_and_q_for_E^s_approach}. Then the ground state level
\[
c(\Omega)=\inf\{\I_s(u,v):\ (u,v)\in E_s,\ (u,v)\neq 0,\ \I_s'(u,v)=0\}
\]
is achieved and positive.
\end{corollary}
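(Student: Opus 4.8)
The plan is to derive both assertions from the Palais--Smale condition of Lemma \ref{I_s_satisfies_PS} together with the energy identity valid on critical points. First I would note that, by Theorem \ref{thm:Existence_with_E^s}, the set of nontrivial critical points of $\I_s$ is nonempty, so $c(\Omega)<\infty$, and I would pick a minimizing sequence $(u_n,v_n)$ of critical points, that is $\I_s'(u_n,v_n)=0$ and $\I_s(u_n,v_n)\to c(\Omega)$. This is, in particular, a Palais--Smale sequence for $\I_s$ at level $c(\Omega)$, so by Lemma \ref{I_s_satisfies_PS} we may assume $(u_n,v_n)\to (u,v)$ strongly in $E_s$, up to a subsequence. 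Since $\I_s\in C^1(E_s,\R)$, passing to the limit gives $\I_s(u,v)=c(\Omega)$ and $\I_s'(u,v)=0$; it then remains only to prove $c(\Omega)>0$ and $(u,v)\neq(0,0)$, which I obtain from uniform a priori bounds on nontrivial critical points.

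The core of the argument is such a uniform lower bound. Let $(u,v)\neq(0,0)$ be any critical point of $\I_s$ (recall the strong coupling: $u\equiv 0$ if and only if $v\equiv 0$). Testing $\I_s'(u,v)$ against $(A^{-s}A^tv,0)$ and against $(0,A^{-t}A^su)$ yields
\[
\|v\|_{E^t}^2=\int_\Omega |u|^{p-1}u\,A^{-s}A^tv\,dx,\qquad \|u\|_{E^s}^2=\int_\Omega |v|^{q-1}v\,A^{-t}A^su\,dx.
\]
Using H\"older's inequality, the isometry $\|A^{-s}A^tv\|_{E^s}=\|v\|_{E^t}$ (resp. $\|A^{-t}A^su\|_{E^t}=\|u\|_{E^s}$), and the embeddings $E^s(\Omega)\hookrightarrow L^{p+1}(\Omega)$, $E^t(\Omega)\hookrightarrow L^{q+1}(\Omega)$, one gets $\|v\|_{E^t}\le C\|u\|_{E^s}^{p}$ and $\|u\|_{E^s}\le C\|v\|_{E^t}^{q}$, hence $\|u\|_{E^s}\le C'\|u\|_{E^s}^{pq}$. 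Since $pq>1$ by \eqref{eq:p_and_q_for_E^s_approach} and $u\not\equiv 0$, this forces $\|u\|_{E^s}\ge \delta$ for a constant $\delta>0$ independent of the solution; feeding this back into the same estimates gives $\|u\|_{p+1}\ge \delta_1>0$ uniformly as well.

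Next I would use the relations $\int_\Omega A^suA^tv\,dx=\int_\Omega |u|^{p+1}\,dx=\int_\Omega |v|^{q+1}\,dx$, obtained by testing $\I_s'(u,v)$ against $(u,0)$ and $(0,v)$. They give, for every nontrivial critical point,
\[
\I_s(u,v)=\Big(1-\frac{1}{p+1}-\frac{1}{q+1}\Big)\int_\Omega |u|^{p+1}\,dx=\frac{pq-1}{(p+1)(q+1)}\int_\Omega |u|^{p+1}\,dx\ \ge\ \frac{pq-1}{(p+1)(q+1)}\,\delta_1^{\,p+1}\ >\ 0 ,
\]
so, taking the infimum over nontrivial critical points, $c(\Omega)\ge \tfrac{pq-1}{(p+1)(q+1)}\delta_1^{\,p+1}>0$. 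Finally, since $\I_s(0,0)=0<c(\Omega)$ and $\I_s$ is continuous, the strong limit $(u,v)$ found in the first paragraph cannot be $(0,0)$ (alternatively, $\|u\|_{E^s}\ge\delta$ passes to the limit); hence $(u,v)$ is a nontrivial critical point with $\I_s(u,v)=c(\Omega)$, i.e. the ground state level is achieved and positive.

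The main obstacle is exactly the uniform lower bound of the second paragraph, which is what prevents the minimizing sequence of critical points from collapsing to $(0,0)$ (equivalently, what makes $c(\Omega)$ strictly positive rather than merely nonnegative). The subtlety is that \eqref{eq:p_and_q_for_E^s_approach} permits $p<1$ or $q<1$, so one cannot argue with $\I_s'(u,v)(u,v)=0$ and a single superlinear embedding; the fix is to exploit the two equations \emph{separately} and then combine the resulting estimates through the genuinely system-theoretic condition $pq>1$.
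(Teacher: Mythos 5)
Your proof is correct and follows essentially the same strategy as the paper: existence of a critical point from Theorem \ref{thm:Existence_with_E^s}, a minimizing sequence of critical points made precompact by the Palais--Smale condition of Lemma \ref{I_s_satisfies_PS}, a uniform a priori lower bound on nontrivial critical points obtained by testing $\I_s'$ against conjugate elements like $(0,A^{-t}A^su)$, and the energy identity $\I_s(u,v)=\tfrac{pq-1}{(p+1)(q+1)}\int_\Omega|u|^{p+1}\,dx$. The only (harmless) difference is that you derive the lower bound by combining the two equations symmetrically through $pq>1$, whereas the paper first uses $\int_\Omega|u|^{p+1}=\int_\Omega|v|^{q+1}$ to reduce everything to a single inequality in $\|u\|_{E^s}$ after assuming without loss of generality $p\geq q$.
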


\begin{proof}
First, observe that arguing as in the previous subsection, if $(u,v)\neq (0,0)$ and $\I_s'(u,v)=0$, we obtain
\begin{equation}\label{eq:groundstate_c>0}
\int_\Omega |u|^{p+1}\, dx=\int_\Omega |v|^{q+1}\, dx \ \ \text{ and } \ \ \I_s(u,v)=\frac{pq-1}{(p+1)(q+1)}\int_\Omega|u|^{p+1}\, dx>0.
\end{equation}
Hence we infer that $c(\Omega)\geq 0$. Suppose without loss of generality that $p\geq q$, so that $p>1$ does hold. From the identity $\I_s'(u,v)(0,A^{-t}A^su)=0$, we infer that
\begin{align*}
\|u\|_{E^s}^2&=\int_\Omega |v|^{q-1}vA^{-t}A^s u\, dx \leq \left(\int_\Omega |v|^{q+1}\, dx\right)^\frac{q}{q+1}\|A^{-t}A^s u\|_{q+1}\\
						&\leq C\|u\|_{p+1}^\frac{q(p+1)}{q+1}\|A^{-t}A^su\|_{q+1}\leq C \|u\|_{E^s}^{\frac{q(p+1)}{q+1}+1}
\end{align*}
for some $C>0$. In particular, there exists $\kappa>0$ such that for every nontrivial solution $(u,v)$, 
\begin{equation}\label{eq:bound_from_bellow_inE^s}
\|u\|_{E^s}^\frac{pq-1}{q+1}\geq \kappa 
\end{equation}
Now take a minimizing sequence $(u_n,v_n)$ at the level $c(\Omega)$, that is
\[
\I_s(u_n,v_n)\to c(\Omega),\qquad I_s'(u_n,v_n)=0.
\]
Since $\I_s$ satisfies the Palais-Smale condition (see Lemma \ref{I_s_satisfies_PS}), we have (up to a subsequence) $(u_n,v_n)\to (u,v)$ in $E^s(\Omega)\times E^t(\Omega)$. Moreover, \eqref{eq:bound_from_bellow_inE^s} implies that $(u,v)\neq (0,0)$, and it is a critical point of $\I_s$, at the critical level $c(\Omega)$. From \eqref{eq:groundstate_c>0}, we have that $c(\Omega)>0$.
\end{proof}

We end this section by observing that, at the price of dealing with modified nonlinearities, we could also have worked with the Sobolev space $H^1_0(\Omega)\times H^1_0(\Omega)$. In fact, introducing the isometric isomorphism 
$$B_s: H^1_0(\Omega)\to E^{s}(\Omega) : u\mapsto B_s(u) =A^{-s}\circ A^1(u),$$ 
the functional $\widetilde \I_s: H^1_0(\Omega)\times H^1_0(\Omega)\to \IR$ defined by 
\[
\widetilde \I_s(u,v)=\int_{B}\langle \nabla u,\nabla v\rangle dx-\int_{B}H(B_su,B_{2-s}v) dx
\]
is a well defined $C^1$-functional for $(p,q)$ satisfying \eqref{eq:p_and_q_for_E^s_approach}. Then $(u,v)\in H^1_0(\Omega)\times H^1_0(\Omega)$ is a critical point of $\widetilde \I_s$ if and only if
\begin{multline*}
\widetilde \I_{s}'(u,v)(\varphi,\psi)=\int_{\Omega} \left(\langle \nabla u, \nabla \psi\rangle + \langle \nabla v, \nabla \varphi\rangle \right)\, dx \\ - \int_{\Omega} \left( H_{u}(B_su,B_{2-s}v)B_s\varphi -H_{v}(B_su,B_{2-s}v)B_{2-s}\psi\right)\, dx  = 0,
\end{multline*}%
for every $(\varphi,\psi) \in H^{1}_{0}(\Omega)\times H^{1}_{0}(\Omega)$, or, equivalently, if and only if
$(B_su,B_{2-s}v)\in E^{s}(\Omega)\times E^{2-s}(\Omega)$ is a critical point of $\I_{s}$. This slightly different strategy was introduced and used in \cite[Section 5]{BonheureRamos} as it is convient to work in a framework where $u$ and $v$ belong to the same functional space, especially when one wants to use the reduction approach of Section \ref{sec:Reduced Functional}.
\begin{remark}
One downsize of the approach presented in this subsection is that it does not allow to treat the cases where $p(N-4)\geq N+4$ or $q(N-4)\geq N+4$, as we cannot find  $0<s<2$ so that the functional $\I_s$ is well defined on $E^s(\Omega)\times E^t(\Omega)$, with $t = 2-s$. Moreover, by using directly this approach, it is not clear how to show that ground state solutions are signed nor if they enjoy symmetry properties. These questions will be considered in the following two sections via other methods.
\end{remark}

\section{The dual method}\label{sec:DualMethod}
In this section we describe some applications of the so called {\it dual variational principle} of Clarke and Ekeland \cite{clarke, ClarkeEkeland} to prove the existence and to study qualitative properties of ground state solutions to the system
\eqref{eq:main_system}. 
%
We still deal with pure power nonlinearities as in the model Hamiltonian \eqref{Hamiltonian}, assuming that the couple $(p,q)$ satisfies
\begin{equation}\label{eq:(p,q)_for_Ederson_part}
p, q> 0, \qquad 1 > \frac{1}{p+1} + \frac{1}{q+1} > \frac{N-2}{N}. \tag{H3}
\end{equation}

\begin{center}
\includegraphics[scale=.26]{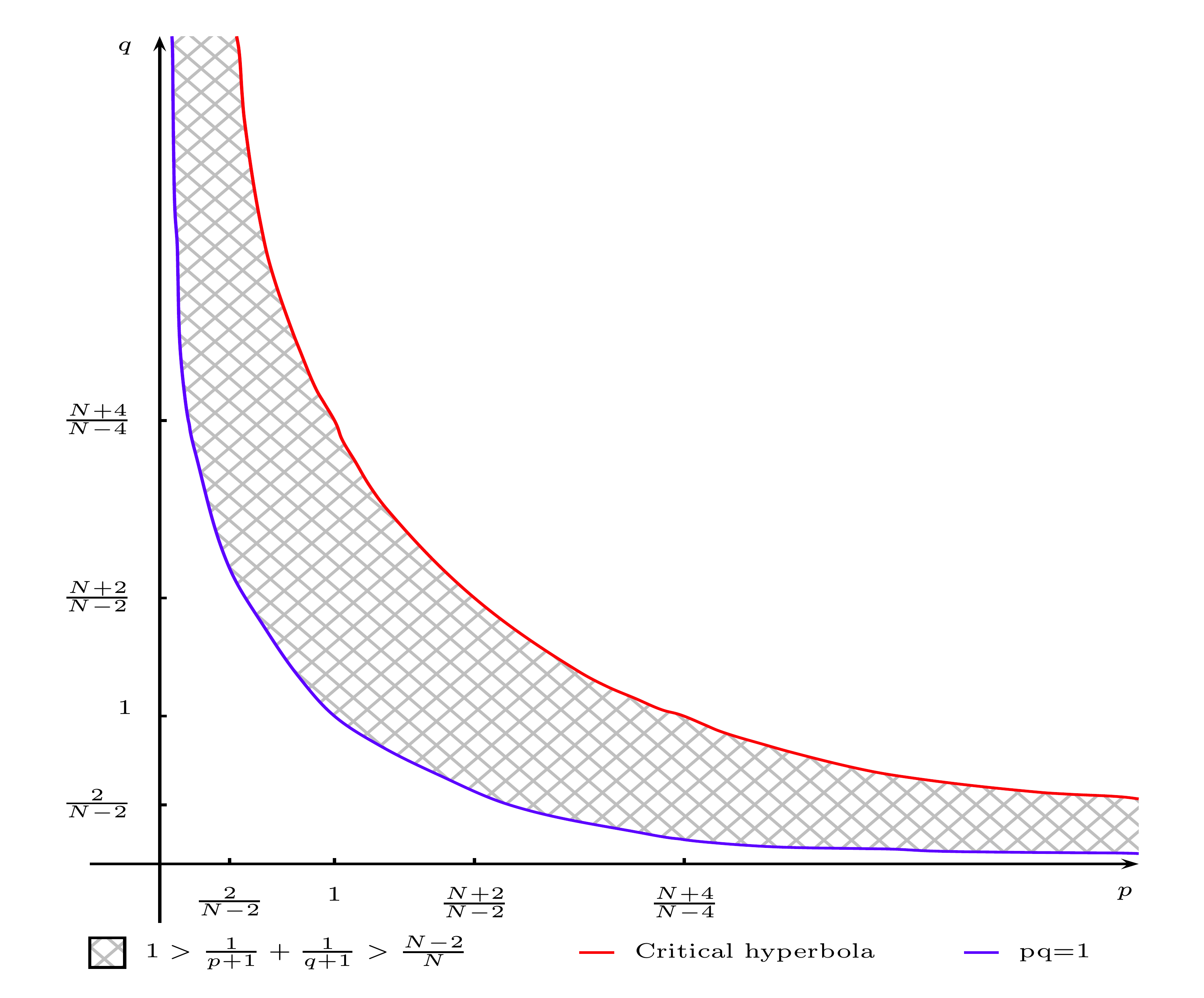}
\end{center}
\vspace{-.3cm}

As far as we know, the application of the dual variational principle to strongly coupled elliptic systems such as \eqref{eq:main_system} goes back to Clément and van der Vorst \cite{ClementvanderVorst}. One of the features in \cite{ClementvanderVorst} is that the authors assumed the mere hypothesis \eqref{eq:(p,q)_for_Ederson_part}, which includes cases with $p\leq1$ or $q\leq 1$. We mention that Alves and Soares \cite{AlvesSoares} also applied the dual variational method to treat singularly perturbed systems\footnote{Compare \eqref{system claudianor} with \cite[eq. (1.3)]{AlvesSoares}. In fact, in order to apply the dual variational method to \eqref{system claudianor}, the two potentials on the left hand sides of \eqref{system claudianor} must be equal as follows from \cite[line 5 p. 114]{AlvesSoares}. 
} of the form
\begin{equation}\label{system claudianor}
\left\{
\begin{array}{l}
- \eps^2 \Delta u + V(x) u = |v|^{q-1}v \quad \text{in} \quad \R^N,\\
- \eps^2 \Delta v + V(x) v = |u|^{p-1}u \quad \text{in} \quad \R^N,
\end{array}
\right.
\end{equation}
assuming \eqref{eq:(p,q)_for_Ederson_part} and the extra assumptions $p> 1$ and $q>1$; cf. \cite{SirakovSoares} for a more general system with a similar superlinear assumption. In this section we follow some of the ideas in \cite{ClementvanderVorst}, but we present a simpler approach. Our main concern is to show how the dual variational method transforms the strongly indefinite structure of the functional associated to problem \eqref{eq:main_system} into a problem whose functional displays a mountain pass geometry. We simplify the arguments of \cite{ClementvanderVorst}, especially with respect to compactness. In \cite{ClementvanderVorst}, the authors use a variant of the mountain pass theorem of Ambrosetti and Rabinowitz \cite{AmbrosettiRabinowitz} due to Bartolo, Benci and Fortunato \cite{BartoloBenciFortunato,BenciFortunato}, which holds for functionals that satisfy the so called Cerami condition, see e.g. \cite{Cerami}. Instead, we prove that the functional associated to this approach indeed satisfies the Palais-Smale condition and so we apply the standard version of the mountain pass theorem \cite{AmbrosettiRabinowitz}. Finally, we stress that more general systems can be considered in this framework and we refer to \cite[Theorem 1.1]{BenciFortunato} and \cite[Theorem 3.2]{ClementvanderVorst}. 

We now illustrate how problem \eqref{eq:main_system} can be treated in a dual formulation. For instance, consider the slightly more general problem
\begin{equation}\label{eq:more general system}
\left\{
\begin{array}{l}
 - \Delta u + c(x) u = |v|^{q-1}v \quad \text{in} \quad \Omega,\\
 -\Delta v + c(x) v = |u|^{p-1}u \quad \text{in} \quad \Omega,\\
 u, v = 0 \quad \text{on} \quad \partial \Omega,
 \end{array}
 \right.
\end{equation}
where the function $c$ is such that $L := -\Delta + c(x)I : W^{2,r}(\Omega)\cap W^{1,r}_0(\Omega) \rt L^r(\Omega)$ is a topological isomorphism for every $1 < r < \infty$. For example, this holds when $c\in C^1(\overline{\Omega})$ is nonnegative and of course one could assume more general conditions. 

We illustrate this method by using it in order to prove that:
\begin{enumerate}[(i)]
\item there exists a ground state solution to \eqref{eq:more general system};
\item any ground state solution $(u,v)$ of \eqref{eq:more general system} is signed, that is, either $(u^+,v^+)=(0,0)$ or  $(u^-,v^-)=(0,0)$ (or, equivalently, $uv>0$ in $\Omega$);
\item any ground state solution of \eqref{eq:more general system} is radially symmetry in case $\Omega$ is a ball and $c\equiv0$. 
\end{enumerate}

For that, we define $\phi_p, \phi_q: \R \rt \R$ by
\[
\phi_p(t) = |t|^{p-1}t, \quad \phi_q(t) = |t|^{q-1}t, \quad t \in \R.
\]
The dual method consists in taking the inverse of the operator $L$, rewriting the system \eqref{eq:more general system} as
\[
u=L^{-1}(|v|^{q-1}v)=L^{-1}(\phi_q(v)),\qquad v=L^{-1}(|u|^{p-1}u)=L^{-1}(\phi_p(u))
\]
and introducing the new variables $f=|u|^{p-1}u=\phi_p(u)=Lv$, $g=|v|^{q-1}v=\phi_q(v)=Lu$, leading to the system
\begin{equation}\label{euler-lagrange}
L^{-1}f = |g|^{\frac{1}{q}-1}g=\phi_q^{-1}(g), \qquad L^{-1}g = |f|^{\frac{1}{p}-1}f=\phi_p^{-1}(f).
\end{equation}

Then, fixing the notation $K=L^{-1}$, we define the operator $T:X\to X^*$, for appropriate $X$, through the identity
\begin{equation}\label{operatorT}
\langle T(f_1,g_1), (f_2,g_2)\rangle = \int_\Omega \left(f_2 K g_1 + g_2  K f_1\right)dx.
\end{equation}
Since $\langle T(f_1,g_1), (f_2,g_2)\rangle= \langle T(f_2,g_2), (f_1,g_1)\rangle$, it follows that the equations in  \eqref{euler-lagrange} appear as the Euler-Lagrange equations associated to the action functional
\begin{equation}\label{phi}
\Phi(f,g) = \int_\Omega \left(\frac{p}{p+1}|f|^{\frac{p+1}{p}} + \frac{q}{q+1}|g|^{\frac{q+1}{q}}\right) dx -\dfrac{1}{2}\langle T(f,g), (f,g)\rangle.
\end{equation}
We will show that $(f,g)$ is a critical point of $\Phi$ defined in an adequate space if, and only if, $(u,v) = (Kg, Kf) = ( \phi_ p^{-1}(f), \phi_ q^{-1}(g))$ is a classical solution of \eqref{eq:more general system}, and that the least energy level associated to \eqref{eq:more general system} corresponds to the mountain pass critical level of $\Phi$. Thus, the dual method allows one to avoid the strongly indefinite character that is present in the direct approaches.

\subsection{Variational framework}

Throughout this section we assume that \eqref{eq:(p,q)_for_Ederson_part} holds and we set 
\[
X:=L^{\frac{p+1}{p}}(\Omega)\times L^{\frac{q+1}{q}}(\Omega).
\]
Assuming that $c \in C^1(\overline{\Omega})$ is nonnegative, we consider the linear operator
\[
L := -\Delta + c(x)I,
\] 
and recall that we denote its inverse by $K$. We first infer from \eqref{eq:(p,q)_for_Ederson_part}, the $W^{2,r}$-regularity for second order elliptic operators as in \cite[Theorem 9.15]{GilbargTrudinger}, and the classical Sobolev embeddings, that the operator $T: X \rt X^*$ defined through \eqref{operatorT} is a linear compact operator.
Moreover $\Phi \in C^1(X, \R)$ and $\Phi ' = \Psi - T$, where
\[
\langle \Psi(f_1,g_1), (f_2, g_2) \rangle = \int_\Omega \left(|f_1|^{\frac{1}{p}-1}f_1f_2 + |g_1|^{\frac{1}{q}-1}g_1g_2\right)dx,
\]
for every  $(f_1, g_1), (f_2, g_2) \in X$. Then, from the classical Riesz representation theorem for Lebesgue spaces, we see that $\Psi: X \rt X^*$ is a homeomorphism. 

Throughout this section we will constantly use the identity
\[
\int_\Omega f K g dx = \int_\Omega g K f dx,
\]
which is the consequence of a simple integration by parts.

\begin{proposition}\label{propositionregularitydual}
Assume that \eqref{eq:(p,q)_for_Ederson_part} holds. Then $(f,g)\in X$ is a critical point of $\Phi$ if, and only if, $(u,v) = (Kg, Kf) = ( \phi_ p^{-1}(f), \phi_ q^{-1}(g))$ is a classical $C^2(\overline{\Omega})$-solution of \eqref{eq:more general system}.
\end{proposition}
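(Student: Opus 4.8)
The plan is to prove the equivalence in two steps: first to show that the Euler--Lagrange equation $\Phi'(f,g)=0$ is equivalent to the coupled pointwise system \eqref{euler-lagrange}, and then to bootstrap the regularity of the corresponding strong solution up to $C^2(\overline{\Omega})$. Recalling from the discussion preceding the proposition that $\Phi\in C^1(X,\R)$ with $\Phi'=\Psi-T$, that $\Psi$ is a homeomorphism of $X$ onto $X^*$, and that $T(X)\subset X^*=L^{p+1}(\Omega)\times L^{q+1}(\Omega)$, I would compute, for $(f,g)\in X$ and any $(f_2,g_2)\in X$,
\begin{multline*}
\Phi'(f,g)(f_2,g_2)=\langle\Psi(f,g),(f_2,g_2)\rangle-\langle T(f,g),(f_2,g_2)\rangle\\
=\int_\Omega\bigl(|f|^{\frac{1}{p}-1}f-Kg\bigr)f_2\,dx+\int_\Omega\bigl(|g|^{\frac{1}{q}-1}g-Kf\bigr)g_2\,dx.
\end{multline*}
Since $|f|^{1/p-1}f-Kg\in L^{p+1}(\Omega)$ and $|g|^{1/q-1}g-Kf\in L^{q+1}(\Omega)$, the fundamental lemma of the calculus of variations then shows that $(f,g)$ is a critical point of $\Phi$ if and only if $\phi_p^{-1}(f)=|f|^{1/p-1}f=Kg$ and $\phi_q^{-1}(g)=|g|^{1/q-1}g=Kf$ a.e.\ in $\Omega$, that is, if and only if \eqref{euler-lagrange} holds. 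Writing $u:=Kg$ and $v:=Kf$, these identities read $f=\phi_p(u)=|u|^{p-1}u$ and $g=\phi_q(v)=|v|^{q-1}v$, and applying $L$ to $u=Kg$, $v=Kf$ yields $Lu=|v|^{q-1}v$ and $Lv=|u|^{p-1}u$; thus $(u,v)$ is a strong solution of \eqref{eq:more general system}, with $u\in W^{2,(q+1)/q}(\Omega)\cap W^{1,(q+1)/q}_0(\Omega)$ and $v\in W^{2,(p+1)/p}(\Omega)\cap W^{1,(p+1)/p}_0(\Omega)$ by the $W^{2,r}$-theory for $L$ (\cite[Theorem 9.15]{GilbargTrudinger}).

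Next I would upgrade $(u,v)$ to a classical solution by a bootstrap: if $u\in L^r(\Omega)$, then $|u|^{p-1}u\in L^{r/p}(\Omega)$, so $v=K(|u|^{p-1}u)\in W^{2,r/p}(\Omega)$, which improves the integrability of $v$ by Sobolev embedding; plugging this into the equation for $u$ and using \eqref{eq:(p,q)_for_Ederson_part}, the exponents strictly increase at each step, so after finitely many iterations $u,v\in L^\infty(\Omega)$. Then $|v|^{q-1}v$ and $|u|^{p-1}u$ are H\"older continuous (once $u,v\in C^{1,\alpha}(\overline{\Omega})$ via $W^{2,r}$-estimates with $r$ large), and the Schauder estimates for $L=-\Delta+c(x)I$ with $c\in C^1(\overline{\Omega})$ give $u,v\in C^{2,\alpha}(\overline{\Omega})$. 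This is the standard argument of Subsection \ref{sec:p_diferente_q}, and I would stress that it goes through unchanged when $p<1$ or $q<1$, since $t\mapsto|t|^{p-1}t$ and $t\mapsto|t|^{q-1}t$ are continuous and map bounded (respectively H\"older) functions to bounded (respectively H\"older) functions.

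For the converse, I would take a classical $C^2(\overline{\Omega})$-solution $(u,v)$ of \eqref{eq:more general system} and set $f:=\phi_p(u)=|u|^{p-1}u$, $g:=\phi_q(v)=|v|^{q-1}v$. As $u,v$ are bounded, $f,g\in L^\infty(\Omega)\subset X$. From the equations, $Lv=|u|^{p-1}u=f$ and $Lu=|v|^{q-1}v=g$, hence $v=Kf$ and $u=Kg$; together with $\phi_p^{-1}(f)=u$ and $\phi_q^{-1}(g)=v$, this gives $\phi_p^{-1}(f)=Kg$, $\phi_q^{-1}(g)=Kf$, i.e.\ $\Psi(f,g)=T(f,g)$ in $X^*$, so $\Phi'(f,g)=0$. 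I expect the only genuinely delicate step to be the regularity bootstrap --- verifying that the iteration of Lebesgue exponents really terminates in $L^\infty$ under the subcritical condition \eqref{eq:(p,q)_for_Ederson_part}, and that it is not obstructed in the possibly sublinear regimes $p<1$ or $q<1$; everything else is bookkeeping with the homeomorphism $\Psi$ and the isomorphism $L$.
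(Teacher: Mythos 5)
Your proposal is correct and follows essentially the same route as the paper: derive the pointwise dual identities $\phi_p^{-1}(f)=Kg$, $\phi_q^{-1}(g)=Kf$ from $\Phi'(f,g)=0$, set $(u,v)=(Kg,Kf)$, invoke the $W^{2,r}$-theory for $L$, and bootstrap to $C^{2}(\overline{\Omega})$. The only difference is one of detail: where the paper delegates the bootstrap to \cite[Theorem 1.1]{Ederson2008} and omits the converse, you sketch both explicitly (correctly, including the caveat that for $p<1$ or $q<1$ the nonlinearities are only H\"older, so one lands in $C^{2,\beta}$ with $\beta=p$ or $\beta=q$ as in Proposition \ref{Reg1}).
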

\begin{proof}
Let $(f,g) \in X$ be a critical point of $\Phi$. Then, for every $(\varphi, \psi) \in X$, we have
\begin{equation}\label{criticalpointdual}
0 = \int_{\Omega}\left( |f|^{\frac{1}{p}-1}f\varphi + |g|^{\frac{1}{q}-1}g\psi\right) dx - \int_{\Omega}\left( \varphi Kg + \psi Kf\right)\, dx.
\end{equation}
Define $u = Kg$ and $v = Kf$. From the standard $W^{2,r}$-regularity for the second order elliptic operator $L$, see for instance \cite[Theorem 9.15 and Lemma 9.17]{GilbargTrudinger}, it follows that $u \in W^{2, \frac{q+1}{q}}(\Omega) \cap W^{1,\frac{q+1}{q}}_0(\Omega)$, $v \in W^{2, \frac{p+1}{p}}(\Omega) \cap W^{1,\frac{p+1}{p}}_0(\Omega)$ and therefore \eqref{criticalpointdual} reads
\[
0 = \int_\Omega \left(|Lv|^{\frac{1}{p}-1}Lv \, \varphi + |Lu|^{\frac{1}{q}-1}Lu \, \psi\right) dx - \int_\Omega \left(\varphi u + \psi v \right)dx \quad \forall \, (\varphi, \psi) \in X.
\]
From this identity, we deduce that $(u,v)$ is a strong solution of
\begin{equation}\label{systemregularity}
Lv = |u|^{p-1}u, \quad Lu = |v|^{q-1}v \quad \text{in} \: \Omega, \qquad u,v=0 \quad \text{ on }  \partial\Omega.
\end{equation}
Then, we can proceed as in \cite[Theorem 1.1]{Ederson2008} to conclude that $(u,v)$ is a classical $C^2(\overline{\Omega})$ solution of \eqref{systemregularity}.

The converse implication is even easier and we omit it here.
\end{proof}

Now we recall a result that helps proving that $\Phi$ satisfies the Palais-Smale condition.

\begin{lemma}[{\cite[Lemma 3.1]{Ederson2008}}]\label{lemmaps}
Let $X$ be a Banach space and $\Phi \in C^1(X, \R )$ be such that
\begin{itemize}
\item[(i)] any Palais-Smale sequence of $\Phi$ is bounded;
\item[(ii)] for all $u \in X$, 
\begin{equation}\label{decomposition}
\Phi'(u) = \Psi(u) + S(u),
\end{equation}
\nin where $\Psi:X \rt X^*$ is a homeomorphism and $S: X \rt X^*$ is a continuous map such that  $(S(u_n))$ has a converging subsequence for every bounded  sequence $(u_n)$ in $X$.
\end{itemize}
Then $\Phi$ satisfies the Palais-Smale condition.
\end{lemma}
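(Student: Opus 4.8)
The plan is to argue directly from the definition of the Palais--Smale condition. Let $(u_n)_n\subset X$ be a Palais--Smale sequence for $\Phi$, i.e. $(\Phi(u_n))_n$ is bounded in $\R$ and $\Phi'(u_n)\to 0$ in $X^*$; the goal is to extract a subsequence converging strongly in $X$.

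First I would invoke hypothesis (i) to conclude that $(u_n)_n$ is bounded in $X$. Using the decomposition \eqref{decomposition} together with the compactness property of $S$ in hypothesis (ii), boundedness of $(u_n)_n$ yields, along a subsequence (not relabelled), $S(u_n)\to \xi$ for some $\xi\in X^*$. Combining this with $\Phi'(u_n)\to 0$ and rewriting $\Psi(u_n)=\Phi'(u_n)-S(u_n)$, we obtain $\Psi(u_n)\to -\xi$ in $X^*$. Finally, since $\Psi: X\to X^*$ is a homeomorphism, its inverse $\Psi^{-1}$ is continuous, so $u_n=\Psi^{-1}(\Psi(u_n))\to \Psi^{-1}(-\xi)$ strongly in $X$. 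This produces a strongly convergent subsequence and establishes the Palais--Smale condition.

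There is no real obstacle here: the statement is essentially a bookkeeping assembly of three ingredients — boundedness of Palais--Smale sequences, a compact perturbation $S$, and an invertible, homeomorphic principal part $\Psi$. The only point meriting a word of care is that the convergences $S(u_n)\to\xi$ and $\Phi'(u_n)\to 0$ are taken along a common subsequence, which is immediate. In the intended application to the functional $\Phi$ in \eqref{phi} (with $\Psi$ as above, $S=-T$, and $T$ the compact operator from \eqref{operatorT}), the genuinely substantive task is to verify the abstract hypotheses, in particular the boundedness of Palais--Smale sequences; that verification, however, lies outside the scope of this lemma.
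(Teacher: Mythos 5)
Your argument is correct and coincides with the paper's own proof: boundedness from (i), extraction of a subsequence along which $S(u_n)$ converges, and then $u_n=\Psi^{-1}(\Phi'(u_n)-S(u_n))$ converges by continuity of $\Psi^{-1}$. Nothing further is needed.
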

\begin{proof}
 Take a Palais-Smale sequence $(u_n)_{n}$ . By (i), the sequence is bounded and $\Psi(u_n) +
S(u_n) = \Phi'(u_n) \rt 0$ in $X^{*}$. Let $v_n = S(u_n)$. By (ii) there exists a subsequence $(v_{n_k})_{k}\subset(v_n)_{n}$ such that $v_{n_k} \rt v$  in $X^*$ for some $v\in X^{*}$. Therefore
\[
u_{n_k} = \Psi^{-1}(\Phi'(u_{n_k} ) - v_{n_k} ) \rt \Psi^{-1}(-v).
\]
\end{proof}

\begin{remark}
The operators $\Psi$ and $S$ appearing in the decomposition \eqref{decomposition} are not necessarily linear. The condition (ii) related to the operator $S$ is satisfied in the case $X$ is a reflexive Banach space and $S:X \rt X^*$ is a compact linear operator, which is the case in this section. We refer to \cite[Section 3]{Ederson2008} for an example where $\Psi$ and $S$ are both nonlinear operators.
 \end{remark}

\begin{proposition}\label{psconditionphi}
 Assume \eqref{eq:(p,q)_for_Ederson_part}. Then the functional $\Phi: X\to  \R$, defined by \eqref{phi}, satisfies the Palais-Smale condition.
\end{proposition}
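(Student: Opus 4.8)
The plan is to verify the two hypotheses of Lemma \ref{lemmaps} for the functional $\Phi$ defined in \eqref{phi}, with the decomposition $\Phi' = \Psi - T$ already recorded above: here $\Psi$ is the homeomorphism $X \to X^*$ coming from the Riesz representation theorem for Lebesgue spaces, and $S = -T$ is a compact linear operator (noted earlier to be compact thanks to the $W^{2,r}$-regularity of $L$, hypothesis \eqref{eq:(p,q)_for_Ederson_part}, and the Sobolev embeddings). Since a compact linear operator maps bounded sequences to relatively compact sequences, condition (ii) of Lemma \ref{lemmaps} holds automatically. So the entire content of the proof is condition (i): every Palais-Smale sequence of $\Phi$ is bounded in $X$.

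To prove boundedness, let $(f_n, g_n) \subset X$ satisfy $\Phi(f_n,g_n) \to c$ and $\Phi'(f_n,g_n) \to 0$ in $X^*$. The exponents $(p+1)/p$ and $(q+1)/q$ are both strictly bigger than $1$ (indeed $\tfrac1p + \tfrac1q > 1$ by \eqref{eq:(p,q)_for_Ederson_part}, i.e.\ $pq < $ is not what matters—what matters is $\tfrac1p+\tfrac1q>1$, equivalently $(p+1)/p + (q+1)/q$ compared to $2$). The standard trick is to combine $\Phi(f_n,g_n)$ with a suitable multiple of $\langle \Phi'(f_n,g_n),(f_n,g_n)\rangle$ so that the quadratic term $\langle T(f_n,g_n),(f_n,g_n)\rangle$ cancels: using homogeneity, $\langle \Phi'(f,g),(f,g)\rangle = \int_\Omega(|f|^{(p+1)/p} + |g|^{(q+1)/q})\,dx - \langle T(f,g),(f,g)\rangle$, so
\[
\Phi(f_n,g_n) - \tfrac12\langle \Phi'(f_n,g_n),(f_n,g_n)\rangle = \Big(\tfrac{p}{p+1}-\tfrac12\Big)\|f_n\|_{(p+1)/p}^{(p+1)/p} + \Big(\tfrac{q}{q+1}-\tfrac12\Big)\|g_n\|_{(q+1)/q}^{(q+1)/q}.
\]
The coefficients $p/(p+1) - 1/2$ and $q/(q+1)-1/2$ are both positive precisely because $p,q>1$—but $p,q$ need not both exceed $1$ under \eqref{eq:(p,q)_for_Ederson_part}. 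This is exactly the subtlety: when, say, $p < 1$, one coefficient is negative, and the naive combination fails. The fix is to use a combination with \emph{unequal} weights, pairing $\langle\Phi'(f_n,g_n),(\alpha f_n,\beta g_n)\rangle$ with $\Phi(f_n,g_n)$, choosing $\alpha,\beta$ so that the quadratic part still cancels (which forces $\alpha+\beta = $ const, essentially $\alpha = (q+1)/(p+q+2)\cdot 2$ type weights) while both resulting coefficients of $\|f_n\|^{(p+1)/p}$ and $\|g_n\|^{(q+1)/q}$ are positive—this is possible exactly when $pq>1$, i.e.\ under \eqref{eq:(p,q)_for_Ederson_part}; this is the same mechanism already used in the proof of Lemma \ref{I_s_satisfies_PS}. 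This yields $\|f_n\|_{(p+1)/p}^{(p+1)/p} + \|g_n\|_{(q+1)/q}^{(q+1)/q} \le C(1 + \|(f_n,g_n)\|_X)$, hence $\|(f_n,g_n)\|_X^{\theta} \le C(1+\|(f_n,g_n)\|_X)$ for some $\theta>1$ (taking $\theta = \min\{(p+1)/p,(q+1)/q\} > 1$, after controlling the two norms against the $X$-norm), which forces boundedness.

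With condition (i) established, Lemma \ref{lemmaps} applies and gives the Palais-Smale condition for $\Phi$. The main obstacle, as indicated, is purely the boundedness step under the weak hypothesis \eqref{eq:(p,q)_for_Ederson_part}: one must resist the temptation of the symmetric test function $(f_n,g_n)$ and instead use the asymmetric weights dictated by $pq>1$, exactly as in Lemma \ref{I_s_satisfies_PS}; everything else (compactness of $T$, that $\Psi$ is a homeomorphism) is already in place from the variational framework set up above.
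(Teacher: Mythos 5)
Your proposal is correct and follows essentially the same route as the paper: reduce everything to boundedness of Palais--Smale sequences via Lemma \ref{lemmaps} (compactness of $T$ and the homeomorphism $\Psi$ give condition (ii) for free), then obtain boundedness by testing $\Phi'$ against an asymmetrically weighted pair $(\alpha f_n,\beta g_n)$ with $\alpha+\beta=1$ so the quadratic term cancels while both remaining coefficients of $\|f_n\|_{\frac{p+1}{p}}^{\frac{p+1}{p}}$ and $\|g_n\|_{\frac{q+1}{q}}^{\frac{q+1}{q}}$ keep a fixed sign, which is possible precisely because $pq>1$. The paper simply makes this choice of weights explicit, taking $\alpha=1/(p'+1)$, $\beta=1/(q'+1)$ with $p'q'=1$, $p'<p$, $q'<q$; apart from a garbled aside about $1/p+1/q$ (irrelevant to the argument, since $(p+1)/p,(q+1)/q>1$ holds for all $p,q>0$), your mechanism is exactly the paper's.
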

\begin{proof}
Observe that $X$ is a reflexive Banach space, $\Phi \in C^1(X, \R)$ is such that $\Phi' = \Psi - T$ where $\Psi: X \rt X^*$ is a homeomorphism and $T: X \rt X^*$ is a linear compact operator. Lemma \ref{lemmaps} implies that all we need to prove is that any Palais-Smale sequence of $\Phi$ is bounded.

Let $(f_n, g_n)$ be a Palais-Smale sequence of $\Phi$. Then, there exist $C>0$ and a sequence $(\eps_n)_{n}$ of positive numbers such that for every $n\in\N$, 
\begin{equation}\label{boundedcondps}
|\Phi(f_n, g_n )| \leq C \quad \text{and} \quad \|\Phi'(f_n, g_n)\|_{X^*} \leq \eps_n.
\end{equation}

Set 
\[
p' = \dfrac{\sqrt{(q-p)^2 +4} - (q-p)}{2} \quad \text{and} \quad q' = \dfrac{\sqrt{(q-p)^2 +4} + (q-p)}{2}.
\]Then observe that the straight line passing through $(p', q')$ and $(p,q)$ has its slope equal to $1$, $p>p'>0$, $q>q'>0$ and
\[
p'q'= 1, \quad \text{that is}, \quad \frac{1}{p'+1} + \frac{1}{q'+1} =1.
\]
Since for every $(f, g) \in X$ we have
\[
\frac{1}{2} \langle T(f, g), (f, g) \rangle - \left\langle T(f, g), \left( \frac{f}{p'+1}, \frac{g}{q'+1} \right)  \right\rangle = 0, 
\]
it follows from \eqref{boundedcondps} and the identity
\begin{multline*}
 \Phi'(f_n, g_n)\left(\frac{f_n}{p'+1}, \frac{g_n}{q'+1}\right) - \Phi(f_n, g_n) = \\\left( \frac{1}{p'+1} - \frac{1}{p+1}\right) \|f_n\|^{\frac{p+1}{p}} + \left( \frac{1}{q'+1} - \frac{1}{q+1} \right) \|g_n\|^{\frac{q+1}{q}}
\end{multline*}
that $(f_n, g_n)$ is a bounded sequence in $X$. Therefore, by Lemma \ref{lemmaps}, $\Phi$ satisfies the Palais-Smale condition.
\end{proof}

\begin{proposition}\label{mountain pass geometry}
Assume \eqref{eq:(p,q)_for_Ederson_part} holds. Then $\Phi$ has a local minimum at $(0,0)$ and a mountain pass geometry around $(0,0)$.
\end{proposition}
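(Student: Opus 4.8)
The plan is to verify the two ingredients of the mountain pass theorem directly on the functional $\Phi$ defined by \eqref{phi}: a strict local minimum at the origin, and the existence of a path escaping to negative values. Since Proposition \ref{psconditionphi} already guarantees the Palais-Smale condition, these two geometric facts are exactly what remains.

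First I would show that $(0,0)$ is a local minimum. The nonlinear part of $\Phi$ is
\[
\Theta(f,g):=\int_\Omega\left(\frac{p}{p+1}|f|^{\frac{p+1}{p}}+\frac{q}{q+1}|g|^{\frac{q+1}{q}}\right)dx,
\]
which near the origin in $X=L^{\frac{p+1}{p}}(\Omega)\times L^{\frac{q+1}{q}}(\Omega)$ behaves like a sum of norm powers raised to exponents $\frac{p+1}{p}>1$ and $\frac{q+1}{q}>1$; in particular it is bounded below by a positive multiple of $\|f\|_{\frac{p+1}{p}}^{\frac{p+1}{p}}+\|g\|_{\frac{q+1}{q}}^{\frac{q+1}{q}}$. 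The quadratic part $\frac12\langle T(f,g),(f,g)\rangle=\int_\Omega fKg\,dx$, with $T$ compact and linear, satisfies $|\langle T(f,g),(f,g)\rangle|\le C\|(f,g)\|_X^2$ by the continuity of $T$ together with the Sobolev embeddings used to define it. Since both exponents $\frac{p+1}{p}$ and $\frac{q+1}{q}$ are strictly less than $2$ (because $p,q>0$), for $\|(f,g)\|_X$ small enough the superlinear-near-zero terms in $\Theta$ dominate the quadratic term, so $\Phi(f,g)\ge c_0\big(\|f\|_{\frac{p+1}{p}}^{\frac{p+1}{p}}+\|g\|_{\frac{q+1}{q}}^{\frac{q+1}{q}}\big)>0=\Phi(0,0)$ on a punctured ball; more precisely one gets $\inf_{\|(f,g)\|_X=\rho}\Phi(f,g)\ge\alpha>0$ for all sufficiently small $\rho>0$. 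This is the cleanest half.

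Next I would produce a point $(f_1,g_1)$ with $\Phi(f_1,g_1)<0$, which combined with the previous estimate yields the mountain pass geometry. Here one exploits that the quadratic form $\int_\Omega fKg\,dx$ is \emph{sign-indefinite}: taking $f=\phi_p(u)$, $g=\phi_q(v)$ associated to a pair $(u,v)$ with $u,v>0$ (so $Kg=v$, $Kf=u$ by construction), one has $\int_\Omega fKg\,dx=\int_\Omega \phi_p(u)\,v\,dx>0$. Then evaluate $\Phi$ along the ray $(tf,tg)$, $t>0$: the nonlinear part grows like $t^{\frac{p+1}{p}}+t^{\frac{q+1}{q}}$ while the quadratic part grows like $t^2$, and since $\frac{p+1}{p}<2$ and $\frac{q+1}{q}<2$ the quadratic (negative, with the minus sign in \eqref{phi}) term wins as $t\to+\infty$, so $\Phi(tf,tg)\to-\infty$. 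Choosing $t$ large gives the required point outside the ball of radius $\rho$, and $t\mapsto(tf,tg)$ is a continuous path from $(0,0)$ crossing the sphere where $\Phi\ge\alpha$.

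The main obstacle, such as it is, is purely bookkeeping in the first step: one must justify the lower bound on $\Theta$ near zero carefully, since the two components live in different Lebesgue spaces and the map $f\mapsto\|f\|_{\frac{p+1}{p}}^{\frac{p+1}{p}}$ is only comparable to a norm power, not a norm; and one must pin down that the quadratic form's operator norm on $X$ is genuinely controlled by the Sobolev/elliptic-regularity estimates already invoked for $T$. Neither is deep, but the interplay of the mismatched exponents $\frac{p+1}{p}$ and $\frac{q+1}{q}$ is where care is needed. Everything else follows from elementary convexity/scaling arguments, and the existence of a ground state then comes from the mountain pass theorem together with Proposition \ref{psconditionphi}.
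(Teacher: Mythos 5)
Your overall strategy (strict positivity on a small sphere plus a path to negative values, then the mountain pass theorem with Proposition \ref{psconditionphi}) is the right one, but both halves of your argument rest on the claim that $\frac{p+1}{p}<2$ and $\frac{q+1}{q}<2$ ``because $p,q>0$'', and that claim is false: $\frac{p+1}{p}=1+\frac{1}{p}<2$ holds if and only if $p>1$. Hypothesis \eqref{eq:(p,q)_for_Ederson_part} only requires $p,q>0$ with $pq>1$, so one of the exponents may be $\leq 1$ (e.g.\ $p=1/2$, $q=3$), in which case $\frac{p+1}{p}\geq 2$. Then (i) near the origin the term $\|f\|_{\frac{p+1}{p}}^{\frac{p+1}{p}}$ no longer dominates the quadratic bound $C\|(f,g)\|_X^2$ when the mass sits on the $f$-component, and (ii) along your straight ray $(tf,tg)$ the nonlinear part grows like $t^{\frac{p+1}{p}}$, which beats $t^2$, so $\Phi(tf,tg)\to+\infty$ rather than $-\infty$. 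As written, your proof only covers the subcase $p,q>1$, whereas the whole point of the dual method here is to handle the full range of \eqref{eq:(p,q)_for_Ederson_part}.

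The repair is to exploit the coupling between the two components instead of comparing each nonlinear term with the quadratic one. For the local minimum, estimate
\[
\int_\Omega fKg\,dx\leq \|f\|_{\frac{p+1}{p}}\|Kg\|_{p+1}\leq \tfrac{p}{p+1}\|f\|_{\frac{p+1}{p}}^{\frac{p+1}{p}}+\tfrac{1}{p+1}\|Kg\|_{p+1}^{p+1}\leq \tfrac{p}{p+1}\|f\|_{\frac{p+1}{p}}^{\frac{p+1}{p}}+C\|g\|_{\frac{q+1}{q}}^{p+1}
\]
by Young's inequality together with elliptic regularity and the Sobolev embedding (this is where subcriticality enters), and symmetrically for $\int gKf\,dx$. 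Half of each positive term absorbs the Young piece, leaving a lower bound of the form $c\|f\|_{\frac{p+1}{p}}^{\frac{p+1}{p}}+c\|g\|_{\frac{q+1}{q}}^{\frac{q+1}{q}}-C\bigl(\|f\|_{\frac{p+1}{p}}^{q+1}+\|g\|_{\frac{q+1}{q}}^{p+1}\bigr)$. The decisive comparisons are now $\frac{p+1}{p}<q+1$ and $\frac{q+1}{q}<p+1$, each of which is exactly equivalent to $pq>1$; no lower bound on the individual exponents is needed. For the negative direction, replace the straight ray by the anisotropic one $t\mapsto(t\varphi_1,t^{\frac{q(p+1)}{p(q+1)}}\varphi_1)$, with $\varphi_1$ the first eigenfunction: both nonlinear terms then scale like $t^{\frac{p+1}{p}}$ while the coupling term scales like $t^{\frac{2pq+p+q}{p(q+1)}}$, and $\frac{2pq+p+q}{p(q+1)}>\frac{p+1}{p}$ again reduces to $pq>1$, so $\Phi\to-\infty$ along this curve.
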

\begin{proof}
Indeed $\Phi(0,0) = 0$ and 
\begin{multline}\label{estimateMP}
\Phi(f,g) = \int_\Omega \left(\frac{p}{p+1}|f|^{\frac{p+1}{p}} + \frac{q}{q+1}|g|^{\frac{q+1}{q}}\right) dx - \frac{1}{2}\int_\Omega\left( f Kg + g Kf\right)\, dx\\
\geq \frac{p}{p+1}\|f\|_{\frac{p+1}{p}}^{\frac{p+1}{p}} + \frac{q}{q+1}\|g\|_{\frac{q+1}{q}}^{\frac{q+1}{q}} - \frac{1}{2} \left( \|K(g)\|_{p+1} \|f\|_{\frac{p+1}{p}} + \|K(f)\|_{q+1} \|g\|_{\frac{q+1}{q}} \right)\\
\geq \frac{p}{2(p+1)}\|f\|_{\frac{p+1}{p}}^{\frac{p+1}{p}} + \frac{q}{2(q+1)} \|g\|_{\frac{q+1}{q}}^{\frac{q+1}{q}} - \frac{1}{2} \left( \frac{\|K(g)\|_{p+1}^{p+1}}{p+1} + \frac{\|K(f)\|_{q+1}^{q+1}}{q+1} \right)\\
 \geq \frac{p}{2(p+1)}\|f\|_{\frac{p+1}{p}}^{\frac{p+1}{p}} + \frac{q}{2(q+1)} \|g\|_{\frac{q+1}{q}}^{\frac{q+1}{q}} - C \left( \|f\|_{\frac{p+1}{p}}^{q+1} + \|g\|_{\frac{q+1}{q}}^{p+1} \right).
\end{multline}
Then, since $\frac{1}{p+1} + \frac{1}{q+1} < 1$, which is equivalent to $pq>1$, it follows that $\frac{p+1}{p} < q+1$ and $\frac{q+1}{q} < p+1$. Therefore, we have established that $(0,0)$ is a local minimum of $\Phi$ and that there exist $r> 0$ and $b>0$ such that
\[
\Phi(f, g) \geq b \quad \forall \, (f,g) \in X \quad \text{such that} \quad \|(f,g)\|_X =r.
\]
Moreover, let $\varphi_1$ be the positive eigenfunction such that $\int_\Omega \varphi_1^2 dx =1$, associated to the first eigenvalue $\lambda_1$ of $(-\Delta + c(x)I, H^1_0(\Omega))$. Then
\[
\Phi(t \varphi_1, t^{\frac{q(p+1)}{p(q+1)}} \varphi_1) = \left(\frac{p}{p+1}\|\varphi_1\|_{\frac{p+1}{p}}^{\frac{p+1}{p}} + \frac{q}{q+1}\|\varphi_1\|_{\frac{q+1}{q}}^{\frac{q+1}{q}}\right)  t^{\frac{p+1}{p}}- \frac{t^{\frac{2pq + p +q}{p(q+1)}}}{\lambda_1}
\]
and therefore, since $pq>1$, we can choose $t_0>0$ large enough such that \linebreak $\Phi(t_0\varphi_1, t_0^{\frac{q(p+1)}{p(q+1)}} \varphi_1) < 0$ and $\|(t_0\varphi_1, t_0^{\frac{q(p+1)}{p(q+1)}} \varphi_1)\|_X > r$.
\end{proof}


From Propositions \ref{psconditionphi} and \ref{mountain pass geometry} and the classical montain pass theorem \cite{AmbrosettiRabinowitz} we know that
\begin{equation}\label{mountainpassleveldual}
\inf_{\gamma \in \Gamma} \max_{t \in [0,1]} \Phi(\gamma(t)) 
\end{equation}
is a positive critical value of $\Phi$, where
\[
\Gamma = \{ \gamma \in C([0,1], X):\ \gamma(0)= 0 \ \text{and} \ \Phi(\gamma(1)) <0 \}.
\]

Next we consider a Nehari type manifold associated to $\Phi$, namely
\begin{equation}\label{neharidual}
\mathcal{N} _\Phi= \left\{ (f,g) \in X:\ (f,g) \neq (0,0) \ \text{and}\ \Phi'(f,g)\left(f,\frac{q(p+1)}{p(q+1)}g\right) =0\right\}.
\end{equation}
It will become clear below why one needs to add the multiplier $\frac{q(p+1)}{p(q+1)}$ to the condition in the Nehari set. This is related to the fact that the powers of $f$ and $g$ in the energy $\Phi$ are different; observe that the multiplier is such that $\frac{q(p+1)}{p(q+1)} \frac{q+1}{q}=\frac{p+1}{p}$.
Observe moreover that if $(f,g) \in \mathcal{N}_\Phi$ then
\begin{equation}\label{eq:Phi>0}
\Phi(f,g) = \frac{pq-1}{p(q+1) + q(p+1)} \int_\Omega \left(\frac{p}{p+1}|f|^{\frac{p+1}{p}} + \frac{q}{q+1}|g|^{\frac{q+1}{q}} \right)dx > 0.
\end{equation}

\begin{definition}
A critical point $(f,g)$ of $\Phi$ is called a least energy critical point if $\Phi(f,g)$ is the smallest value among the nontrivial critical values of $\Phi$. 
\end{definition}

Using the previous notations, one can easily check that for every $(f,g)$ critical point of $\Phi$ it holds
\[
\Phi(f,g)= \int_\Omega \langle \nabla u,\nabla v\rangle \, dx-\int_\Omega \left(\frac{1}{p+1}|u|^{p+1}+\frac{1}{q+1}|v|^{q+1}\right)\, dx.
\]
Thus the least energy critical level defined through $\Phi$ coincides with the one defined in the previous section, namely we have that
\[
c(\Omega)=\inf \left\{\Phi(f,g):\ (f,g)\in X,\ (f,g)\neq 0,\ \Phi'(f,g)=0 \right\}.
\]
Before we give two other characterizations of the least energy level, we state and prove some properties of $\Ncal_\Phi$.

\begin{lemma}\label{lemma:properties_of_N_Phi}
The Nehari set $\Ncal_\Phi$ associated to the functional $\Phi$ has the following properties.
\begin{enumerate}[(i)]
\item There exists $R> 0$ such that $\|(f,g)\|_X \geq R$ for every $(f,g)\in \mathcal{N}_\Phi$.
\item The set $\mathcal{N}_\Phi$, as defined by \eqref{neharidual}, is a $C^1$-manifold on $X$ of codimension one. 
\item $\mathcal{N}_\Phi$ is a natural constraint to $\Phi$ in the sense that
\[
(f,g)\in \Ncal_\Phi,\ \Phi'|_{\Ncal_\Phi}(f,g)=0   \qquad  \Longrightarrow\qquad     \Phi'(f,g)=0.
\]
\end{enumerate}
\end{lemma}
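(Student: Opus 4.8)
The plan is to treat the three items in order, exploiting throughout the homogeneity structure of $\Phi$ and the key algebraic fact that the multiplier $\mu := \frac{q(p+1)}{p(q+1)}$ is chosen precisely so that $\mu\cdot\frac{q+1}{q}=\frac{p+1}{p}$. For item (i), I would start from the defining relation of $\mathcal{N}_\Phi$, namely $\Phi'(f,g)(f,\mu g)=0$, which reads
\[
\|f\|_{\frac{p+1}{p}}^{\frac{p+1}{p}} + \mu\,\|g\|_{\frac{q+1}{q}}^{\frac{q+1}{q}} = \tfrac12\langle T(f,g),(f,\mu g)\rangle + \tfrac12 \mu\langle T(f,g),(f, g)\rangle,
\]
or more simply the identity already used in \eqref{estimateMP}: $\frac{p}{p+1}\|f\|_{\frac{p+1}{p}}^{\frac{p+1}{p}}+\frac{q}{q+1}\|g\|_{\frac{q+1}{q}}^{\frac{q+1}{q}}\le C(\|f\|_{\frac{p+1}{p}}^{q+1}+\|g\|_{\frac{q+1}{q}}^{p+1})$ after re-deriving it on $\mathcal{N}_\Phi$. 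Since on $\mathcal{N}_\Phi$ one has $(f,g)\ne(0,0)$, and since $\frac{p+1}{p}<q+1$ and $\frac{q+1}{q}<p+1$ (equivalent to $pq>1$), a term on the left of lower homogeneity is dominated by a term on the right of higher homogeneity; a standard argument (if $\|(f,g)\|_X$ were small, the right-hand side would be negligible compared to the left) then forces $\|(f,g)\|_X\ge R$ for some $R>0$. This is the same computation underlying the mountain pass geometry, so it is essentially already done in Proposition~\ref{mountain pass geometry}.

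For item (ii), let $G(f,g):=\Phi'(f,g)(f,\mu g)$, so that $\mathcal{N}_\Phi=\{(f,g)\neq(0,0):G(f,g)=0\}$. One computes explicitly
\[
G(f,g)=\|f\|_{\frac{p+1}{p}}^{\frac{p+1}{p}}+\mu\|g\|_{\frac{q+1}{q}}^{\frac{q+1}{q}}-\big(\tfrac12+\tfrac{\mu}{2}\big)\langle T(f,g),(f,g)\rangle,
\]
using symmetry of $T$. This is $C^1$ on $X$ (away from the origin the $L^r$-norm terms are smooth), and it suffices to check $G'(f,g)(f,\mu g)\ne 0$ on $\mathcal{N}_\Phi$; differentiating, the norm terms contribute $\frac{p+1}{p}\|f\|_{\frac{p+1}{p}}^{\frac{p+1}{p}}+\mu\frac{q+1}{q}\|g\|_{\frac{q+1}{q}}^{\frac{q+1}{q}}$ (using $\mu\frac{q+1}{q}=\frac{p+1}{p}$, this is $\frac{p+1}{p}(\|f\|^{\frac{p+1}{p}}+\mu\|g\|^{\frac{q+1}{q}})$) while the quadratic term contributes $2\cdot(\frac12+\frac\mu2)\langle T(f,g),(f,g)\rangle$, which equals $(1+\mu)\langle T(f,g),(f,g)\rangle$; combining with the constraint $G(f,g)=0$ one gets $G'(f,g)(f,\mu g)=\big(\frac{p+1}{p}-1\big)(\|f\|_{\frac{p+1}{p}}^{\frac{p+1}{p}}+\mu\|g\|_{\frac{q+1}{q}}^{\frac{q+1}{q}})>0$ by item (i), since $(f,g)\ne 0$ implies the norm sum is bounded below by a positive constant. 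Hence $0$ is a regular value of $G$ on $X\setminus\{(0,0)\}$, and $\mathcal{N}_\Phi$ is a $C^1$-manifold of codimension one.

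For item (iii), suppose $(f,g)\in\mathcal{N}_\Phi$ is a constrained critical point, i.e. $\Phi'(f,g)=\sigma\, G'(f,g)$ in $X^*$ for some Lagrange multiplier $\sigma\in\R$. Testing this identity against the tangent-like direction $(f,\mu g)$ gives $0=\Phi'(f,g)(f,\mu g)=\sigma\, G'(f,g)(f,\mu g)$, because $(f,g)\in\mathcal{N}_\Phi$. Since by the computation above $G'(f,g)(f,\mu g)>0$, we conclude $\sigma=0$, whence $\Phi'(f,g)=0$; this is exactly the natural constraint property. The main obstacle in the whole argument is purely bookkeeping: keeping the exponents $\frac{p+1}{p},\frac{q+1}{q},p+1,q+1$ and the multiplier $\mu$ straight so that the cancellations in (ii) and (iii) come out with a strictly positive sign — but this is guaranteed by $pq>1$ and the definition of $\mu$, so there is no genuine analytic difficulty.
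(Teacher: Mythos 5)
Your proposal follows the same route as the paper's proof: item (i) via H\"older and Young together with the comparison of homogeneities $\frac{p+1}{p}<q+1$, $\frac{q+1}{q}<p+1$; item (ii) by showing that $0$ is a regular value of the constraint functional, tested in the direction $(f,\mu g)$ with $\mu=\frac{q(p+1)}{p(q+1)}$ (the paper uses the proportional direction $\bigl(\frac{p}{p+1}f,\frac{q}{q+1}g\bigr)$); item (iii) by the Lagrange multiplier argument tested against that same direction. The logical skeleton is sound and coincides with the paper's.

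Two of your displayed computations are, however, incorrect. In item (i), the identity $\langle T(f,g),(f,\mu g)\rangle=\frac12\langle T(f,g),(f,\mu g)\rangle+\frac12\mu\langle T(f,g),(f,g)\rangle$ fails unless $\mu=1$; the correct relation is $\langle T(f,g),(f,\mu g)\rangle=(1+\mu)\int_\Omega fKg\,dx=\frac{1+\mu}{2}\langle T(f,g),(f,g)\rangle$. This is harmless, since you immediately revert to the estimate behind \eqref{estimateMP}, which is exactly what the paper does. More seriously, in item (ii) the derivative of the quadratic part of $G$ in the direction $(f,\mu g)$ equals $(1+\mu)\langle T(f,g),(f,\mu g)\rangle=(1+\mu)^2\int_\Omega fKg\,dx$, not $(1+\mu)\langle T(f,g),(f,g)\rangle$, and the derivative of the $g$-norm term carries the factor $\mu^2\frac{q+1}{q}=\mu\frac{p+1}{p}$, not $\mu\frac{q+1}{q}$. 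Doing the computation correctly and inserting the constraint $\|f\|_{\frac{p+1}{p}}^{\frac{p+1}{p}}+\mu\|g\|_{\frac{q+1}{q}}^{\frac{q+1}{q}}=(1+\mu)\int_\Omega fKg\,dx$ gives
\[
G'(f,g)(f,\mu g)=\Bigl(\tfrac{p+1}{p}-1-\mu\Bigr)\Bigl(\|f\|_{\frac{p+1}{p}}^{\frac{p+1}{p}}+\mu\|g\|_{\frac{q+1}{q}}^{\frac{q+1}{q}}\Bigr)=\frac{1-pq}{p(q+1)}\Bigl(\|f\|_{\frac{p+1}{p}}^{\frac{p+1}{p}}+\mu\|g\|_{\frac{q+1}{q}}^{\frac{q+1}{q}}\Bigr),
\]
which is strictly \emph{negative} since $pq>1$ — not the positive quantity $\bigl(\frac{p+1}{p}-1\bigr)\bigl(\|f\|_{\frac{p+1}{p}}^{\frac{p+1}{p}}+\mu\|g\|_{\frac{q+1}{q}}^{\frac{q+1}{q}}\bigr)$ you assert. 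Because items (ii) and (iii) only require this quantity to be nonzero, your argument survives once the arithmetic is repaired, but the identity as you state it is false. A minor additional remark: the positivity of the norm sum needs only $(f,g)\neq(0,0)$, which is part of the definition of $\mathcal N_\Phi$; item (i) is not what is used there.
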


\begin{proof}
\emph{Item (i)} From the definition of $\Phi$ we see that $(f,g) \in \mathcal{N}_\Phi$ if, and only if, $(f,g) \neq (0,0)$ and 
\[
\int_\Omega \left( |f|^{\frac{p+1}{p}} + \frac{q(p+1)}{p(q+1)}|g|^{\frac{q+1}{q}} \right) dx - \int_\Omega \left( f Kg +\frac{q(p+1)}{p(q+1)} g Kf \right)dx = 0. 
\]

So, if $(f,g) \in \mathcal{N}_\Phi$ then
\begin{multline*}
 \|f\|_{\frac{p+1}{p}}^{\frac{p+1}{p}} + \frac{q(p+1)}{p(q+1)}\|g\|_{\frac{q+1}{q}}^{\frac{q+1}{q}}  = \int_\Omega\left( f Kg +\frac{q(p+1)}{p(q+1)} g Kf \right)dx \\
 \leq \|f\|_{\frac{p+1}{p}} \|Kg\|_{p+1} + \frac{q(p+1)}{p(q+1)}\|g\|_{\frac{q+1}{q}} \|Kf\|_{q+1}\\
 \leq \frac{p}{p+1} \|f\|_{\frac{p+1}{p}}^{\frac{p+1}{p}} + \frac{1}{p+1} \|Kg\|_{p+1}^{p+1} +\frac{q(p+1)}{p(q+1)}\left( \frac{q}{q+1}\|g\|_{\frac{q+1}{q}}^{\frac{q+1}{q}} + \frac{1}{q+1}\|Kf\|_{q+1}^{q+1}\right)
\end{multline*}
and so
\[
\frac{1}{p+1}  \|f\|_{\frac{p+1}{p}}^{\frac{p+1}{p}} + \frac{q(p+1)}{p(q+1)} \frac{1}{(q+1)}  \|g\|_{\frac{q+1}{q}}^{\frac{q+1}{q}} \leq C \left( \|f\|_{\frac{p+1}{p}}^{q+1} +  \|g\|_{\frac{q+1}{q}}^{p+1}\right).
\]
From the last inequality and since $q+1 > \frac{p+1}{p}$, $p+1> \frac{q+1}{q}$, it follows that there exists $R> 0$ such that
\[
\|(f,g)\|_X \geq R \quad \forall \, (f,g) \in \mathcal{N}_\Phi.
\]
\medbreak

\noindent\emph{Item (ii).} We set $\Lambda: X \menos\{(0,0)\} \rt \R$ by
\[
\Lambda (f,g) = \int_\Omega \left( |f|^{\frac{p+1}{p}} + \frac{q(p+1)}{p(q+1)}|g|^{\frac{q+1}{q}} \right) dx - \int_\Omega\left( f Kg + \frac{q(p+1)}{p(q+1)}g Kf \right)dx.
\]
In view of item (i), it is enough to prove that $0$ is a regular value of $\Lambda$. \linebreak First observe that if $\Lambda(f,g) =0$ then
\[
\int_\Omega \left( |f|^{\frac{p+1}{p}} + \frac{q(p+1)}{p(q+1)}|g|^{\frac{q+1}{q}} \right) dx = \int_\Omega \left(f Kg + \frac{q(p+1)}{p(q+1)} g Kf\right) dx > 0.
\]
On the other hand,
\begin{multline*}
\Lambda'(f,g)(f_1, g_1) = \int_\Omega \left( \frac{p+1}{p}|f|^{\frac{1}{p}-1}ff_1 + \frac{p+1}{p}|g|^{\frac{1}{q}-1}gg_1 \right) dx\\ - \left( 1 + \frac{q(p+1)}{p(q+1)} \right)\int_\Omega \left(f_1 Kg + g_1 Kf \right)dx.
\end{multline*}
Therefore, if $\Lambda(f,g) = 0$ then
\[
\Lambda'(f,g)\left( \frac{p}{p+1}f, \frac{q}{q+1}g \right) = \frac{1- pq}{(p+1)(q+1)} \int_\Omega \left( |f|^{\frac{p+1}{p}} + \frac{q(p+1)}{p(q+1)}|g|^{\frac{q+1}{q}} \right) dx <0
\]
and so $0$ is a regular value of $\Lambda$.

\medbreak
\noindent \emph{Item (iii).} If $(f,g)$ is a critical point of $\Phi'|_{\Ncal_\Phi}$ then there exists $\lambda \in \R$ such that
\[
\Phi'(f,g)(f_1, g_1)= \lambda \Lambda'(f,g)(f_1, g_1) \quad \forall \, (f_1, g_1) \in X.
\]
So, in particular, for $(f_1, g_1) = \left( f, \frac{q(p+1)}{p(q+1)}g \right)$ and by the definition of $\mathcal{N}$
\begin{align*}
0 &= \Phi'(f,g)\left( f, \frac{q(p+1)}{p(q+1)}g \right) = \lambda \Lambda'(f,g) \left( f, \frac{q(p+1)}{p(q+1)}g \right) \\
    &= \lambda \frac{(pq-1)}{p(q+1)} \int_\Omega \left(f Kg + \frac{q(p+1)}{p(q+1)} g Kf \right)dx \\
    &= \lambda \frac{(pq-1)}{p(q+1)} \int_\Omega \left(|f|^{\frac{p+1}{p}} + \frac{(pq-1)}{p(q+1)} |g|^{\frac{q+1}{q}} \right)dx,
\end{align*}
and so $\lambda = 0$.

\end{proof}

We can now prove the following equivalent characterizations of the least energy level $c(\Omega)$.

\begin{theorem}
We have that
\begin{align*}
c(\Omega)&=\inf_{\gamma \in \Gamma} \max_{t \in [0,1]} \Phi(\gamma(t)) \\
		&=\mathop{\inf_{(f,g) \in X}}_{\langle T(f,g), (f,g)\rangle>0} \sup_{t\geq0}\Phi(tf, t^{\frac{q(p+1)}{p(q+1)}}g)\\
		&=\inf_{(f,g)\in \Ncal_\Phi}\Phi(f,g)>0
\end{align*}
is attained.
\end{theorem}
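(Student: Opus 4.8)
The plan is to prove the two displayed equalities, the positivity, and the attainment by a short cyclic chain of inequalities, after first analysing the one-dimensional ``fibering'' maps $t\mapsto\Phi(tf,t^{\frac{q(p+1)}{p(q+1)}}g)$. I would fix $(f,g)\in X$ with $\langle T(f,g),(f,g)\rangle>0$, noting that this forces both $f\neq0$ and $g\neq0$ since $\langle T(0,g),(0,g)\rangle=\langle T(f,0),(f,0)\rangle=0$. Using the identity $\frac{q(p+1)}{p(q+1)}\cdot\frac{q+1}{q}=\frac{p+1}{p}$, a direct computation gives
\[
\Phi\Big(tf,t^{\frac{q(p+1)}{p(q+1)}}g\Big)=At^{\beta}-Bt^{\gamma},\qquad \beta=\frac{p+1}{p},\quad \gamma=\frac{2pq+p+q}{p(q+1)},
\]
with $A=\frac{p}{p+1}\|f\|_{\frac{p+1}{p}}^{\frac{p+1}{p}}+\frac{q}{q+1}\|g\|_{\frac{q+1}{q}}^{\frac{q+1}{q}}>0$ and $B=\frac12\langle T(f,g),(f,g)\rangle>0$. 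Since $pq>1$ is equivalent to $\gamma>\beta$, this function of $t\ge0$ vanishes at $t=0$, is positive on an interval $(0,t_{**})$, tends to $-\infty$ as $t\to\infty$, and has a unique critical point $t_*>0$ which is its global maximum. Differentiating and multiplying by $t_*$, the stationarity condition at $t_*$ is precisely the membership $(t_*f,t_*^{\frac{q(p+1)}{p(q+1)}}g)\in\Ncal_\Phi$.

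Conversely, I would verify that every $(f,g)\in\Ncal_\Phi$ has both components nonzero (the constraint forces one to vanish as soon as the other does), has $\langle T(f,g),(f,g)\rangle=2\int_\Omega fKg\,dx>0$, and, by the same computation now with $t_*=1$, is the maximum point of its own fibering map. This bijective correspondence gives immediately
\[
\mathop{\inf_{(f,g)\in X}}_{\langle T(f,g),(f,g)\rangle>0}\ \sup_{t\ge0}\Phi\Big(tf,t^{\frac{q(p+1)}{p(q+1)}}g\Big)=\inf_{(f,g)\in\Ncal_\Phi}\Phi(f,g),
\]
and this common value is strictly positive by \eqref{eq:Phi>0} together with Lemma \ref{lemma:properties_of_N_Phi}(i) (or directly from Proposition \ref{mountain pass geometry}).

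It then remains to close the loop with $c(\Omega)$ and the mountain pass level, which I would do via three inequalities. First, $\inf_{\gamma\in\Gamma}\max_{[0,1]}\Phi(\gamma(\cdot))\le\inf_{\Ncal_\Phi}\Phi$: given $(f,g)\in\Ncal_\Phi$, since $pq>1$ the ray $s\mapsto(sf,s^{\frac{q(p+1)}{p(q+1)}}g)$ eventually has negative energy, so a linear reparametrization onto $[0,1]$ of the segment $s\in[0,S]$, with $\Phi$ negative at $s=S$, lies in $\Gamma$ and has energy maximum exactly $\Phi(f,g)$ by the first step. Second, $c(\Omega)\le\inf_{\gamma\in\Gamma}\max_{[0,1]}\Phi(\gamma(\cdot))$: by Propositions \ref{psconditionphi} and \ref{mountain pass geometry} the mountain pass theorem applies to $\Phi$, so its mountain pass level is a critical value of $\Phi$, attained at some critical point that is nontrivial because the level is positive; since $c(\Omega)$ is by definition the infimum of the nontrivial critical values of $\Phi$, it is no larger. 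Third, $\inf_{\Ncal_\Phi}\Phi\le c(\Omega)$: any nontrivial critical point $(f,g)$ of $\Phi$ satisfies in particular $\Phi'(f,g)(f,\frac{q(p+1)}{p(q+1)}g)=0$, hence lies in $\Ncal_\Phi$. Chaining, $c(\Omega)\le\inf_{\gamma\in\Gamma}\max_{[0,1]}\Phi(\gamma(\cdot))\le\inf_{\Ncal_\Phi}\Phi\le c(\Omega)$, so all are equal; together with the previous paragraph, all four quantities coincide and are positive. For attainment, the mountain pass theorem provides a Palais--Smale sequence at level $c(\Omega)>0$, which by Proposition \ref{psconditionphi} has a subsequence converging to a critical point of $\Phi$ at level $c(\Omega)$, necessarily nontrivial, hence a classical solution of \eqref{eq:more general system} by Proposition \ref{propositionregularitydual}; in particular $\inf_{\Ncal_\Phi}\Phi$ is attained.

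The step I expect to be the main obstacle is the fibering-map analysis of the first two paragraphs: one must align the exponents so that the two nonlinear terms of $\Phi$ scale with the common power $t^{(p+1)/p}$, check that $\gamma>\beta$ is exactly the superlinearity $pq>1$, and confirm that the critical-point equation of each ray coincides with the constraint defining $\Ncal_\Phi$ — this is what forces the multiplier $\frac{q(p+1)}{p(q+1)}$ in \eqref{neharidual} — so that each ray carries a unique maximum sitting on $\Ncal_\Phi$ while each point of $\Ncal_\Phi$ is the maximum of its own ray. Once this picture is secured, the rest is a routine combination of the mountain pass theorem, the Palais--Smale condition, and the correspondence between critical points of $\Phi$ and classical solutions established in Proposition \ref{propositionregularitydual}.
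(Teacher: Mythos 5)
Your proposal is correct and follows essentially the same route as the paper: the same fibering-map computation $\Phi(tf,t^{\frac{q(p+1)}{p(q+1)}}g)=At^{\frac{p+1}{p}}-Bt^{\frac{2pq+p+q}{p(q+1)}}$ identifying the unique ray maximum with membership in $\Ncal_\Phi$, the same closing of the cycle of inequalities between the mountain pass level, the $\inf\sup$ over rays, $\inf_{\Ncal_\Phi}\Phi$ and $c(\Omega)$, and the same appeal to Propositions \ref{psconditionphi} and \ref{mountain pass geometry} for positivity and attainment. The only (immaterial) difference is the direction in which you chain the inequalities; your write-up is in fact slightly more explicit than the paper's about why every point of $\Ncal_\Phi$ satisfies $\langle T(f,g),(f,g)\rangle>0$ and about the attainment step.
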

\begin{proof}
First we observe that every nontrivial critical point $(f,g)$ of $\Phi$ is such that $(f,g) \in \mathcal{N}_\Phi$. Take the mountain pass level defined by \eqref{mountainpassleveldual}. Then, since it is a critical value of $\Phi$, it is clear that
\[
\inf_{(f,g) \in \mathcal{N}_{\Phi}}\Phi(f,g) \leq \inf_{\gamma \in \Gamma} \max_{t \in [0,1]} \Phi(\gamma(t)).
\]
On the other hand, given $(f,g) \in X$ such that $\langle T(f,g), (f,g) \rangle>0$ we consider the maps
\begin{equation}\label{eq:gamma(t)=}
 \gamma(t, (f,g)):=(t f, t^{\frac{q(p+1)}{p(q+1)}}g), \qquad \theta(t, (f,g)):=\Phi(\gamma(t,(f,g))).
 \end{equation}
 By a direct computation,
 $$
 \theta(t,(f,g))=At^{\frac{p+1}{p}}-B t^{\frac{p(q+1)+ q(p+1)}{p(q+1)}},$$
 where $A:=\int_\Omega \frac{p}{p+1}|f|^{\frac{p+1}{p}}+\frac{q}{q+1}|g|^{\frac{q+1}{q}} dx>0$, $B:=\frac{1}{2}\langle T(f,g), (f,g) \rangle>0$. We observe that
 $1<\frac{p+1}{p}<\frac{p(q+1)+ q(p+1)}{p(q+1)}$, since $pq>1$. It follows that $\theta(t,(f,g))\to -\infty$ as $t\to +\infty$ and that there exists a unique point $t_0>0$ such that $\theta'(t_0, (f,g))=0$; such a point $ t_0$ is a strict global maximum of the map $\theta(\cdot,(f,g))$. Moreover, $\theta(t,(f,g)) \in \mathcal{N}_\Phi$ for $t> 0$ if, and only if, $t= t_0$ and so,
 \begin{align*}
 \inf_{(f,g) \in \mathcal{N}} \Phi(f,g) &\leq \inf_{\gamma \in \Gamma} \max_{t \in [0,1]} \Phi(\gamma(t))\\
 							& \leq \mathop{\inf_{(f,g) \in X}}_{\langle T(f,g), (f,g) \rangle>0} \sup_{t\geq0}\Phi(t f, t^{\frac{q(p+1)}{p(q+1)}}g) = \inf_{(f,g) \in \mathcal{N}} \Phi(f,g). 
 \end{align*}
 The remaining properties are now a standard consequence of \eqref{eq:Phi>0} and Lemma~\ref{lemma:properties_of_N_Phi}
 \end{proof}

\begin{remark}
The shape of the map $\gamma$ in \eqref{eq:gamma(t)=} is due to the different powers of $f$ and $g$ in the expression of the energy $\Phi$, and also justifies the definition of the Nehari manifold in \eqref{neharidual}. Moreover, one needs to restrict the study of $\gamma$ to $(f,g)$ satisfying $\langle T(f,g), (f,g) \rangle>0$, as otherwise we would get $\theta(t,(f,g))\to +\infty$.
\end{remark}

\subsection{Sign and symmetry properties}\label{subset:Sign_and_symmetry}

One of the advantages of using the dual method is that it becomes quite straightforward to prove qualitative properties like sign and symmetry for least energy solutions.

We denote by $B_R$ the open ball in $\IR^N$ of radius $R$ centered at the origin and, for a given function $f \in C(\overline{B_R})$, $f\geq 0$, we denote by $f^*$ the Schwarz symmetric function associated to $f$, namely the radially symmetric, radially non increasing function, equi-measurable with $f$.

\begin{theorem}\label{thm:qualitativeprop_dual}
The following two properties hold.
 \begin{enumerate}[(i)]
\item Any least energy critical point $(f,g)$ of $\Phi$ is such that $f>0$ and $g>0$ in $\Omega$, or $f<0$ and $g<0$ in $\Omega$.
\item In case $\Omega$ is a ball and $c\equiv0$, any positive least energy critical point of $\Phi$ is Schwarz symmetric, that is, $f = f^*$ and $g = g^*$. 
\end{enumerate}
In particular, the corresponding $(u,v)$ solution to \eqref{eq:more general system} satisfies the same properties.
\end{theorem}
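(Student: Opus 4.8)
The plan is to work with the dual functional $\Phi$ and exploit the variational characterization of the least energy level as a minimax / Nehari level established above. For part (i), I would argue by contradiction: suppose $(f,g)$ is a least energy critical point that changes sign, say $f$ is sign-changing (the case of $g$ sign-changing is analogous, and in fact the two are linked since $u = Kg$ and $v = Kf$ and $K$ preserves positivity). The idea is to replace $(f,g)$ by $(|f|,|g|)$ and show this strictly decreases the energy while essentially staying on the Nehari manifold (or at least producing a competitor whose maximal rescaling lies on $\Ncal_\Phi$ with strictly smaller energy). The crucial ingredient is that the quadratic form $\langle T(f,g),(f,g)\rangle = 2\int_\Omega f\,Kg\,dx$ strictly increases when passing to $(|f|,|g|)$: since $K = L^{-1}$ with $L = -\Delta + c(x)I$ has a strictly positive kernel (by the maximum principle / Green's function positivity, valid because $c\geq 0$), one has $\int_\Omega |f|\,K|g|\,dx \geq |\int_\Omega f\,Kg\,dx|$ with equality only if $f,g$ are (a.e.) signed. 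Meanwhile the first term $\int_\Omega \frac{p}{p+1}|f|^{(p+1)/p} + \frac{q}{q+1}|g|^{(q+1)/q}\,dx$ is unchanged. Plugging $(|f|,|g|)$ into the map $\theta(t,(|f|,|g|)) = At^{(p+1)/p} - Bt^{(p(q+1)+q(p+1))/(p(q+1))}$ with the \emph{same} $A$ but \emph{strictly larger} $B$, and using that the maximum over $t$ of such a function is a strictly decreasing function of $B$ (for fixed $A>0$), we get $\sup_{t\geq 0}\theta(t,(|f|,|g|)) < \sup_{t\geq 0}\theta(t,(f,g)) = \Phi(f,g) = c(\Omega)$, contradicting minimality. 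This forces $f$ and $g$ to be signed; then the strong maximum principle applied to $Lu = |v|^{q-1}v$ and $Lv = |u|^{p-1}u$ (recalling $(u,v)=(Kg,Kf)$ are $C^2$ by Proposition \ref{propositionregularitydual}) upgrades ``signed'' to strictly positive (or strictly negative) in $\Omega$. Finally, that $f > 0 \iff g > 0$ follows from $u = Kg > 0 \iff g > 0$ and $v = Kf$, together with the system: if $f > 0$ then $v = Kf > 0$, hence $-\Delta u + cu = \phi_q(v) > 0$, hence $u > 0$, hence $g = Lu$... more directly, $u,v$ have the same sign because $u>0 \Rightarrow Lv = \phi_p(u) > 0 \Rightarrow v > 0$ and symmetrically.

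For part (ii), with $\Omega = B_R$ and $c \equiv 0$, so $K = (-\Delta)^{-1}$, I would use the same strategy but with Schwarz symmetrization in place of taking absolute values. Start from a positive least energy critical point $(f,g)$, so $f,g > 0$, and consider the competitor $(f^*, g^*)$. The $L^{(p+1)/p}$ and $L^{(q+1)/q}$ norms are preserved under equimeasurable rearrangement, so $A$ is unchanged. The key inequality is the Riesz rearrangement inequality (in the sharp form for the Green function of $-\Delta$ on a ball, which is itself radially symmetric and radially decreasing in an appropriate sense):
\[
\int_{B_R} f\, Kg\,dx = \int_{B_R}\int_{B_R} G(x,y) f(x) g(y)\,dx\,dy \leq \int_{B_R}\int_{B_R} G(x,y) f^*(x) g^*(y)\,dx\,dy = \int_{B_R} f^*\, K g^*\,dx,
\]
so $B$ does not decrease; hence, as in part (i), $\sup_{t\geq 0}\theta(t,(f^*,g^*)) \leq \sup_{t\geq 0}\theta(t,(f,g)) = c(\Omega)$. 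Since the rescaled competitor lies on $\Ncal_\Phi$ (or can be rescaled onto it), and $c(\Omega)$ is the infimum over $\Ncal_\Phi$, equality must hold throughout. The final step is a rigidity/equality-case analysis: equality in the Riesz inequality (with the strictly symmetric-decreasing kernel $G$) forces $f = f^*$ and $g = g^*$ up to translation, and since $G$ is genuinely the ball's Green function (not translation invariant) the translation is trivial, giving $f = f^*$, $g = g^*$. Transferring back, $u = Kg$ and $v = Kf$ inherit radial symmetry and monotonicity because $K$ of a Schwarz-symmetric function on a ball is Schwarz-symmetric.

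\textbf{Main obstacle.} The delicate point in both parts is the \emph{equality case}: it is easy to get the non-strict inequalities $\sup_t\theta(\cdot,(|f|,|g|)) \leq c(\Omega)$ and $\sup_t\theta(\cdot,(f^*,g^*)) \leq c(\Omega)$, but to conclude one must show that equality in the underlying integral inequality (positivity of $\int |f|K|g| - |\int fKg|$ in part (i); the Riesz rearrangement inequality in part (ii)) forces $(f,g)$ to already have the desired form. For part (i) this hinges on $G(x,y) > 0$ for a.e. $(x,y)$, which is standard. For part (ii) one needs the rigidity statement for Riesz's inequality — that equality with a strictly radially decreasing kernel characterizes Schwarz-symmetric functions — which requires a little care (e.g. Lieb's analysis of the equality case, or a continuity/strict-monotonicity argument using that the decreasing rearrangement strictly increases the bilinear form unless $f,g$ are already centered rearrangements). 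A secondary technical point is ensuring the symmetrized competitor can legitimately be used in the Nehari characterization: since $\langle T(f^*,g^*),(f^*,g^*)\rangle > 0$ (it dominates the positive quantity $\langle T(f,g),(f,g)\rangle$), the map $t \mapsto \theta(t,(f^*,g^*))$ has its strict maximum at some $t_0 > 0$ with $(t_0 f^*, t_0^{q(p+1)/(p(q+1))} g^*) \in \Ncal_\Phi$, so the comparison with $\inf_{\Ncal_\Phi}\Phi = c(\Omega)$ is valid — but one should double-check that this forces $t_0 = 1$ in the equality case, which again follows from the rigidity combined with $(f,g) \in \Ncal_\Phi$ already.
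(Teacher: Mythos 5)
Your argument for part (i) is essentially the paper's: both rest on the comparison $\Phi(|f|,|g|)\leq \Phi(f,g)$ (equivalently, the cross term $\int_\Omega fKg\,dx$ does not decrease when one passes to $(|f|,|g|)$, by positivity of the Green's function of $L$), combined with the minimax/Nehari characterization of $c(\Omega)$ and the strong maximum principle. The paper writes this as a chain of inequalities that must all be equalities and uses the uniqueness of the maximizing $t_0$ to force $t_0=1$; you phrase it as a strict-inequality contradiction via the observation that $\sup_{t\geq 0}\bigl(At^{\frac{p+1}{p}}-Bt^{\frac{p(q+1)+q(p+1)}{p(q+1)}}\bigr)$ is strictly decreasing in $B$. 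These are the same proof, and your identification of the equality case (equality in $\int|f|K|g|\geq|\int fKg|$ forces $f,g$ to be signed with the same sign) is correct.

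For part (ii) you take a genuinely different route, and it has a gap exactly where you flag the ``main obstacle.'' You invoke the Riesz rearrangement inequality, and its equality case, for the kernel $G(x,y)$ of $(-\Delta)^{-1}$ on the ball. But the classical Riesz inequality and Lieb's rigidity analysis apply to translation-invariant, symmetric-decreasing kernels $k(x-y)$ on $\R^N$; the Green's function of the ball is not of this form, so neither the inequality $\int fKg\leq\int f^*Kg^*$ nor (more seriously) the characterization of equality is available off the shelf. A Riesz-type inequality on the ball can be proved by polarization (Baernstein--Taylor, Brock--Solynin), but the equality case would still need a separate rigidity argument. The paper avoids all of this by splitting the comparison into two elementary steps,
\begin{equation}
\int_\Omega f^*Kg^*\,dx \;\geq\; \int_\Omega f^*(Kg)^*\,dx \;\geq\; \int_\Omega fKg\,dx,
\end{equation}
where the second step is the Hardy--Littlewood inequality and the first is Talenti's comparison principle $(Kg)^*\leq K(g^*)$ from Lemma \ref{simetrizacao}, whose second assertion supplies precisely the rigidity you are missing: equality in the first step forces $f=f^*$ and $g=g^*$. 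With that lemma in hand, the rest of your scheme (same $A$, non-decreasing $B$, uniqueness of the maximizing $t_0$ forcing $t_0=1$) goes through as in part (i). So your outline for (ii) is salvageable, but the key inequality and its equality case should be derived from Talenti's comparison rather than from Riesz rearrangement.
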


For the proof of Theorem \ref{thm:qualitativeprop_dual} we first need to recall a result on the properties of the Schwarz symmetrization. The first conclusion in the lemma below can be found in \cite[Theorem 1]{Talenti} and the second one is a particular case of \cite[Theorem 1]{AlvinoLionsTrombetti}.

\begin{lemma}\label{simetrizacao}
Let $B_R\con \R^N$, $N\geq 1$, be the open ball centered at the origin with radius $R>0$. Let $f \in C(\overline{B_R})$, $f\geq 0$, and $u,w$ satisfy
\[
\begin{array}{rl}
\left\{
\begin{array}{rcll}
-\Delta u & = &f & \hbox{in}\,\,B_R,\\
u & = & 0 &\hbox{on}\,\, \partial B_R,
\end{array}
\right.\,\,\,
\left\{
\begin{array}{rcll}
-\Delta w & = &f^* & \hbox{in}\,\,B_R,\\
w& = & 0 &\hbox{on}\,\, \partial B_R.
\end{array}
\right.
\end{array}
\]
Then $u^* \leq w$ in $B_R$. Furthermore,
\[
|\{ u^* < w \}| = 0 \,\,\,\hbox{if and only if}\,\,\,f = f^*.
\]
\end{lemma}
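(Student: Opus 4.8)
The plan is to run the classical Talenti comparison argument for $-\Delta$ to obtain the inequality $u^*\le w$, and then to read off $f=f^*$ from the equality case, following Alvino--Lions--Trombetti. Throughout we may assume $f\not\equiv 0$ (otherwise $u\equiv w\equiv 0$ and $f=f^*=0$ trivially), so that by the strong maximum principle $u>0$ and $w>0$ in $B_R$.

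\emph{Step 1: a differential inequality for $u$.} For $t>0$ write $\Omega_t=\{u>t\}$ and $\mu(t)=|\Omega_t|$. For a.e.\ $t$ the level set $\{u=t\}$ is a regular hypersurface (Sard's theorem), and integrating $-\Delta u=f$ over $\Omega_t$ and applying the divergence theorem gives $\int_{\{u=t\}}|\nabla u|\,d\mathcal H^{N-1}=\int_{\Omega_t}f\,dx$. By Cauchy--Schwarz,
\[
P(\Omega_t)^2\le\Big(\int_{\{u=t\}}|\nabla u|\,d\mathcal H^{N-1}\Big)\Big(\int_{\{u=t\}}\frac{d\mathcal H^{N-1}}{|\nabla u|}\Big),
\]
and by the coarea formula the second factor equals $-\mu'(t)$ for a.e.\ $t$. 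Combining this with the isoperimetric inequality $P(\Omega_t)\ge N\omega_N^{1/N}\mu(t)^{(N-1)/N}$ and with the Hardy--Littlewood inequality $\int_{\Omega_t}f\,dx\le F(\mu(t))$, where $F(s):=\int_{B}f^*\,dx$ with $B$ the ball centred at $0$ of measure $s$, we obtain
\[
-\mu'(t)\ \ge\ \frac{N^2\omega_N^{2/N}\,\mu(t)^{2(N-1)/N}}{F(\mu(t))}\qquad\text{for a.e.\ }t>0.
\]
For the radial solution $w$, with $\nu(t):=|\{w>t\}|$, all three inequalities are equalities (superlevel sets are concentric balls, $|\nabla w|$ is constant on each of them, and $f^*$ is already Schwarz symmetric), hence $-\nu'(t)=N^2\omega_N^{2/N}\nu(t)^{2(N-1)/N}/F(\nu(t))$.

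\emph{Step 2: comparison.} Put $H(s)=\int_s^{|B_R|}\big(N^2\omega_N^{2/N}\sigma^{2(N-1)/N}\big)^{-1}F(\sigma)\,d\sigma$ for $0<s\le|B_R|$; this is well defined and strictly decreasing since $F(\sigma)=O(\sigma)$ as $\sigma\to0^+$ while $2(N-1)/N<2$. Differentiating, the identity for $w$ yields $\tfrac{d}{dt}H(\nu(t))=1$, hence $H(\nu(t))=t$ (because $H(|B_R|)=0$ and $\nu(0^+)=|B_R|$), whereas the inequality for $u$ yields $\tfrac{d}{dt}H(\mu(t))\ge1$, hence $H(\mu(t))\ge t=H(\nu(t))$. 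As $H$ is decreasing, $\mu(t)\le\nu(t)$ for every $t>0$, i.e.\ $|\{u^*>t\}|=|\{u>t\}|\le|\{w>t\}|$; since $u^*$ and $w$ are both radial and nonincreasing, this is precisely $u^*\le w$ in $B_R$.

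\emph{Step 3: the equality case, and the main obstacle.} If $f=f^*$, then by rotational invariance and uniqueness $u$ is radial, and from $-(r^{N-1}u')'=r^{N-1}f\ge0$ with $r^{N-1}u'(r)\to0$ as $r\to0^+$ one gets $u'\le0$, so $u=u^*$; since $w$ solves the same problem, $u=w$ and $|\{u^*<w\}|=0$. Conversely, assume $|\{u^*<w\}|=0$, i.e.\ $u^*=w$; then $\mu=\mu_u=\mu_{u^*}=\mu_w=\nu$, so $H(\mu(t))=t$, and combined with $\tfrac{d}{dt}H(\mu(t))\ge1$ this forces the derivative to equal $1$ for a.e.\ $t$. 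Tracing back, for a.e.\ $t$ the isoperimetric, Cauchy--Schwarz and Hardy--Littlewood inequalities of Step 1 are all equalities; in particular $\{u>t\}$ is a ball up to a null set and $|\nabla u|$ is a positive constant on $\{u=t\}$. The step I expect to be the main obstacle is precisely the upgrade from ``almost every superlevel set of $u$ is a ball'' to ``$u$ is radially symmetric about the centre of $B_R$'': nested balls need not be concentric, so one must use the constancy of $|\nabla u|$ on a.e.\ level set together with the continuity of $u$ to rule out a drifting of the centres (this rigidity is the content of \cite[Theorem 1]{AlvinoLionsTrombetti}, while Steps 1--2 constitute \cite[Theorem 1]{Talenti}). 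Once $u=u^*$ is established, $u=u^*=w$ yields $f=-\Delta u=-\Delta w=f^*$, which completes the proof.
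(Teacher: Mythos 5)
The paper does not actually prove this lemma: it quotes the comparison $u^*\le w$ from \cite[Theorem 1]{Talenti} and the equality case from \cite[Theorem 1]{AlvinoLionsTrombetti}, and your Steps 1--2 are precisely Talenti's argument while you correctly identify and attribute the remaining rigidity step (nested balls need not be concentric) to Alvino--Lions--Trombetti, so your proposal follows essentially the same route, only with more detail than the paper supplies. One small caution: since $f$ is only continuous, $u$ is merely $C^{1,\alpha}$, so Sard's theorem does not yield regularity of almost every level set; the level-set identities of your Step 1 should instead be derived, as in Talenti's paper, by testing the equation with truncations of $(u-t)^+$ and using the coarea formula for Sobolev functions.
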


\begin{proof}[Proof of {\rm Theorem \ref{thm:qualitativeprop_dual}}] 
 \noindent\emph{Assertion (i).} It follows from the maximum principle for the second order elliptic operator $L$ that
\begin{equation}\label{comparisonphi}
 \Phi(|f|,|g|)\leq \Phi(f,g)\qquad \forall \, (f,g)\in X.
 \end{equation}
 Moreover, equality holds if and only if either $f\geq 0$ and $g\geq 0$ or else $f\leq 0$ and $g\leq 0$. So, given a least energy critical point $(f,g)$ of $\Phi$, for some $t_0>0$ we have that
 \begin{multline*}
 c(\Omega)\leq \sup_{t\geq0}  \Phi(t|f|, t^{\frac{q(p+1)}{p(q+1)}}|g|)= \Phi(t_0|f|, t_0^{\frac{q(p+1)}{p(q+1)}}|g|)\leq  \Phi(t_0f, t_0^{\frac{q(p+1)}{p(q+1)}}g)\\
  \leq \sup_{t\geq0}  \Phi(tf, t^{\frac{q(p+1)}{p(q+1)}}g) =  \Phi(f,g)=c(\Omega).
 \end{multline*}
According to \eqref{comparisonphi} and the uniqueness of $t_0$, it follows that $t_0=1$, $\Phi(|f|, |g|)=\Phi(f,g)$, and $(f^+, g^+)=(0,0)$ or $(f^-, g^-)=(0,0)$. Then we apply Proposition \ref{propositionregularitydual} and the strong maximum principle to conclude that $f>0$ and $g>0$ in $\Omega$, or $f< 0$ and $g<0$ in $\Omega$.   

\medbreak
\noindent\emph{Assertion (ii).} Assume $\Omega$ is a ball and $c \equiv0$. Let $f,g$ be a positive least energy critical point of $\Phi$. From Proposition \ref{propositionregularitydual} we know that $f,g$ are $C(\overline{\Omega})$ functions and $f,g>0$ in $\Omega$. So, we have
\begin{equation}\label{comparisonphisimetrizada}
 \Phi(t f^*,t g^*)\leq \Phi(tf,tg)\qquad \forall \, t>0,
 \end{equation}
because
 \begin{equation}\label{+detalhesdes}
 \int f^*Kg^* dx \geq \int f^* (Kg)^* dx \geq \int f Kg dx,
 \end{equation}
where the first inequality is given by Lemma \ref{simetrizacao} and the second is the Hardy-Littlewood inequality, cf. \cite{HardyLittlewoodPolya,Talenti}. Moreover, by Lemma \ref{simetrizacao}, the identity holds at the first inequality of \eqref{+detalhesdes} if and only if $f=f^*$ and $g =g^*$.  On the other hand, as in the proof of item (i), there exists $t_0>0$ such that
 \begin{multline*}
 c(\Omega)\leq \sup_{t\geq0}  \Phi(tf^*, t^{\frac{q(p+1)}{p(q+1)}}g^*)= \Phi(t_0f^*, t_0^{\frac{q(p+1)}{p(q+1)}}g^*)\leq  \Phi(t_0f, t_0^{\frac{q(p+1)}{p(q+1)}}g)\\
  \leq \sup_{t\geq0}  \Phi(tf, t^{\frac{q(p+1)}{p(q+1)}}g) =  \Phi(f,g)=c(\Omega).
 \end{multline*}
According to \eqref{comparisonphisimetrizada} and the uniqueness of $t_0$, it follows that $t_0=1$ and $f=f^*$ and $g =g^*$.
\end{proof}

\begin{remark} The positivity - assertion (i) - in the more restrictive case $p,q>1$ was observed by Alves et al. \cite{Alvesetal} with a similar argument.
\end{remark}

\begin{remark}\label{rem:Troy}
Assuming that $\Omega = B_R(0)$, $c(x)=c$ is a positive constant, and $p,q \geq 1$, the ground state solutions $(u,v)$ of \eqref{eq:more general system} are Schwarz symmetric. This follows from a more general result of Troy \cite{Troy} under the additional assumption that $p,q\geq 1$, which ensures that the nonlinearities $s\mapsto |s|^{p-1}s, \;|s|^{q-1}s$ are Lipchitz continuous. This result is based on the moving plane method, once it is know that $u$ and $v$ are positive. However, the approach based on symmetrization techniques is more direct and natural for ground state solutions of \eqref{eq:more general system} with $c=0$ and allows us to treat more general powers. 
\end{remark}


\section{A reduction by inversion}\label{sec:QuartaOrdem}

Let $\Omega$ be a smooth bounded domain in $\R^N$ with $N \geq 1$. We consider the system
\begin{equation}\label{S}
\left\{
\begin{array}{rcll}
-\Delta u & = & \left| v\right|^{q-1} v & \hbox{in} \,\, \Omega, \\
-\Delta v & = & \left| u\right|^{p-1} u & \hbox{in} \,\, \Omega, \\
u, v &= & 0 & \hbox{on} \,\, \partial \Omega,
\end{array}
\right.
\end{equation}
under the same hypothesis made in Subsection \ref{subsec:W^1s}, namely 
\begin{equation}\label{eq:(p,q)_for_Ederson_part2}
p,q >0,\qquad \gd{\frac{1}{p+1} + \frac{1}{q+1} > \frac{N- 2}{N}},  \tag{H1}
\end{equation}
and we mention that in this part we closely follow some of the procedures in \cite{BonheureSantosRamosTAMS}.

\begin{center}
\includegraphics[scale=.26]{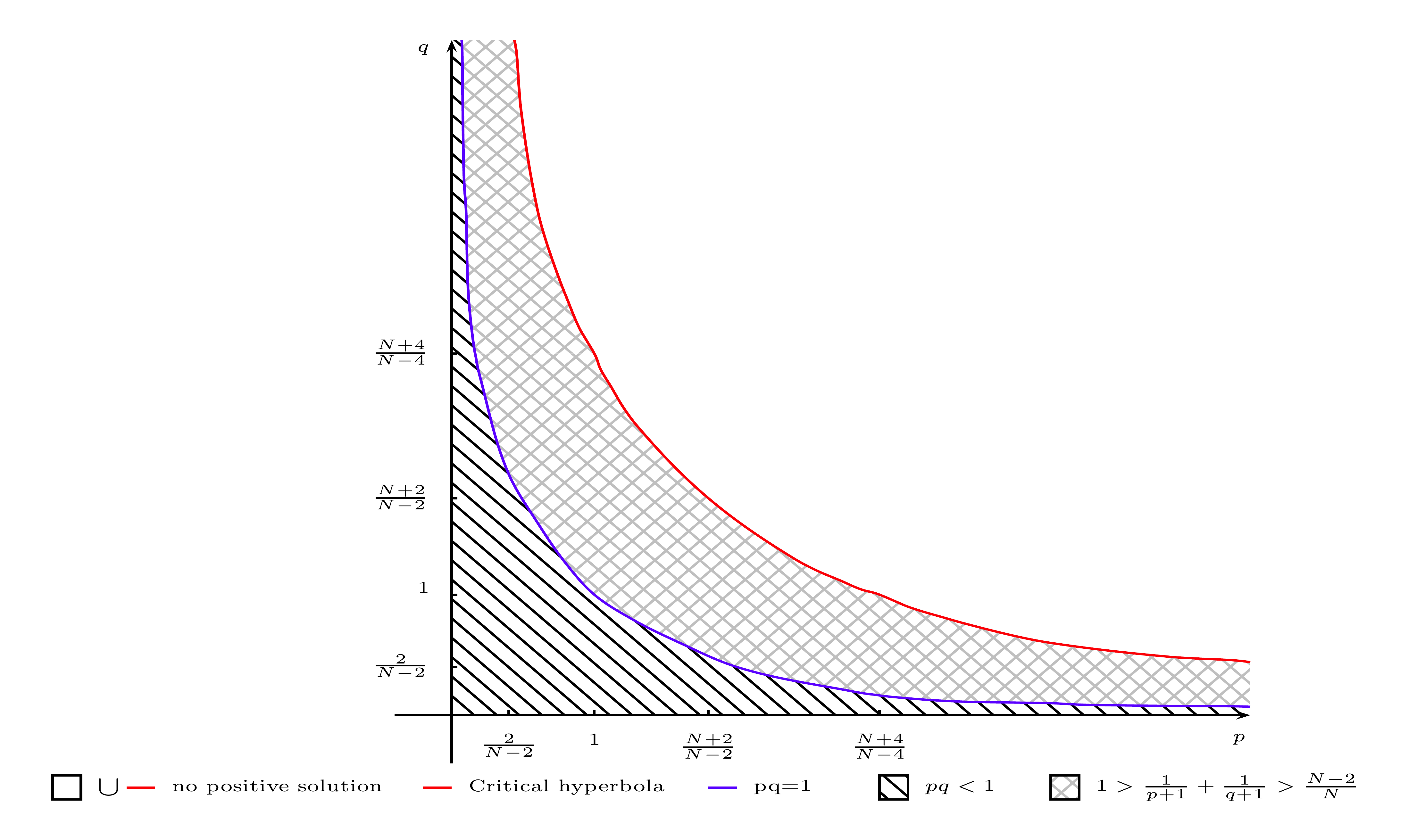}
\end{center}
\vspace{-.3cm}

In this section we reduce \eqref{S} to a fourth order equation. Indeed, see \cite[Theorem 1.1]{Ederson2008} and Proposition \ref{Reg1} hereafter, it is known that \eqref{S} is equivalent to
\begin{equation}\label{ENL}
\left\{
\begin{array}{rcll}
\Delta \left( | \Delta u |^{\frac{1}{q} -1}\Delta u \right) & = & | u |^{p-1}u & \hbox{in}\,\,\Omega\\
u , \Delta u & = & 0 & \hbox{on}\,\,\partial \Omega,
\end{array}
\right.
\end{equation}
in the sense that weak solutions of \eqref{ENL} correspond to classical solutions of \eqref{S}.

The idea of such a reduction goes back at least to P.-L.~Lions \cite{lions85}, see also \cite{ClementMitidieri,ClementFelmerMitidieri,hulshof96,doo-ub,Wang}. It turns out that \eqref{eq:(p,q)_for_Ederson_part2}, the hypothesis for subcriticality for \eqref{S}, is the right hypothesis to ensure a subcritical variational framework for dealing with the single equation \eqref{ENL}. We also mention that a more general class of Hamiltonian systems can be treated by this approach, see for instance \cite{doo-ub}.

\begin{definition}\label{solfraca}
Assume \eqref{eq:(p,q)_for_Ederson_part2}. Let $E = W^{2,\frac{q+1}{q}}(\Omega) \cap W^{1, \frac{q+1}{q}}_0(\Omega)$ be endowed with the norm
\[
\|u \|_E = \gd{\left( \int_{\Omega} | \Delta u |^{\frac{q+1}{q}} dx \right)^{\frac{q}{q+1}}}, \,\, u \in E.
\]
We say that $u \in E$ is a {\it weak solution} of \eqref{ENL} if
\[
\int_{\Omega} | \Delta u |^{\frac{1}{q}-1}\Delta u \Delta v \, dx = \int_{\Omega}| u |^{p-1}u v\, dx, \quad \forall \, v \in E.
\]
\end{definition}

So,  weak solutions of \eqref{ENL} are precisely the critical points of the $C^1(E, \R)$ functional $J: E\to\R$ defined by
\[
J(u) = \frac{q}{q+1} \int_{\Omega} \left| \Delta u \right|^{\frac{q+1}{q}}\, dx - \frac{1}{p+1} \int_{\Omega}\left| u \right|^{p+1}\, dx.
\]

Besides being more direct, another advantage of this approach is that it allows us to treat \eqref{S} in both cases $pq>1$ and $pq<1$ (superlinear and sublinear). Moreover, as we will see, it transforms the search for least energy solutions to \eqref{S} in the search of functions that realize the best constant for the embedding of $E$ into $L^{p+1}(\Omega)$ (cf. Lemma \ref{equivalencia} ahead). The price to pay with this approach is that one has to deal with a fourth order problem.

In case $ (p+1)(N-2), (q+1)(N-2) < 2N$, we recall that we can also define the weak solutions for \eqref{S} as the critical points of the $C^1 (H^1_0(\Omega) \times H^1_0(\Omega), \R)$ functional
\[
I(u,v) := \I_1(u,v)= \int_{\Omega} \langle \nabla u, \nabla v \rangle dx - \int_{\Omega} \left( \frac{|u |^{p+1}}{p+1} + \frac{|v|^{q+1}}{q+1} \right)dx, \,\,\, u,v \in H^1_0(\Omega).
\]

In order to clarify our presentation and justify our definition of ground state solutions we begin with some regularity results.

\begin{proposition}\label{Reg1} Assume that \eqref{eq:(p,q)_for_Ederson_part2} holds. Let $u \in E$ and set $v:= |\Delta u |^{\frac{1}{q} -1} (-\Delta u)$. The following statements are equivalent:
\begin{enumerate}[(i)]
\item $u$ is a critical point of $J$.
\item $u,v \in W^{2,s}(\Omega)$ for all $1 \leq s < \infty$ and $(u,v)$ is a strong solution of {\rm (\ref{S})}.
\item $u \in C^{2,\alpha}(\overline{\Omega})$ and $v \in C^{2,\beta}(\overline{\Omega})$ is a classical solution of \eqref{S} with: $\alpha =q$ if $0 < q <1$, and any $\alpha \in (0,1)$ if $q\geq 1$;  $\beta =p$ if $0 < p <1$, and any $\beta \in (0,1)$ if $p\geq 1$.
\end{enumerate}
In any such case, we have that $J(u) = I(u,v)$.
\end{proposition}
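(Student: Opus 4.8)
The plan is to prove the chain of equivalences $(i) \Leftrightarrow (ii) \Leftrightarrow (iii)$ together with the energy identity $J(u)=I(u,v)$, and the natural route is $(i)\Rightarrow(ii)\Rightarrow(iii)\Rightarrow(i)$, closing the loop. The starting observation is that the substitution $v := |\Delta u|^{1/q-1}(-\Delta u)$ is \emph{invertible}: since $t\mapsto |t|^{1/q-1}t$ is a homeomorphism of $\R$ with inverse $s\mapsto |s|^{q-1}s$, we recover $-\Delta u = |v|^{q-1}v$, i.e. $u$ formally solves the first equation of \eqref{S}. Also, $u\in E=W^{2,\frac{q+1}{q}}\cap W^{1,\frac{q+1}{q}}_0$ means $\Delta u\in L^{\frac{q+1}{q}}(\Omega)$, hence $v=|\Delta u|^{1/q-1}(-\Delta u)\in L^{q+1}(\Omega)$, and by \eqref{eq:(p,q)_for_Ederson_part2} the embedding $E\hookrightarrow L^{p+1}(\Omega)$ holds, so all integrals below make sense.

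First I would show $(i)\Rightarrow(ii)$. If $u$ is a critical point of $J$, then $\int_\Omega |\Delta u|^{1/q-1}\Delta u\,\Delta\varphi\,dx = \int_\Omega |u|^{p-1}u\,\varphi\,dx$ for all $\varphi\in E$; rewriting the left side as $-\int_\Omega v\,\Delta\varphi\,dx$, this says $-\Delta v = |u|^{p-1}u$ holds weakly, with $v\in L^{q+1}(\Omega)$ and $|u|^{p-1}u\in L^{\frac{p+1}{p}}(\Omega)$. By $W^{2,s}$-regularity for the Laplacian (as in \cite[Theorem 9.15]{GilbargTrudinger}) applied to $-\Delta v=|u|^{p-1}u$ with right-hand side in $L^{\frac{p+1}{p}}$, we get $v\in W^{2,\frac{p+1}{p}}\cap W^{1,\frac{p+1}{p}}_0$; then $-\Delta u = |v|^{q-1}v\in L^{\frac{q+1}{q}}$ bootstraps $u$, and one iterates. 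This is exactly the bootstrap of \cite[Theorem 1.1]{Ederson2008}: under \eqref{eq:(p,q)_for_Ederson_part2} the iteration does not stall (the condition $\frac1{p+1}+\frac1{q+1}>\frac{N-2}{N}$ is precisely what guarantees the gain at each step eventually lands in every $L^s$), yielding $u,v\in W^{2,s}(\Omega)$ for all $s<\infty$, hence $(u,v)$ is a strong solution of \eqref{S}. The step $(ii)\Rightarrow(iii)$ is then Schauder theory: $u,v\in W^{2,s}$ for all $s$ gives $u,v\in C^{1,\gamma}(\overline\Omega)$ for all $\gamma<1$, whence $|v|^{q-1}v\in C^{0,\beta}$ with $\beta=\min\{p,1\}$-type Hölder exponent — more precisely $|v|^{q-1}v\in C^{0,\alpha}$ with $\alpha=q$ if $q<1$ (since $s\mapsto|s|^{q-1}s$ is then $q$-Hölder) and $\alpha$ arbitrary in $(0,1)$ if $q\ge1$ — and Schauder estimates for $-\Delta u=|v|^{q-1}v$ give $u\in C^{2,\alpha}(\overline\Omega)$; symmetrically for $v$ using the Hölder regularity of $s\mapsto|s|^{p-1}s$. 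Finally $(iii)\Rightarrow(i)$ is immediate: a classical solution of \eqref{S} satisfies $-\Delta u=|v|^{q-1}v$, so $v=|\Delta u|^{1/q-1}(-\Delta u)$ and $-\Delta v=|u|^{p-1}u$; testing the latter against $\varphi\in E$ and integrating by parts twice recovers the weak formulation of \eqref{ENL}, so $u$ is a critical point of $J$.

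It remains to check $J(u)=I(u,v)$. With $v=|\Delta u|^{1/q-1}(-\Delta u)$ one has $-\Delta u = |v|^{q-1}v$ and $|\Delta u|^{\frac{q+1}{q}} = |v|^{q+1} = (-\Delta u)\,v$ pointwise, so $\int_\Omega|\Delta u|^{\frac{q+1}{q}}dx = \int_\Omega(-\Delta u)v\,dx = \int_\Omega\langle\nabla u,\nabla v\rangle\,dx$; moreover $\int_\Omega|v|^{q+1}dx = \int_\Omega\langle\nabla u,\nabla v\rangle\,dx$ as well. Substituting,
\[
J(u)=\frac{q}{q+1}\int_\Omega\langle\nabla u,\nabla v\rangle\,dx-\frac1{p+1}\int_\Omega|u|^{p+1}\,dx
= \int_\Omega\langle\nabla u,\nabla v\rangle\,dx-\frac1{q+1}\int_\Omega|v|^{q+1}\,dx-\frac1{p+1}\int_\Omega|u|^{p+1}\,dx = I(u,v),
\]
using $\frac{q}{q+1}=1-\frac1{q+1}$ and that the gradient integrals are finite since $u\in W^{2,\frac{q+1}{q}}\subset H^1_0$ once the regularity of step (ii) is in force (for (i)$\Leftrightarrow$(ii)$\Leftrightarrow$(iii) the pairing $\int\langle\nabla u,\nabla v\rangle$ is justified a posteriori, or directly via $\int(-\Delta u)v$ which only needs $\Delta u\in L^{\frac{q+1}{q}}$, $v\in L^{q+1}$). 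The main obstacle is the bootstrap in $(i)\Rightarrow(ii)$: one must verify carefully that the exponent iteration under the sharp subcriticality condition \eqref{eq:(p,q)_for_Ederson_part2} — rather than the naive $(p+1)(N-2),(q+1)(N-2)<2N$ — reaches $W^{2,s}$ for every finite $s$ without stalling, which is where the interplay between the two equations (and the possibility $p<1$ or $q<1$) enters; this is handled as in \cite[Theorem 1.1]{Ederson2008} and is also discussed in Subsection \ref{sec:p_diferente_q}.
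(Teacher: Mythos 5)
Your argument is correct, and it fills in a proof that the paper deliberately omits (the authors defer to \cite[Theorem 1.1]{Ederson2008} and \cite[Appendix A]{BonheureSantosRamosTAMS}); the route you take — inverting $\phi_q$ to read off $-\Delta u=|v|^{q-1}v$, identifying the critical point equation as a very weak formulation of $-\Delta v=|u|^{p-1}u$, bootstrapping under \eqref{eq:(p,q)_for_Ederson_part2}, then Schauder, then closing the loop by two integrations by parts — is exactly the standard one used in those references, and your derivation of $J(u)=I(u,v)$ from $|\Delta u|^{\frac{q+1}{q}}=|v|^{q+1}=(-\Delta u)v$ matches \cite[eq. (4.9)]{BonheureSantosRamosTAMS}. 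The only step where I would ask for one more line is the passage from ``$-\int_\Omega v\,\Delta\varphi\,dx=\int_\Omega|u|^{p-1}u\,\varphi\,dx$ for all $\varphi\in E$'' to ``$v$ is the $W^{2,\frac{p+1}{p}}\cap W^{1,\frac{p+1}{p}}_0$ solution'': \cite[Theorem 9.15]{GilbargTrudinger} produces \emph{some} strong solution $w$ with that regularity, and to conclude $v=w$ you need uniqueness of transposition (very weak) solutions, which follows by testing $v-w$ against $\Delta\varphi$ and using that $\Delta:E\to L^{\frac{q+1}{q}}(\Omega)$ is onto while $v-w\in L^{q+1}(\Omega)$. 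Since you explicitly lean on \cite[Theorem 1.1]{Ederson2008}, where this duality step is carried out, this is a presentational remark rather than a gap.
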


In the case when both $p$ and $q$ are subcritical in the $H^1$--sense, we have the following.

\begin{proposition}\label{Reg2} Assume $(p+1)(N-2), (q+1)(N-2) < 2N$. Let $u,v \in H^1_0(\Omega)$. The following statements are equivalent:\\
{\rm (i) } $(u,v)$ is a critical point of $I$.\\
{\rm (ii) } $ u$ is a critical point of $J$ and $v = |\Delta u |^{\frac{1}{q} -1} (-\Delta u)$.\\
In any such case, we have that $J(u) = I(u,v)$.
\end{proposition}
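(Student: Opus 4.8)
The plan is to prove the equivalence (i) $\Leftrightarrow$ (ii) by passing through the fourth-order formulation and invoking Proposition \ref{Reg1}. Since we are assuming $(p+1)(N-2),(q+1)(N-2)<2N$, the functional $I=\I_1$ is well defined and $C^1$ on $H^1_0(\Omega)\times H^1_0(\Omega)$, and its critical points are exactly the weak solutions of \eqref{S} in the $H^1$-sense. The key observation is that under these subcriticality assumptions, any weak solution $(u,v)\in H^1_0(\Omega)\times H^1_0(\Omega)$ of \eqref{S} automatically has extra regularity: a standard bootstrap using $W^{2,s}$ elliptic estimates (as in the references \cite{GilbargTrudinger}, and as already invoked in Subsection \ref{subsec:W^1s}) upgrades $(u,v)$ to a strong, then classical, solution. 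In particular $u\in W^{2,\frac{q+1}{q}}(\Omega)\cap W^{1,\frac{q+1}{q}}_0(\Omega)=E$, so it makes sense to speak of $u$ as an element of $E$.

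For the implication (i) $\Rightarrow$ (ii): let $(u,v)$ be a critical point of $I$. Then $(u,v)$ is a classical solution of \eqref{S}, so from the first equation $-\Delta u=|v|^{q-1}v$ we may invert the (pointwise, monotone) map $t\mapsto|t|^{q-1}t$ to get $v=|{-\Delta u}|^{\frac1q-1}(-\Delta u)=|\Delta u|^{\frac1q-1}(-\Delta u)$. Substituting this into the second equation of \eqref{S} yields precisely $\Delta(|\Delta u|^{\frac1q-1}\Delta u)=|u|^{p-1}u$ with $u,\Delta u=0$ on $\partial\Omega$, i.e. $u$ is a weak (indeed classical) solution of \eqref{ENL}; by Proposition \ref{Reg1}, $u$ is a critical point of $J$. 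Conversely, for (ii) $\Rightarrow$ (i): if $u$ is a critical point of $J$ and $v=|\Delta u|^{\frac1q-1}(-\Delta u)$, then Proposition \ref{Reg1} gives that $(u,v)$ is a classical solution of \eqref{S}, with $u,v\in H^1_0(\Omega)$ (the regularity in Proposition \ref{Reg1}(ii)-(iii) certainly puts both components in $H^1_0(\Omega)$ under our assumptions); testing the two equations of \eqref{S} against $H^1_0$ functions and integrating by parts shows $I'(u,v)=0$.

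Finally, the energy identity $J(u)=I(u,v)$ in any of these cases is already contained in the conclusion of Proposition \ref{Reg1}, since the relation $v=|\Delta u|^{\frac1q-1}(-\Delta u)$ holds in every case considered here. Concretely, one checks it directly: with $-\Delta u=|v|^{q-1}v$ one has $\int_\Omega\langle\nabla u,\nabla v\rangle\,dx=\int_\Omega(-\Delta u)v\,dx=\int_\Omega|v|^{q+1}\,dx=\int_\Omega|\Delta u|^{\frac{q+1}{q}}\,dx$, and then the quadratic and $|v|^{q+1}$ terms in $I$ combine into the $\frac{q}{q+1}\int|\Delta u|^{\frac{q+1}{q}}$ term of $J$, while the $|u|^{p+1}$ terms match verbatim.

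The main obstacle, and the only place where the hypothesis $(p+1)(N-2),(q+1)(N-2)<2N$ is genuinely used, is the regularity bootstrap that shows an $H^1_0\times H^1_0$ weak solution of \eqref{S} is strong/classical and in particular lies in $E$; once that is in hand, everything reduces to Proposition \ref{Reg1} plus the elementary inversion of the nonlinearity. I would present that bootstrap briefly (or simply cite it, as the excerpt does for the analogous statement in Subsection \ref{subsec:W^1s} and in \cite[Proposition 2.1]{BonheureSantosRamosTAMS}), since it is entirely standard.
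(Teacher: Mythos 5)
Your argument is correct and is essentially the route the paper itself indicates: the paper omits the proof of Proposition \ref{Reg2}, deferring the technical regularity step to \cite[Theorem 1.1]{Ederson2008} and \cite[Appendix A]{BonheureSantosRamosTAMS} and noting that the identity $J(u)=I(u,v)$ follows directly once regularity is established --- which is exactly the structure of your proposal (bootstrap to a classical solution in $E$, invert the first equation, then apply Proposition \ref{Reg1} in both directions). The only ingredient you leave implicit, namely the bootstrap showing that an $H^1_0(\Omega)\times H^1_0(\Omega)$ critical point of $I$ is classical, is the same one the paper cites rather than proves, so nothing essential is missing.
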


The proofs for Propositions \ref{Reg1} and \ref{Reg2} are a bit technical and we decide to omit them in this survey; for the interested reader we indicate \cite[Theorem 1.1]{Ederson2008} and \cite[Appendix A]{BonheureSantosRamosTAMS}. The identity $J(u) = I(u,v)$ is obtained in a straightforward way - see the arguments in \cite[eq. (4.9)]{BonheureSantosRamosTAMS} - once we know that weak solutions are indeed regular.

We make a preliminary remark in the case when $pq =1$, in which \eqref{ENL} becomes an eigenvalue problem. Let
\begin{equation}
\lambda_{1,q}: = \inf \left\{ \frac{\int_{\Omega}| \Delta u |^{\frac{q+1}{q}}dx}{\int_{\Omega}| u |^{\frac{q+1}{q}}dx}: u \in E \menos\{ 0\}\right\}.
\end{equation}
Clearly, if $\lambda_{1,q}>1$ then \eqref{ENL} has no nontrivial weak solutions. Moreover, in general $J(u)=0$ for any such weak solution $u \in E$; in particular, the value $J(u)$ does not distinguish weak solutions of \eqref{ENL} in the case when $pq=1$. In virtue of this remark, and since we will be dealing with least energy solutions of \eqref{ENL}, in the sequel we always assume that $pq\neq 1$. Supported by the regularity results stated above, we can now introduce the definition of ground state solution in this context.
\begin{definition}
Assume \eqref{eq:(p,q)_for_Ederson_part2} and $pq \neq 1$. We say that $u \in E \menos \{ 0\}$ is a {\it ground state solution} for \eqref{S} if $J$ attains its smallest nonzero critical value at $u$. 
\end{definition}

As $I(u,v)=J(u)$ for all solutions, then this notion turns out to be equivalent to all the other definitions of ground state (least energy) solutions in this survey, that is
\[
c(\Omega)=\inf \{J(u):\ u\in E,\ u\neq 0, \ J'(u)=0\}.
\]

The next theorem is Theorem \ref{thm:qualitativeprop_dual} (i) for which we will provide an alternative proof.

\begin{theorem}\label{teoexistence}
Assume \eqref{eq:(p,q)_for_Ederson_part2} and $pq \neq 1$. Then \eqref{S} has a ground state solution. Moreover, any ground state solution $(u,v)$ of \eqref{S} is such that $uv > 0$ in $\Omega$.
\end{theorem}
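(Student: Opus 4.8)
The strategy is to work with the reduced fourth-order functional $J$ on the Banach space $E = W^{2,\frac{q+1}{q}}(\Omega)\cap W^{1,\frac{q+1}{q}}_0(\Omega)$, and to obtain a ground state by minimizing over the natural Nehari manifold of $J$, which — unlike the situation for the strongly indefinite functional \eqref{eq:usual_action_functional} — is well behaved because $J$ has a genuine mountain pass geometry. Concretely, since $pq\neq 1$ and \eqref{eq:(p,q)_for_Ederson_part2} holds, the embedding $E\hookrightarrow L^{p+1}(\Omega)$ is compact (this is exactly where subcriticality enters, and is the content of the relation between \eqref{eq:(p,q)_for_Ederson_part2} and the critical hyperbola discussed in the introduction). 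The plan is: (1) rescale $J(tu)= \frac{q}{q+1}t^{\frac{q+1}{q}}\|u\|_E^{\frac{q+1}{q}} - \frac{1}{p+1}t^{p+1}\|u\|_{p+1}^{p+1}$ and observe that, because $pq\neq 1$, the exponents $\frac{q+1}{q}$ and $p+1$ are different, so for each $u\neq 0$ the map $t\mapsto J(tu)$ has a unique nonzero critical point $t(u)>0$, which moreover is a maximum when $p+1>\frac{q+1}{q}$ (i.e. $pq>1$) and a minimum when $pq<1$. (2) Use this to show that the least critical value equals $\inf_{\Ncal_J} J$, where $\Ncal_J=\{u\neq 0 : J'(u)u=0\}$, and that this infimum can be rewritten as a minimization of a Rayleigh-type quotient; in fact minimizing $J$ on $\Ncal_J$ is equivalent to finding the best constant $S$ in $\|u\|_{p+1}\le S^{-1}\|u\|_E$, as already hinted at before Lemma~\ref{equivalencia}. (3) Take a minimizing sequence, normalize it on $\Ncal_J$ (so it is bounded in $E$ by the lower bound $\|u\|_E\ge R>0$ on $\Ncal_J$ combined with the value constraint), extract a weakly convergent subsequence $u_n\rightharpoonup u$, and use compactness of $E\hookrightarrow L^{p+1}$ to pass to the limit; weak lower semicontinuity of $\|\cdot\|_E^{\frac{q+1}{q}}$ together with strong convergence of the $L^{p+1}$ term gives that $u$ is a minimizer, hence (by Lemma~\ref{lemma:properties_of_N_Phi}-type natural-constraint reasoning, adapted to $J$) a nontrivial critical point of $J$. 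By Proposition~\ref{Reg1}, setting $v=|\Delta u|^{\frac1q-1}(-\Delta u)$ yields a classical ground state solution $(u,v)$ of \eqref{S}.

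For the second assertion, $uv>0$ in $\Omega$, the key point is a symmetrization/sign argument on $J$ itself. Note $J(|u|)\le J(u)$ for all $u\in E$: this is because $\||u|\|_{p+1}=\|u\|_{p+1}$ while $\|\,|u|\,\|_E \le \|u\|_E$, the latter being a consequence of Kato's inequality (equivalently, comparing $-\Delta|u|$ with $|\Delta u|$ via the maximum principle, exactly as in the $\Phi(|f|,|g|)\le\Phi(f,g)$ estimate \eqref{comparisonphi} of the dual method). Given a ground state $u$, apply the rescaling: $c(\Omega)\le J(t(|u|)\,|u|)\le J(t(|u|)\,u)\le \sup_{t>0}J(tu)=J(u)=c(\Omega)$ when $pq>1$ (and the analogous chain with $\inf$ replacing $\sup$ when $pq<1$). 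Forcing equality throughout, uniqueness of the scaling parameter gives $t(|u|)=1$ and $J(|u|)=J(u)$, which forces $u$ not to change sign; so $u\ge0$ or $u\le0$. Then $v=|\Delta u|^{\frac1q-1}(-\Delta u)$ solves $-\Delta v = |u|^{p-1}u$, and $u$ solves $-\Delta u = |v|^{q-1}v$; the strong maximum principle applied to each equation, together with the regularity from Proposition~\ref{Reg1}, promotes this to $u>0,v>0$ in $\Omega$ (or $u<0,v<0$), i.e. $uv>0$.

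\textbf{Main obstacle.} The delicate point I expect is not the existence part — once the compact embedding $E\hookrightarrow L^{p+1}(\Omega)$ under \eqref{eq:(p,q)_for_Ederson_part2} is granted, the Nehari minimization is standard — but rather handling the \emph{sublinear} case $pq<1$ cleanly and uniformly with the superlinear one. When $pq<1$ the functional $J$ is bounded below and the Nehari point is a \emph{minimum} of $t\mapsto J(tu)$ rather than a maximum, so several inequalities reverse; one must be careful that the minimization still selects a nontrivial solution (the origin must be separated from $\Ncal_J$, which is why the lower bound $\|u\|_E\ge R$ on $\Ncal_J$ is essential) and that the sign comparison $J(|u|)\le J(u)$ still yields the right conclusion after rescaling. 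A second, more technical, obstacle is justifying that a minimizer of $J$ over $\Ncal_J$ is genuinely a free critical point of $J$ (the natural-constraint property): this requires checking that $\Ncal_J$ is a $C^1$-manifold and that the Lagrange multiplier vanishes, which follows from computing $\Lambda'(u)u$ with $\Lambda(u)=J'(u)u$ and using $pq\neq1$ to see it is nonzero on $\Ncal_J$ — entirely analogous to Lemma~\ref{lemma:properties_of_N_Phi}(ii)--(iii) but for the single fourth-order equation.
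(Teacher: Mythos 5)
Your existence argument follows the paper's own route essentially verbatim: reformulate the Nehari minimization for $J$ as the best-constant problem $\alpha_{p,q}$ for the embedding $E\hookrightarrow L^{p+1}(\Omega)$, get attainment from compactness, and use the Lagrange multiplier computation ($G'(u)u=\frac{1-pq}{q}\|u\|_E^{\frac{q+1}{q}}\neq 0$ on $\Ncal_J$, valid precisely because $pq\neq 1$) to see that the constrained minimizer is a free critical point. That part is fine, including your handling of the sublinear case.

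The sign argument, however, has a genuine gap. You want to compare $J(|u|)$ with $J(u)$, but $|u|$ is in general \emph{not} an element of $E=W^{2,\frac{q+1}{q}}(\Omega)\cap W^{1,\frac{q+1}{q}}_0(\Omega)$: if $u$ changes sign transversally, the distributional Laplacian of $|u|$ acquires a singular part supported on the nodal set, so $|u|$ has no second weak derivative and $\|\,|u|\,\|_E$ is undefined (this is exactly the obstruction to naive symmetrization in second-order Sobolev spaces that motivates Section \ref{sec:more_on_symmetry}). Kato's inequality only gives $\Delta|u|\geq \operatorname{sgn}(u)\Delta u$ as distributions; it does not control $\int|\Delta|u||^{\frac{q+1}{q}}$, and the analogy with $\Phi(|f|,|g|)\leq\Phi(f,g)$ in the dual method fails because there the variables live in Lebesgue spaces where taking absolute values is harmless. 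The correct device (van der Vorst's argument, used in Lemma \ref{positividade}) is to replace $|u|$ by the auxiliary function $w\in E$ solving $-\Delta w=|\Delta u|$ with $w=0$ on $\partial\Omega$: then $\|w\|_E=\|u\|_E$ exactly, and if $-\Delta u$ changed sign the strong maximum principle would give $w>|u|$, hence $\|w\|_{p+1}>\|u\|_{p+1}$ and a strictly smaller Rayleigh quotient than $\alpha_{p,q}$ — a contradiction. This shows $-\Delta u$ (equivalently $v$) is signed, and then the strong maximum principle applied to each equation yields $u>0,\ v>0$ (or both negative). Once the comparison is set up this way, no separate fibering/rescaling step is needed, and the superlinear and sublinear cases are treated uniformly since the argument only uses the minimality of the Rayleigh quotient.
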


In connection with Theorem \ref{teounicidadesub} below, we point out that \eqref{eq:(p,q)_for_Ederson_part2} holds in case $pq< 1$. In this case, we have a uniqueness result.
\begin{theorem}\label{teounicidadesub}
Assume $pq< 1$. Then {\rm (\ref{S})} has a unique positive solution which is precisely the (positive) ground state solution.
\end{theorem}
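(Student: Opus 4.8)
\textbf{Proof proposal for Theorem \ref{teounicidadesub}.}

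The plan is to exploit the reduced functional $J$ and the fact that, when $pq<1$, $J$ becomes coercive and (after the natural change of variables) essentially convex along rays, so that it has a unique nontrivial critical point which is automatically the global minimizer, hence the ground state. First I would record that under $pq<1$ the exponent condition \eqref{eq:(p,q)_for_Ederson_part2} is automatic, so Proposition~\ref{Reg1} applies and weak solutions of \eqref{ENL} are exactly classical solutions of \eqref{S}; moreover $J$ is coercive on $E$ because $\frac{q+1}{q}>p+1$ is equivalent to $pq<1$, so the leading term $\frac{q}{q+1}\|u\|_E^{\frac{q+1}{q}}$ dominates the subcritical term $\frac{1}{p+1}\int_\Omega|u|^{p+1}\,dx$ (using the compact embedding $E\hookrightarrow L^{p+1}(\Omega)$ from \eqref{eq:(p,q)_for_Ederson_part2}). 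Hence $J$ attains a global minimum at some $u_0\in E$; since $J(tu)=\frac{q}{q+1}t^{\frac{q+1}{q}}\|u\|_E^{\frac{q+1}{q}}-\frac{1}{p+1}t^{p+1}\int_\Omega|u|^{p+1}\,dx<0$ for $t$ small whenever $u\not\equiv 0$, the minimizer is nontrivial, so a ground state solution exists and has negative energy. By the comparison $J(|u|)\le J(u)$ (replacing $u$ by $|\Delta u|^{\frac1q-1}(-\Delta u)$ is order-reversing via the maximum principle, exactly as in the proof of Theorem~\ref{thm:qualitativeprop_dual}(i)), the minimizer can be taken nonnegative, and the strong maximum principle applied to $-\Delta u=|v|^{q-1}v$ and $-\Delta v=|u|^{p-1}u$ gives $u,v>0$ in $\Omega$, i.e. $uv>0$.

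For uniqueness I would argue that \emph{every} nontrivial critical point of $J$ is the global minimizer. Given any $u\not\equiv 0$ with $J'(u)=0$, testing with $u$ yields $\|u\|_E^{\frac{q+1}{q}}=\int_\Omega|u|^{p+1}\,dx$, hence
\[
J(u)=\Big(\frac{q}{q+1}-\frac{1}{p+1}\Big)\int_\Omega|u|^{p+1}\,dx=\frac{q(p+1)-(q+1)}{(q+1)(p+1)}\int_\Omega|u|^{p+1}\,dx=\frac{pq-1}{(q+1)(p+1)}\int_\Omega|u|^{p+1}\,dx<0,
\]
so all critical values are negative and a critical point cannot be a local max of the fibering map $t\mapsto J(tu)$. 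Differentiating $t\mapsto J(tu)=\frac{q}{q+1}t^{\frac{q+1}{q}}\|u\|_E^{\frac{q+1}{q}}-\frac{1}{p+1}t^{p+1}\int_\Omega|u|^{p+1}\,dx$ and using $\frac{q+1}{q}>p+1$, one sees this map has a unique critical point $t_u>0$ on $(0,\infty)$, which is its strict global maximum; so the natural (Nehari-type) constraint is a $C^1$ manifold that is a global minimizer once and is a natural constraint, and the minimization of $J$ over it has a solution. The crux is then to show the minimizer is \emph{unique}: here I would invoke the convexity structure obtained by writing $u=A^{-1}\circ(\text{something})$, or more directly apply the classical ``hidden convexity'' argument of Brezis--Oswald / Benguria--Brezis--Lieb type --- since $t\mapsto \frac{|t|^{p+1}}{p+1}$ is concave in $t^{\frac{q+1}{q}}$ when $p+1<\frac{q+1}{q}$, the functional $J$ becomes strictly convex after the substitution $w=|u|^{\frac{q+1}{q}\cdot\frac{1}{p+1}}\,\mathrm{sgn}\,u$ (equivalently one compares two positive solutions $u_1,u_2$ by testing the equation for $u_1$ with $u_2^{r}/u_1^{r-1}$ and vice versa and subtracting). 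Strict convexity (or the strict Picone-type inequality) forces $u_1\equiv u_2$.

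The main obstacle I expect is exactly the convexity/comparison step establishing uniqueness of the positive solution, because the functional $J$ is fourth order and $u$ may change sign a priori, so one must first reduce to positive solutions (done via the maximum-principle comparison above and the strong maximum principle, which requires $v=|\Delta u|^{\frac1q-1}(-\Delta u)$ to make sense --- guaranteed by Proposition~\ref{Reg1}) and then make the Picone/hidden-convexity argument work despite the nonlinear operator $\Delta(|\Delta u|^{\frac1q-1}\Delta u)$. A clean route is to pass back to the system form \eqref{S}: if $(u_1,v_1)$ and $(u_2,v_2)$ are two positive solutions, multiply $-\Delta u_1=v_1^{q}$ by $(u_1^{r}-u_2^{r})/u_1^{r-1}$ for a suitable exponent $r$ tuned to $pq<1$, integrate by parts, do the symmetric manipulation, subtract, and use $pq<1$ together with the strict convexity of $t\mapsto t^{q}$ or $t\mapsto t^{p}$ on the relevant range to get a sign-definite identity whose vanishing forces $u_1\equiv u_2$ (hence $v_1\equiv v_2$). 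Finally, combining existence of a ground state (the global minimizer) with the uniqueness of the positive solution gives that this unique positive solution \emph{is} the ground state, completing the proof.
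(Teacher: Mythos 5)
Your existence argument (coercivity of $J$ when $pq<1$, since $\tfrac{q+1}{q}>p+1$ is equivalent to $pq<1$, together with $J(tu)<0$ for small $t>0$) is correct and is a legitimate alternative to the paper's route, which instead runs the minimization on the Nehari manifold $\Ncal_J$ and shows it is equivalent to attaining the best constant $\alpha_{p,q}$ of the embedding $E\hookrightarrow L^{p+1}(\Omega)$ (Lemmas \ref{equivalencia}, \ref{alphapqatingido}, \ref{neharisolucao}), a scheme that treats $pq<1$ and $pq>1$ uniformly. A small slip along the way: for $pq<1$ the unique critical point of the fibering map $t\mapsto J(tu)$ is its strict global \emph{minimum}, not its maximum.

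There are, however, two genuine gaps. First, the positivity step: the comparison $J(|u|)\le J(u)$ is not available here, because $|u|$ need not belong to $E=W^{2,\frac{q+1}{q}}(\Omega)\cap W^{1,\frac{q+1}{q}}_0(\Omega)$ --- taking absolute values destroys second weak derivatives, which is exactly the obstruction the paper repeatedly emphasizes for rearrangement-type operations in this framework. The correct argument is that of Lemma \ref{positividade} (van der Vorst's trick): solve $-\Delta w=|\Delta u|$, so that $w>|u|$ if $\Delta u$ changes sign, and contradict the optimality of $u$ in \eqref{alphapq}. Second, and more importantly, the uniqueness of the positive solution --- which you yourself flag as ``the main obstacle'' --- is only sketched: you propose a Picone/hidden-convexity identity with ``a suitable exponent $r$ tuned to $pq<1$'' but neither specify $r$ nor verify that the resulting identity is sign-definite; note that $pq<1$ allows one of $p,q$ to exceed $1$, so a naive scalar Brezis--Oswald convexity argument applied to each equation separately is not available. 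The paper does not carry out this step either: once positivity is established, it quotes uniqueness directly from \cite[Theorem 3]{dalmasso2000}. As written, your argument does not yet prove the uniqueness assertion of the theorem.
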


In case $\Omega$ is a ball, we have a symmetry result for ground state solutions. Moreover, the uniqueness result also holds in the superlinear case. The next theorem is partially a consequence of Theorem \ref{thm:qualitativeprop_dual} (ii) but we present an alternative proof. 
\begin{theorem}\label{teounicidadebola}
Assume \eqref{eq:(p,q)_for_Ederson_part2}, $pq\neq 1$ and $\Omega = B_R(0)$. Then \eqref{S} has, up to sign, a unique ground state solution. Furthermore, by letting $u>0$ and $v : = | \Delta u |^{\frac{1}{q}-1}(-\Delta u) >0$ we have that both $u$ and $v$ are radially symmetric and radially decreasing with respect to the origin.
\end{theorem}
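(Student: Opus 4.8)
\textbf{Proof proposal for Theorem~\ref{teounicidadebola}.}

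The plan is to reduce everything to the single fourth-order equation \eqref{ENL} and to exploit the Schwarz symmetrization estimates of Lemma~\ref{simetrizacao}, following the strategy already used for Theorem~\ref{thm:qualitativeprop_dual}(ii) but now working with the functional $J$ instead of the dual functional $\Phi$. First I would recast the search for ground states as a constrained minimization. By homogeneity, for $u\in E\setminus\{0\}$ one has $J(tu)=\frac{q}{q+1}t^{(q+1)/q}\|u\|_E^{(q+1)/q}-\frac{1}{p+1}t^{p+1}\int_\Omega|u|^{p+1}dx$; since $pq\neq1$ the two exponents $(q+1)/q$ and $p+1$ differ, so along each ray $t\mapsto J(tu)$ has a unique nonzero critical point $t(u)$ when $pq>1$ (a maximum, with $J(t(u)u)>0$) or when $pq<1$ one argues via Theorem~\ref{teounicidadesub}. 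Consequently the ground state level equals
\[
c(B_R)=\inf_{u\in E\setminus\{0\}}\ \sup_{t>0} J(tu),
\]
and (for $pq>1$) the minimization is equivalent to maximizing the Sobolev-type quotient $\|u\|_{p+1}/\|u\|_E$, i.e. to finding an extremal function for the embedding $E\hookrightarrow L^{p+1}(\Omega)$ (cf. Lemma~\ref{equivalencia} and the preceding discussion).

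Next I would establish existence of a ground state: with hypothesis \eqref{eq:(p,q)_for_Ederson_part2} the embedding $E\hookrightarrow L^{p+1}(\Omega)$ is compact, so a minimizing sequence for the quotient (which we may take in $E$, normalized) converges weakly in $E$ and strongly in $L^{p+1}$, whence the quotient is attained; the maximizer, suitably rescaled, is a critical point of $J$ with $J(u)=c(B_R)>0$, hence a ground state. By Proposition~\ref{Reg1}, $u$ and $v:=|\Delta u|^{1/q-1}(-\Delta u)$ are classical solutions of \eqref{S}, and by Theorem~\ref{teoexistence} we have $uv>0$, so after a sign change we may assume $u>0$ and $v>0$ in $B_R$.

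For the symmetry, I would compare $u$ with the symmetrized competitor. Set $f:=|\Delta u|^{1/q-1}(-\Delta u)=v\ge0$; then $-\Delta u=|f|^{q-1}f\cdot$(a harmless rewriting), more precisely $-\Delta u = f^q$ in $B_R$ with $u=0$ on $\partial B_R$, so $u=K(f^q)$ where $K=(-\Delta)^{-1}$. Let $w:=K\big((f^*)^q\big)=K\big((f^q)^*\big)$ be the solution with datum $(f^q)^*$. Lemma~\ref{simetrizacao} gives $u^*\le w$ in $B_R$, with equality (a.e.) if and only if $f^q=(f^q)^*$, equivalently $f=f^*$. Since $\|\Delta w\|_{(q+1)/q}=\|(f^q)^*\|_{(q+1)/q}=\|f^q\|_{(q+1)/q}=\|\Delta u\|_{(q+1)/q}$, we have $\|w\|_E=\|u\|_E$, while by equimeasurability and $u^*\le w$,
\[
\int_{B_R}|w|^{p+1}dx\ \ge\ \int_{B_R}|u^*|^{p+1}dx\ =\ \int_{B_R}|u|^{p+1}dx,
\]
so the Sobolev quotient of $w$ is at least that of $u$; since $u$ is a maximizer, equality must hold throughout, forcing $f=f^*$ by the rigidity clause of Lemma~\ref{simetrizacao}, and then $u=u^*=w$. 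Thus $v=f=f^*$ is radially symmetric and radially nonincreasing, and $u=K(v^q)=w$ is radial; strict radial monotonicity of both follows from the strong maximum principle applied to the (now one-dimensional, radial) equations, noting that $v^q>0$ and $u>0$ inside, so $-\Delta u>0$ and $-\Delta v=u^p>0$ give $u_r<0$, $v_r<0$ for $r\in(0,R)$.

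For uniqueness up to sign I would argue that any ground state produces, via the above, a radial extremal of the Sobolev quotient, and then invoke the uniqueness of the positive radial solution of \eqref{ENL}: in the sublinear case $pq<1$ this is exactly Theorem~\ref{teounicidadesub}; in the superlinear case $pq>1$ one uses that on the ball the radial ODE system $-u''-\frac{N-1}{r}u'=v^q$, $-v''-\frac{N-1}{r}v'=u^p$ with $u(R)=v(R)=0$ has a unique positive solution — this can be obtained by an ODE shooting/Pohozaev-type argument or by the Krasnoselskii–Rabinowitz–type uniqueness for cooperative radial systems, and is the point where the argument is least self-contained. \textbf{The main obstacle} is precisely this last uniqueness statement in the superlinear regime: the symmetrization argument only shows that \emph{every} ground state is radial and extremal, and pins down the value $c(B_R)$, but to conclude ``up to sign a unique ground state'' one genuinely needs uniqueness of the positive radial solution (equivalently, of the radial extremal of the embedding $E\hookrightarrow L^{p+1}$), which I would either cite from \cite{BonheureSantosRamosTAMS} or prove by reducing, via the substitution and the radial change of variables, to a single ODE boundary value problem whose positive solution is unique by a standard monotonicity-in-the-shooting-parameter argument.
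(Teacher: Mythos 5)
Your proposal is correct and follows essentially the same route as the paper: existence and positivity via the attained best constant $\alpha_{p,q}$ (Lemmas~\ref{equivalencia}, \ref{alphapqatingido}, \ref{positividade}), and radial symmetry by comparing $u$ with $w=K\big((-\Delta u)^*\big)$ through Talenti's comparison principle (Lemma~\ref{simetrizacao}), noting that your symmetrization of $v$ coincides with the paper's symmetrization of $-\Delta u$ since $(f^*)^q=(f^q)^*$. The uniqueness step you flag as not self-contained is handled in the paper exactly as you suggest, by citing the uniqueness of the positive radial solution from \cite[Theorem 1.1 (i)]{dalmasso} once positivity and radial symmetry are established.
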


We mention that in \cite[Theorem 1.2 (i)]{dalmasso}, the author proves the existence of a radial positive solution of \eqref{S} under hypotheses \eqref{eq:(p,q)_for_Ederson_part2}, $pq \neq 1$ and $\Omega = B_R(0)$. Such a result is extended by combining Theorems \ref{teoexistence}-\ref{teounicidadesub}-\ref{teounicidadebola}. The proof of Theorem \ref{teounicidadebola} in based on an extension of \cite[Theorem 1]{FerreroGazzolaWeth}, which deals with the case $p=1$. As for the uniqueness properties above, they turn out to be straightforward consequences of the results in \cite{dalmasso2000, dalmasso}, once the remaining properties (positivity and symmetry) are established.

We will prove these results via the reduction by inversion method in the next subsection. Recall that the existence, sign, and symmetry results had already been proved for $pq>1$ (more precisely under \eqref{eq:(p,q)_for_Ederson_part}) via the dual method in the previous section. Here we decide to reprove these facts with this approach in order to clarify its advantages in the study of Hamiltonian systems, and also in particular to highlight its flexibility by dealing with the more general case \eqref{eq:(p,q)_for_Ederson_part2}, $pq\neq 1$ (without any extra effort). We will also provide other characterizations of $c(\Omega)$.

\subsection{Proof of the existence, sign, and symmetry results}
\label{boundeddomain}
In this part we prove Theorems \ref{teoexistence}-\ref{teounicidadesub}-\ref{teounicidadebola}.

In the sequel we assume that \eqref{eq:(p,q)_for_Ederson_part2} holds and $pq \neq 1$. We denote by $\mathcal{N}_J$ the Nehari manifold associated to the functional $J$, namely
\[
\mathcal{N}_J := \left\{ u \in E:\ u\neq 0\ \text{ and }\  J'(u)u = 0 \right\},
\]
and introduce the minimization problems
\begin{equation}\label{minimizacaoJ}
c_J:= \inf_{u \in \mathcal{N}_J} J (u)
\end{equation}
and
\begin{equation}\label{alphapq}
\alpha_{p,q} : = \inf \left\{ \int_{\Omega} | \Delta u |^{\frac{q+1}{q}} dx \,:\, u \in E, \, \|u\|_{p+1}^{p+1} =1\right\}.
\end{equation}
Note that if it is achieved, $1/(\alpha_{p,q})^{q/(q+1)}$ is the optimal constant for the embedding of $E$ into $L^{p+1}(\Omega)$.

We start by observing that given $u \in E\menos\{ 0 \}$ there exists a unique $t = t(u) > 0$ such that $t(u) u \in \mathcal{N}_J$, which is explicitly given by
\begin{equation}\label{raio}
t(u) = \left( \frac{\| u \|_E^{\frac{q+1}{q}}}{ \| u \|_{p+1}^{p+1}} \right)^{\frac{q}{pq-1}}.
\end{equation}
Now, let $u \in \mathcal{N}_J$. Then $0 = J'(u)u  = \| u \|_E^{\frac{q+1}{q}} - \| u \|_{p+1}^{p+1}$, and therefore
\begin{equation}\label{energiaJ}
J(u) = \frac{q}{q+1} \| u \|_E^{\frac{q+1}{q}} - \frac{1}{p+1} \| u \|_{p+1}^{p+1} = \frac{pq-1}{(p+1)(q+1)} \| u \|_E^{\frac{q+1}{q}}.
\end{equation}
Furthermore,
\begin{equation}\label{quociente}
\gd{\frac{\|u \|_E^{\frac{q+1}{q}}}{\| u \|_{p+1}^{\frac{q+1}{q}}} = \frac{\|u \|_E^{\frac{q+1}{q}}}{\|u \|_E^{\left(\frac{q+1}{q}\right)^2 \frac{1}{p+1}}} = \left( \frac{(p+1)(q+1)}{pq-1} J(u) \right)^{\frac{pq-1}{q(p+1)}}. }
\end{equation}

\begin{lemma}\label{equivalencia}
Assume \eqref{eq:(p,q)_for_Ederson_part2} and $pq\neq 1$. Then the minimization problems \eqref{minimizacaoJ} and \eqref{alphapq} are equivalent in the sense that:
\begin{enumerate}[(i)]
\item Given a minimizing sequence $(u_n) \con \mathcal{N}_J$ for \eqref{minimizacaoJ}, $(\| u_n \|_{p+1}^{-1} u_n)$ is a minimizing sequence for \eqref{alphapq}.
\item Given a minimizing sequence $(\overline{u}_n)$ for \eqref{alphapq}, $(\| \overline{u}_n \|_E^{\frac{q+1}{pq-1}} \overline{u}_n) \con \mathcal{N}_J$ is a minimizing sequence for \eqref{minimizacaoJ}.
\item We have the equality
\begin{equation}\label{relacao}
\gd{c_J = \frac{pq-1}{(p+1)(q+1)} \alpha_{p,q}^{\frac{q(p+1)}{pq-1}}.}
\end{equation}
\item The optimal constant  $\alpha_{p,q}$ is attained if and only if $c_J$ is attained. In addition, if $\overline{u}$ is a solution for \eqref{alphapq}, then $\| \overline{u} \|_E^{\frac{q+1}{pq -1}} \overline{u} = \alpha_{p,q}^{\frac{q}{pq- 1}} \overline{u}$ is a solution for \eqref{minimizacaoJ}. Conversely, if $u$ is a solution for \eqref{minimizacaoJ}, then $\|u \|^{-1}_{p+1}u$ is a solution for \eqref{alphapq}.
\end{enumerate}
\end{lemma}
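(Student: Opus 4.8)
The plan is to exploit the explicit algebraic relation between the scaling on the Nehari manifold $\mathcal{N}_J$ and the normalization $\|u\|_{p+1}=1$, which are dual to each other under the one-parameter family of dilations $u\mapsto tu$. Recall from \eqref{raio} that every ray through a nonzero $u$ meets $\mathcal{N}_J$ exactly once; the map taking $u$ to its normalization $\|u\|_{p+1}^{-1}u$ and the map taking a normalized $\overline u$ to $\|\overline u\|_E^{\frac{q+1}{pq-1}}\overline u$ are, up to positive scalars, inverse to each other. This is the backbone of all four items.

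First I would prove item (i). Given a minimizing sequence $(u_n)\subset\mathcal{N}_J$, set $\overline u_n := \|u_n\|_{p+1}^{-1}u_n$, so that $\|\overline u_n\|_{p+1}^{p+1}=1$ and hence $\overline u_n$ is admissible for \eqref{alphapq}. Using \eqref{quociente}, namely $\|u\|_E^{(q+1)/q}/\|u\|_{p+1}^{(q+1)/q} = \bigl((p+1)(q+1)J(u)/(pq-1)\bigr)^{(pq-1)/(q(p+1))}$ valid for $u\in\mathcal{N}_J$, I read off that
\[
\int_\Omega |\Delta \overline u_n|^{\frac{q+1}{q}}\,dx = \frac{\|u_n\|_E^{\frac{q+1}{q}}}{\|u_n\|_{p+1}^{\frac{q+1}{q}}} = \left(\frac{(p+1)(q+1)}{pq-1}J(u_n)\right)^{\frac{pq-1}{q(p+1)}},
\]
which is a continuous strictly increasing function of $J(u_n)$; thus $J(u_n)\to c_J$ forces $\int_\Omega |\Delta\overline u_n|^{(q+1)/q}\,dx \to \bigl((p+1)(q+1)c_J/(pq-1)\bigr)^{(pq-1)/(q(p+1))}$. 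To conclude that $(\overline u_n)$ is minimizing for \eqref{alphapq}, I must know that this limiting value equals $\alpha_{p,q}$, i.e. I need \eqref{relacao}; so in practice I would prove item (iii) first, or at least the inequality $c_J \ge \frac{pq-1}{(p+1)(q+1)}\alpha_{p,q}^{q(p+1)/(pq-1)}$, by noting that for any $u\in\mathcal{N}_J$ the above display gives $\int_\Omega|\Delta\overline u_n|^{(q+1)/q}\,dx \ge \alpha_{p,q}$, hence $J(u)\ge \frac{pq-1}{(p+1)(q+1)}\alpha_{p,q}^{q(p+1)/(pq-1)}$; and the reverse inequality comes from the construction in item (ii).

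For item (ii), given a minimizing sequence $(\overline u_n)$ with $\|\overline u_n\|_{p+1}=1$, apply \eqref{raio} with $u=\overline u_n$: since $\|\overline u_n\|_{p+1}^{p+1}=1$, the scaling factor is $t(\overline u_n) = \|\overline u_n\|_E^{q/(pq-1)}$, so $t(\overline u_n)\overline u_n = \|\overline u_n\|_E^{q/(pq-1)}\,\overline u_n$. Wait --- the statement writes the factor as $\|\overline u_n\|_E^{\frac{q+1}{pq-1}}$; I would double-check the exponent bookkeeping here, recalling that $\|\cdot\|_E$ is the $L^{(q+1)/q}$-norm of $\Delta u$ raised to the power $q/(q+1)$, so $\|\overline u_n\|_E^{(q+1)/q}$ is $\int|\Delta\overline u_n|^{(q+1)/q}$; the precise exponent in \eqref{raio} is what must be used and I will simply substitute it. Then $t(\overline u_n)\overline u_n\in\mathcal{N}_J$ by construction, and by \eqref{energiaJ}, $J(t(\overline u_n)\overline u_n) = \frac{pq-1}{(p+1)(q+1)}\|t(\overline u_n)\overline u_n\|_E^{(q+1)/q}$; a direct computation expresses $\|t(\overline u_n)\overline u_n\|_E^{(q+1)/q}$ as a power of $\int|\Delta\overline u_n|^{(q+1)/q}$, which converges to the corresponding power of $\alpha_{p,q}$. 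This yields both that $(t(\overline u_n)\overline u_n)$ is minimizing and, taking $\overline u_n$ to be a near-optimizer, the inequality $c_J \le \frac{pq-1}{(p+1)(q+1)}\alpha_{p,q}^{q(p+1)/(pq-1)}$, completing item (iii). Finally, item (iv): if $\alpha_{p,q}$ is attained at $\overline u$, then $\overline u$ (suitably scaled as in item (ii)) lies in $\mathcal{N}_J$ and realizes $c_J$ by the equality case of the estimates just made; the explicit scaling factor $\alpha_{p,q}^{q/(pq-1)}$ follows by plugging $\|\overline u\|_E^{(q+1)/q}=\alpha_{p,q}$ (which holds since $\|\overline u\|_{p+1}=1$) into \eqref{raio}. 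Conversely, if $u$ attains $c_J$, then $\|u\|_{p+1}^{-1}u$ is admissible for \eqref{alphapq} and, by item (i) applied to the constant sequence, realizes $\alpha_{p,q}$.

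The only genuine subtlety --- and the step I expect to demand the most care --- is the exponent arithmetic reconciling the definition $\|u\|_E = (\int_\Omega|\Delta u|^{(q+1)/q}dx)^{q/(q+1)}$ with formulas \eqref{raio}, \eqref{energiaJ}, \eqref{quociente}, so that all the powers of $pq-1$, $q+1$, $q$, $p+1$ match up; there is no analytic difficulty, no compactness needed, and no inequality beyond H\"older-free bookkeeping, but a single slip in an exponent would break the equivalence. Everything else is a direct substitution using the already-derived identities \eqref{raio}, \eqref{energiaJ}, \eqref{quociente}.
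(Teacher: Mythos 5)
Your proposal is correct and follows essentially the same route as the paper: both rest on the identities \eqref{raio}, \eqref{energiaJ}, \eqref{quociente} to establish the two inequalities $\alpha_{p,q}\le\bigl(\tfrac{(p+1)(q+1)}{pq-1}c_J\bigr)^{\frac{pq-1}{q(p+1)}}$ and $c_J\le\tfrac{pq-1}{(p+1)(q+1)}\alpha_{p,q}^{\frac{q(p+1)}{pq-1}}$, from which (i)--(iv) are read off, and your resolution of the scaling exponent (substituting $\|\overline u_n\|_{p+1}=1$ into \eqref{raio} to get $\|\overline u_n\|_E^{\frac{q+1}{pq-1}}$, not $\|\overline u_n\|_E^{\frac{q}{pq-1}}$) is the right one. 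The only point worth being explicit about, which neither you nor the paper dwells on, is that for $pq<1$ the exponent $\tfrac{pq-1}{q(p+1)}$ and the factor $\tfrac{(p+1)(q+1)}{pq-1}$ are both negative, so the two inequality reversals cancel and the conclusions survive unchanged.
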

\begin{proof}
Let $(u_n) \con \mathcal{N}_J$ be a minimizing sequence for \eqref{minimizacaoJ}. Then, by \eqref{quociente},
\begin{multline}\label{lado1}
\alpha_{p,q} \leq \lim_{n \rt \infty} \frac{\| u_n \|_E^{\frac{q+1}{q}}}{\| u_n \|_{p+1}^{\frac{q+1}{q}}} = \lim_{n \rt \infty} \left( \frac{(p+1)(q+1)}{pq-1} J(u_n) \right)^{\frac{pq-1}{q(p+1)}} \\= \left( \frac{(p+1)(q+1)}{pq-1} c_J \right)^{\frac{pq-1}{q(p+1)}}.
\end{multline}
On the other hand, let $(\overline{u}_n)$ be a minimizing sequence for \eqref{alphapq}. Then, by \eqref{raio}, $(\| \overline{u}_n \|^{\frac{q+1}{pq-1}} \overline{u}_n ) \con \mathcal{N}_J$ and so, by \eqref{energiaJ},
\begin{multline}\label{lado2}
c_J \leq \lim_{n \rt \infty} J( \| \overline{u}_n \|_E^{\frac{q+1}{pq-1}} \overline{u}_n ) = \frac{pq-1}{(p+1)(q+1)} \lim_{n \rt \infty} \|\overline{u}_n \|_E^{\frac{q(p+1)}{pq -1}\frac{q+1}{q}} \\ = \frac{pq-1}{(p+1)(q+1)} \alpha_{p,q}^{\frac{q(p+1)}{pq -1}}.
\end{multline}
The proof for (i)-(iii) follows from \eqref{lado1}-\eqref{lado2}.

Now, suppose that $\overline{u} \in E$ is such that $\| \overline{u} \|_{p+1} =1$ and $\alpha_{p,q} = \| \overline{u}\|_E^{\frac{q+1}{q}}$. Then, by \eqref{raio}, $\| \overline{u} \|_E^{\frac{q+1}{pq-1}} \overline{u} = \alpha_{p,q}^{\frac{q}{pq-1}} \overline{u} \in \mathcal{N}_J$. Furthermore, for every $u \in \mathcal{N}_J$ we see from \eqref{quociente} that
\begin{align*}
 \left( \frac{(p+1)(q+1)}{pq-1} J(\alpha_{p,q}^{\frac{q}{pq-1}} \overline{u} ) \right)^{\frac{pq-1}{q(p+1)}} & = \alpha_{p,q} & \\ & \leq \frac{\|u \|_E^{\frac{q+1}{q}}}{\| u\|_{p+1}^{\frac{q+1}{q}}} = \left( \frac{(p+1)(q+1)}{pq-1} J(u) \right)^{\frac{pq-1}{q(p+1)}},
\end{align*}
that is, $J(\alpha_{p,q}^{\frac{q}{pq-1}} \overline{u} ) \leq J(u)$. Therefore, $J(\alpha_{p,q}^{\frac{q}{pq-1}} \overline{u} ) = c_J$.

Conversely, suppose that $u \in \mathcal{N}_J$ is such that $J(u) = c_J$. Then, by \eqref{quociente} and \eqref{relacao},
\[
\frac{\| u \|_E^{\frac{q+1}{q}}}{\| u \|_{p+1}^{\frac{q+1}{q}} } = \left( \frac{(p+1)(q+1)}{pq-1} J(u) \right)^{\frac{pq-1}{q(p+1)}} = \left( \frac{(p+1)(q+1)}{pq-1} c_J \right)^{\frac{pq-1}{q(p+1)}} = \alpha_{p,q}.
\]
This completes the proof of (iv).
\end{proof}

\begin{lemma}\label{alphapqatingido}
Assume \eqref{eq:(p,q)_for_Ederson_part2} and $pq \neq 1$. Then the optimal constant  $\alpha_{p,q}$ is attained, i.e. there exists $u \in E$ such that $\| u \|_{p+1} =1$ and $\| u \|_E^{\frac{q+1}{q}} = \alpha_{p,q}$.
\end{lemma}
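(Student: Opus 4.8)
The plan is to use the direct method in the calculus of variations. Let $(u_n)\subset E$ be a minimizing sequence for \eqref{alphapq}, so that $\|u_n\|_{p+1}^{p+1}=1$ and $\int_\Omega |\Delta u_n|^{\frac{q+1}{q}}\,dx\to\alpha_{p,q}$. In particular $(u_n)$ is bounded in $E=W^{2,\frac{q+1}{q}}(\Omega)\cap W^{1,\frac{q+1}{q}}_0(\Omega)$, and since this is a reflexive Banach space (because $\frac{q+1}{q}>1$), up to a subsequence we have $u_n\rightharpoonup u$ weakly in $E$. The first step is to pass to the limit in the two ingredients of the variational problem: by weak lower semicontinuity of the norm, $\int_\Omega |\Delta u|^{\frac{q+1}{q}}\,dx\leq \liminf_n \int_\Omega |\Delta u_n|^{\frac{q+1}{q}}\,dx=\alpha_{p,q}$; and by the compact embedding $E\hookrightarrow L^{p+1}(\Omega)$ — which holds precisely under \eqref{eq:(p,q)_for_Ederson_part2}, as recalled in the discussion preceding Definition \ref{solfraca} — we get $u_n\to u$ strongly in $L^{p+1}(\Omega)$, hence $\|u\|_{p+1}^{p+1}=1$. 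Thus $u$ is admissible and $\int_\Omega|\Delta u|^{\frac{q+1}{q}}\,dx\leq \alpha_{p,q}$; combined with the definition of $\alpha_{p,q}$ as an infimum over admissible functions, this forces $\int_\Omega |\Delta u|^{\frac{q+1}{q}}\,dx=\alpha_{p,q}$ and $\|u\|_{p+1}=1$, so the infimum is attained at $u$.

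The only genuinely delicate point — and the one I would expect to be the main obstacle — is to make sure the compact embedding $E\hookrightarrow L^{p+1}(\Omega)$ is available for the \emph{full} range \eqref{eq:(p,q)_for_Ederson_part2}, in particular when $p$ is large (possibly above the usual Sobolev exponent of $W^{2,\frac{q+1}{q}}$ in the classical sense). This is exactly the content of the subcriticality condition $\frac1{p+1}+\frac1{q+1}>\frac{N-2}{N}$: one checks that it is equivalent to $\frac{q+1}{q}$ and $p+1$ satisfying the strict Sobolev inequality for $W^{2,\frac{q+1}{q}}\hookrightarrow L^{p+1}$, namely $N\frac{q}{q+1}-2 < N/(p+1)$ (rewrite $\frac1{p+1}+\frac1{q+1}>\frac{N-2}{N}$ as $\frac{N}{p+1}>N-2-\frac{N}{q+1}+1 = N\big(1-\frac1{q+1}\big)-2 = N\frac{q}{q+1}-2$), so the embedding is continuous, and strict subcriticality upgrades it to a compact one by the Rellich--Kondrachov theorem. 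I would simply invoke this, referring to the analogous fact used for the $E^s$ spaces in Subsection \ref{subset:Fractional} (or to \cite{BonheureSantosRamosTAMS}), rather than reproving it.

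Finally I would note that the argument does not use $pq\neq 1$ in an essential way for the existence of a minimizer, but this hypothesis is kept for consistency with the surrounding discussion (it guarantees that the minimizer of \eqref{alphapq} corresponds, via the rescaling in Lemma \ref{equivalencia}, to a genuine nontrivial ground state of $J$ rather than to the degenerate eigenvalue situation $pq=1$). One can also remark that $u$ is not identically zero — this is automatic since $\|u\|_{p+1}=1$ — which is the one subtlety that often fails in non-compact settings but is harmless here thanks to the strong $L^{p+1}$-convergence on the bounded domain $\Omega$.
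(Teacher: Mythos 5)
Your proof is correct and takes essentially the same route as the paper, whose entire proof consists of the remark that the lemma is ``a straightforward consequence of the fact that $E$ is compactly embedded into $L^{p+1}(\Omega)$''; your direct-method argument (boundedness and weak convergence of the minimizing sequence in the reflexive space $E$, weak lower semicontinuity of $\|\cdot\|_E$, and strong $L^{p+1}$-convergence preserving the constraint $\|u\|_{p+1}=1$) is exactly the standard unpacking of that remark. One cosmetic slip: in your verification of the embedding exponents the intermediate expression should be $N-2-\frac{N}{q+1}$ rather than $N-2-\frac{N}{q+1}+1$, though the final inequality $\frac{N}{p+1}>\frac{Nq}{q+1}-2$ and its equivalence with \eqref{eq:(p,q)_for_Ederson_part2} are correct.
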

\begin{proof}
It is a straightforward consequence of the fact that $E$ is compactly embedded into $L^{p+1}(\Omega)$, since $\Omega$ is a bounded smooth domain.
\end{proof}

Our next lemma shows that the minimization problem \eqref{minimizacaoJ} is a natural method for finding ground state solutions for \eqref{S}, namely we show that $c(\Omega)=c_J$.

\begin{lemma}\label{neharisolucao}
Assume \eqref{eq:(p,q)_for_Ederson_part2} and $pq\neq 1$. If $u \in \mathcal{N}_J$ is such that $J(u) = c_J$ then $u$ is a ground state solution for \eqref{S}. Conversely, if $u$ is a ground state solution for \eqref{S} then $J(u) = c_J$.
\end{lemma}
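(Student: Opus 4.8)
The plan is to show the two-way correspondence between minimizers of $J$ on $\mathcal{N}_J$ and ground state solutions of \eqref{S}, using the fact (already available) that critical points of $J$ are exactly the weak solutions of \eqref{ENL}, which correspond to classical solutions of \eqref{S} via $v = |\Delta u|^{\frac{1}{q}-1}(-\Delta u)$ (Proposition \ref{Reg1}), and that along all such solutions $J(u)=I(u,v)$. First I would recall that, by Lemma \ref{alphapqatingido} together with Lemma \ref{equivalencia}(iv), the level $c_J$ is attained; moreover by Lemma \ref{equivalencia} a minimizer $u$ of $J|_{\mathcal{N}_J}$ is (a scalar multiple of) a minimizer of $\alpha_{p,q}$, hence a solution of a Euler--Lagrange equation. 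The key point is that $\mathcal{N}_J$ is a natural constraint for $J$: if $u\in\mathcal{N}_J$ minimizes $J$ on $\mathcal{N}_J$, then by the Lagrange multiplier rule there is $\mu\in\R$ with $J'(u)=\mu\,G'(u)$, where $G(u)=J'(u)u = \|u\|_E^{\frac{q+1}{q}} - \|u\|_{p+1}^{p+1}$; testing with $u$ gives $0 = J'(u)u = \mu\, G'(u)u$, and since
\[
G'(u)u = \tfrac{q+1}{q}\|u\|_E^{\frac{q+1}{q}} - (p+1)\|u\|_{p+1}^{p+1} = \Big(\tfrac{q+1}{q}-(p+1)\Big)\|u\|_E^{\frac{q+1}{q}} \neq 0
\]
(using $u\in\mathcal{N}_J$, $u\neq 0$, and $pq\neq 1$), we deduce $\mu = 0$, so $J'(u)=0$ and $u$ is a weak solution of \eqref{ENL}, hence by Proposition \ref{Reg1} the pair $(u,v)$ is a classical solution of \eqref{S}.

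It remains to check that this solution has the least nonzero critical value, i.e. that $c_J = c(\Omega)$. Here I would argue by the standard double inequality. On one hand, any nontrivial critical point $w$ of $J$ satisfies $J'(w)w = 0$, so $w\in\mathcal{N}_J$ and thus $J(w)\geq c_J$; taking the infimum over nontrivial critical points gives $c(\Omega) = \inf\{J(w): w\neq 0,\ J'(w)=0\} \geq c_J$. On the other hand, the minimizer $u$ of $J$ on $\mathcal{N}_J$ constructed above is itself a nontrivial critical point of $J$ with $J(u)=c_J$, so $c_J \geq c(\Omega)$. Hence $c_J = c(\Omega)$, and a minimizer on $\mathcal{N}_J$ is precisely a ground state solution.

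For the converse, if $u$ is a ground state solution of \eqref{S}, then by definition $J$ attains its smallest nonzero critical value at $u$, that is $J(u) = c(\Omega) = c_J$ by the equality just established; since $u$ is in particular a nontrivial critical point of $J$, we have $J'(u)u = 0$, so $u\in\mathcal{N}_J$, and therefore $u$ realizes $c_J = \inf_{\mathcal{N}_J} J$. This closes the equivalence. The only step with any subtlety is the verification that $\mathcal{N}_J$ is a natural constraint (the computation of $G'(u)u$ and the conclusion $\mu=0$), which is where the hypothesis $pq\neq 1$ is genuinely used; everything else is bookkeeping with the definitions of $c_J$, $c(\Omega)$, and the regularity statement of Proposition \ref{Reg1}.
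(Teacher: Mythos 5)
Your proof is correct and follows essentially the same route as the paper's: the Lagrange multiplier argument with $G(u)=J'(u)u$ and the observation that $G'(u)u=\frac{1-pq}{q}\,\|u\|_E^{\frac{q+1}{q}}\neq 0$ on $\mathcal{N}_J$ gives the first implication, while the attainment of $c_J$ by a nontrivial critical point (via Lemmas \ref{equivalencia}(iv) and \ref{alphapqatingido}) gives the converse. Your computation of $G'(u)u$ is in fact the corrected version of the paper's (whose displayed exponent $\frac{p+1}{p}$ is a typo for $\frac{q+1}{q}$), and your explicit double inequality $c_J=c(\Omega)$ merely spells out what the paper leaves implicit.
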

\begin{proof} Let $G(u)=J'(u)u$. Then $G'(u)u=\frac{1-pq}{q} \| u \|_E^{\frac{p+1}{p}} \neq 0$ for every $u\in \mathcal{N}_J$, and the first conclusion follows from the Lagrange multiplier theorem. In particular, thanks also to Lemmas \ref{equivalencia} (iv) and \ref{alphapqatingido} we have that there exists $\bar{u}\in \mathcal{N}_J$ such that $J(\bar{u})=c_{J}$ and $J'(\bar{u})=0$, and this yields our second conclusion.
\end{proof}

\begin{lemma}\label{positividade}
Let $u \in\mathcal{N}_J$ be such that $J(u)=c_{J}$. Then $u>0$ and $-\Delta u > 0$ in $\Omega$, or else $u<0$ and $-\Delta u < 0$ in $\Omega$.
\end{lemma}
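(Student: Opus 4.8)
The plan is to exploit that $J$ depends on $u$ only through $|u|$ and $|\Delta u|$, and to compare the minimizer with the solution of an auxiliary Dirichlet problem, using the equivalence of \eqref{minimizacaoJ} and \eqref{alphapq} established in Lemma~\ref{equivalencia}. First I would note that, by Lemma~\ref{equivalencia}(iv), if $u\in\mathcal N_J$ realizes $c_J$ then $\bar u:=\|u\|_{p+1}^{-1}u$ realizes $\alpha_{p,q}$, i.e.\ $\|\bar u\|_{p+1}=1$ and $\int_\Omega|\Delta\bar u|^{\frac{q+1}{q}}\,dx=\alpha_{p,q}$. Let $w$ be the weak solution of $-\Delta w=|\Delta\bar u|$ in $\Omega$, $w=0$ on $\partial\Omega$; since $|\Delta\bar u|\in L^{\frac{q+1}{q}}(\Omega)$, elliptic regularity gives $w\in E$, and $|\Delta w|=|\Delta\bar u|$ a.e.\ so that $\int_\Omega|\Delta w|^{\frac{q+1}{q}}\,dx=\alpha_{p,q}$ as well.

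Next I would use the maximum principle: since $-\Delta(w-\bar u)=|\Delta\bar u|+\Delta\bar u\ge0$ and $-\Delta(w+\bar u)=|\Delta\bar u|-\Delta\bar u\ge0$ in $\Omega$, both with zero boundary values, we get $w\ge|\bar u|\ge0$ a.e.\ in $\Omega$. Hence $\|w\|_{p+1}^{p+1}=\int_\Omega w^{p+1}\,dx\ge\int_\Omega|\bar u|^{p+1}\,dx=1$. If $\|w\|_{p+1}>1$, then $\tilde w:=\|w\|_{p+1}^{-1}w$ satisfies $\|\tilde w\|_{p+1}=1$ and $\int_\Omega|\Delta\tilde w|^{\frac{q+1}{q}}\,dx=\|w\|_{p+1}^{-\frac{q+1}{q}}\alpha_{p,q}<\alpha_{p,q}$, contradicting the definition of $\alpha_{p,q}$. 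Therefore $\|w\|_{p+1}=1$, so $\int_\Omega(w^{p+1}-|\bar u|^{p+1})\,dx=0$ with $w\ge|\bar u|\ge0$, which forces $w=|\bar u|$ a.e.

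Consequently $-\Delta|\bar u|=|\Delta\bar u|\ge0$ in $\Omega$ with $|\bar u|=w\in E\setminus\{0\}$; by elliptic regularity (Proposition~\ref{Reg1} and Schauder estimates) this function is classical, and the strong maximum principle yields $|\bar u|>0$ in $\Omega$. Since $\bar u$ (hence $u$) is continuous on $\overline\Omega$ and never vanishes in the connected open set $\Omega$, it has a fixed sign there; assume $u>0$ in $\Omega$ (the case $u<0$ is symmetric). By Proposition~\ref{Reg1}, $v:=|\Delta u|^{\frac1q-1}(-\Delta u)$ is a classical solution of \eqref{S}, so $-\Delta v=u^p>0$ in $\Omega$ with $v=0$ on $\partial\Omega$; the strong maximum principle gives $v>0$ in $\Omega$, and then $-\Delta u=|v|^{q-1}v=v^q>0$ in $\Omega$, as claimed.

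The step I expect to require the most care is the comparison argument: verifying that $w\in E$ so that it (and $\tilde w$) is an admissible competitor for $\alpha_{p,q}$, correctly applying the weak maximum principle to get $w\ge|\bar u|$ in the low-integrability class $E$, and then deducing $w=|\bar u|$ from the equality case; once $w=|\bar u|$ is known, the regularity available through Proposition~\ref{Reg1} makes the two applications of the strong maximum principle routine.
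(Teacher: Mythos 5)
Your proof is correct and follows essentially the same route as the paper: both rest on van der Vorst's comparison with the auxiliary function $w$ solving $-\Delta w=|\Delta u|$, combined with the minimality characterization of $\alpha_{p,q}$ from Lemma \ref{equivalencia}(iv). The only difference is organizational: the paper assumes $-\Delta u$ changes sign and derives the strict inequality $w>|u|$ to contradict the definition of $\alpha_{p,q}$ directly, whereas you run the equality case to get $w=|\bar u|$, deduce $|\bar u|>0$, and then recover $-\Delta u>0$ by passing through $v$ in the system.
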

\begin{proof} We infer from Lemma \ref{neharisolucao} and Proposition \ref{Reg1}, that the couple $(u,v)$ with $v := | \Delta u |^{\frac{1}{p}-1}(-\Delta u)$ classically solves the problem \eqref{S} and we have that $u,v\in C^{2, \alpha}(\overline{\Omega})$ for a suitable $\alpha \in (0,1)$. By using the strong maximum principle, we will be done if we show that $-\Delta u$ does not change sign in $\Omega$.

Now, we use an argument that goes back at least to van der Vorst \cite{vanderVorstdie}. Namely, let $w\in E$ be such that $-\Delta w=|\Delta u|$, so that $-\Delta(w\pm u)\geq 0$. Arguing by contradiction, suppose $-\Delta u$ does change sign in $\Omega$. Then $-\Delta(w\pm u)\neq 0$ and the strong maximum principle implies that $w>|u|$. Then, using also Lemma \ref{equivalencia} (iv), we have that
\[
\gd{ \int _{\Omega}\left| \Delta \left( \frac{w}{\| w \|_{p+1}} \right) \right|^{\frac{q+1}{q}} dx = \int_{\Omega} \left(\frac{| \Delta u |^{\frac{q+1}{q}}}{\| w \|_{p+1}^{\frac{q+1}{q}}}\right) dx
<\int_{\Omega} \left(\frac{| \Delta u |^{\frac{q+1}{q}}}{\| u \|_{p+1}^{\frac{q+1}{q}}}\right) dx = \alpha_{p,q}. }
\]
This contradicts the definition of $\alpha_{p,q}$ and completes the proof.\end{proof}

Before we pass to the proof of the main theorems of this section recall Lemma \ref{simetrizacao}, and the definition of the Schwarz symmetrization $f^\ast$ of a function $f\in C(\overline{B_R})$ at the beggining of Subsection \ref{subset:Sign_and_symmetry}.

\begin{proof}[Proof of {\rm Theorems \ref{teoexistence}-\ref{teounicidadesub}-\ref{teounicidadebola}}] The conclusion in Theorem \ref{teoexistence} follows from Lemma \ref{equivalencia} (iv), Lemma \ref{alphapqatingido}, Lemma \ref{neharisolucao} and Lemma \ref{positividade}. The uniqueness property of Theorem \ref{teounicidadesub} is then a direct consequence of \cite[Theorem 3]{dalmasso2000}. As for Theorem \ref{teounicidadebola}, once the radial symmetry is established the uniqueness of the ground state follows from \cite[Theorem 1.1 (i)]{dalmasso}.
 Now, let $u \in E$ be a ground state solution for \eqref{S} such that $u, -\Delta u>0$ in $\Omega$, and set $f := -\Delta u\ \in C(\overline{B_R})$. Let $w$ be such that $-\Delta w=f^*$ in $B_R$, $w=0$ on $\partial B_R$. In order to complete our proof we must show that $f=f^*$. Arguing by contradiction, suppose $f\neq f^*$. It follows then from Lemma \ref{simetrizacao} that $\|w\|_{p+1}>\|u^*\|_{p+1}$. Thus, using also Lemma \ref{equivalencia} (iv), we have that
\begin{multline*}
 \int_{B_R}\left| \Delta\left(\frac{w}{| w |_{p+1}}\right) \right|^{\frac{q+1}{q}}dx = \int_{B_R} \left( \frac{| \Delta u |^{\frac{q+1}{q}}}{| w |_{p+1}^{\frac{q+1}{q}}} \right)dx \\ < \int_{B_R} \left( \frac{| \Delta u |^{\frac{q+1}{q}}}{| u^* |_{p+1}^{\frac{q+1}{q}}} \right)dx = \int_{B_R} \left| \Delta\left(\frac{u}{| u |_{p+1}}\right) \right|^{\frac{q+1}{q}}dx = \alpha_{p,q}.
\end{multline*}
This contradicts the definition of $\alpha_{p,q}$ and completes the argument.\end{proof}


\section{A Lyapunov-Schmidt type reduction}\label{sec:Reduced Functional}

In Sections \ref{section:direct_approaches} and \ref{sec:DualMethod}, we have presented several approaches where we look at solutions or critical points as couples $(u,v)$ in a product of two functional spaces. In Section \ref{sec:QuartaOrdem}, we have reduced the system to a scalar equation which, as a price to pay, leads to an increase of the order of the problem. In this section, we again reduce the problem to a single equation or equivalently to the existence of critical points of a scalar functional, but without increasing the order. This reduction can be thought as an infinite dimensional Lyapunov-Schmidt reduction.

The approach of this section, for the case $p,q>1$, consists in making the most of the saddle geometry of the functional \eqref{eq:usual_action_functional} and in using the corresponding decomposition of the functional space. 
It should be noted that \eqref{eq:usual_action_functional} has a mountain pass geometry when restricted to the space of pairs with equal components $H^+:=\{(u,u)\}$, while it is concave when restricted to $H^-=\{(u,-u)\}$. This allows us to prove that for each $(u,u)$, there exists a unique $(\Psi_u,-\Psi_u)$ so that $(u+\Psi_u,u-\Psi_u)$ maximizes the energy functional; more importantly, as a function of $u$, the energy evaluated at such type of points has a mountain pass geometry, critical points correspond to solutions of the system, and one can apply the classical theory to such reduced functional. This rather simple idea will allow to substitute the saddle geometry of $\I_s$ or $\G_{s}$ at the origin by a mountain-pass geometry for a scalar functional. 

\subsection{Preliminaries}
Before introducing the reduced functional in Subsection \ref{subset:reducedFunctional}, we will make some preliminary considerations. We aim at working with
\begin{equation}\label{eq:p_and_q_reduzidoGERAL}
p,q>1, \qquad \frac{1}{p+1}+\frac{1}{q+1}>\frac{N-2}{N}. \tag{H4}
\end{equation}
However, first we will make some preliminary considerations in the (apparently) more restrictive case
\begin{equation}\label{eq:p_and_q_reduzido}
p,q>1,  \qquad (p+1)(N-2),(q+1)(N-2)\leq 2N. \tag{H4'}
\end{equation}

\begin{center}
\includegraphics[scale=.26]{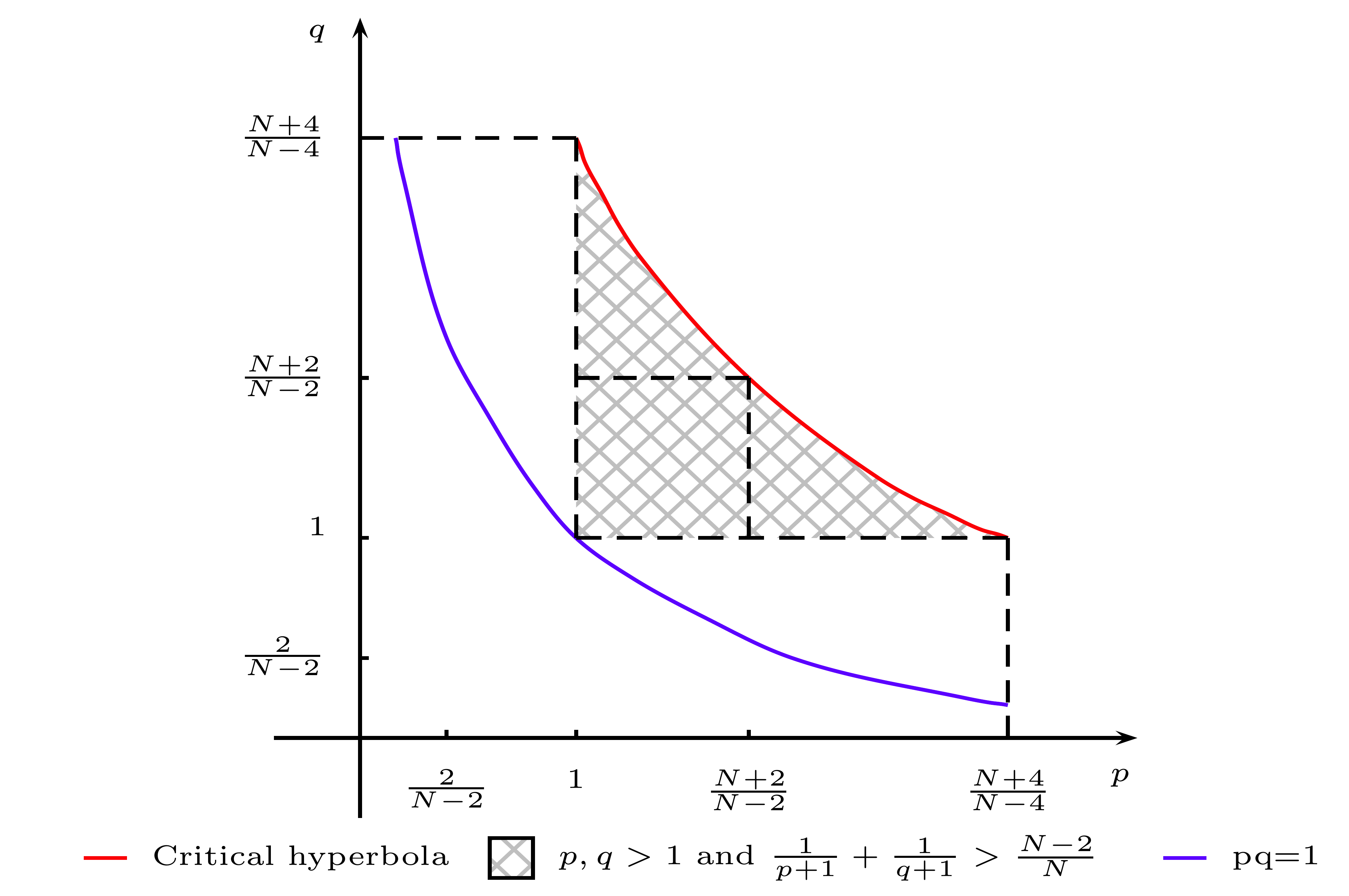}
\end{center}

\vspace{-.1cm}
Ahead in Subsection \ref{sec:p_diferente_q} we will justify why one does not lose generality by imposing that \emph{both} $p$ and $q$ are subcritical in the $H^1_0(\Omega)$-sense. This fact is not obvious, and it is related to a priori bounds on the $L^\infty$--norm of some families of solutions to appropriately truncated problems. Moreover, in Remark \ref{rem:p=1_ou_q=1} we will make some observations about the case $pq>1$ with either $p=1$ or $q=1$.

Under \eqref{eq:p_and_q_reduzido} we can use the $E^s\times E^t$ framework with $s=t=1$ (cf. Subsection \ref{subset:Fractional}), and work in $H^1_0(\Omega)\times H^1_0(\Omega)$. We have already seen that the ground state level is achieved and positive (cf. Corollary \ref{coro:_least_energy_achieved}). Here we will provide two other variational characterizations for this critical level, check Propositions \ref{prop:gs_caract2} and \ref{prop:gs_caract3} ahead. 

We will deal with \eqref{eq:main_system} of the particular type
\begin{equation}\label{eq:system_with_f_and_g}
\left\{
\begin{array}{ll}
-\Delta u=g(v) & \text{ in } \Omega,\\
 -\Delta v=f(u) & \text{ in } \Omega,\\
 u,v=0  & \text{ on } \partial \Omega.
 \end{array}
\right.
\end{equation}
Since we will deal with truncations of the functions $f$ and $g$ in Subsection \ref{sec:p_diferente_q}, we need to consider nonlinearities which are not necessarily pure powers. Following \cite[Section 4]{BonheureSantosRamosJFA} and \cite{RamosTavares}, we assume that the $C^1$-functions  $f,g:\R\to \R$ satisfy the following conditions.
\begin{itemize}
\item[($fg1$)] $f(s)=\textrm{o}(s)$, $g(s)=\textrm{o}(s)$ as $s\to 0$;
\end{itemize}
either
\begin{itemize}
\item[($fg2$)] there exist $p,q$ satisfying \eqref{eq:p_and_q_reduzidoGERAL} and $C>0$ such that
\[
|f(s)|\leq C(1+|s|^p),\quad |g(s)|\leq C(1+|s|^{q}) \qquad \forall s\in \R,
\]
\end{itemize}
or
\begin{itemize}
\item[($fg2'$)] there exist $p,q$ satisfying \eqref{eq:p_and_q_reduzido} and $C>0$ such that
\[
|f'(s)|\leq C(1+|s|^{p-1}),\quad |g'(s)|\leq C(1+|s|^{q-1}) \qquad \forall s\in \R;
\]
\end{itemize}
and
\begin{itemize}
\item[($fg3$)] there exists $\delta>0$ such that 
\[
0<(1+\delta)f(s)s\leq f'(s)s^2,\quad  0<(1+\delta)g(s)s\leq g'(s)s^2 \qquad \forall s\neq 0.
\]
\end{itemize}
Observe that ($fg3$) implies that $f(s)\geq f(1)s^{1+\delta}$ (for $s\geq 1$), and $f(s)\leq f(-1)|s|^{1+\delta}$ (for $s\leq -1$), and the same holds for $g$; this yields in particular that $1+\delta\leq p$. In this subsection and in the following, we will assume ($fg1$)--($fg2'$)--($fg3$), and consider the more general case ($fg1$)--($fg2$)--($fg3$) in Subsection \ref{sec:p_diferente_q}.

Under assumptions $(fg1),(fg2'),(fg3)$, the following energy functional $\I=\I_1:H^1_0(\Omega)\times H^1_0(\Omega)\to \R$,
\[
\I(u,v)=\int_\Omega \langle \nabla u,\nabla v\rangle\, dx-\int_\Omega F(u)\, dx-\int_\Omega G(v)\, dx
\]
is well defined and of class $C^2$ (where $F(s):=\int_0^s f(\xi)\, d\xi$, $G(s):=\int_0^s g(\xi)\, d\xi$). In order to simplify notations, we will denote from now on $H:=H^1_0(\Omega)\times H^1_0(\Omega)$, which splits in $H^+\oplus H^-$, with
\[
H^+=\{(\phi,\phi):\ \phi\in H^1_0(\Omega)\},\qquad H^-=\{(\phi,-\phi):\ \phi\in H^1_0(\Omega)\},
\]
writing each $(\varphi, \psi) \in H^1_0(\Omega)\times H^1_0(\Omega)$ as
\[
(\varphi, \psi) = \left( \frac{\varphi + \psi}{2}, \frac{\varphi + \psi}{2} \right) + \left( \frac{\varphi - \psi}{2}, \frac{\psi - \varphi}{2}\right).
\]
A \emph{weak solution} corresponds to a critical point of $\I$, and the ground state level is given by
\begin{equation}\label{eq:least_energy_level}
c(\Omega)=\inf\{I(u,v)\ \mid\ u,v\in H^1_0(\Omega),\ (u,v)\neq (0,0),\ \I'(u,v)=0\}.
\end{equation} 
Before providing other characterizations of this level, we start with the following results which in the $H^1_0\times H^1_0$ case generalize Lemma \ref{I_s_satisfies_PS} and Theorem \ref{thm:Existence_with_E^s}.

\begin{lemma}\label{lemma:I_PalaisSmale}
The functional $\I$ satisfies the Palais-Smale condition.
\end{lemma}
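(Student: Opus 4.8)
The plan is to verify the Palais–Smale condition for $\I$ in the $H^1_0(\Omega)\times H^1_0(\Omega)$ framework, following the same scheme as in Lemma \ref{I_s_satisfies_PS} but now under the abstract hypotheses $(fg1)$--$(fg2')$--$(fg3)$. Let $(u_n,v_n)$ be a sequence with $\I(u_n,v_n)$ bounded and $\I'(u_n,v_n)\to 0$ in $H^{-1}(\Omega)\times H^{-1}(\Omega)$.

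\emph{Step 1: boundedness of the Palais--Smale sequence.} First I would exploit $(fg3)$, which is the analogue of an Ambrosetti--Rabinowitz type condition. Testing $\I'(u_n,v_n)$ with the pair $\left(\frac{q_0+1}{p_0+q_0+2}u_n,\frac{p_0+1}{p_0+q_0+2}v_n\right)$ for appropriate constants (or, more simply using $(fg3)$ directly, with $\left(\frac{1}{2+\delta}u_n,\frac{1}{2+\delta}v_n\right)$), one gets
\[
C+\eps_n\|(u_n,v_n)\|_H\geq \I(u_n,v_n)-\frac{1}{2+\delta}\I'(u_n,v_n)(u_n,v_n)\geq \frac{\delta}{2(2+\delta)}\int_\Omega \big(f(u_n)u_n+g(v_n)v_n\big)\, dx,
\]
using $(fg3)$ in the form $f(s)s-\frac{1}{2+\delta}\big(f'(s)s^2\big)\le \ldots$; more precisely $(fg3)$ gives $F(s)\le \frac{1}{2+\delta}f(s)s$ after integration, which controls $\int F(u_n)+\int G(v_n)$ by $\frac{1}{2+\delta}\int (f(u_n)u_n+g(v_n)v_n)$, and hence also by the right-hand side above. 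This produces a bound on $\int f(u_n)u_n\,dx$ and $\int g(v_n)v_n\,dx$, hence on $\|u_n\|_{p+1}^{p+1}$ and $\|v_n\|_{q+1}^{q+1}$ (again via $(fg3)$, which forces at least superlinear growth). To bound $\|(u_n,v_n)\|_H$ itself, I would test $\I'(u_n,v_n)$ with $(u_n,u_n)$ and with $(v_n,-v_n)$, or equivalently with $(v_n,u_n)$, to obtain
\[
\|(u_n,v_n)\|_H^2 = 2\int_\Omega \langle\nabla u_n,\nabla v_n\rangle\,dx + (\text{lower order}) = \int_\Omega \big(f(u_n)v_n+g(v_n)u_n\big)\,dx + \I'(u_n,v_n)(v_n,u_n),
\]
and then estimate the right-hand side using $|f(u_n)|\le C(1+|u_n|^p)$, $|g(v_n)|\le C(1+|v_n|^q)$ from $(fg2')$, Hölder's inequality, the Sobolev embeddings $H^1_0\hookrightarrow L^{p+1},L^{q+1}$ (valid since $p,q$ are $H^1$-subcritical under \eqref{eq:p_and_q_reduzido}), and the already-established bounds on $\|u_n\|_{p+1},\|v_n\|_{q+1}$. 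This gives $\|(u_n,v_n)\|_H^2\le C\|(u_n,v_n)\|_H+\eps_n\|(u_n,v_n)\|_H$, whence boundedness.

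\emph{Step 2: strong convergence.} Up to a subsequence, $(u_n,v_n)\rightharpoonup(u,v)$ weakly in $H$, strongly in $L^{p+1}(\Omega)\times L^{q+1}(\Omega)$ by the compact Sobolev embeddings, and a.e.\ in $\Omega$. By the growth condition $(fg2')$ and the continuity of the Nemytskii operators, $f(u_n)\to f(u)$ in $L^{(p+1)/p}(\Omega)$ and $g(v_n)\to g(v)$ in $L^{(q+1)/q}(\Omega)$. Now I would write $\I'(u_n,v_n)(\varphi,\psi)=\int_\Omega(\langle\nabla u_n,\nabla\psi\rangle+\langle\nabla v_n,\nabla\varphi\rangle)\,dx-\int_\Omega f(u_n)\varphi\,dx-\int_\Omega g(v_n)\psi\,dx$. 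Testing with $(\varphi,\psi)=(v_n-v,\,0)$ and with $(\varphi,\psi)=(0,\,u_n-u)$ (taking advantage of the cross structure of the quadratic form), and using $\I'(u_n,v_n)\to 0$ together with the strong $L^{p+1},L^{q+1}$ convergences, yields $\int_\Omega\langle\nabla v_n,\nabla(v_n-v)\rangle\,dx\to 0$ and $\int_\Omega\langle\nabla u_n,\nabla(u_n-u)\rangle\,dx\to 0$; combined with weak convergence this gives $\|\nabla u_n\|_2\to\|\nabla u\|_2$ and $\|\nabla v_n\|_2\to\|\nabla v\|_2$, hence $(u_n,v_n)\to(u,v)$ strongly in $H$.

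The main obstacle is Step 1, specifically extracting a \emph{full} $H$-norm bound rather than merely bounds on the $L^{p+1}$ and $L^{q+1}$ norms: because the quadratic form $\mathcal Q$ is strongly indefinite, $\I'(u_n,v_n)(u_n,v_n)$ alone controls $\int\langle\nabla u_n,\nabla v_n\rangle\,dx$ but \emph{not} $\|\nabla u_n\|_2^2+\|\nabla v_n\|_2^2$; this is exactly why one must test against the ``rotated'' directions $(v_n,u_n)$, for which the quadratic part becomes $\|\nabla u_n\|_2^2+\|\nabla v_n\|_2^2$, and then close the estimate using the a priori $L^{p+1},L^{q+1}$ control coming from $(fg3)$. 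Care is also needed because $f,g$ are not pure powers, so the superlinearity used to absorb terms must be invoked through $(fg3)$ rather than by homogeneity.
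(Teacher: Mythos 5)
Your argument is essentially the paper's proof: control $\int_\Omega(f(u_n)u_n+g(v_n)v_n)\,dx$ via $(fg3)$ and the combination $\I(u_n,v_n)-\tfrac12\I'(u_n,v_n)(u_n,v_n)$, recover the full $H$-norm by testing against the rotated direction $(v_n,u_n)$ and closing with H\"older, Sobolev and the sublinear exponents $p/(p+1),q/(q+1)<1$, and obtain strong convergence by testing against the rotated differences. Two small slips to fix: the multiplier in your first display must be $\tfrac12$, not $\tfrac{1}{2+\delta}$ (otherwise the indefinite cross term $\int_\Omega\langle\nabla u_n,\nabla v_n\rangle\,dx$ survives uncontrolled — with $\tfrac12$ it cancels and yields exactly the constant $\tfrac{\delta}{2(2+\delta)}$ you wrote), and the resulting control is $\mathrm{O}(1)+\mathrm{o}(1)\|(u_n,v_n)\|_H$ rather than $\mathrm{O}(1)$, so the final inequality reads $\|(u_n,v_n)\|_H^2\leq C\|(u_n,v_n)\|_H\bigl(1+\|(u_n,v_n)\|_H\bigr)^{p/(p+1)}+\dots$, which still gives boundedness precisely because $1+p/(p+1)<2$ (also, drop the spurious intermediate identity $\|(u_n,v_n)\|_H^2=2\int_\Omega\langle\nabla u_n,\nabla v_n\rangle\,dx+\text{l.o.t.}$; only the second equality in that line is correct and needed).
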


\begin{proof}  In Lemma \ref{I_s_satisfies_PS} we presented the proof of this fact in the case of $f$ and $g$ being pure powers. The general case follows the same line, proving first that a Palais-Smale sequence $(u_n,v_n)$ satisfies
\[
\frac{\delta}{2(2+\delta)}\int_\Omega (f(u_n)u_n+g(v_n)v_n)\, dx=\textrm{O}(1)+\textrm{o}(1)\|(u_n,v_n)\|_H,
\]
and showing afterwards the existence of $\kappa>0$ such that
\begin{align*}
\|(u_n,v_n)\|_H^2=&\int_\Omega (f(u_n)u_n+g(v_n)v_n)\, dx +\I'(u_n,v_n)(v_n,u_n)\\
			\leq& \frac{1}{2}\|(u_n,v_n)\|^2+\kappa\|v_n\|_{p+1}\left( \int_\Omega f(u_n)u_n\, dx\right)^{\frac{p}{p+1}}\\
			&+\kappa \|u_n\|_{q+1}\left( \int_\Omega g(v_n)v_n\, dx \right)^\frac{q}{q+1}+\textrm{o}(1)\|(u_n,v_n)\|_H.
\end{align*}
In the last inequality it is used the fact that, for any given $\varepsilon>0$, there exists $C>0$ such that $|f(s)|^{p/(p+1)}\leq Cf(s)s$ and $|g(s)|^{q/(q+1)}\leq Cg(s)s$ whenever $|s|\geq \varepsilon$.
\end{proof}

\begin{lemma}\label{lemma:Existence_result}
Given $w,z$ such that $w\neq -z$, there exists a nontrivial critical point $(u,v)$ of $\I$ such that
\begin{equation}\label{eq:I(u,v)leq_BenciRab}
\I(u,v)\leq \mathop{\sup_{t\geq 0}}_{\phi\in H^1_0(\Omega)} \I(t(w,z)+(\phi,-\phi)).
\end{equation}
\end{lemma}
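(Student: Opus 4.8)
The plan is to set up a linking geometry adapted to the pair $(w,z)$ and then apply the Benci--Rabinowitz linking theorem, relying on the Palais--Smale condition already established in Lemma \ref{lemma:I_PalaisSmale}. Since $w\neq -z$ we have $(w,z)\notin H^-$, so the subspace $H^- \oplus \mathbb{R}(w,z)$ is well defined and contains, after decomposing $(w,z) = (w,z)^+ + (w,z)^-$ into its $H^+$ and $H^-$ components, a genuine $H^+$--direction. First I would record the standard facts about the quadratic part $\mathcal Q(u,v)=\int_\Omega \langle\nabla u,\nabla v\rangle\,dx$: it is positive definite on $H^+$, negative definite on $H^-$, and these two subspaces are $\mathcal Q$--orthogonal; writing $(u,v)=(\phi,\phi)+(\psi,-\psi)$ one has $\mathcal Q(u,v)=\|\nabla\phi\|_2^2-\|\nabla\psi\|_2^2$.

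Next I would verify the two geometric ingredients of the linking theorem. For the ``small sphere'' $S$: take $S = \partial B_\rho(0)\cap H^+$ for $\rho>0$ small. On $H^+$ the functional reads $\I(\phi,\phi)=\|\nabla\phi\|_2^2 - \int_\Omega(F(\phi)+G(\phi))\,dx$, and from $(fg1)$--$(fg2')$ (superquadratic growth controlled by subcritical powers, with $F,G=\mathrm{o}(s^2)$ at the origin by $(fg1)$) one gets $\I(\phi,\phi)\geq \tfrac12\|\nabla\phi\|_2^2 - C\|\nabla\phi\|_2^{2+\delta}-C\|\nabla\phi\|_2^{p+1}-C\|\nabla\phi\|_2^{q+1}$, hence $\I|_S\geq\alpha>0$ for $\rho$ small. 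For the ``boundary of the box'' $Q$: set $Q = \{\, t(w,z)+(\psi,-\psi)\ :\ 0\leq t\leq R_1,\ \psi\in H^1_0(\Omega),\ \|\nabla\psi\|_2\leq R_2 \,\}$ inside the subspace $H^-\oplus\mathbb{R}(w,z)$. Using $pq>1$ (equivalently $\tfrac1{p+1}+\tfrac1{q+1}<1$) together with the superlinearity encoded in $(fg3)$ — which forces $F(s)\geq c|s|^{1+\delta}$, $G(s)\geq c|s|^{1+\delta}$ for $|s|$ large — one shows $\I\to-\infty$ along the $(w,z)$--direction and along $H^-$, so that for $R_1,R_2$ large enough $\I|_{\partial Q}\leq 0<\alpha$; here $\partial Q$ is taken relative to $H^-\oplus\mathbb{R}(w,z)$. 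The classical topological lemma then shows $S$ and $\partial Q$ link in the sense of Benci--Rabinowitz (the relevant degree-theoretic intersection argument, as in \cite{BenciRabinowitz} or \cite[Theorem 3.1]{Felmer}).

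With linking in hand and the Palais--Smale condition from Lemma \ref{lemma:I_PalaisSmale}, the linking theorem yields a critical point $(u,v)$ of $\I$ at the minimax level
\[
c_{w,z} := \inf_{h\in\Gamma}\ \sup_{(a,b)\in Q}\ \I(h(a,b)) \ \geq \ \alpha \ >\ 0,
\]
where $\Gamma$ is the usual class of deformations of $Q$ fixing $\partial Q$. In particular $(u,v)$ is nontrivial since $c_{w,z}>0=\I(0,0)$. Finally, the identity map belongs to $\Gamma$, so
\[
\I(u,v) \ \leq\ c_{w,z}\ \leq\ \sup_{(a,b)\in Q}\I(a,b)\ =\ \mathop{\sup_{t\geq 0}}_{\phi\in H^1_0(\Omega)}\I\bigl(t(w,z)+(\phi,-\phi)\bigr),
\]
which is precisely \eqref{eq:I(u,v)leq_BenciRab} (the supremum over $t\in[0,R_1]$ and $\|\nabla\phi\|_2\leq R_2$ equals the supremum over all $t\geq 0$ and all $\phi$, because outside the box $\I$ has already turned negative). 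I expect the main obstacle to be the careful bookkeeping in the estimate $\I|_{\partial Q}\leq 0$: one must handle the three faces of $\partial Q$ separately ($t=0$ with $\psi$ large; $\|\nabla\psi\|_2 = R_2$ with $0\leq t\leq R_1$; and $t=R_1$ with $\|\nabla\psi\|_2\leq R_2$), and on the mixed faces one needs the superlinear lower bounds from $(fg3)$ to beat the indefinite quadratic term $2t\,\mathcal Q((w,z),(\psi,-\psi))$ uniformly — this is where the condition $w\neq -z$ and the subcriticality $(fg2')$ are both genuinely used.
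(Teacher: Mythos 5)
Your proposal is correct and follows essentially the same route as the paper: a lower bound $\I\geq\alpha>0$ on $\partial B_\rho(0)\cap H^+$ via $(fg1)$--$(fg2')$, the estimate $\I\leq 0$ on $H^-$ and on the large sphere (or box boundary) in $\R^+(w,z)\oplus H^-$ using the superlinear lower bound $F(s),G(s)\gtrsim|s|^{2+\delta}$ from $(fg3)$ together with $\int_\Omega|w+z|^{2+\delta}\,dx>0$, and then the Benci--Rabinowitz linking theorem combined with the Palais--Smale condition of Lemma \ref{lemma:I_PalaisSmale}, the inequality \eqref{eq:I(u,v)leq_BenciRab} coming from the minimax characterization since the identity map is admissible. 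The only cosmetic difference is that you phrase the linking set as a box with three faces while the paper uses the equivalent sphere formulation; no gap either way.
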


\begin{proof}
\noindent \emph{Step 1.} Given $(u,u)\in H^+$, from ($fg1$)--($fg2'$) we deduce the existence of $C>0$ such that
\[
\I(u,u)=\|u\|^2_{H^1_0}-\int_\Omega F(u)\, dx-\int_\Omega G(u)\, dx\geq \frac{1}{2}\|u\|^2_{H^1_0}-C\|u\|_{L^{p+1}}^{p+1}-C\|u\|_{L^{q+1}}^{q+1},
\]
whence
\[
\I\geq \alpha \qquad \text{ on } \partial B_\rho(0)\cap H^+,
\]
for some $\rho,\alpha>0$. 

\medbreak

\noindent \emph{Step 2.} Take $(w,z)\in H$ with $w\neq -z$. We claim that
\[
\I(u,v)\leq 0 \text{ whenever } (u,v)\in H^-, \text{ or } (u,v)\in \R^+(w,z) + H^- \text{ with } \|(u,v)\|=R,
\]
for $R$ sufficiently large. Is is clear that $\I(u,-u)\leq 0$ for every $u\in H^1_0(\Omega)$, while
\begin{multline*}
\I(tw+\phi,tz-\phi)=t^2\int_\Omega \ip{\nabla w}{\nabla z}\,  dx-\|\phi\|_{H^1_0}^2+t\int_\Omega \ip{\nabla \phi}{\nabla (z-w)}\\
				-\int_\Omega F(tw+\phi)\, dx-\int_\Omega G(tz-\phi)\, dx\\
				\leq  a_1t^2-\frac{1}{2}\|\phi\|_{H^1_0}^2- a_2\int_\Omega |tw+\phi|^{2+\delta}\, dx-a_3\int_\Omega |tz-\phi|^{2+\delta}\, dx-a_4\\
				\leq  a_1t^2-\frac{1}{2}\|\phi\|_{H^1_0}^2-a_5t^{2+\delta}\int_\Omega |w+z|^{2+\delta}\, dx-a_3\leq 0
\end{multline*}
if either $t$ or $\|\phi\|_{H^1_0}$ is sufficiently large, so we have proved the claim. 

\medbreak 

\noindent \emph{Step 3.} One can conclude applying directly Benci-Rabinowitz's linking theorem \cite[Theorem 0.1]{BenciRabinowitz}. Observe that inequality \eqref{eq:I(u,v)leq_BenciRab} is a direct consequence of the minimax procedure presented in \cite{BenciRabinowitz}.

\end{proof}

As discussed in Section \ref{section:direct_approaches}, it is not true that $c(\Omega)$ can be obtained as the infimum on the usual Nehari manifold of $\I$. Based on the structure of the functional and on Lemma \ref{lemma:Existence_result}, the suitable Nehari-type set to work with is the following:
\begin{equation}\label{eq:Abstract_Nehari}
\Ncal_\I:=\left\{(u,v)\in H\ \mid \begin{array}{c} (u,v)\neq 0, \ \I'(u,v)(u,v)=0,\\ \I'(u,v)(\phi,-\phi)=0,\ \forall \phi\in H^1_0(\Omega)\end{array}\right\}.
\end{equation}
This set was first introduced by Pistoia and Ramos \cite[p. 4]{PistoiaRamosDirichlet}, using ideas from a previous paper of Ramos and Yang \cite{RamosYang}. However, the proofs there required more restrictive assumptions on the nonlinearities. Our observations in this section follow mostly the ideas of Ramos and Tavares \cite{RamosTavares}; see also \cite[Section  5]{TavaresThesis} for a more detailed version.

\begin{lemma}\label{lemma:N_is_manifold}
The set $\Ncal_\I$ is a submanifold of $H$ having infinite codimension. Moreover, $\Ncal_\I$ is a natural constraint for $\I$ in the sense that
\[
(u,v)\in \Ncal_\I,\ \I|_{\Ncal_\I}'(u,v)=0\qquad  \Longrightarrow\qquad \I'(u,v)=0.
\]
\end{lemma}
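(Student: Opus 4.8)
The statement has two parts: first that $\Ncal_\I$ is a submanifold of infinite codimension, and second that it is a natural constraint. The natural strategy is to view $\Ncal_\I$ as the zero set of a suitable $C^1$ map and to establish that $0$ is a regular value, in a sense adapted to the infinite-codimensional situation. Concretely, I would introduce the map $G\colon H \to \R \times H^1_0(\Omega)$ (or rather into $\R\times (H^-)^*$, identifying directions $\phi$ with $(\phi,-\phi)\in H^-$) given by
\[
G(u,v) = \Big(\I'(u,v)(u,v),\ \phi\mapsto \I'(u,v)(\phi,-\phi)\Big),
\]
so that $\Ncal_\I = G^{-1}(0)\setminus\{(0,0)\}$. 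Since $\I\in C^2$, $G$ is $C^1$. One then needs to check that $DG(u,v)$ is surjective at every point of $\Ncal_\I$ and that its kernel splits; because the second component of $G$ already takes values in an infinite-dimensional space, this produces the claimed infinite codimension.

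\textbf{Key steps.} First I would record, using $(fg3)$ in the form $\I(u,v)\geq \big(\tfrac12-\tfrac1{2+\delta}\big)\int_\Omega(f(u)u+g(v)v)\,dx>0$ on $\Ncal_\I$ together with the bound-from-below argument (as in Step 1 of Lemma \ref{lemma:Existence_result} or Item (i) of Lemma \ref{lemma:properties_of_N_Phi}), that $\Ncal_\I$ is bounded away from $(0,0)$; this lets me drop the puncture and treat $\Ncal_\I = G^{-1}(0)$ near any of its points. Second, and this is the crux, I would compute $DG(u,v)$ applied to test directions. Evaluating the differential of the constraint $\phi\mapsto \I'(u,v)(\phi,-\phi)$ in the direction $(\psi,-\psi)\in H^-$ gives a quadratic form whose leading part is $-2\int_\Omega\langle\nabla\psi,\nabla\psi\rangle\,dx = -2\|\psi\|_{H^1_0}^2$ plus lower-order (compact) terms $-\int F''(u)\psi^2-\int G''(v)\psi^2$; I would argue that, thanks to $(fg3)$ (which gives $f'(s)s^2\geq(1+\delta)f(s)s$, controlling the sign), this operator from $H^-\to (H^-)^*$ is an isomorphism — a negative-definite form plus a compact perturbation, invertible because on $\Ncal_\I$ the relevant bilinear form stays coercive. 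Combined with a one-dimensional argument handling the scalar component $\I'(u,v)(u,v)$ via the direction $(u,v)$ (using $G'(u,v)u$-type nondegeneracy, i.e. that $\tfrac{d}{dt}\I'(tu,tv)(tu,tv)|_{t=1}\neq 0$ on the Nehari part, which again follows from $(fg3)$), this shows $DG(u,v)$ is onto with complemented kernel, hence $\Ncal_\I$ is a $C^1$ Banach submanifold of codimension $1 + \dim H^1_0(\Omega) = \infty$.

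\textbf{Natural constraint and main obstacle.} For the natural-constraint property, suppose $(u,v)\in\Ncal_\I$ with $\I|_{\Ncal_\I}'(u,v)=0$. By the Lagrange multiplier rule on a Banach manifold there exist a scalar $\mu$ and an element $\zeta\in H^1_0(\Omega)$ (the multipliers for the two families of constraints) such that for all $(\varphi,\psi)\in H$,
\[
\I'(u,v)(\varphi,\psi) = \mu\,DG_1(u,v)(\varphi,\psi) + \I''(u,v)\big((\zeta,-\zeta),(\varphi,\psi)\big) + \text{(terms from differentiating }\I'(u,v)(u,v)\text{)}.
\]
I would then test this identity against the two distinguished classes of directions — namely $(\varphi,\psi)=(u,v)$ and $(\varphi,\psi)=(\phi,-\phi)$ — and use the defining relations of $\Ncal_\I$ together with the invertibility established above to force $\mu=0$ and $\zeta=0$, exactly in the spirit of Item (iii) of Lemma \ref{lemma:properties_of_N_Phi}. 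The main obstacle I anticipate is the bookkeeping in this last step: because $\Ncal_\I$ is cut out by infinitely many constraints, the ``multiplier'' is an element of an infinite-dimensional space, and one must carefully set up the Lagrange multiplier theorem (or argue directly by decomposing $H=H^+\oplus H^-$ and projecting the equation $\I'(u,v)=0$ onto each summand) so that the system satisfied by the multipliers is seen to be the invertible one from the manifold computation. Verifying the coercivity/invertibility of that operator — essentially showing the second variation of $\I$ restricted to $H^-$ is negative definite along $\Ncal_\I$ — is the genuinely quantitative point, and it is where assumption $(fg3)$ does the real work.
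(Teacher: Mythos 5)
Your overall strategy coincides with the paper's: realize $\Ncal_\I$ as the zero set of the $C^1$ map $K(u,v)=(\I'(u,v)(u,v),\,P\I'(u,v))$ with values in $\R\oplus H^-$, get surjectivity of the derivative from negative-definiteness of the second variation plus a Fredholm argument (the identity minus a compact operator, using $p,q<2^*-1$), and prove the natural-constraint property by Lagrange multipliers with a multiplier pair $(\lambda,\Psi)\in\R\times H^1_0(\Omega)$ tested back against the multiplier direction itself. The boundedness of $\Ncal_\I$ away from the origin is not actually needed for the manifold statement, since $\{(u,v)\neq(0,0)\}$ is open.

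There is, however, one step in your plan that would not go through as written: you propose to deduce surjectivity of $DG(u,v)$ from two \emph{separate} diagonal facts, namely that $\I''(u,v)$ is negative definite on $H^-$ and that $\frac{d}{dt}\I'(tu,tv)(tu,tv)\big|_{t=1}<0$. For a symmetric operator in block form $\left(\begin{smallmatrix} a & b\\ b^* & D\end{smallmatrix}\right)$ on $\spann\{(u,v)\}\oplus H^-$, the conditions $a<0$ and $D<0$ do not imply injectivity: one needs the Schur complement $a-bD^{-1}b^*\neq 0$, and the coupling term $b$, i.e.\ $\I''(u,v)\big((u,v),(\phi,-\phi)\big)$, does not vanish in general. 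The paper avoids this by working with the \emph{full} quadratic form on $Z=\spann\{(u,v)\}\oplus H^-$: if $K'(u,v)(tu+\phi,tv-\phi)=0$ then $\I''(u,v)\big((tu+\phi,tv-\phi),(tu+\phi,tv-\phi)\big)=0$, and after adding and subtracting $\int_\Omega \frac{f(u)}{u}\phi^2$, $\int_\Omega\frac{g(v)}{v}\phi^2$ and using the constraint identities $\I'(u,v)(u,v)=0$ and $\I'(u,v)(\phi,-\phi)=0$, this form becomes a sum of terms that are all nonpositive by $(fg3)$, forcing $t=0$ and $\phi\equiv 0$; injectivity on $Z$ plus the Fredholm alternative then gives surjectivity. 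You should replace your block-by-block argument with this unified computation. The same computation is what closes your Lagrange-multiplier step: testing the multiplier identity against $(\lambda u+\Psi,\lambda v-\Psi)$ yields precisely $\I''(u,v)\big((\lambda u+\Psi,\lambda v-\Psi),(\lambda u+\Psi,\lambda v-\Psi)\big)=0$, whence $\lambda=0$ and $\Psi\equiv 0$.
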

\begin{proof}
Elements in $\Ncal_\I$ are zeros of the map
\[
K:H\to \R\oplus H^-,\quad K(u,v):=(\I'(u,v)(u,v),P\I'(u,v)),
\]
where $P:H\to H^-$ denotes the orthogonal projection. For any $(u,v)\in \Ncal_\I$, its derivative $K'(u,v):H\to \R\oplus H^-$ is given by
\[
K'(u,v)(\xi,\eta)=(\I'(u,v)(\xi,\eta)+\I''(u,v)(u,v)(\xi,\eta),PI''(u,v)(\xi,\eta)).
\]
Let us now focus on $K'(u,v)$ restricted to the subspace $Z:=\spann \{(u,v)\}\oplus H^-$ of $H$, which we can be identified with $\R\oplus H^-$.

\medbreak

\noindent \emph{Step 1.} $K'(u,v)$ is one to one. Take $(tu+\phi,tv-\phi)\in Z$ such that $K'(u,v)(tu+\phi,tv-\phi)=0$. Then 
\[
\I''(u,v)(u,v)(tu+\phi,tv-\phi)=0,\qquad P\I''(u,v)(tu+\phi,tv-\phi)=0,
\]
and in particular
\[I''(u,v)(tu+\phi,tv-\phi)(tu+\phi,tv-\phi)=0,
\]
that is
\begin{multline*}
2t^  2\int_\Omega \ip{\nabla u}{\nabla v}\, dx+2t\int_\Omega \ip{\nabla \phi}{\nabla(v-u)}\, dx-2\int_\Omega|\nabla \phi|^2\, dx\\
-\int_\Omega f'(u)(tu+\phi)^2\, dx-\int_\Omega g'(v)(tv-\phi)^2 \, dx=0.
\end{multline*}
Adding and subtracting the quantities $\int_\Omega \frac{f(u)}{u}\phi^2\, dx$, $\int_\Omega \frac{g(v)}{v}\phi^2\, dx$ and using the identities $I'(u,v)(t^2u,t^2v)=0$, $I'(u,v)(t\phi,-t\phi)=0$, we obtain
\begin{multline*}
\int_\Omega\left(\frac{f(u)}{u}-f'(u)\right) (tu+\phi)^2\, dx+\int_\Omega \left(\frac{g(v)}{v}-g'(v)\right) (tv-\phi)^2\, dx\\
-\int_\Omega \left(\frac{f(u)}{u}\phi^2+\frac{g(v)}{v}\phi^2\right)-2\int_\Omega |\nabla \phi|^2 \, dx=0
\end{multline*}
Taking into account ($fg3$), we have $\phi\equiv 0$ and $t = 0$.

\medbreak

\noindent \emph{Step 2.} As $1<p,q<2^*-1$, one can check that $Id- K'(u,v)$ is a compact operator. Then by the Fredholm alternative theorem $K'(u,v)|_Z$ being one to one yields that $K'(u,v)$ is onto, and thus $\Ncal_\I$ is a manifold.

\noindent Step 3. If $I'|_{\Ncal_\I}(u,v)=0$, then according to the Lagrange multiplier rule there exist $\lambda\in \R$ and $\Psi\in H^1_0(\Omega)$ such that
\begin{align*}
\I'(u,v)(\xi,\eta)&= \lambda \I'(u,v)(\xi,\eta)+\lambda \I''(u,v)(u,v)(\xi,\eta)+\I''(u,v)(\Psi,-\Psi)(\xi,\eta)\\
	   &= \I'(u,v)(\lambda \xi,\lambda \eta)+\I''(u,v)(\lambda u+\Psi,\lambda v-\Psi)(\xi,\eta)
\end{align*}
for every $(\xi,\eta)\in H$. By taking $(\xi,\eta)=(\lambda u+\Psi,\lambda v-\Psi)$, we have
\[
\I''(u,v)(\lambda u+\Psi,\lambda v-\Psi)(\lambda u+\Psi,\lambda v-\Psi)=0
\]
and thus, as in the Step 1 of this proof, $\lambda=0,\Psi\equiv 0$, and the conclusion follows.
\end{proof}

We can now state a different characterization for the least energy level $c(\Omega)$.

\begin{proposition}\label{prop:gs_caract2}
Assume $(fg1)$--$(fg2')$--$(fg3)$ hold. Then
\[
c(\Omega)=\inf_{\Ncal_\I}\I>0, 
\]
which is achieved.
\end{proposition}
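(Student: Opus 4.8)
The plan is to show the two-sided inequality $c(\Omega) = \inf_{\Ncal_\I}\I$ and then that the infimum is positive and attained. First I would prove that every nontrivial critical point of $\I$ lies on $\Ncal_\I$, which is immediate: if $\I'(u,v) = 0$ then in particular $\I'(u,v)(u,v) = 0$ and $\I'(u,v)(\phi,-\phi) = 0$ for all $\phi \in H^1_0(\Omega)$. Combined with Lemma \ref{lemma:N_is_manifold}, which says $\Ncal_\I$ is a natural constraint, this gives that the critical points of $\I|_{\Ncal_\I}$ are exactly the nontrivial critical points of $\I$; hence $\inf_{\Ncal_\I}\I \le c(\Omega)$, and any minimizer of $\I$ on $\Ncal_\I$ which is also a critical point of $\I|_{\Ncal_\I}$ will be a ground state. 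The reverse inequality $c(\Omega) \le \inf_{\Ncal_\I}\I$ will follow from Lemma \ref{lemma:Existence_result}: given $(u,v) \in \Ncal_\I$, I would show that $(u,v)$ is itself the maximum point of $\I$ over $\R^+(u,v) + H^-$, so that the right-hand side of \eqref{eq:I(u,v)leq_BenciRab} with $(w,z) = (u,v)$ equals $\I(u,v)$; Lemma \ref{lemma:Existence_result} then produces a nontrivial critical point with energy $\le \I(u,v)$, whence $c(\Omega) \le \I(u,v)$, and taking the infimum gives the claim.

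The key technical point, and what I expect to be the main obstacle, is precisely the assertion that for each $(w,z)$ with $w \ne -z$ the function $(t,\phi) \mapsto \I(tw + \phi, tz - \phi)$ on $\R^+ \times H^1_0(\Omega)$ has a unique critical point, which is a strict global maximum, and that this critical point lies on $\Ncal_\I$. Concavity in the $\phi$-variable is clear since $\phi \mapsto -\|\phi\|_{H^1_0}^2 - \int_\Omega F(tw+\phi) - \int_\Omega G(tz-\phi)$ is strictly concave (using that $F, G$ are convex by $(fg3)$, in fact $f, g$ are increasing). The behavior in $t$ uses $(fg3)$ again: the superquadraticity $F(s)s \le$ (const)$\,F'(s)s^2/(2+\delta)$-type estimates from Lemma \ref{lemma:Existence_result} Step 2 guarantee $\I(tw+\phi, tz-\phi) \to -\infty$ as $t + \|\phi\|_{H^1_0} \to \infty$, while Step 1 of Lemma \ref{lemma:Existence_result} gives that the function is positive on a small sphere in $H^+$; together with the fact that $t = 0, \phi = 0$ gives value $0$ and the strict concavity/monotonicity this forces the existence of a unique interior maximizer $(t_0, \phi_0)$. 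By construction, vanishing of the partial derivatives at $(t_0,\phi_0)$ says exactly that $\I'(u_0,v_0)(u_0,v_0) = 0$ and $\I'(u_0,v_0)(\phi,-\phi) = 0$ for all $\phi$, i.e. $(u_0,v_0) := (t_0 w + \phi_0, t_0 z - \phi_0) \in \Ncal_\I$. Conversely, if $(u,v) \in \Ncal_\I$, then taking $(w,z) = (u,v)$ one checks $t_0 = 1, \phi_0 = 0$ by uniqueness, so that $\sup_{t \ge 0, \phi} \I(t(u,v) + (\phi,-\phi)) = \I(u,v)$, closing the argument above.

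Finally, for positivity and attainment: $c(\Omega) = \inf_{\Ncal_\I}\I > 0$ follows since for $(u,v) \in \Ncal_\I$ the relation $\I'(u,v)(u,v) = 0$ together with $(fg3)$ gives, as noted in the commented-out passage, $\I(u,v) \ge (\tfrac{1}{2} - \tfrac{1}{2+\delta})\int_\Omega(f(u)u + g(v)v)\,dx$, and the Sobolev-type lower bound $\|(u,v)\|_H \ge R > 0$ on $\Ncal_\I$ (argued exactly as in item (i) of Lemma \ref{lemma:properties_of_N_Phi}, using $(fg1)$--$(fg2')$ and $1 < p,q < 2^*-1$) prevents the right-hand side from degenerating. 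For attainment, take a minimizing sequence $(u_n,v_n) \subset \Ncal_\I$ for $\I$; since $\I$ satisfies the Palais–Smale condition (Lemma \ref{lemma:I_PalaisSmale}) and $\Ncal_\I$ is a natural constraint, a standard argument — applying Ekeland's variational principle on the manifold $\Ncal_\I$ to get a Palais–Smale sequence for $\I|_{\Ncal_\I}$, hence for $\I$ by Lemma \ref{lemma:N_is_manifold} — yields a subsequence converging strongly to some $(u,v)$ with $\I(u,v) = \inf_{\Ncal_\I}\I$ and $\I'(u,v) = 0$; the bound $\|(u,v)\|_H \ge R$ ensures $(u,v) \ne (0,0)$, so $(u,v)$ is a ground state and the infimum is attained.
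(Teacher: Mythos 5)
Your proposal is correct and, for the attainment and positivity parts, follows essentially the paper's own argument (Ekeland on $\Ncal_\I$, the Palais--Smale condition of Lemma \ref{lemma:I_PalaisSmale}, the lower bound on the norm over $\Ncal_\I$ to rule out the trivial limit, and the natural-constraint property of Lemma \ref{lemma:N_is_manifold}). Where you diverge is in the proof of $c(\Omega)\le\inf_{\Ncal_\I}\I$: the paper obtains it for free from attainment, since the minimizer produced by Ekeland plus the natural constraint is itself a nontrivial critical point of $\I$ at the level $\inf_{\Ncal_\I}\I$, so that $c(\Omega)\le\I(u,v)=\inf_{\Ncal_\I}\I$; you instead route it through Lemma \ref{lemma:Existence_result} and the claim that each $(u,v)\in\Ncal_\I$ is the unique maximizer of $\I$ on its fiber $\R^+(u,v)+H^-$. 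That claim is true, but its proof (existence of the maximizer via coercivity and weak lower semicontinuity, uniqueness via the negativity of the second derivative of $\Lambda(t,\phi)$ at any critical point, using $(fg3)$) is precisely the content of the paper's later Lemma \ref{lemma:sup_I(t(u,v)+(phi,-phi))}, which is developed only for the third characterization in Proposition \ref{prop:gs_caract3}; your appeal to ``strict concavity/monotonicity'' alone does not quite deliver uniqueness, since $\Lambda$ is not jointly concave, and one really needs the Hessian computation from the proof of Lemma \ref{lemma:N_is_manifold}. So your detour is logically sound but redundant: once you have run the Ekeland argument at the end, both inequalities already follow, and the fiber-maximization machinery can be deferred to where the paper uses it. A small bonus of the paper's ordering is that positivity of the infimum also comes for free from attainment, since $\I(u,v)\geq\frac{\delta}{2(2+\delta)}\int_\Omega(f(u)u+g(v)v)\,dx>0$ at the nontrivial minimizer, whereas your argument needs the extra (standard but unstated) step that the norm bound $\|(u,v)\|_H\geq R$ forces $\int_\Omega(f(u)u+g(v)v)\,dx$ to stay away from zero.
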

\begin{proof}
First of all observe that
\[
\I(u,v)=\frac{\delta}{2(2+\delta)}\int_\Omega (f(u)u+g(v)v)\, dx>0 \ \forall (u,v)\in \Ncal_\I,
\]
hence $\inf_{\Ncal_\I}\I\geq 0$. Taking a minimizing sequence $\{(u_n,v_n)\}_n$, by Ekeland's variational principle we can suppose without loss of generality that $(u_n,v_n)$ is a Palais Smale sequence. Thus from Lemma \ref{lemma:I_PalaisSmale} we obtain the existence of $(u,v)\in H$ such that, up to a subsequence, $(u_n,v_n)\to (u,v)$ in $H$. Reasoning as in the proof of Corollary \ref{coro:_least_energy_achieved}, we conclude that $(u,v)\neq (0,0)$ achieves $\inf_{\Ncal_\I}\I$. Thus by Lemma \ref{lemma:N_is_manifold} we have $\I'(u,v)=0$, and so $\inf_{\Ncal_\I}\I= c(\Omega)$.
\end{proof}

Let us now look for a third characterization of $c(\Omega)$. Take $(u,v)\in H$, $u\neq -v$. As we saw in the proof of Lemma \ref{lemma:Existence_result}, the quantity
\[
\sup\{\I(t(u,v)+(\phi,-\phi)):\ t\geq 0,\ \phi\in H^1_0\}
\]
is finite. We want to show that it is \emph{uniquely achieved}, and that the maximum is associated to a point in $\Ncal_\I$.

\begin{lemma}\label{lemma:sup_I(t(u,v)+(phi,-phi))}
Given $(u,v)\in H$, $u\neq -v$, there exist unique $t^*>0$ and $\phi^*\in H^1_0(\Omega)$ such that
\[
\sup\{\I(t(u,v)+(\phi,-\phi)):\ t\geq 0,\ \phi\in H^1_0(\Omega)\}=\I(t^*(u,v)+(\phi^*,-\phi^*)).
\]
Moreover, $t^*$ and $\phi^*$ are uniquely characterized by
\[
\I'(t^*u+\phi^*,t^*v-\phi^*)(u,v)=0,\qquad \I'(t^*u+\phi^*,t^*v-\phi^*)(\phi,-\phi)=0,\ \forall \phi\in H^1_0(\Omega),
\]
that is
\[
(t^*u+\phi^*,t^*v-\phi^*)\in \Ncal_\I.
\]
\end{lemma}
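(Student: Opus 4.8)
The plan is to fix $(u,v)\in H$ with $u\neq -v$ and study the function $(t,\phi)\mapsto \I(t u+\phi, t v-\phi)$ on $[0,\infty)\times H^1_0(\Omega)$, showing it has a unique critical point which is a strict global maximum. First I would reduce to the case $t>0$: since $u\neq -v$, we have $tu+tv\not\equiv 0$, and by $(fg3)$ (which gives the superquadratic lower bound $F(s)\geq F(1)|s|^{2+\delta}$ etc.), exactly as in Step 2 of the proof of Lemma~\ref{lemma:Existence_result}, $\I(tu+\phi,tv-\phi)\to -\infty$ as $t+\|\phi\|_{H^1_0}\to\infty$; combined with $\I\leq 0$ on all of $H^-$ (the $t=0$ slice) and $\I$ being positive somewhere on $\mathbb{R}^+(u,v)+H^-$ (Step 1 type estimate, using $(fg1)$), the supremum is positive, finite, and attained at some interior point $(t^*,\phi^*)$ with $t^*>0$. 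Any such maximizer is a critical point of the two-variable function, which is precisely the pair of equations
\[
\I'(t^*u+\phi^*,t^*v-\phi^*)(u,v)=0,\qquad \I'(t^*u+\phi^*,t^*v-\phi^*)(\phi,-\phi)=0\quad\forall\phi\in H^1_0(\Omega),
\]
i.e. $(t^*u+\phi^*,t^*v-\phi^*)\in\Ncal_\I$ (note $t^*>0$ and $u\neq -v$ guarantee $t^*u+\phi^*\neq -(t^*v-\phi^*)$, so the point is nontrivial).

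The crux is \emph{uniqueness}. The clean way is to observe that the map in question is, after restricting to the affine subspace $\mathbb{R}(u,v)\oplus H^-$, \emph{strictly concave} on the relevant region. More precisely, I would show that along any segment inside $\{t>0\}\times H^1_0(\Omega)$ the function $r\mapsto \I(r(t_1u+\phi_1)+(1-r)(t_2u+\phi_2),\,\dots)$ is strictly concave; equivalently that the second derivative of $(s,\phi)\mapsto \I(su+\phi,sv-\phi)$ (second variation) is negative definite at \emph{every} point of the form $(t^*u+\phi^*,t^*v-\phi^*)\in\Ncal_\I$. The quadratic part $2st\int\langle\nabla u,\nabla v\rangle - 2\|\phi\|^2$ contributes an indefinite-looking term, but on a critical point one can absorb the cross term using the critical point relations and $(fg3)$: this is exactly the computation carried out in Step~1 of the proof of Lemma~\ref{lemma:N_is_manifold}, which shows that $\I''(u,v)$ restricted to $\mathrm{span}\{(u,v)\}\oplus H^-$ is negative definite whenever $(u,v)\in\Ncal_\I$ (that argument used only $(fg3)$ and the Nehari-type relations). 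Since a $C^2$ function whose Hessian is negative definite at every critical point has at most one critical point on a convex set — two distinct maxima would force, by the mountain-pass/intermediate-value argument along the connecting segment, an intermediate critical point of mountain-pass type where the Hessian cannot be negative definite — uniqueness of $(t^*,\phi^*)$ follows.

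The main obstacle I anticipate is making the uniqueness argument airtight: the domain $\{t>0\}$ is not all of $\mathbb{R}$, so one must check that the maximizer does not escape to the boundary $t=0$ (handled by $\I|_{H^-}\leq 0 <$ value at the maximum) and that the ``negative definite Hessian at all critical points $\Rightarrow$ at most one critical point'' principle applies on the relevant convex region. A slightly more pedestrian but robust alternative, which I would fall back on if the concavity bookkeeping gets delicate, is: fix $t$, show $\phi\mapsto \I(tu+\phi,tv-\phi)$ is strictly concave (its Hessian is $-2\|\cdot\|^2 - \int f'(tu+\phi)(\cdot)^2-\int g'(tv-\phi)(\cdot)^2<0$, using $f',g'\geq 0$ on the relevant sets via $(fg3)$), hence has a unique maximizer $\phi=\Phi(t)$ depending smoothly on $t$; then reduce to the scalar function $h(t):=\I(tu+\Phi(t),tv-\Phi(t))$ and show, again using $(fg3)$, that $h$ has exactly one critical point on $(0,\infty)$, which is a strict maximum — this is the genuinely one-dimensional ``fibering map'' analysis, for which superquadraticity of $F$ and $G$ is precisely what is needed. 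Either route closes the proof; the $C^2$-smoothness of $\I$ (noted after $(fg3)$) makes all differentiations legitimate.
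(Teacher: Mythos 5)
Your proposal follows essentially the same route as the paper: coercivity of $\Lambda(t,\phi)=\I(tu+\phi,tv-\phi)$ coming from the superquadraticity in $(fg3)$, identification of interior maximizers with points of $\Ncal_\I$, and uniqueness from the negative definiteness of $\Lambda''$ at every critical point via the computation in Step 1 of the proof of Lemma~\ref{lemma:N_is_manifold}. The one step you assert rather than prove is that the finite supremum is actually \emph{attained}; the paper closes this by the direct method — a maximizing sequence is bounded, so $t_n\to t^*$ and $\phi_n\rightharpoonup\phi^*$, and weak lower semicontinuity of $\|\phi\|_{H^1_0}^2$ together with Fatou's lemma applied to $\int_\Omega F$ and $\int_\Omega G$ give $s\le \I(t^*u+\phi^*,t^*v-\phi^*)\le s$ — and you should include this, since coercivity alone does not yield attainment in an infinite-dimensional space.
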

\begin{proof}
This was essentially proved in \cite{RamosYang}, under more restrictive assumptions. Here we present a proof which works under ($fg1$)--($fg2'$)--($fg3$). Going back to the proof of Lemma \ref{lemma:Existence_result}, we deduce that
\[
\Lambda(t,\phi):=\I(tu+\phi,tv-\phi)\leq a_1t^2-\frac{1}{2}\|\phi\|_{H^1_0}^2-a_2t^{2+\delta}\int_\Omega |u+v|^{2+\delta}\, dx-a_3
\]
for some $a_1,a_2,a_3>0$, so that $s:=\sup \Lambda(t,\phi)$ is finite. Moreover, taking a maximizing sequence $(t_n,\phi_n)$, $|t_n|$ and $\|\phi_n\|_{H^1_0}$ are bounded, and (up to subsequences)
\[
t_n\to t^*,\qquad \phi_n\rightharpoonup \phi^* \text{ weakly in }H^1_0.
\]
In particular 
\[
\|\phi^*\|_{H^1_0}^2\leq \liminf \|\phi_n\|_{H^1_0}^2,\quad \int_\Omega \ip{\nabla \phi_n}{\nabla (v-u)}\, dx\to \int_\Omega \ip{\nabla \phi}{\nabla (v-u)}\, dx,
\]
and, by Fatou's lemma,
\[
\int_\Omega F(t^\ast u+\phi^\ast)\, dx\leq \liminf \int_\Omega F(t_nu+\phi_n)\, dx,
\]
and the same holds for $G$. So,
\[
s=\limsup \I(t_nu+\phi_n,t_nv-\phi_n)\leq \I(t^\ast u+\phi^\ast,t^\ast v-\phi^\ast)\leq s
\]
and thus $s$ is attained for $t^\ast>0$, $\phi^\ast\in H^1_0(\Omega)$, satisfying 
\[
(u^\ast,v^\ast):=(t^\ast u+\phi^\ast,t^\ast v-\phi^\ast)\in \Ncal_\I.
\]
As for the uniqueness of $t^\ast$, $\phi^\ast$, observe that 
\begin{align*}
\Lambda''(t^\ast,\phi^\ast)(t,\phi)(t,\phi)&=\I''(u^\ast,v^\ast)(tu+\phi,tv-\phi)(tu+\phi,tv-\phi)\\
								&\I''(u^\ast,v^\ast)(su^\ast+\psi,sv^\ast-\psi)(su^\ast+\psi,sv^\ast-\psi)
\end{align*}
for $s:=t/t^\ast$, $\psi=\phi-t\phi^\ast /t^\ast$, and so, by the computations of the proof of Lemma \ref{lemma:N_is_manifold},
\begin{multline*}
\Lambda''(t^\ast,\phi^\ast)(t,\phi)(t,\phi)\leq -\delta\int_\Omega \left(\frac{f(u^*)}{u^*}(su^*+\psi)^2+\frac{g(v^*)}{v^*}(sv^\ast-\psi)^2\right)\, dx\\ - \int_\Omega \left(\frac{f(u^\ast)}{u^\ast}\psi^2+\frac{g(v^*)}{v^*}\psi^2\right)\, dx-2\int_\Omega |\nabla \psi|^2\, dx<0,
\end{multline*}
whenever $(t,\phi)\neq (0,0)$, whence $\Lambda$ has at most one single critical point.
\end{proof}
As a consequence of this lemma, we have the following third characterization of the least energy level $c$.
\begin{proposition}\label{prop:gs_caract3}
Assume that ($fg1$)--($fg2'$)--($fg3$) hold. Then
\[
c(\Omega)=\mathop{\inf_{u,v\in H^1_0}}_{v\neq -u} \mathop{\sup_{t\geq 0}}_{\phi\in H^1_0} \I(t(u,v)+(\phi,-\phi)).
\]
\end{proposition}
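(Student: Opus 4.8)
The plan is to prove the claimed identity by establishing the two inequalities $d(\Omega)\ge c(\Omega)$ and $d(\Omega)\le c(\Omega)$, where I write $d(\Omega)$ for the minimax quantity on the right-hand side. All the analytic substance is already contained in Proposition~\ref{prop:gs_caract2} (that $\inf_{\Ncal_\I}\I=c(\Omega)$, and it is attained) and in Lemma~\ref{lemma:sup_I(t(u,v)+(phi,-phi))} (attainment and uniqueness of the maximizer along each fiber $\{(tu+\phi,tv-\phi)\}$), so the proof is essentially a matter of assembling these two facts.

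For $d(\Omega)\ge c(\Omega)$ I would fix an arbitrary pair $(u,v)\in H$ with $u\neq -v$ and apply Lemma~\ref{lemma:sup_I(t(u,v)+(phi,-phi))}: the supremum $\mathop{\sup_{t\ge 0}}_{\phi\in H^1_0}\I(t(u,v)+(\phi,-\phi))$ is attained at a point of the form $(t^\ast u+\phi^\ast,\,t^\ast v-\phi^\ast)$, which belongs to $\Ncal_\I$. Hence this supremum is bounded below by $\inf_{\Ncal_\I}\I$, which equals $c(\Omega)$ by Proposition~\ref{prop:gs_caract2}. Taking the infimum over all admissible $(u,v)$ yields $d(\Omega)\ge c(\Omega)$.

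For the reverse inequality I would take a minimizer $(u,v)\in\Ncal_\I$ of $\inf_{\Ncal_\I}\I=c(\Omega)$, which exists by Proposition~\ref{prop:gs_caract2}. The one point requiring a short separate check is that $u\neq -v$, so that Lemma~\ref{lemma:sup_I(t(u,v)+(phi,-phi))} may be invoked for this pair: since $(u,v)\in\Ncal_\I$ we have $\I'(u,v)(u,v)=0$, i.e. $2\int_\Omega\ip{\nabla u}{\nabla v}\,dx=\int_\Omega\bigl(f(u)u+g(v)v\bigr)\,dx$, and if $u=-v$ the left-hand side would be $-2\|u\|_{H^1_0}^2\le 0$ while ($fg3$) forces the right-hand side to be strictly positive unless $u\equiv 0$, contradicting $(u,v)\neq(0,0)$. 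Now, since $(u,v)\in\Ncal_\I$, the pair corresponds to the parameters $(t,\phi)=(1,0)$ in its own fiber; by the uniqueness statement in Lemma~\ref{lemma:sup_I(t(u,v)+(phi,-phi))}, which characterizes the maximizing $(t^\ast,\phi^\ast)$ (with $t^\ast>0$) precisely by the requirement $(t^\ast u+\phi^\ast,t^\ast v-\phi^\ast)\in\Ncal_\I$, it follows that $t^\ast=1$ and $\phi^\ast=0$. Consequently
\[
\mathop{\sup_{t\ge 0}}_{\phi\in H^1_0}\I(t(u,v)+(\phi,-\phi))=\I(u,v)=c(\Omega),
\]
so $d(\Omega)\le c(\Omega)$, and combining the two inequalities finishes the proof.

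I do not expect a genuine obstacle here: the delicate work (the strict sign of $\Lambda''$ giving uniqueness of the fiberwise maximizer in Lemma~\ref{lemma:sup_I(t(u,v)+(phi,-phi))}, and the identification of $\inf_{\Ncal_\I}\I$ with the ground state level in Proposition~\ref{prop:gs_caract2}) has already been carried out. The only subtlety is remembering to verify $u\neq -v$ for the ground state before applying Lemma~\ref{lemma:sup_I(t(u,v)+(phi,-phi))} to it.
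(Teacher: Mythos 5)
Your proof is correct and follows exactly the route the paper intends: the paper states Proposition \ref{prop:gs_caract3} as an immediate consequence of Lemma \ref{lemma:sup_I(t(u,v)+(phi,-phi))} combined with Proposition \ref{prop:gs_caract2}, and your two inequalities, including the check that a ground state satisfies $u\neq -v$ via ($fg3$), fill in precisely the details the paper leaves implicit.
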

We end this preliminary subsection with three observations.
\begin{remark}
With respect to {\rm Subsection \ref{subset:Fractional}}, observe that here we have more information on the ground state level under \eqref{eq:p_and_q_reduzido}. However, observe that this is a more restrictive assumption than \eqref{eq:p_and_q_for_E^s_approach}. 
\end{remark}
\begin{remark}\label{rem:p=1_ou_q=1}
If either $p=1$ and $q>1$,  or $q=1$ and $p>1$, we still have that $\I$ is a $C^2$ functional. Moreover, a closer look at the proof of {\rm Lemma \ref{lemma:N_is_manifold}} shows that its conclusions are true for such $(p,q)$'s. Thus {\rm Proposition \ref{prop:gs_caract2}} still holds in this situation. This observation has already been made in \cite[Lemma 4.1]{BonheureSantosRamosJFA}.
\end{remark}
\begin{remark}
\emph{Extension to other boundary conditions:} following step by step each proof, it is easily seen that the same conclusions hold for $(u,v)\in H^1(\Omega)\times H^1(\Omega)$, that is, we have similar variational characterizations for ground states of the problem
\[
-\Delta u+u=g(v),\quad -\Delta v+v=f(u) \text{ in } \Omega,
\]
with $\partial_\nu u=\partial_\nu v=0$ on $\partial \Omega$. The same observation is true for the system posed in the entire space $\Omega=\R^N$ (cf. \cite[eq. (4.4)]{BonheureSantosRamosTAMS}). Furthermore, thanks to {\rm Proposition \ref{prop:gs_caract3}}, it is also straightforward to show that
\[
\Omega\mapsto c(\Omega)
\]
is decreasing with respect to domain inclusion, and that
\[
\R^+\to \R^+,\qquad \lambda\mapsto c(\lambda) \qquad \text{ is increasing,}
\]
where $c(\lambda)$ denotes the ground state level of 
\[
-\Delta u+\lambda u=g(v),\quad -\Delta v+\lambda v=f(u) \text{ in } \R^N.
\]
\end{remark}


\subsection{Introduction of the reduced functional}\label{subset:reducedFunctional}

The considerations of the previous subsection motivate the following (equivalent) approach. Throughout this subsection we assume ($fg1$)--($fg2'$)--($fg3$), so that $p,q<2^*-1$. Reasoning as in the proof of Lemma \ref{lemma:sup_I(t(u,v)+(phi,-phi))}, we can conclude that for any given $(u,v)$ with $u\neq -v$, there exists a unique function $\Psi_{u,v}\in H^1_0(\Omega)$ such that
\[
\sup\{ \I(u+\phi,v-\phi):\ \phi\in H^1_0\}=\I(u+\Psi_{u,v},v-\Psi_{u,v}\},
\]
which is uniquely characterized by
\[
\I'(u+\Psi_{u,v},v-\Psi_{u,v})(\phi,-\phi)=0,\qquad \forall \phi\in H^1_0(\Omega),
\]
that is
\[
-2\Delta \Psi_{u,v}=-\Delta(v-u)+g(v-\Psi_{u,v})-f(u+\Psi_{u,v}) \qquad \text{ in } H^{-1}(\Omega).
\]
\begin{lemma} \label{lemma:Theta_is_C1}
The map $H\cap\{u\neq -v\} \to H$, $
(u,v)\mapsto \Psi_{u,v}$ is of class $C^1$.
\end{lemma}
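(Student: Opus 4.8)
The plan is to apply the implicit function theorem to the map
\[
F: (H\cap\{u\neq -v\})\times H^1_0(\Omega)\to H^{-1}(\Omega),\qquad F((u,v),\phi):=\Delta(v-u)-2\Delta\phi+g(v-\phi)-f(u+\phi),
\]
viewed through the Riesz isomorphism as a map into $H^1_0(\Omega)$, or equivalently to regard $\Psi_{u,v}$ as the solution of the equation $P\,\I'(u+\phi,v-\phi)=0$, where $P$ is the projection onto $H^-$ used in Lemma \ref{lemma:N_is_manifold}. Since $\I$ is of class $C^2$ under ($fg1$)--($fg2'$)--($fg3$), the map $((u,v),\phi)\mapsto P\,\I'(u+\phi,v-\phi)$ is of class $C^1$; by the definition of $\Psi_{u,v}$ we have $F((u,v),\Psi_{u,v})=0$, so the only thing to check is that the partial derivative with respect to $\phi$ is a linear isomorphism of $H^1_0(\Omega)$ at each such point.

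The key step is therefore the following: compute $D_\phi F((u,v),\phi)$ at $\phi=\Psi_{u,v}$ and show it is invertible. Writing $(u^*,v^*)=(u+\Psi_{u,v},v-\Psi_{u,v})$, a direct computation gives, for $\psi\in H^1_0(\Omega)$,
\[
\langle D_\phi F((u,v),\Psi_{u,v})\psi,\psi\rangle = \I''(u^*,v^*)(\psi,-\psi)(\psi,-\psi)= -2\int_\Omega|\nabla\psi|^2\,dx-\int_\Omega f'(u^*)\psi^2\,dx-\int_\Omega g'(v^*)\psi^2\,dx,
\]
which, by ($fg3$) (so that $f'(s)s^2\geq (1+\delta)f(s)s>0$, hence $f'\geq 0$, and similarly $g'\geq0$, wherever the arguments are nonzero — and nonnegativity holds everywhere by continuity), is $\leq -2\|\psi\|_{H^1_0}^2$. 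Thus the symmetric bilinear form associated to $D_\phi F$ is negative definite and coercive; combined with the fact that $D_\phi F$ is a bounded linear operator on $H^1_0(\Omega)$ (this uses $p,q<2^*-1$, i.e. ($fg2'$), to control the lower-order terms $f'(u^*)\psi$, $g'(v^*)\psi$ via the Sobolev embedding), the Lax--Milgram theorem yields that $D_\phi F((u,v),\Psi_{u,v})$ is an isomorphism of $H^1_0(\Omega)$.

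The implicit function theorem then provides, locally around each $(u_0,v_0)$ with $u_0\neq -v_0$, a $C^1$ map $(u,v)\mapsto\phi(u,v)$ solving $F((u,v),\phi(u,v))=0$; by the uniqueness statement already established in (the proof of) Lemma \ref{lemma:sup_I(t(u,v)+(phi,-phi))} — the functional $\phi\mapsto\I(u+\phi,v-\phi)$ is strictly concave and has a unique critical point — this local solution must coincide with $\Psi_{u,v}$, so $(u,v)\mapsto\Psi_{u,v}$ is $C^1$ on a neighbourhood of each point of $H\cap\{u\neq -v\}$, hence $C^1$ on the whole open set. The main obstacle is essentially bookkeeping: one must verify that $\phi\mapsto P\I'(u+\phi,v-\phi)$ is genuinely $C^1$ into $H^1_0(\Omega)$ (not merely Gateaux differentiable), which again reduces to continuity of the Nemytskii operators $s\mapsto f(s),f'(s)$ on the relevant Lebesgue spaces under ($fg2'$); and one must make sure the negative definiteness above uses only ($fg3$) and not the pure-power structure, which is exactly the computation carried out in Step 1 of the proof of Lemma \ref{lemma:N_is_manifold}.
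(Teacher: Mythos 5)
Your argument is correct and follows essentially the same route as the paper: both apply the implicit function theorem to $((u,v),\phi)\mapsto P\,\I'(u+\phi,v-\phi)$ and reduce everything to the invertibility of the partial derivative in $\phi$. The only (harmless) difference is that the paper establishes this invertibility by noting that $\mathrm{Id}-T$ is compact under ($fg2'$) and that $T$ is injective, then invoking the Fredholm alternative, whereas you use the coercivity estimate $-\langle T\psi,\psi\rangle\geq 2\|\psi\|_{H^1_0}^2$ (valid since $f',g'\geq 0$ by ($fg3$)) together with Lax--Milgram; both are valid.
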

\begin{proof} We follow the proof of \cite[Proposition 2.1]{RamosTavares}. We apply the implicit function theorem to the map $ \Theta:(H\cap\{u\neq -v\})\times H^-\to H^-$; $ \Theta((u,v),(\psi,-\psi))=P \I'((u,v)+(\psi,-\psi))$, where $P$ is the orthogonal projection of $H$ onto $H^-$. For any fixed pair $((u,v),(\psi,-\psi))$, the derivative of $\Theta$ with respect to $(\phi,-\phi)$ evaluated at $((u,v),(\psi,-\psi))$ is given by the linear map
$$(\phi,-\phi)\mapsto T(\phi,-\phi)=P \I''(u+\psi,v-\psi)(\phi,-\phi),$$ 
that is
$$\langle T(\phi,-\phi),(\varphi,-\varphi) \rangle= -2\int_\Omega \ip{\nabla\phi}{\nabla \varphi}\, dx-\int_\Omega f'(u+\psi)\phi \varphi-\int_\Omega g'(v-\psi)\phi \varphi, \quad \forall \, \phi,\varphi.$$
Since $(fg2')$ holds, we have that $Id-T$ is a compact operator. The operator $T$ is one-to-one, since if $T(\phi,-\phi)=0$, then
$$-2\|\phi\|^2=\int_\Omega f'(u+\psi)\phi^2+\int_\Omega g'(v-\psi)\phi^2\geq 0$$ and so $\phi=0$. Thus by the Fredholm's alternative theorem $T$ is also onto and hence we can apply the implicit function theorem and obtain the desired result.
\end{proof}

Denote $\Psi_{u,u}$ simply by $\Psi_u$. One can now define the \emph{reduced functional}
\begin{equation}\label{eq:reducedfunctional_J}
\J:H^1_0(\Omega)\to \R,\qquad \J(u):=\I(u+\Psi_u,u-\Psi_u),
\end{equation}
which is a $C^1$ functional by the previous lemma. Moreover,
\begin{align*}
\J'(u)\phi=\I'(u+\Psi_u,u-\Psi_u)(\phi+\Psi_u'\phi,\phi-\Psi_u'\phi)=\I'(u+\Psi_u,u-\Psi_u)(\phi,\phi).
\end{align*}
As $H=H^+\oplus H^-$, we have proved the following lemma.
\begin{lemma}\label{lem:homeo}
The map
\[
\eta:H^1_0\to H \qquad u\mapsto (u+\Psi_u,u-\Psi_u)
\]
is a homeomorphism between critical points of $\J$ and $\I$.
\end{lemma}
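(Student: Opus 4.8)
The plan is to prove the statement by establishing three facts and then assembling them: first, that $\eta$ is continuous and injective and admits a continuous inverse on its image; second, that $\eta$ carries critical points of $\J$ to critical points of $\I$; and third, that conversely every critical point of $\I$ is of the form $\eta(u)$. Once these are in hand, the restriction of $\eta$ to $\{u\in H^1_0(\Omega):\J'(u)=0\}$ is a continuous bijection onto $\{(u,v)\in H:\I'(u,v)=0\}$, and its inverse is the restriction of the bounded linear map $\iota(a,b):=\tfrac12(a+b)$ (which satisfies $\iota\circ\eta=\mathrm{id}$), hence a homeomorphism.

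For the first point, I would note that $u\mapsto\Psi_u=\Psi_{u,u}$ is the restriction to the diagonal of the $C^1$ map of Lemma~\ref{lemma:Theta_is_C1}, hence continuous on $\{u\neq 0\}$, while $\Psi_0=0$ because $\I(\phi,-\phi)=-\|\phi\|_{H^1_0}^2-\int_\Omega F(\phi)\,dx-\int_\Omega G(-\phi)\,dx\le 0$ with equality only at $\phi=0$ (here $F,G\ge 0$, a consequence of ($fg3$)), so $\eta(0)=(0,0)$ and $\eta$ is continuous everywhere; injectivity and the continuous inverse then follow at once from $\iota\circ\eta=\mathrm{id}_{H^1_0(\Omega)}$. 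For the second point, I would combine the two identities already at hand, $\J'(u)\phi=\I'(u+\Psi_u,u-\Psi_u)(\phi,\phi)$ for all $\phi$ and $\I'(u+\Psi_u,u-\Psi_u)(\phi,-\phi)=0$ for all $\phi$: since $H=H^+\oplus H^-$ and every $(\varphi,\psi)\in H$ decomposes as $\big(\tfrac{\varphi+\psi}{2},\tfrac{\varphi+\psi}{2}\big)+\big(\tfrac{\varphi-\psi}{2},\tfrac{\psi-\varphi}{2}\big)$, these identities are exactly the assertion that $\I'(\eta(u))=0$ if and only if $\J'(u)=0$. For the third point, given a critical point $(u^\ast,v^\ast)$ of $\I$ I would put $u:=\tfrac12(u^\ast+v^\ast)$ and $\phi_0:=\tfrac12(u^\ast-v^\ast)$, so that $(u^\ast,v^\ast)=(u+\phi_0,u-\phi_0)$ and, because $\I'(u^\ast,v^\ast)$ vanishes on all of $H$, in particular $\I'(u+\phi_0,u-\phi_0)(\psi,-\psi)=0$ for all $\psi\in H^1_0(\Omega)$; when $u\neq 0$, the uniqueness in the variational characterization of $\Psi_u$ (the strict concavity along $H^-$ used in the proof of Lemma~\ref{lemma:sup_I(t(u,v)+(phi,-phi))} applied to the pair $(u,u)$) forces $\phi_0=\Psi_u$, whence $(u^\ast,v^\ast)=\eta(u)$ and then $\J'(u)=0$ by the equivalence of the second point.

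The main obstacle is the remaining case $u=0$ in the third point, that is, ruling out critical points of $\I$ of the form $(u^\ast,-u^\ast)$ with $u^\ast\neq 0$ (which are not reached by $\eta$, since every pair $(u,u)$ with $u\neq 0$ satisfies $u\neq -u$). Here I would invoke the sign information provided by ($fg3$): on one hand $\I(u^\ast,-u^\ast)=-\|u^\ast\|_{H^1_0}^2-\int_\Omega F(u^\ast)\,dx-\int_\Omega G(-u^\ast)\,dx<0$ whenever $u^\ast\neq 0$; on the other hand every critical point of $\I$ has nonnegative energy, by the identity $\I(u,v)=\tfrac{\delta}{2(2+\delta)}\int_\Omega(f(u)u+g(v)v)\,dx$ recorded in the proof of Proposition~\ref{prop:gs_caract2}. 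Hence such a critical point must equal $(0,0)=\eta(0)$, and the three points above combine to give the claimed homeomorphism. Apart from this sign argument, everything is a formal consequence of the decomposition $H=H^+\oplus H^-$ and of the lemmas already proved in this subsection.
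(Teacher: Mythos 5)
Your argument is correct and follows the paper's own route: the core is exactly the identity $\J'(u)\phi=\I'(u+\Psi_u,u-\Psi_u)(\phi,\phi)$ combined with the splitting $H=H^+\oplus H^-$ and the defining property $\I'(\eta(u))(\phi,-\phi)=0$, which is precisely how the paper deduces the lemma in the lines preceding its statement. You additionally make explicit the surjectivity onto critical points of $\I$ and the exclusion of nontrivial critical points of the form $(u^*,-u^*)$ via the sign of the energy at critical points, details the paper leaves implicit.
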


The introduction of this reduced functional appeared contemporarily in the papers by Ramos et al \cite{BonheureRamos,RamosTavaresZou,RamosJMAA}. The properties of $\J$, in opposition to the ones of $\I$, are similar to the ones of the energy functional in the single equation case. In fact, from the properties of the functional $\I$, it is easy to show that $\J$ satisfies the Palais-Smale condition, and that it has a mountain pass geometry. Therefore, as far as the least energy level is concerned, we have the following ``usual'' characterizations.

\begin{proposition}
Let $c(\Omega)$ be the least energy level as defined in \eqref{eq:least_energy_level}. Then
\begin{align*}
c(\Omega)&=\inf\{\J(u)\mid  u\in H^1_0(\Omega),\ u\neq 0,\  \J'(u)=0\}\\
  &=\inf_{\Ncal_{\J}} {\J}\\
  &=\inf_{u\in H^1_0\setminus \{0\}} \sup_{t\geq 0} {\J}(tu),
\end{align*}
where $\Ncal_{\J}:=\{u\in H^1_0(\Omega)\ \mid \ u\neq 0,\ {\J}'(u)u=0\}$ is the \emph{standard} Nehari manifold of the reduced functional ${\J}$.
\end{proposition}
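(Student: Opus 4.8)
The plan is to exploit Lemma~\ref{lem:homeo}, which establishes that $\eta:u\mapsto(u+\Psi_u,u-\Psi_u)$ is a bijection between critical points of $\J$ and of $\I$, together with the identity $\J(u)=\I(u+\Psi_u,u-\Psi_u)$. This immediately gives $\inf\{\J(u):u\neq 0,\ \J'(u)=0\}=\inf\{\I(w,z):(w,z)\neq(0,0),\ \I'(w,z)=0\}=c(\Omega)$, provided one checks that the bijection preserves nontriviality: $u=0$ forces $\Psi_0=0$ (by uniqueness, since $\phi\equiv 0$ solves the defining equation when $u=0$), so $\eta(0)=(0,0)$, and conversely $\eta(u)=(0,0)$ gives $u+\Psi_u=0=u-\Psi_u$, hence $u=0$. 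So the first equality is essentially a restatement of Lemma~\ref{lem:homeo}.

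For the second and third equalities, I would first verify the two structural facts asserted in the paragraph preceding the proposition: that $\J$ satisfies the Palais--Smale condition, and that $\J$ has a mountain pass geometry with $\J(0)=0$. For the mountain pass geometry near $0$: since $\Psi_0=0$ and $\Psi$ is $C^1$ (Lemma~\ref{lemma:Theta_is_C1}), for $\|u\|_{H^1_0}$ small $\Psi_u$ is also small, and using $(fg1)$--$(fg2')$ one estimates $\J(u)=\I(u+\Psi_u,u-\Psi_u)\ge \tfrac12\|u\|^2_{H^1_0}-\text{(higher order)}$ — more directly, $\J(u)\geq \I(u,u)$ is false in general, but $\J(u)=\sup_\phi \I(u+\phi,u-\phi)\geq \I(u,u)$, which by Step~1 of Lemma~\ref{lemma:Existence_result} is $\geq \alpha>0$ on $\partial B_\rho(0)\cap H^+$, i.e. for $\|u\|_{H^1_0}=\rho$. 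For the existence of a point with negative energy, pick any $u$ with $u\neq 0$; then $\J(tu)=\I(tu+\Psi_{tu},tu-\Psi_{tu})\leq$ the quantity in \eqref{eq:I(u,v)leq_BenciRab} with $(w,z)=(u,u)$, which Step~2 of Lemma~\ref{lemma:Existence_result} shows tends to $-\infty$, so $\J(t_0 u)<0$ for large $t_0$. The Palais--Smale property for $\J$ transfers from Lemma~\ref{lemma:I_PalaisSmale} via $\eta$: if $(u_n)$ is Palais--Smale for $\J$, then $(u_n+\Psi_{u_n},u_n-\Psi_{u_n})$ is Palais--Smale for $\I$ (using $\J'(u_n)\phi=\I'(\eta(u_n))(\phi,\phi)$ and the fact that $P\I'(\eta(u_n))=0$ by the definition of $\Psi$), hence has a convergent subsequence, and applying $\eta^{-1}$ (continuity of $u\mapsto\Psi_u$ gives continuity of $\eta^{-1}$) yields convergence of $(u_n)$.

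With these two facts in hand, the standard mountain pass theorem gives that the mountain pass level of $\J$, namely $\inf_{\gamma}\max_t\J(\gamma(t))$, is a positive critical value, and the usual Nehari-manifold argument for superlinear scalar functionals (which applies because $(fg3)$ gives the Ambrosetti--Rabinowitz-type condition $\J(u)-\tfrac12\J'(u)u\geq c\,\J'(u)u$ in the appropriate sense — more precisely one checks that for each $u\neq 0$ the function $t\mapsto\J(tu)$ has a unique positive critical point $t(u)$ which is a strict global maximum, using $(fg3)$) shows $\inf_{u\neq0}\sup_{t\geq0}\J(tu)=\inf_{\Ncal_\J}\J$ equals this mountain pass level. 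Finally, $\inf_{\Ncal_\J}\J=c(\Omega)$: the inequality $\geq$ follows because $\eta$ maps $\Ncal_\J$ into the set of critical-point-candidates — actually more simply, any nontrivial critical point of $\J$ lies in $\Ncal_\J$ so $\inf_{\Ncal_\J}\J\leq c(\Omega)$, while the reverse uses Palais--Smale on $\Ncal_\J$ (Ekeland's principle plus the fact that $\Ncal_\J$ is a natural constraint, which is standard for the scalar Nehari manifold) to produce a minimizer that is a genuine critical point, hence $\inf_{\Ncal_\J}\J\geq c(\Omega)$.

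The main obstacle I anticipate is verifying cleanly that $\J$ inherits the Palais--Smale condition and the mountain pass geometry from $\I$ — in particular making rigorous the transfer of Palais--Smale sequences through $\eta$, which requires knowing that $\|\Psi_{u_n}\|_{H^1_0}$ stays controlled along such sequences and that $\eta^{-1}$ is continuous. Once that transfer is established, the three characterizations of $c(\Omega)$ are routine applications of the classical theory for a scalar functional of mountain pass type satisfying an Ambrosetti--Rabinowitz condition, exactly as in the single-equation case; the novelty is entirely in the reduction, not in the minimax arguments themselves.
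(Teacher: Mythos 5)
Your proposal is correct, and it fills in exactly the argument the paper leaves implicit: the text before the proposition justifies it only by asserting that $\J$ inherits the Palais--Smale condition and a mountain pass geometry from $\I$, which is what you verify, and the first equality is indeed just Lemma \ref{lem:homeo} together with $\J(u)=\I(\eta(u))$ and $\Psi_0=0$. Two remarks. First, there is a shorter route to the last two equalities that uses the machinery already in place rather than redoing the scalar Nehari theory: since $\eta(u)=(u,u)+(\Psi_u,-\Psi_u)$ and $\I'(\eta(u))(\phi,-\phi)=0$ for all $\phi$, one has $\J'(u)u=\I'(\eta(u))(u,u)=\I'(\eta(u))\eta(u)$, so $u\in\Ncal_{\J}$ if and only if $\eta(u)\in\Ncal_\I$, and the second equality is Proposition \ref{prop:gs_caract2}; likewise $\sup_{t\geq0}\J(tu)=\sup_{t\geq0,\,\phi}\I(t(u,u)+(\phi,-\phi))$, and since $\R^+(u,v)\oplus H^-=\R^+(w,w)\oplus H^-$ with $w=(u+v)/2$ whenever $v\neq-u$, the infimum over diagonal directions coincides with the infimum in Proposition \ref{prop:gs_caract3}, giving the third equality (with uniqueness of the critical point of $t\mapsto\J(tu)$ coming from Lemma \ref{lemma:sup_I(t(u,v)+(phi,-phi))} rather than from a fresh $(fg3)$ computation). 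Second, two small slips in your write-up: the sentence claiming ``$\J(u)\geq\I(u,u)$ is false in general'' contradicts what you correctly prove immediately after --- the inequality always holds, since $\phi=0$ is admissible in the supremum defining $\J(u)$; and the ``main obstacle'' you anticipate is in fact vacuous, because $u_n=\tfrac12\bigl[(u_n+\Psi_{u_n})+(u_n-\Psi_{u_n})\bigr]$ is a continuous linear function of $\eta(u_n)$, so strong convergence of $\eta(u_{n_k})$ in $H$ immediately yields strong convergence of $u_{n_k}$ without any separate control of $\|\Psi_{u_n}\|_{H^1_0}$ or continuity of $\eta^{-1}$.
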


\begin{remark}
A related reduction method was also used by Szulkin and Weth \cite{SzulkinWeth}  in order to find ground-state solutions of the equation
\[
-\Delta u+V(x) u =f(x,u),\qquad u\in H^1(\R^N).
\]
with f a superlinear, subcritical nonlinearity, f and V periodic in x, and 0 not belonging to the spectrum of $-\Delta +V$. The authors use a reduction based on the decomposition $H^1(\R^N)=E^+ \oplus E^-$ related to the positive and negative parts of the spectrum of $-\Delta +V$. The work \cite{SzulkinWeth} is inspired by Pankov \cite{Pankov}, where a manifold of type \eqref{eq:Abstract_Nehari} appeared in an independent way. We refer the reader to \cite[Section  4]{SzulkinWethsurvey} for more details on this.
\end{remark}

\subsection{The general case} \label{sec:p_diferente_q}
In this section we justify in a precise way why one does not lose generality by considering $p,q<2^*-1$ instead of $1/(p+1)+1/(q+1)>(N-2)/N$, and $(fg2')$ instead of $(fg2)$. We illustrate this in the study of ground state solutions, and at the end of this section we make more general remarks. The ideas presented here are based on \cite[Section  5]{RamosTavares}; since the proofs there are rather sketchy, we have decided to present here all the details. 

Take $(p,q)$ satisfying \eqref{eq:p_and_q_reduzidoGERAL}, supposing without loss of generality that $p\leq q$, so that $p<2^*-1$. Assume that ($fg1$)--($fg2$)--($fg3$) holds. For each $n\in \N$, consider the following $C^1$ truncation of the functions $f$ and $g$:
\[
f_n(s)=\left\{
\begin{array}{cc}
\frac{f'(n)}{pn^{p-1}}s^p + f(n)-\frac{f'(n)n}{p} &  s> n\\
f(s) 									& -n\leq s\leq n\\
\frac{f'(-n)}{pn^{p-1}}|s|^{p-1}s+f(-n)+\frac{f'(-n)n}{p}  & s<-n.
\end{array}
\right.
\]
and
\[
g_n(s)=\left\{
\begin{array}{cc}
\frac{g'(n)}{pn^{p-1}}s^p + g(n)-\frac{g'(n)n}{p} &  s> n\\
g(s) 									& -n\leq s\leq n\\
\frac{g'(-n)}{pn^{p-1}}|s|^{p-1}s+g(-n)+\frac{g'(-n)n}{p}  & s<-n.
\end{array}
\right.
\]
Observe that
\begin{equation}\label{eq:uniform_bound_of_g_n_independent}
|f_n(s)|\leq C(1+|s|^{p-1}), \qquad |g_n(s)|\leq C(1+|s|^{q-1})
\end{equation}
for some $C$ independent of $n$ (recall that $q\geq p$), and
\begin{equation*}\label{eq:uniform_bound_of_g_n_dependent}
|f_n'(s)|\leq C_n(1+|s|^{p-1}),\qquad |g_n'(s)|\leq C_n(1+|s|^{p-1}).
\end{equation*}
In particular, $f_n$ and $g_n$ satisfy $(fg2')$. For $F_n(s):=\int_0^s f_n(\xi)\, dx$ and  $G_n(s):=\int_0^s g_n(\xi)\, d\xi$, take the functional 
\[
\I_n(u,v)=\int_\Omega \langle \nabla u,\nabla v\rangle\, dx-\int_\Omega F_n(u)\, dx-\int_\Omega G_n(v)\, dx,
\]
which is well defined in $H=H^1_0(\Omega)\times H^1_0(\Omega)$. Take the auxiliary functions
\[
\tilde f(s)=\left\{
\begin{array}{cc}
\frac{f'(1)}{1+\delta}s^{1+\delta} + f(1)-\frac{f'(1)}{1+\delta} &  s> 1\\
f(s) 									& -1\leq s\leq 1\\
\frac{f'(-1)}{1+\delta}|s|^{\delta}s+f(-1)+\frac{f'(-1)}{1+\delta}  & s<-1.
\end{array}
\right.
\]
and
\[
\tilde g(s)=\left\{
\begin{array}{cc}
\frac{g'(1)}{1+\delta}s^{1+\delta} + g(1)-\frac{g'(1)}{1+\delta} &  s> 1\\
g(s) 									& -1\leq s\leq 1\\
\frac{g'(-1)}{1+\delta}|s|^{\delta}s+g(-1)+\frac{g'(-1)}{1+\delta}  & s<-1.
\end{array}
\right.
\]
It is straightforward to check that 
\[
\lambda_1 \tilde f'(s)\leq f_n'(s) \quad \text{ for every $\lambda_1<\min \left\{1,(1+\delta)\frac{f(1)}{f'(1)},-(1+\delta)\frac{f(-1)}{f'(-1)}\right\}$},
\]
and
\[
\lambda_2 \tilde g'(s)\leq g_n'(s) \quad \text{ for every $\lambda_2<\min \left\{1,(1+\delta)\frac{g(1)}{g'(1)},-(1+\delta)\frac{g(-1)}{g'(-1)}\right\}$},
\]so that
\begin{equation}\label{eq:G(s)_leq_G_n(s)}
\lambda \tilde F(s)\leq F_n(s),\quad \lambda \tilde G(s)\leq G_n(s) \qquad \text{ for } \lambda=\min\{\lambda_1,\lambda_2\}.
\end{equation}
Define $\tilde \I:H\to \R$ by
\[
\tilde \I(u,v)=\int_\Omega \langle \nabla u, \nabla v\rangle\, dx-\lambda\int_\Omega \tilde F(u)-\lambda \int_\Omega \tilde G(v)\, dx.
\]
We denote by $c_n$ and $\tilde c$ the least energy levels of $\I_n$ and $\tilde \I$ respectively, as defined before in this section - recall \eqref{eq:least_energy_level}.
\begin{lemma}\label{lemma:uniform_bounds_Energy}
If $(u_n,v_n)$ is a least energy solution for $\I_n$, then 
\[
\I_n(u_n,v_n)\leq \tilde c.
\]
Furthermore,
\[
\int_\Omega (f(u_n)u_n+g(v_n)v_n)\, dx\leq \frac{2(2+\delta)}{\delta}\tilde c.
\]
\end{lemma}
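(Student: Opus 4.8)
The idea is to dominate the $n$-dependent level $c_n=\I_n(u_n,v_n)$ by the \emph{fixed} level $\tilde c$ by means of the pointwise minorization \eqref{eq:G(s)_leq_G_n(s)} together with the minimax characterization of the ground-state level from Subsection~\ref{subset:reducedFunctional}; the integral estimate then follows from the structure of $\I_n$ at a critical point, exactly as in the proof of Proposition~\ref{prop:gs_caract2}.

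\textbf{The energy bound.} The first task is to check that $f_n,g_n$ and the rescaled auxiliary nonlinearities $\lambda\tilde f,\lambda\tilde g$ all satisfy ($fg1$)--($fg2'$)--($fg3$). Condition ($fg1$) is immediate since each of these functions agrees with $f$ (resp. $g$) near $0$; ($fg2'$) for $f_n,g_n$ is recorded right after their definition (they grow at most like $|s|^{p-1}$ with $1+\delta\le p<2^*-1$), while for $\lambda\tilde f,\lambda\tilde g$ it follows from $|\tilde f'(s)|,|\tilde g'(s)|\le C(1+|s|^{\delta})$ and $\delta\le p-1$; and ($fg3$) is inherited — for $f_n$ from the $C^1$-matching $f_n(\pm n)=f(\pm n)$, $f_n'(\pm n)=f'(\pm n)$ together with $1+\delta\le p$, for $\tilde f$ likewise from the matching at $s=\pm1$, and it is plainly stable under multiplication by $\lambda>0$. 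Hence Proposition~\ref{prop:gs_caract3} applies to $\I_n$ and to $\tilde\I$, giving
\[
c_n=\mathop{\inf_{u,v\in H^1_0}}_{v\ne -u}\mathop{\sup_{t\ge0}}_{\phi\in H^1_0}\I_n\bigl(t(u,v)+(\phi,-\phi)\bigr),\qquad
\tilde c=\mathop{\inf_{u,v\in H^1_0}}_{v\ne -u}\mathop{\sup_{t\ge0}}_{\phi\in H^1_0}\tilde\I\bigl(t(u,v)+(\phi,-\phi)\bigr).
\]
By \eqref{eq:G(s)_leq_G_n(s)} we have $F_n\ge\lambda\tilde F$ and $G_n\ge\lambda\tilde G$ on $\R$, hence $\I_n(u,v)\le\tilde\I(u,v)$ for every $(u,v)\in H$. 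Since the same family of admissible curves $t(u,v)+(\phi,-\phi)$ enters both characterizations, taking the supremum along each curve and then the infimum over $(u,v)$ with $v\ne-u$ yields $c_n\le\tilde c$, that is $\I_n(u_n,v_n)=c_n\le\tilde c$.

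\textbf{The integral bound.} A least energy solution $(u_n,v_n)$ is in particular a critical point of $\I_n$, so $\I_n'(u_n,v_n)(u_n,v_n)=0$, i.e. $2\int_\Omega\langle\nabla u_n,\nabla v_n\rangle\,dx=\int_\Omega\bigl(f_n(u_n)u_n+g_n(v_n)v_n\bigr)\,dx$. Moreover ($fg3$) for $f_n,g_n$ gives $(2+\delta)F_n(s)\le f_n(s)s$ and $(2+\delta)G_n(s)\le g_n(s)s$ for every $s$ (integrate $\frac{d}{ds}[f_n(s)s]\ge(2+\delta)f_n(s)$ on $(0,s)$ for $s>0$, and argue symmetrically for $s<0$ and for $g_n$). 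Inserting both relations into $\I_n(u_n,v_n)=\int_\Omega\langle\nabla u_n,\nabla v_n\rangle\,dx-\int_\Omega F_n(u_n)\,dx-\int_\Omega G_n(v_n)\,dx$, just as in the proof of Proposition~\ref{prop:gs_caract2}, yields
\[
\I_n(u_n,v_n)\ \ge\ \frac{\delta}{2(2+\delta)}\int_\Omega\bigl(f_n(u_n)u_n+g_n(v_n)v_n\bigr)\,dx .
\]
Together with the energy bound this gives $\int_\Omega(f_n(u_n)u_n+g_n(v_n)v_n)\,dx\le\frac{2(2+\delta)}{\delta}\tilde c$; since $f_n=f$, $g_n=g$ on $[-n,n]$ and $0\le f(s)s\le f_n(s)s$, $0\le g(s)s\le g_n(s)s$ for $|s|\ge n$ by construction of the truncations, the same bound holds with $f,g$ in place of $f_n,g_n$.

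\textbf{Where the work lies.} The delicate part is the first step. Although $q$ may be $H^1$-supercritical, the truncations $f_n,g_n$ grow only like $|s|^p$ with $p<2^*-1$, so both $\I_n$ and the auxiliary $\tilde\I$ do fall — \emph{uniformly in} $n$ — within the subcritical reduced-functional framework of Subsection~\ref{subset:reducedFunctional}; this is precisely what the verification of ($fg2'$) and, above all, ($fg3$) for $f_n,g_n,\lambda\tilde f,\lambda\tilde g$ is for, and it is what makes Proposition~\ref{prop:gs_caract3} available. The other point requiring care is that the pointwise inequality \eqref{eq:G(s)_leq_G_n(s)} transfers to the minimax levels, which works only because the two functionals are compared along the \emph{same} curves $t(u,v)+(\phi,-\phi)$. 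Everything else is routine manipulation with ($fg3$).
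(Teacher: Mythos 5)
Your argument is essentially the paper's: the pointwise minorization \eqref{eq:G(s)_leq_G_n(s)} gives $\I_n\le\tilde\I$ on $H$, the minimax characterization of Proposition \ref{prop:gs_caract3} transfers this to the levels (the paper evaluates the inf--sup along the single curve through a ground state of $\tilde\I$, you compare the two inf--sup expressions wholesale; same thing), and the integral bound comes from combining $\I_n(u_n,v_n)\le\tilde c$ with $\I_n'(u_n,v_n)(u_n,v_n)=0$ and the consequence $(2+\delta)F_n\le f_n(s)s$ of ($fg3$). Both steps are correct, and your explicit verification that $f_n,g_n,\lambda\tilde f,\lambda\tilde g$ satisfy ($fg1$)--($fg2'$)--($fg3$) is a useful addition the paper leaves implicit.

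The one genuine flaw is your closing claim that $0\le f(s)s\le f_n(s)s$ for $|s|\ge n$ ``by construction of the truncations.'' That inequality does not follow from the construction: $f_n$ is obtained by replacing $f$ beyond $\pm n$ with a pure power matched in $C^1$ at $\pm n$, and whether $f_n$ lies above or below $f$ just beyond $n$ depends on whether $f'(s)/s^{p-1}$ is decreasing or increasing there, which the hypotheses ($fg1$)--($fg2$)--($fg3$) do not control (e.g.\ $f(s)=s^p(2+\sin\ln s)$ with $p\ge 2+\delta$ satisfies all hypotheses, and choosing $n$ at a local minimum of $f'(s)/s^{p-1}$ gives $f_n<f$ just above $n$). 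The paper's own proof only establishes the bound with $f_n,g_n$ in the integrand, and that is the form actually invoked later (e.g.\ in Lemma \ref{lemma:bounds_in_HsHt}); the appearance of $f,g$ in the statement should be read as $f_n,g_n$. So drop that last sentence rather than try to repair it.
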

\begin{proof}
Inequality \eqref{eq:G(s)_leq_G_n(s)} yields that $\I_n(u,v)\leq \tilde \I(u,v)$ for every $(u,v)\in H$. Hence, if $(\tilde u,\tilde v)$ is a ground state for $\tilde I$, then by Proposition \ref{prop:gs_caract3} one has
\[
\tilde c=\mathop{\sup_{t\geq 0}}_{\phi\in H^1_0} \tilde \I(t(u,v)+(\phi,-\phi))\geq \mathop{\sup_{t\geq 0}}_{\phi\in H^1_0}  \I_n(t(u,v)+(\phi,-\phi))\geq c_n=\I_n(u_n,v_n)
\]
and the first conclusion follows. Now, by taking in consideration 
\[
\int_\Omega \langle \nabla u_n,\nabla v_n\rangle\, dx-\int_\Omega F_n(u_n)-\int_\Omega G_n(v_n)\, dx=I_n(u_n,v_n)\leq C
\]
and
\[
2 \int_\Omega \langle \nabla u_n,\nabla v_n\rangle\, dx-\int_\Omega (f_n(u_n)+g_n(v_n))\, dx=I_n'(u_n,v_n)(u_n,v_n)=0,
\]
and by combining this with assumption ($fg3$), we have
\[
\int_\Omega (f_n(u_n)+g_n(v_n))\, dx\leq 2\tilde c+\frac{2}{2+\delta}\int_\Omega (f_n(u_n)u_n+g_n(v_n)v_n)\, dx.
\]
As $2/(2+\delta)<1$, the result follows.
\end{proof}

Observe that \eqref{eq:p_and_q_reduzidoGERAL} implies \eqref{eq:p_and_q_for_E^s_approach}. Thus, as seen in Subsection \ref{subset:Fractional}, there exists $0<s<2$ such that 
\[
E^s(\Omega)\hookrightarrow L^{p+1}(\Omega),\qquad E^{2-s}(\Omega)\hookrightarrow L^{q+1}(\Omega).
\]
are compact embeddings. Next, we prove uniform bounds in the space $E^s(\Omega)\times E^{2-s}(\Omega)$.
\begin{lemma}\label{lemma:bounds_in_HsHt}
Under the previous notations, there exists $C>0$ (independent of $n$) such that
\[
\|u_n\|_{E^s(\Omega)}+\|v_n\|_{E^{2-s}(\Omega)}\leq C.
\]
\end{lemma}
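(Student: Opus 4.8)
The plan is to first turn the integral bound of Lemma~\ref{lemma:uniform_bounds_Energy} into uniform $L^1$-control on the right hand sides, then run a regularity bootstrap — uniform in $n$ — up to uniform $L^{p+1}\times L^{q+1}$-bounds on $(u_n,v_n)$, and finally read off the claimed $E^s\times E^{2-s}$-bound from the equations by duality.

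First I would observe that, since each $f_n$ and $g_n$ still satisfies $(fg3)$ with the same $\delta$ and $f_n(\pm1)=f(\pm1)$, $g_n(\pm1)=g(\pm1)$, one has $|f_n(s)|\le M+f_n(s)s$ and $|g_n(s)|\le M+g_n(s)s$ for all $s\in\R$, where $M:=\max_{|t|\le1}\{|f(t)|,|g(t)|\}$ (indeed $f_n(s)s>0$ for $s\neq0$, and $|f_n(s)|\le f_n(s)s$ once $|s|\ge1$). Combined with the estimate $\int_\Omega\bigl(f_n(u_n)u_n+g_n(v_n)v_n\bigr)\,dx\le\frac{2(2+\delta)}{\delta}\tilde c$ from Lemma~\ref{lemma:uniform_bounds_Energy}, this yields
\[
\|g_n(v_n)\|_1+\|f_n(u_n)\|_1\le C
\]
with $C$ independent of $n$. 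Hence, by the standard elliptic estimates with $L^1$ data on the bounded smooth domain $\Omega$ applied to $-\Delta u_n=g_n(v_n)$ and $-\Delta v_n=f_n(u_n)$, both $u_n$ and $v_n$ are uniformly bounded in $W^{1,r}_0(\Omega)$ for every $1\le r<N/(N-1)$, hence uniformly bounded in $L^\rho(\Omega)$ for every $1\le\rho<N/(N-2)$ (for every finite $\rho$ if $N\le2$).

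The core of the argument is then a bootstrap that upgrades these exponents to uniform $L^{p+1}\times L^{q+1}$-bounds on $(u_n,v_n)$. Using the two equations, the $n$-uniform growth controls~\eqref{eq:uniform_bound_of_g_n_independent}, the $W^{2,r}$ elliptic estimates (cf.\ \cite[Theorem~9.15]{GilbargTrudinger}) and the Sobolev embeddings, a uniform bound on $\|u_n\|_{L^a}$ translates — when $a$ is large enough — into a uniform bound on $\|f_n(u_n)\|_{L^{a/p}}$, hence on $\|v_n\|_{W^{2,a/p}}$, hence on $\|v_n\|_{L^{b}}$ with a strictly larger exponent $b$ (or on $\|v_n\|_{L^\infty}$), and symmetrically with $p,q$ and $u_n,v_n$ interchanged; iterating and alternating the two equations, the subcriticality hypothesis $\frac1{p+1}+\frac1{q+1}>\frac{N-2}{N}$ ensures that this process terminates after finitely many steps with $\|u_n\|_{p+1}\le C$ and $\|v_n\|_{q+1}\le C$. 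This is precisely the bootstrap underlying the regularity statements in Propositions~\ref{Reg1}--\ref{Reg2} (see also \cite[Section~5]{RamosTavares} and \cite[Proposition~2.1]{BonheureSantosRamosTAMS}); what must be checked here is that \emph{every} constant in the iteration, including the exponent from which it starts, can be chosen independently of $n$. Making this precise, i.e.\ checking that the iteration never leaves $L^1$ and that the integrability gains do not degenerate uniformly, is the main obstacle, and it is exactly where the uniform growth bounds~\eqref{eq:uniform_bound_of_g_n_independent} and the uniform $L^1$-bound of the previous step are used.

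Finally I would conclude by duality. Recall that $s$ and $t=2-s$ were chosen so that $E^s(\Omega)\hookrightarrow L^{p+1}(\Omega)$ and $E^t(\Omega)\hookrightarrow L^{q+1}(\Omega)$ continuously, hence $L^{\frac{p+1}{p}}(\Omega)\hookrightarrow E^{-s}(\Omega)$ and $L^{\frac{q+1}{q}}(\Omega)\hookrightarrow E^{-t}(\Omega)$ continuously. Since $A^2=-\Delta$, we have $u_n=A^{-2}g_n(v_n)$ and $v_n=A^{-2}f_n(u_n)$, so $A^su_n=A^{-t}g_n(v_n)$ and $A^tv_n=A^{-s}f_n(u_n)$; therefore, using \eqref{eq:uniform_bound_of_g_n_independent} once more,
\[
\|u_n\|_{E^s}=\|g_n(v_n)\|_{E^{-t}}\le C\,\|g_n(v_n)\|_{\frac{q+1}{q}}\le C\bigl(1+\|v_n\|_{q+1}^{q}\bigr)\le C,
\]
\[
\|v_n\|_{E^{t}}=\|f_n(u_n)\|_{E^{-s}}\le C\,\|f_n(u_n)\|_{\frac{p+1}{p}}\le C\bigl(1+\|u_n\|_{p+1}^{p}\bigr)\le C,
\]
with $C$ independent of $n$, which is the desired bound.
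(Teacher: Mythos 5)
Your first step and your last step are both sound, but the bridge between them --- the bootstrap from uniform $L^1$-bounds on $f_n(u_n)$, $g_n(v_n)$ up to uniform $L^{p+1}\times L^{q+1}$-bounds on $(u_n,v_n)$ --- is a genuine gap, and not merely a matter of bookkeeping. The $L^1$-elliptic theory only gives $u_n,v_n$ uniformly bounded in $L^\rho(\Omega)$ for $\rho<N/(N-2)$, so the first application of the growth bounds places $f_n(u_n)$ in $L^{\rho/p}$ with $\rho/p<N/\bigl(p(N-2)\bigr)$; whenever $p\geq N/(N-2)$ this exponent falls below $1$, the $W^{2,r}$ theory no longer applies, and the iteration cannot even start. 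Hypothesis \eqref{eq:p_and_q_reduzidoGERAL} allows this: for instance $N=3$, $p=5$, $q=3/2$ lies below the critical hyperbola while $p>N/(N-2)=3$. The bootstraps you invoke (Propositions \ref{Reg1}--\ref{Reg2}, the $L^\infty$-lemma that follows the present one, \cite[Theorem 1]{Sirakov}) all start from $f(u)\in L^{(p+1)/p}$, $g(v)\in L^{(q+1)/q}$ --- i.e.\ precisely from the information this lemma is meant to produce --- so they cannot be used to prove it.

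The repair is that Lemma \ref{lemma:uniform_bounds_Energy} gives you much more than an $L^1$-bound. By \eqref{eq:uniform_bound_of_g_n_independent}, for $|s|\geq\eps'$ one has $|g_n(s)|^{1/q}\leq C|s|$, hence $|g_n(s)|^{(q+1)/q}\leq C\,|g_n(s)s|$, so the uniform bound on $\int_\Omega g_n(v_n)v_n\,dx$ yields directly $\|g_n(v_n)\|_{(q+1)/q}\leq C$ uniformly in $n$ (and symmetrically $\|f_n(u_n)\|_{(p+1)/p}\leq C$), with no bootstrap at all. Feeding these bounds into your own concluding duality step finishes the proof. This is in substance the paper's argument: it tests $-\Delta u_n=g_n(v_n)$ against $A^{-(2-s)}A^su_n$, splits the integral according to whether $|v_n|\geq\eps'$, and uses exactly the pointwise inequality $|g_n(s)|^{(q+1)/q}\leq C|g_n(s)s|$ together with Lemma \ref{lemma:uniform_bounds_Energy} to close the estimate after absorbing the quadratic terms.
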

\begin{proof}
Multiplying equation $-\Delta u_n=g_n(v_n)$ by $A^{-(2-s)}A^s u_n\in E^{2-s}(\Omega)$, we obtain
\[
\|u_n\|_{E^s(\Omega)}^2=\int_\Omega g_n(v_n)A^{-(2-s)}A^s u_n \, dx.
\]
Given $\eps>0$, by using ($fg1$) we obtain the existence of $\eps',C>0$ such that
\begin{multline*}
\|u_n\|_{E^s(\Omega)}^2 
	\leq \eps \|v_n\|_{L^2}^2+\eps C\|A^{-(2-s)}A^su_n\|_{L^{q+1}}^2\\
	+C \left(\int_{\{|v_n|\geq \eps'\}} |g_n(v_n)|^{(q+1)/q}\right)^{q/(q+1)}\|A^{-(2-s)}A^su_n\|_{L^{q+1}}.
\end{multline*}
Now, we have that
\[
\|A^{-(2-s)}A^s u_n\|_{L^{q+1}}\leq C'\| A^{-(2-s)}A^s u_n\|_{E^{2-s}(\Omega)}=C'\|A^s u_n\|_{L^2}=C'\|u_n\|_{E^s(\Omega)};
\]
Moreover, $|g(s)|\leq C|s|^q$ $\forall |s|\geq \eps'$, for some C independent of $n$ (cf. \eqref{eq:uniform_bound_of_g_n_independent}), thus $|g(s)|^{(q+1)/q}\leq C|g(s)s|$. By choosing $\eps$ sufficiently small, and recalling Lemma \ref{lemma:uniform_bounds_Energy}, we obtain
\begin{align*}
\|u_n\|^2_{E^s(\Omega)}&=\int_\Omega g_n(v_n) A^{-(2-s)}A^s u_n\, dx \\
					&\leq \frac{1}{4}\left(\|u_n\|_{E^s(\Omega)}^2+\|v_n\|_{E^{2-s}(\Omega)}^2\right)+C''\|u_n\|_{E^s(\Omega)}.
\end{align*}
Since an analogous estimate holds true for $\|v_n\|_{E^{2-s}(\Omega)}^2$, the result follows.
\end{proof}

Finally, the following estimate implies that one can indeed work without loss of generality with $p,q<2^*-1$ (actually even with $p=q$).
\begin{lemma}
Under the previous notations, there exists $C>0$ such that
\[
\|u_n\|_\infty+\|v_n\|_\infty \leq C.
\]
In particular, $(u_n,v_n)$ is a least energy solution of \eqref{eq:system_with_f_and_g} for sufficiently large $n$.
\end{lemma}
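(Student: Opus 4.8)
The strategy is a bootstrap argument in $L^r$-spaces, starting from the uniform bounds in $E^s(\Omega)\times E^{2-s}(\Omega)$ obtained in Lemma~\ref{lemma:bounds_in_HsHt}, and using that the truncated nonlinearities satisfy the \emph{uniform} growth bound \eqref{eq:uniform_bound_of_g_n_independent}, i.e. $|f_n(s)|\leq C(1+|s|^{p-1})$ and $|g_n(s)|\leq C(1+|s|^{q-1})$ with $C$ independent of $n$. The point is that all constants appearing in the Calder\'on--Zygmund and Sobolev estimates below depend only on $p$, $q$, $N$, $\Omega$ and the constant $C$ in \eqref{eq:uniform_bound_of_g_n_independent}, hence not on $n$. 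Throughout, write $u=u_n$, $v=v_n$ to lighten notation.

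First I would record the starting integrability: by the compact (in particular continuous) embeddings $E^s(\Omega)\hookrightarrow L^{p+1}(\Omega)$ and $E^{2-s}(\Omega)\hookrightarrow L^{q+1}(\Omega)$, Lemma~\ref{lemma:bounds_in_HsHt} gives $\|u\|_{p+1}+\|v\|_{q+1}\leq C$ uniformly. Now run the iteration. Suppose $u\in L^{a}(\Omega)$ and $v\in L^{b}(\Omega)$ with uniformly bounded norms. From $-\Delta v=f_n(u)$ and \eqref{eq:uniform_bound_of_g_n_independent} we get $f_n(u)\in L^{a/(p-1)}(\Omega)$ uniformly; by the $W^{2,r}$ elliptic estimate \cite[Theorem 9.15]{GilbargTrudinger} (valid since $u,v\in W^{2,(q+1)/q}\cap W^{1,(q+1)/q}_0$ etc. by Proposition~\ref{Reg1}, so that $v\in W^{2,a/(p-1)}_0$ when $a/(p-1)<\infty$) and Sobolev embedding, $v$ gains integrability: either $v\in L^{b'}$ with $1/b' = (p-1)/a - 2/N$ if this is positive, or $v\in L^\infty$ (indeed $C^{1,\alpha}$) once $a/(p-1)>N/2$. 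Symmetrically, from $-\Delta u=g_n(v)$ and $g_n(v)\in L^{b/(q-1)}$ we improve the integrability of $u$. Iterating this coupled scheme, the exponents increase by a fixed additive gap $2/N$ at each step (after dividing by $p-1$ resp. $q-1$ and using that $pq>1$ keeps the recursion from stalling—this is exactly where the subcriticality \eqref{eq:p_and_q_reduzidoGERAL} enters, guaranteeing the gain is strictly positive and the process terminates in finitely many steps), so after finitely many iterations both $f_n(u)$ and $g_n(v)$ lie in some $L^r(\Omega)$ with $r>N/2$, whence $u,v\in L^\infty(\Omega)$ with a bound depending only on the allowed data. A clean way to organize the recursion is to track the pair of ``deficits'' and observe that the linear map governing their evolution is a contraction towards the critical regime; since the paper elsewhere invokes ``a standard bootstrap'' (cf. Subsection~\ref{sec:p_diferente_q} and \cite[Proposition 2.1]{BonheureSantosRamosTAMS}), one may simply cite that scheme, stressing only the uniformity in $n$.

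Once $\|u_n\|_\infty+\|v_n\|_\infty\leq C$ is established with $C$ independent of $n$, the final assertion is immediate: for $n> C$ we have $|u_n(x)|\leq n$ and $|v_n(x)|\leq n$ for all $x$, so $f_n(u_n)=f(u_n)$ and $g_n(v_n)=g(v_n)$ pointwise, i.e. $\I_n$ and $\I$ coincide on a neighbourhood of $(u_n,v_n)$ and $(u_n,v_n)$ solves \eqref{eq:system_with_f_and_g}. It remains to check it is a \emph{least energy} solution of the untruncated problem: by Lemma~\ref{lemma:uniform_bounds_Energy} and Proposition~\ref{prop:gs_caract3}, $c_n=\I_n(u_n,v_n)=\I(u_n,v_n)\geq c(\Omega)$ since $(u_n,v_n)$ is a nontrivial critical point of $\I$; conversely any ground state $(u,v)$ of $\I$ has $\|u\|_\infty,\|v\|_\infty\leq M$ for some fixed $M$ (again by the bootstrap, now applied to $\I$ directly), so for $n>M$ it is also a critical point of $\I_n$, giving $c_n\leq \I_n(u,v)=\I(u,v)=c(\Omega)$; hence $c_n=c(\Omega)$ and $(u_n,v_n)$ achieves $c(\Omega)$.

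The main obstacle is purely bookkeeping: making the bootstrap recursion explicit enough to see that \emph{(a)} the integrability gain per step is bounded below by a constant independent of $n$, and \emph{(b)} the process does not get stuck below the threshold $r=N/2$ for any $(p,q)$ satisfying \eqref{eq:p_and_q_reduzidoGERAL}. Both follow from the subcritical inequality $1/(p+1)+1/(q+1)>(N-2)/N$, which is precisely the condition ensuring that the ``critical hyperbola'' is not reached, so the coupled Moser/Brezis--Kato type iteration converges; everything else is routine application of Calder\'on--Zygmund estimates with constants tracked through the (finitely many) steps.
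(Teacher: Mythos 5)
Your proposal follows essentially the same route as the paper: the uniform $L^{p+1}\times L^{q+1}$ bound coming from Lemma \ref{lemma:bounds_in_HsHt} and the embeddings, the $n$-independent growth of the truncated nonlinearities, and a coupled Calder\'on--Zygmund/Sobolev bootstrap whose termination is guaranteed by subcriticality. Two caveats. First, the correct uniform growth of the truncations is $|f_n(s)|\leq C(1+|s|^{p})$, $|g_n(s)|\leq C(1+|s|^{q})$ (consistent with ($fg2$) and with the definition of $f_n$; the exponents $p-1$, $q-1$ displayed in \eqref{eq:uniform_bound_of_g_n_independent} appear to be a typo), so the dual exponent at each step is $a/p$ rather than $a/(p-1)$; with this correction your recursion becomes exactly the one the paper writes down, namely $p_{k+1}=Nq_k/(Np-2q_k)$, $q_{k+1}=Np_k/(Nq-2p_k)$ with $p_0=q+1$, $q_0=p+1$. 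Second, the only non-routine point — that this recursion escapes past the threshold $N/2$ in finitely many steps — is precisely where the paper spends its effort: it shows $(p_k)$, $(q_k)$ are strictly increasing (the statement $p_0<p_1$, $q_0<q_1$ is equivalent to $(p,q)$ lying below the critical hyperbola) and excludes a finite limit by the monotonicity of $q\mapsto N(pq-1)/(2(q+1))$ on the subcritical range. Your description of a ``fixed additive gap $2/N$'' is not literally correct (the gain is affine in the reciprocals of the exponents and degenerates as one approaches the critical hyperbola), so this step should be carried out rather than delegated to a citation, since it is the actual content of the lemma. On the other hand, your two-sided comparison of $c_n$ with $c(\Omega)$ to justify the ``least energy'' conclusion (via Lemma \ref{lemma:uniform_bounds_Energy} and Proposition \ref{prop:gs_caract3}, plus the observation that a ground state of the untruncated problem is itself bounded and hence a competitor for $c_n$ when $n$ is large) is more complete than the paper's proof, which leaves that point implicit.
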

\begin{proof}
We follow the proof of \cite[Theorem 1]{Sirakov}. From the choice of $s$ and by Lemma \ref{lemma:bounds_in_HsHt}, we have that 
\[
u_n \text{ is bounded in } L^{p+1}, \quad v_n \text{ is bounded in } L^{q+1}.
\]
Thus $f(u_n)$ is bounded in $L^{(p+1)/p}$ and $g_n(v_n)$ is bounded in $L^{(q+1)/q}$, and by elliptic regularity (see for instance \cite[Th. 9.5 \& Coro. 9.17]{GilbargTrudinger})
\[
u_n \text{ is bounded in } W^{2,(q+1)/q}, \quad v_n \text{ is bounded in } W^{2,(p+1)/p}.
\]
If either $(p+1)/p\geq N/2$ or $(q+1)/q\geq N/2$, then the result follows from the embedding $W^{2,s}\hookrightarrow L^\infty$ for $s\geq N/2$. Assuming the contrary, then by the embedding $W^{2,s}\hookrightarrow L^{Ns/(N-2s)}$, we conclude that
\[
u_n \text{ is bounded in } L^{N(q+1)/(Nq-2(q+1))}, \quad v_n \text{ is bounded in } L^{N(p+1)/(Np-2(p+1))}.
\]
We now iterate this procedure. Define the sequences
\[
p_{n+1}=\frac{Nq_n}{Np-2q_n}, \quad q_{n+1}=\frac{Np_n}{Nq-2p_n},
\]
with $p_0:=q+1$, $q_0:=p+1$. Whenever $Np-2q_n,Nq-2p_n>0$, we have that
\[
u_n \text{ is bounded in } W^{2,p_{n+1}/q}, \quad v_n \text{ is bounded in } W^{2,q_{n+1}/p}.
\]
We now prove that $p_n\to +\infty$, $q_n\to +\infty$, which shows that we need to make this bootstrap procedure a finite number of times only in order to obtain the desired conclusion. The fact that $(p,q)$ is below the critical hyperbola is equivalent to $p_0<p_1$, $q_0<q_1$; by induction one can easily show that both $p_n$ and $q_n$ are strictly increasing sequences. Suppose, in view of a contradiction, that $p_n\to l_1\in \R$, $q_n\to l_2\in \R$. Then
\[
l_2=\frac{N(pq-1)}{2(q+1)}.
\]
We claim that $N(pq-1)/(2q+2)<p+1$, which is in contradiction with the fact that $p+1=q_0<l_2$. To prove the claim take, for each $p$ fixed, the function
\[
h_p(q)=\frac{N(pq-1)}{2(q+1)}, \text{ for } 1<q<\frac{2p+N+2}{pN-2p-2}.
\]
We have $h_p'>0$, hence
\[
h_p(q)<h_p\left(\frac{2p+N+2}{pN-2p-2}\right)=p+1.
\]
\end{proof}

\begin{remark}\label{rem:p=q}
For later reference, let us stress that actually from the start one could have supposed, without loss of generality, that $1<p=q<2^*-1$.
\end{remark}
From the previous considerations, we learn that the following general result holds.

\begin{theorem}\label{thm:aprioribounds}
Under assumptions $(fg1)$--$(fg2)$--$(fg3)$, let $(u_n,v_n)$ be any sequence of solutions of the truncated system
\[
-\Delta u_n=g_n(v_n),\quad -\Delta v_n=f_n(u_n),\qquad u_n,v_n\in H^1_0(\Omega).
\]
If there exists $C>0$ such that $\I_n(u_n,v_n)\leq C$, then $\|u_n\|_\infty+\|v_n\|_\infty\leq C'$ for some constant $C'$ (and thus $(u_n,v_n)$ solves \eqref{eq:system_with_f_and_g} for large $n$).
\end{theorem}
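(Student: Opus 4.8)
The plan is to follow the chain of lemmas proved above for the family $(\I_n)$ and the auxiliary functional $\tilde\I$, and then run a bootstrap on $L^\infty$. The hypothesis is that $(u_n,v_n)$ solves the truncated system with $\I_n(u_n,v_n)\le C$, so the \emph{only} difference with the situation treated in Lemmas \ref{lemma:uniform_bounds_Energy}--\ref{lemma:bounds_in_HsHt} is that we no longer assume $(u_n,v_n)$ is a \emph{least energy} solution — merely an energy-bounded one. Fortunately, the key inequality used in those lemmas does not require minimality. First I would reproduce the second part of the argument in Lemma \ref{lemma:uniform_bounds_Energy}: from $\I_n(u_n,v_n)\le C$ and $\I_n'(u_n,v_n)(u_n,v_n)=0$ one eliminates the quadratic term $\int_\Omega\langle\nabla u_n,\nabla v_n\rangle\,dx$ and, using $(fg3)$ in the form $F_n(s)\le \frac{1}{2+\delta}f_n(s)s$ (valid for the truncations since they inherit $(fg3)$), obtains
\[
\int_\Omega\bigl(f_n(u_n)u_n+g_n(v_n)v_n\bigr)\,dx\le 2C+\frac{2}{2+\delta}\int_\Omega\bigl(f_n(u_n)u_n+g_n(v_n)v_n\bigr)\,dx,
\]
whence $\int_\Omega(f_n(u_n)u_n+g_n(v_n)v_n)\,dx\le \frac{2(2+\delta)}{\delta}C=:C_1$, with $C_1$ independent of $n$.

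Next I would rerun Lemma \ref{lemma:bounds_in_HsHt} verbatim: choosing $0<s<2$ with the compact embeddings $E^s(\Omega)\hookrightarrow L^{p+1}(\Omega)$, $E^{2-s}(\Omega)\hookrightarrow L^{q+1}(\Omega)$ guaranteed by \eqref{eq:p_and_q_reduzidoGERAL} (which implies \eqref{eq:p_and_q_for_E^s_approach}), multiplying $-\Delta u_n=g_n(v_n)$ by $A^{-(2-s)}A^su_n$ and $-\Delta v_n=f_n(u_n)$ by $A^{-s}A^{2-s}v_n$, and using $(fg1)$ together with the uniform growth bound \eqref{eq:uniform_bound_of_g_n_independent} and the estimate $|g(s)|^{(q+1)/q}\le C|g(s)s|$ for $|s|$ bounded away from $0$. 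Absorbing the small-$\eps$ terms and invoking the just-proved bound $C_1$, one gets $\|u_n\|_{E^s(\Omega)}+\|v_n\|_{E^{2-s}(\Omega)}\le C_2$ uniformly. In particular $u_n$ is bounded in $L^{p+1}$ and $v_n$ in $L^{q+1}$.

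Finally I would carry out the bootstrap exactly as in the last lemma of Subsection \ref{sec:p_diferente_q}. The uniform bound \eqref{eq:uniform_bound_of_g_n_independent} gives $|f_n(u_n)|\le C(1+|u_n|^{p-1})$ and $|g_n(v_n)|\le C(1+|v_n|^{q-1})$, so $f_n(u_n)$ is bounded in $L^{(p+1)/p}$ (interpreting $(p-1)\cdot\frac{p+1}{p}\le p+1$ suitably; actually one writes $|f_n(u_n)|$ bounded in $L^{(q+1)/q}$ after one $W^{2,r}$ step as in Sirakov's argument \cite{Sirakov}), $g_n(v_n)$ in $L^{(q+1)/q}$; elliptic regularity \cite[Th.~9.5 \& Coro.~9.17]{GilbargTrudinger} upgrades to $W^{2,\cdot}$ bounds, and the Sobolev embeddings $W^{2,r}\hookrightarrow L^{Nr/(N-2r)}$ feed the iteration $p_{n+1}=\tfrac{Nq_n}{Np-2q_n}$, $q_{n+1}=\tfrac{Np_n}{Nq-2p_n}$ with $p_0=q+1$, $q_0=p+1$. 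Subcriticality of $(p,q)$ is precisely $p_0<p_1$, $q_0<q_1$; monotonicity follows by induction, and the contradiction argument with $h_p(q)=\tfrac{N(pq-1)}{2(q+1)}$ and $h_p'>0$ shows the sequences cannot converge to finite limits, hence diverge, so after finitely many steps we land in a space $W^{2,r}\hookrightarrow L^\infty$ ($r\ge N/2$). This yields $\|u_n\|_\infty+\|v_n\|_\infty\le C'$ uniformly; once $\|u_n\|_\infty,\|v_n\|_\infty\le n$ the truncations are inactive, so $(u_n,v_n)$ solves \eqref{eq:system_with_f_and_g} for large $n$.

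The main obstacle I anticipate is purely bookkeeping: making sure that \emph{every} constant in the $E^s$-estimate (Lemma \ref{lemma:bounds_in_HsHt} step) and in the bootstrap exponents is genuinely independent of $n$, which hinges on \eqref{eq:uniform_bound_of_g_n_independent} (the $n$-\emph{in}dependent growth bound, not the $n$-dependent one on $f_n',g_n'$). Everything else is a transcription of the arguments already given for least-energy solutions, since minimality was only used there to supply the energy bound $\I_n(u_n,v_n)\le\tilde c$, which here is replaced by the hypothesis $\I_n(u_n,v_n)\le C$.
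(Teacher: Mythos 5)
Your proposal is correct and is essentially the paper's own argument: the paper presents Theorem \ref{thm:aprioribounds} as a direct consequence of "the previous considerations," precisely because (as you observe) minimality entered Lemma \ref{lemma:uniform_bounds_Energy} only to supply the energy bound $\I_n(u_n,v_n)\le\tilde c$, which is here replaced by the hypothesis $\I_n(u_n,v_n)\le C$; the bound on $\int_\Omega(f_n(u_n)u_n+g_n(v_n)v_n)\,dx$, the $E^s\times E^{2-s}$ estimate, and the bootstrap then run unchanged. Your bookkeeping concern about $n$-independence of the constants is resolved exactly as you say, via the uniform growth bound \eqref{eq:uniform_bound_of_g_n_independent} on $f_n,g_n$ (not the $n$-dependent one on $f_n',g_n'$).
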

We saw that the variational characterization provided by Proposition \ref{prop:gs_caract3} implies that $\I_n(u_n,v_n)\leq C$ for ground state solutions. In general, one can imagine that if in another situation one has a suitable variational characterization for a certain energy critical level, then the uniform bound on functionals at that energy level is also easily satisfied. This justifies the statement made in the beginning of this section saying that, in general, one does not lose generality by assuming $p,q<2^\ast-1$.

\begin{remark}
In the works by Ramos et al \cite{PistoiaRamosNeumann,PistoiaRamosDirichlet,RamosAPriori,RamosSoares,RamosYang}, $L^\infty$--bounds are obtained under a different assumption for the sequence $(u_n,v_n)$, and for more restrictive classes of functions $f$ and $g$. For the nonlinearities, consider
\begin{itemize}
\item[$(\widetilde{fg2})$] There exists $p,q$ satisfying \eqref{eq:p_and_q_reduzido} and constants $l_1,l_2>0$ such that
\[
\lim_{|s|\to \infty} \frac{|f(s)|}{|s|^{p}}=l_1,\qquad \lim_{|s|\to \infty} \frac{|g(s)|}{|s|^{q}}=l_2.
\]
\end{itemize}
Then for instance in  \cite[Theorem 3.3]{RamosAPriori} it is shown that the conclusion of {\rm Theorem~\ref{thm:aprioribounds}} also holds under assumptions $(fg1),(fg2'),(fg3)$ and supposing that $(u_n,v_n)$ is a critical point of $\I_n$ such that $m_{H^-}(u_n,v_n)\leq k$ for some $k\in \N$, where $m_{H^-}$ is the relative Morse index  \cite[Section 2.4]{Abbondandolo1}, \cite[Section 1]{Abbondandolo2}:
\[
m_{H^-}(u,v):=dim_{E^-}V^-:=dim(V^-\cap (E^-)^\perp)-dim(E^-\cap (V^-)^\perp),
\]
and $V^-$ represents the negative eigenspace of $\I_n''(u,v)$. 

At this point we would like to observe that although $\I(u,v)$ has always infinite Morse index, this is not the case for the reduced functional. For instance for ground state solutions $(u,v)$, the Morse index of ${\J}(\frac{u+v}{2})$ is one, and also $m_{H^-}(u,v)=1$ (cf. \cite[Example 3.2]{RamosAPriori}). In general, one has $m_{{\J}}(u)\leq m_{H^-}(u+\Psi_u,u-\Psi_u)$, see \cite[Lemma 3.1]{RamosAPriori}.
\end{remark}

\subsection{A family of reduced functionals depending on a parameter}\label{sec:reduced_with_lambda}

In some cases, it is useful to introduce a free parameter when we reduce the functional $\I$. We will consider two such situations in Subsection \ref{sec:symmetrybreaking} and Subsection \ref{sec:pertusym} ahead. Suppose without loss of generality $1<p=q<2^*$, and consider the family of functionals 
\[
\J_{\lambda} : H^{1}_{0}(\Omega) \to \R : u\mapsto \sup\left\{ \I\left(\lambda u + \psi,u-\frac{\psi}{\lambda}\right): \psi\in H^1_0(\Omega)\right\},\quad \lambda>0.
\]
As in the case $\lambda=1$, it is easily seen that $\J_{\lambda}(w) = \I(\lambda w+\psi_{\lambda , w},w-\frac{\psi_{\lambda , w}}{\lambda})$
for some unique $\psi_{\lambda , w}\in H^1_0(\Omega)$, and that the map $\theta : H^1_0(\Omega) \to H^1_0(\Omega)$, $w \mapsto \psi_{\lambda , w}$ is of class $C^{1}$, see Lemma \ref{lemma:Theta_is_C1}. 

By definition of $\psi_{\lambda , w}$, we have that
\begin{equation}\label{eq:reducI'}
\I'(\lambda w+\psi_{\lambda , w},w-\frac{\psi_{\lambda , w}}{\lambda})(\eta,-\frac{\eta}{\lambda}) = 0
\end{equation}
for every $\eta\in H^{1}_{0}(B)$. It then follows that
\begin{equation}
\label{eq:J'}
\J_{\lambda}'(w)\xi = I'(u,v)(\lambda\xi,\xi) =  I'(u,v)(\lambda\xi +\phi,\xi-\frac{\phi}{\lambda}),
\end{equation}
where $w:= (u+\lambda v)/2\lambda $ and $\phi\in H^{1}_{0}(\Omega)$ is arbitrary. Therefore, the map
$$H^1_0(\Omega)\to H^1_0(\Omega)\times H^1_0(\Omega), \quad w \mapsto (\lambda w+\psi_{\lambda , w},w-\frac{\psi_{\lambda , w}}{\lambda})$$ provides a homeomorphism between critical points of the reduced functional $\J_{\lambda}$ and critical points of the functional $\I$. Indeed, observe that for any $(\zeta,\xi)\in H^1_{0}(\Omega)\times H^1_{0}(\Omega)$, we have
\begin{multline*}
\I'(\lambda w+\psi_{\lambda , w},w-\frac{\psi_{\lambda , w}}{\lambda})(\lambda\zeta,\xi) = \I'(\lambda w+\psi_{\lambda , w},w-\frac{\psi_{\lambda , w}}{\lambda})(\lambda\frac{\zeta-\xi}{2},-\frac{\zeta-\xi}{2}) \\ + \I'(\lambda w+\psi_{\lambda , w},w-\frac{\psi_{\lambda , w}}{\lambda})(\lambda\frac{\zeta+\xi}{2},\frac{\zeta+\xi}{2}),
\end{multline*}
so that our claim follows.

Denoting by 
\begin{equation}\label{widetildeN_lambda:=}
{\mathcal N}_{\J_{\lambda}}:=\{w\in H^1_0(\Omega), \; w\neq 0: \J'_{\lambda}(w)w=0\}
\end{equation}
the Nehari manifold associated to $\J_{\lambda}$, we can define 
$$
c_{\lambda}:=\inf_{{N}_{\J_{\lambda}}}\J_{\lambda}.
$$
\begin{lemma} \label{clambda}
We have that
\begin{equation}\label{eq:c_lambda=MP_lambda}
c_{\lambda}=\inf_{\gamma\in \Gamma}\sup_{t\in [0,1]}J_{\lambda}(\gamma(t)),
\end{equation}
where 
\[
\Gamma:=\{ \gamma\in C([0,1]; H^1_0(\Omega)): \gamma(0)=0 \; \mbox{ and } \; \J_{\lambda}(\gamma(1))<0\}.
\] 
Moreover, the level $c_{\lambda}$ does not depend on $\lambda$, and $c_\lambda=c(\Omega)$.
\end{lemma}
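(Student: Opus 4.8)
The plan is to prove \eqref{eq:c_lambda=MP_lambda} and the $\lambda$-independence of $c_\lambda$ by first establishing, just as in the case $\lambda=1$, that $\J_\lambda$ has a mountain pass geometry and satisfies the Palais--Smale condition, and then by exhibiting an explicit bijection between the relevant objects for different values of $\lambda$.

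\begin{proof}[Proof of Lemma \ref{clambda}]
First, we claim that $\J_\lambda$ satisfies the mountain pass geometry around the origin and the Palais--Smale condition. This is proved exactly as for $\J=\J_1$: the local geometry at $0$ follows from Step 1 of the proof of Lemma \ref{lemma:Existence_result} together with the identity $\J_\lambda'(w)\xi=\I'(u,v)(\lambda\xi,\xi)$ in \eqref{eq:J'}, while the fact that $\J_\lambda(t w_0)\to -\infty$ along a suitable direction $w_0$ follows from Step 2 of the same proof (observing that $(\lambda w + \psi, w - \psi/\lambda)$ ranges over all of $H$ as $w$ ranges over $H^1_0(\Omega)$ and $\psi$ over $H^1_0(\Omega)$, so the supremum defining $\J_\lambda$ is finite). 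The Palais--Smale condition for $\J_\lambda$ is inherited from that of $\I$ (Lemma \ref{lemma:I_PalaisSmale}) through the $C^1$ map $\theta$, arguing as one does for $\J$. Given these two facts, the classical mountain pass theorem and the standard argument relating the mountain pass level, the Nehari infimum, and the least critical level (valid because of ($fg3$), which guarantees the fibering maps $t\mapsto \J_\lambda(tw)$ have a unique positive critical point, a strict maximum) give
\[
\inf_{{\mathcal N}_{\J_\lambda}}\J_\lambda=\inf_{\gamma\in\Gamma}\sup_{t\in[0,1]}\J_\lambda(\gamma(t))=\inf\{\J_\lambda(w):\ w\in H^1_0(\Omega),\ w\neq 0,\ \J_\lambda'(w)=0\},
\]
which is \eqref{eq:c_lambda=MP_lambda}.

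Next I would prove that $c_\lambda=c(\Omega)$ for every $\lambda>0$. The key observation is already contained in the paragraph preceding the lemma: the map $w\mapsto (\lambda w+\psi_{\lambda,w},\, w-\psi_{\lambda,w}/\lambda)$ is a bijection between critical points of $\J_\lambda$ and critical points of $\I$, and moreover it preserves energy, $\J_\lambda(w)=\I(\lambda w+\psi_{\lambda,w}, w-\psi_{\lambda,w}/\lambda)$, by the very definition of $\J_\lambda$. Hence the set of nontrivial critical values of $\J_\lambda$ coincides with the set of nontrivial critical values of $\I$, and therefore
\[
c_\lambda=\inf\{\J_\lambda(w):\ w\neq 0,\ \J_\lambda'(w)=0\}=\inf\{\I(u,v):\ (u,v)\neq(0,0),\ \I'(u,v)=0\}=c(\Omega),
\]
the last equality being \eqref{eq:least_energy_level}. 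In particular $c_\lambda$ does not depend on $\lambda$. (Alternatively, one can see the $\lambda$-independence directly by noting that for $(u,v)$ with $u\neq-v$ the supremum $\sup_{t\ge 0,\ \phi}\I(t(u,v)+(\phi,-\phi))$ of Proposition \ref{prop:gs_caract3} can be reparametrized, via $u=\lambda w$, $v=w$ and $\phi\mapsto\psi$, to match the mountain pass description of $c_\lambda$, but the argument via the energy-preserving bijection is cleaner.)

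The only genuinely delicate point is the verification that $\J_\lambda$ is well defined and $C^1$ with the stated properties, i.e. that for each $w$ the functional $\psi\mapsto\I(\lambda w+\psi, w-\psi/\lambda)$ attains its supremum at a unique $\psi_{\lambda,w}$ and that $w\mapsto\psi_{\lambda,w}$ is $C^1$; but this is asserted in the text just before the lemma (referring back to Lemma \ref{lemma:Theta_is_C1}), so for the purposes of this proof it may be taken for granted. With that in hand, the two halves above are essentially formal, and I expect no further obstacle; the main thing to be careful about is making sure the reparametrization $(\lambda\zeta,\xi)=(\lambda\tfrac{\zeta-\xi}{2},-\tfrac{\zeta-\xi}{2})+(\lambda\tfrac{\zeta+\xi}{2},\tfrac{\zeta+\xi}{2})$ is used correctly to conclude that a zero of $\J_\lambda'$ really does yield a zero of $\I'$ and not merely a point where $\I'$ vanishes on the ``diagonal-type'' directions.
\end{proof}
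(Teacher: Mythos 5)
Your proof is correct, but the route you take for the heart of the lemma --- the $\lambda$-independence --- is genuinely different from the paper's. The paper introduces the explicit homeomorphism $\theta_{\lambda}(w)=\frac{\lambda+1}{2}w+\frac{\lambda-1}{2\lambda}\psi_{\lambda,w}$ of $H^1_0(\Omega)$ and uses the comparison $\J_{\lambda}(\theta_{\lambda}^{-1}(w))\le \J_{1}(w)$ (borrowed from \cite[Proposition 9, Step 3]{BonheureRamos}) to transport admissible paths from the $\J_1$ min-max to the $\J_\lambda$ min-max, obtaining $c_{\lambda}\le c_{1}$ and then the reverse inequality symmetrically; this works purely at the level of inf-sup values and never needs the infima to be attained. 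You instead identify $c_{\lambda}$ with the least nontrivial critical value of $\J_{\lambda}$ and then invoke the energy-preserving correspondence $w\mapsto(\lambda w+\psi_{\lambda,w},\,w-\psi_{\lambda,w}/\lambda)$ between critical points of $\J_{\lambda}$ and of $\I$ to conclude $c_{\lambda}=c(\Omega)$ at once. This is cleaner and gives the conclusion $c_\lambda=c(\Omega)$ directly (the paper only gets $c_\lambda=c_1$ from this argument and must then appeal to the earlier Proposition identifying $c_1$ with $c(\Omega)$), but it costs you the verification that $\inf_{\mathcal{N}_{\J_{\lambda}}}\J_{\lambda}$ is attained and is a critical value for \emph{every} $\lambda$ --- i.e.\ the Palais--Smale condition, the uniqueness of the critical point of the fibering maps, and the natural-constraint property of $\mathcal{N}_{\J_{\lambda}}$, all for the tilted decomposition $H=\R(\lambda w,w)\oplus\{(\psi,-\psi/\lambda)\}$. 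You correctly flag these as carrying over from the $\lambda=1$ case (the quadratic form is still negative definite on the tilted subspace, so the arguments of Lemmas \ref{lemma:N_is_manifold} and \ref{lemma:sup_I(t(u,v)+(phi,-phi))} adapt verbatim), and your observation that trivial critical points correspond to trivial ones under the bijection is the right thing to check; both proofs are at a comparable level of rigor, since the paper also declares \eqref{eq:c_lambda=MP_lambda} "standard" and omits it.
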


\begin{proof}
This is essentially proved in \cite[Proposition 2.2]{PistoiaRamosDirichlet} in a more general situation where $\lambda$ is allowed to be a non constant function. We give a more direct proof here. 

We skip the proof of \eqref{eq:c_lambda=MP_lambda}, as it follows in a standard way. Now, let  $\theta_{\lambda}: H^1_0(\Omega)\to H^1_0(\Omega)$ be given by
$$
\theta_{\lambda}(w):=\frac{\lambda+1}{2}\, w+\frac{\lambda-1}{2\lambda }\, \psi_{\lambda, w}.$$
Then, as proved in \cite[Proposition 9, Step 3]{BonheureRamos}, $\theta_{\lambda}$ is a homeomorphism and
$$
\J_{\lambda}(\theta_{\lambda}^{-1}(w))\le \J_{1}(w),$$ for every $w\in H^1_0(\Omega)$. Now, given $w\in H^1_0(\Omega)\setminus\{0\}$, take $t_0>0$ so large that $\J_{1}(t_0w)<0$ and define $\gamma(\xi):=
\theta_{\lambda}^{-1}(\xi t_0 w)$ for $0\le \xi \le 1$. Then $\gamma\in \Gamma$ and
$$
\sup_{\xi\in [0,1]}\J_{\lambda}(\gamma(\xi))\le \sup_{\xi\in [0,1]}\J_{1}(\xi t_{0} w)\le \sup_{t>0}\J_{1}(t w),$$
implying that
$$
c_{\lambda}\le \sup_{t>0}\J_{1}(tw).$$
Since $w$ is arbitrary, we conclude that $c_{\lambda}\le c_{1}$. To show that $c_{1}\le c_{\lambda}$, one proceeds in a similar way.
\end{proof}

We have deduced yet another characterization of the ground energy level \linebreak $c(\Omega)$ defined in \eqref{eq:least_energy_level}. In particular, this yields
\begin{multline*}
c(\Omega) =\inf_{v\neq -\frac{u}{\lambda}}\sup \{\I(tu+\psi,tv-\frac{\psi}{\lambda}): t\geqslant 0,\; \psi\in H^1_0(\Omega)\}\\
 =\inf\{ \I(u,v)  \mid (u,v)\in H\setminus\{(0,0)\}, \; I'(u,v)(u+\psi,v-\frac{\psi}{\lambda})=0 \quad \forall \psi\in H^1_0(\Omega)\}.
\end{multline*}

\section{More on the symmetry properties of solutions}\label{sec:more_on_symmetry}


The questions about the symmetry of the solutions of a second order elliptic equation can be tackled either using reflexion methods and moving planes arguments as in Gidas et al. \cite{GidasNiNirenberg1979}, or symmetrization techniques as in Talenti \cite{Talenti}. The moving planes method was adapted for elliptic systems as \eqref{eq:main_system} by Troy \cite{Troy}, see also \cite{BuscaSirakov2000, deFigueiredoNodea1994,Shaker} and Remark \ref{rem:Troy}. Further contribution based on symmetrization techniques for a scalar equation often rely on the Polya-Szeg\"o inequality which asserts that the gradient of a Schwarz rearranged function $u^{*}$ has a smaller $L^{2}$-norm (other quantities can be considered as well) than the original function $u$. For higher order elliptic problems, and also for Hamiltonian elliptic systems, this approach by symmetrization cannot be applied in such a direct way. Indeed, if one thinks for instance of the treatment of the system \eqref{eq:main_system_particularcase} using the reduction by inversion, the functional framework is a Sobolev space requiring the existence of two weak derivatives and one can clearly produce examples of such functions whose Schwarz symmetric rearrangement does not possess two weak derivatives anymore. We have shown in Section \ref{sec:DualMethod} and Section \ref{sec:QuartaOrdem} that the right tool to apply symmetrization technique is the comparison principle due to Talenti, see Lemma \ref{simetrizacao}. We will give more insight on the use of this principle to get both complete and partial symmetry results using polarizations. 

%

\subsection{Working with polarization} \label{workingwithpolarization}
In this section, we show how the reduction by inversion framework allows to use polarization techniques to prove complete or partial symmetry results. 

Assume $H$ is a closed half-space in $\R^N$. We denote by  $\sigma_{H}: \R^N \rt \R^N$  the reflection with respect to the boundary $\partial H$ of $H$. For simplicity, we also put $\overline{x} = \sigma_H(x)$ for $x \in \R^N$ when the underlying half-space $H$ is understood. For a measurable function $w : \R^N \rt \R$ we define the polarization $w_H$ of $w$ relative to $H$ by
\[
w_H(x) =
\left\{
\begin{array}{l}
\max\{ w(x), w(\overline{x}) \}, \quad x \in H,\\
\min\{ w(x), w(\overline{x})\}, \quad x \in \R^N \menos H.
\end{array}
\right.
\]
We also denote $\overline{w}(x):=w(\overline{x})$. 

We consider the set $\mathcal{H}$ of all closed half-spaces $H$ in $\R^N$ such that $0 \in \partial H$. Given an unitary vector $e \in \R^N$, we denote by $\mathcal{H}_{e}$ the set of all closed half-spaces $H \in \mathcal{H}$ such that $e \in int(H)$ and we denote by $\mathcal{H}_*$ the set of all closed half-spaces $H$ in $\R^N$ such that $0 \in int(H)$.

Recall that a function $f:\R^N\to \R$ is said to be foliated Schwarz symmetric with respect to a unitary vector $e\in \R^N$ if it is axially symmetric with respect to the axis $\R e$ and nonincreasing in the polar angle $\theta= \arccos( x\cdot  e) \in [0, \pi]$. 

We mention that, up to our knowledge, the link between polarization and foliated Schwarz symmetry appeared first in \cite{SmetsWillem};  cf.  \cite[Theorem 2.6]{BartschWethWillem} for further results about the foliated Schwarz symmetry of least energy solutions of some second order elliptic equations with radial data. We also mention that some precursory works, as \cite{Ahlfors, Baernstein1, Baernstein2, BrockSolynin}, brought to light the relation between polarizations and rearrangements in many different settings. 

To our purposes we recall, without proving, the following useful characterization of a symmetric function by means of polarizations, and refer to the survey \cite{WethSurvey} for more details on the subject.

\begin{proposition}[]\label{Prop:polarizationXsymmetry} ${}$
\begin{enumerate}[(i)]
\item \cite[Proposition 2]{FerreroGazzolaWeth} Let $f \in C(\R^N)$. Then $f$ is Schwarz symmetric (with respect to the origin) if, and only if, $f = f_{H}$ for every $H \in \mathcal{H}_*$.
\item \cite[Lemma 2.6]{SmetsWillem}, \cite[Lemma 17]{BerchioGazzolaWeth} Let $f \in C(\R^N)$ and $e \in \R^N$ an unitary vector. Then $f$ is foliated Schwarz symmetric with respect to $e$ if, and only if, $f = f_H$ for every $H \in \mathcal{H}_{e}$.
\end{enumerate}
 \end{proposition}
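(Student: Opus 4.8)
Since this proposition is quoted from the literature and merely recalled, the natural plan is to indicate the structure of the argument rather than supply all details, but here is how I would organize a self-contained proof. For part (i): the ``only if'' direction is immediate, since a Schwarz symmetric function is radial and radially nonincreasing, so for any $H\in\mathcal H_*$ and any $x\in H$ one has $|x|\le|\overline x|$ (because $0\in \mathrm{int}(H)$ forces the reflected point to be at least as far from the origin), hence $f(x)\ge f(\overline x)$ and $f_H(x)=f(x)$; similarly $f_H=f$ on $\mathbb R^N\setminus H$. For the ``if'' direction, the key observation is that $f=f_H$ for all $H\in\mathcal H_*$ means that for every pair of points $x,y$ with $|x|\le |y|$ one can find a half-space $H\in\mathcal H_*$ with $x\in H$ and $\overline x=y$ \emph{provided} $|x|\le|y|$ and $x,y$ are not antipodal; the condition $f=f_H$ then gives $f(x)\ge f(y)$. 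Handling the limiting/antipodal cases uses continuity of $f$. One concludes that $f(x)$ depends only on $|x|$ and is nonincreasing in $|x|$, i.e. $f$ is Schwarz symmetric.

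For part (ii) the argument is analogous but relative to the axis $\mathbb Re$. The ``only if'' direction: if $f$ is foliated Schwarz symmetric with respect to $e$ and $H\in\mathcal H_e$, then for $x\in H$ the reflected point $\overline x$ has a larger polar angle $\theta(\overline x)\ge\theta(x)$ (this is the geometric heart of the matter: reflecting across a hyperplane through $0$ that has $e$ in its interior half-space moves a point away from the axis $\mathbb R e$), so by monotonicity in $\theta$ we get $f(x)\ge f(\overline x)$, hence $f_H=f$. For the converse, one shows that $f=f_H$ for all $H\in\mathcal H_e$ forces, for any two points $x,y$ on the same sphere $\{|x|=r\}$ with $\theta(x)\le\theta(y)$, the inequality $f(x)\ge f(y)$: one selects a half-space $H\in\mathcal H_e$ whose boundary hyperplane is the perpendicular bisector (through $0$) of the geodesic arc joining $x$ to $y$ on that sphere, chosen so that $x\in H$ and $y=\overline x$; such $H$ lies in $\mathcal H_e$ precisely because $\theta(x)\le\theta(y)$ places $e$ on the $x$-side. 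Then $f=f_H$ gives $f(x)=f_H(x)=\max\{f(x),f(y)\}\ge f(y)$. Axial symmetry about $\mathbb Re$ follows by taking $x,y$ with $\theta(x)=\theta(y)$ and applying the inequality both ways; monotonicity in $\theta$ is exactly the inequality just derived.

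The step I expect to be the main obstacle — and the reason the paper simply cites \cite{FerreroGazzolaWeth,SmetsWillem,BerchioGazzolaWeth} — is the careful verification of the geometric selection lemma: given an admissible pair of points, producing a half-space in the correct class ($\mathcal H_*$ or $\mathcal H_e$) whose reflection swaps them, and in particular dealing cleanly with the degenerate configurations (antipodal points, points on the axis, points on a common hyperplane through $0$) where no such half-space exists and one must instead pass to a limit and invoke the continuity of $f$. Everything else is a short monotonicity/continuity argument. Since the statement is only being recalled here for later use, I would present the above as a sketch and refer to the cited references for the full technical details.
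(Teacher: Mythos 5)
The paper states this proposition without proof, explicitly recalling it from the cited references, so your choice to give only a sketch and defer to \cite{FerreroGazzolaWeth,SmetsWillem,BerchioGazzolaWeth} matches what the paper does. Your outline is the correct standard argument (polarize across the perpendicular bisector of the two points, which lies in $\mathcal H_*$ when $|x|<|y|$ and in $\mathcal H_e$ when $\theta(x)<\theta(y)$, then handle the boundary cases by continuity); the only minor imprecision is in part (i), where the degenerate configuration is the whole set $|x|=|y|$ (the bisector then passes through the origin, so $H\notin\mathcal H_*$), not merely antipodal pairs.
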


The following result is essentially due to \cite{BrockSolynin} but we provide a rather elementary proof. Here, $B$ stands for the open ball in $\R^N$ centered at the origin and with radius one.

\begin{lemma}\label{laplaciano polarizado}
Let $f \in L^t(B)$, $1 < t  < \infty$, and $H \in \mathcal{H}$. Let $u$ and $v$ be the strong solutions of
\[
\left\{
\begin{array}{l}
-\Delta u  = f,  \quad -\Delta v  = f_H \quad in \quad $B$,\\
u, v=0 \quad{on} \quad \partial B.
\end{array}
\right.
\]
Then $v=v_H$ in $B $ and $v\geq u_H$ in $H\cap B$. Moreover, 
\begin{equation}\label{lemma_brock1}
\int_{B}u\varphi\, dx \leq \int_{B}v\varphi_H\, dx, \qquad \forall \varphi\in L^{\infty}(B).
\end{equation}
In particular, if $ f\geq 0$,
\begin{equation}\label{lemma_brock2}
\int_{B}u^s\varphi\, dx \leq \int_{B}v^s\varphi_H\, dx, \qquad \forall \varphi\in L^{\infty}(B), \; \varphi\geq 0, \; s>1.
\end{equation}
\end{lemma}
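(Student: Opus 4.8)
The plan is to establish the polarization inequality for the Laplacian by reducing everything to the comparison of Green's functions and the elementary pointwise behaviour of polarized functions. First I would recall that for the ball $B$, which is symmetric with respect to any hyperplane $\partial H$ with $H\in\mathcal H$, the Green function $G(x,y)$ of $-\Delta$ with Dirichlet conditions satisfies the two key properties: $G(\overline x,\overline y)=G(x,y)$ (symmetry of the domain) and, for $x,y\in H\cap B$, one has $G(x,y)\ge G(x,\overline y)$ (points on the same side of $\partial H$ are ``closer'' in the Green-function sense than reflected points). The latter is the classical reflection inequality, provable e.g.\ by the maximum principle applied to $y\mapsto G(x,y)-G(x,\overline y)$ on $H\cap B$.

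Then I would write $u(x)=\int_B G(x,y)f(y)\,dy$ and $v(x)=\int_B G(x,y)f_H(y)\,dy$. The identity $v=v_H$ in $B$ follows directly: using $G(\overline x,\overline y)=G(x,y)$ and the change of variables $y\mapsto\overline y$, one checks that $v(\overline x)=\int_B G(x,y)\,\overline{f_H}(y)\,dy$, and since $f_H+\overline{f_H}=f+\overline f$ and $f_H\ge\overline{f_H}$ pointwise in $H$, splitting the integral over $H\cap B$ and $(\R^N\setminus H)\cap B$ and using $G(x,y)\ge G(x,\overline y)$ for $x,y\in H\cap B$ gives $v(x)\ge v(\overline x)$ for $x\in H\cap B$, i.e.\ $v=v_H$. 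The inequality $v\ge u_H$ in $H\cap B$ is obtained in the same spirit: for $x\in H\cap B$, write both $v(x)$ and $\max\{u(x),u(\overline x)\}$ as integrals against $G$, decompose the domain of integration into its $H$-part and its complement, and use $G(x,y)-G(x,\overline y)\ge 0$ together with the pointwise inequalities $f_H\ge f$ on $H$, $f_H\le f$ off $H$ — this is exactly the computation in \cite{BrockSolynin}, and the ``elementary'' part is just bookkeeping of the four regions one gets from the positions of $x$ and $y$ relative to $\partial H$.

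For \eqref{lemma_brock1}, the cleanest route is to prove it first for $\varphi\ge 0$ and then use linearity. For $\varphi\ge 0$ in $L^\infty(B)$, I would write $\int_B u\varphi\,dx=\int_B\int_B G(x,y)f(y)\varphi(x)\,dy\,dx$ and $\int_B v\varphi_H\,dx=\int_B\int_B G(x,y)f_H(y)\varphi_H(x)\,dy\,dx$; by Fubini and the symmetry $G(\overline x,\overline y)=G(x,y)$ one symmetrizes the double integral over the four quadrants determined by $\partial H$ and is reduced to the pointwise inequality
\[
G(x,y)\bigl(f(y)\varphi(x)+\overline f(y)\overline\varphi(x)\bigr)+G(x,\overline y)\bigl(f(y)\overline\varphi(x)+\overline f(y)\varphi(x)\bigr)
\]
\[
\le G(x,y)\bigl(f_H(y)\varphi_H(x)+\overline{f_H}(y)\overline{\varphi_H}(x)\bigr)+G(x,\overline y)\bigl(f_H(y)\overline{\varphi_H}(x)+\overline{f_H}(y)\varphi_H(x)\bigr)
\]
for $x,y\in H\cap B$. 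This in turn follows from the abstract ``rearrangement on two points'' lemma: if $a\ge b$, $a'\ge b'$ (here $a=f_H(y),b=\overline{f_H}(y)$, etc.) are the sorted versions of $(f(y),\overline f(y))$ and $(\varphi(x),\overline\varphi(x))$, and $\alpha:=G(x,y)\ge\beta:=G(x,\overline y)\ge0$, then $\alpha(ab'+a'b)\le\dots$; concretely it reduces to $(\alpha-\beta)(a-b)(a'-b')\ge0$, which holds. Passing to signed $\varphi$: replace $\varphi$ by $\varphi+\|\varphi\|_\infty$ (and note $(\varphi+c)_H=\varphi_H+c$ for constants $c$, plus $\int_B u\,dx=\int_B v\,dx$ since $f$ and $f_H$ have the same integral — wait, that is only the constant-test-function case, which does give $\int_B u=\int_B v$), so the constant shift cancels and \eqref{lemma_brock1} follows for all $\varphi\in L^\infty(B)$.

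Finally \eqref{lemma_brock2}: when $f\ge0$ the maximum principle gives $u\ge0$, $v\ge0$; from $v=v_H$ and $v\ge u_H\ge 0$ in $H\cap B$ (and the reversed inequalities outside $H$) one gets, raising to the power $s>1$ which is monotone on $[0,\infty)$, that $v^s=(v^s)_H$ and $v^s\ge (u^s)_H$ with the analogous reversed statement off $H$; then for $\varphi\ge0$ in $L^\infty(B)$ the inequality $\int_B u^s\varphi\le\int_B v^s\varphi_H$ follows by exactly the same four-region splitting as above, now using the pointwise facts about $u^s,v^s$ in place of the Green-function representation — indeed once one has $v^s\ge (u^s)_H$ on $H\cap B$, $v^s=(v^s)_H$, and $u^s\le\max\{u^s(x),u^s(\overline x)\}$, the inequality $\int u^s\varphi\le\int v^s\varphi_H$ is the standard fact that polarization increases the $L^1$ pairing (Hardy–Littlewood type). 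The main obstacle is organizing the combinatorics of the four regions cleanly and making sure the reflection inequality $G(x,y)\ge G(x,\overline y)$ for same-side points is invoked correctly; everything else is routine once that pointwise two-point rearrangement inequality is in place.
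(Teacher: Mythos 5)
Your treatment of the first three assertions ($v=v_H$, $v\geq u_H$ in $H\cap B$, and \eqref{lemma_brock1}) is correct, but it follows a genuinely different route from the paper. You work with the Green function $G$, the reflection identity $G(\overline x,\overline y)=G(x,y)$ and the reflection inequality $G(x,y)\geq G(x,\overline y)$ for $x,y\in H\cap B$, and then reduce everything to a two-point rearrangement inequality $(\alpha-\beta)(a-b)(a'-b')\geq 0$. The paper instead never introduces $G$: it applies the maximum principle on $H\cap B$ to the combinations $v-\overline v$, $v-u$, $v-\overline u$, and uses the exact identity $u+\overline u=v+\overline v$ (coming from $f-f_H=\overline{f_H}-\overline f$) to rewrite \eqref{lemma_brock1} as $\int_{H\cap B}[(\varphi_H-\varphi)(v-\overline u)+(\varphi_H-\overline\varphi)(v-u)]\,dx\geq 0$, a sum of manifestly nonnegative products. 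Your kernel argument buys a self-contained proof of the positivity of the relevant comparisons and works for signed $\varphi$ without the constant-shift trick (the two-point inequality does not require $a,b,a',b'\geq 0$); the paper's argument is shorter and avoids the four-quadrant bookkeeping. Either is acceptable for these three statements.

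There is, however, a genuine gap in your deduction of \eqref{lemma_brock2}. The Hardy--Littlewood-type polarization fact you invoke gives $\int_B u^s\varphi\,dx\leq\int_B (u^s)_H\,\varphi_H\,dx=\int_B (u_H)^s\varphi_H\,dx$, i.e. it compares a function with \emph{its own} polarization. What remains is $\int_B (u_H)^s\varphi_H\,dx\leq\int_B v^s\varphi_H\,dx$, and this is \emph{not} a consequence of the pointwise facts you list: you only have $v^s\geq (u_H)^s$ on $H\cap B$, while the \emph{reversed} inequality holds off $H$, so no pointwise comparison closes the argument. The missing ingredient is the convexity of $r\mapsto r^s$ (not just its monotonicity, which is all you use): from $u+\overline u=v+\overline v$ and $v\geq\max\{u,\overline u\}$ on $H\cap B$, the pair $(v,\overline v)$ majorizes $(u,\overline u)$, whence $u^s+\overline u^{\,s}\leq v^s+\overline v^{\,s}$; combining this with $\varphi_H\geq\overline{\varphi_H}\geq 0$ on $H\cap B$ yields the remaining inequality. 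That monotonicity alone is insufficient is seen on a two-point example: $u=\overline u=1$, $v=2$, $\overline v=0$, $\varphi\equiv 1$ and $j(r)=\sqrt r$ give $j(u)+j(\overline u)=2>\sqrt 2=j(v)+j(\overline v)$. This convexity/majorization step is precisely what the paper outsources to \cite[Lemma 9.1]{BrockSolynin} applied to $j(r)=r^s$; you need to either cite that lemma or supply the majorization argument explicitly.
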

\begin{proof}
Without loss of generality, we can assume that $f$ is smooth. Since $-\Delta (v-\overline{v})=f_H-\overline{f_H}\geq 0$ in $H\cap B$ we deduce from the maximum principle that $v\geq \overline{v}$ in $H\cap B$. So $v=v_H$ in $B$. On the other hand, since, by definition, $f-f_H=\overline{f_H}-\overline{f}$, we have that $-\Delta ( \overline{u}+u-\overline{v}-v)=\overline{f}+f-f_H-\overline{f_H}=0$. It follows that $\overline{u}+u=\overline{v}+v $ in $B$; in particular, $u=v$ on $\partial H\cap B$. Then, by observing that $-\Delta(v-u)=f_H-f\geq 0$ in $H\cap B$ and $ -\Delta(v-\overline{u})=f_H-\overline{f}\geq 0$ in $H\cap B$, we conclude that $v\geq u$ in $H\cap B$ and $v\geq \overline{u} $ in $H\cap B$, that is $v\geq u_H$ in $H\cap B$.

Now, given $\varphi \in L^{\infty}(B)$, we must derive the inequality
$$
\int_{B}u\varphi \, dx=\int_{H\cap B}(u \varphi+\overline{u}\, \overline{\varphi})\, dx \leq \int_{H\cap B}(v \varphi_{H}  +  \overline{v}\, \overline{\varphi_H})\, dx=\int_{B}v \varphi_H\, dx.$$
By replacing $\overline{v}=u+\overline{u}-v$ and $\overline{\varphi_H}=\varphi+\overline{\varphi}-\varphi_H$ in the above expression, we find that the inequality reads 
$$
\int_{H\cap B}   [(\varphi_H-\varphi)(v-\overline{u})+(\varphi_H-\overline{\varphi})(v-u) ]\, dx\geq 0.$$
Clearly, this holds true since each of the four terms in parenthesis  is non-negative, and this establishes \eqref{lemma_brock1}.

Finally, in case $f\geq 0$, since moreover, $v=v_H$, the property \eqref{lemma_brock2} is a consequence of \eqref{lemma_brock1}, as follows from \cite[Lemma 9.1]{BrockSolynin} applied to the map $j(r)=r^{s}$. \end{proof}

We will also need a similar version of the above lemma for the case of the whole space $\R^N$. First, we recall that $-\Delta + I : W^{2,t}(\R^N) \rt L^t(\R^N)$ is an isomorphism for every $1 < t < \infty$. 


\begin{lemma}\label{laplaciano polarizadoSRN}
Let $1 < t  < \frac{N}{2}$ and set $r >0$ such that $\frac{t-1}{t} + \frac{1}{r} = \frac{N-2}{N}$. Let $r'>1$ such that $\frac{1}{r} + \frac{1}{r'}=1$.  Let $f \in L^t(\R^N)$, and $H$ any half-space in $\R^N$. Let $u$ and $v$ be the strong solutions of
\[
-\Delta u + u  = f,  \quad -\Delta v + v = f_H \quad in \quad \R^N.
\]
Then $v=v_H$ in $\R^N$ and $v\geq u_H$ in $H$. Moreover, 
\begin{equation}\label{lemma_brock1SRN}
\int_{\R^N}u\varphi\, dx \leq \int_{\R^N}v\varphi_H\, dx, \qquad \forall \, \varphi \in L^{r'}(\R^N).
\end{equation}
In particular, if $ f\geq 0$,
\begin{equation}\label{lemma_brock2SRN}
\int_{\R^ N}u^r\varphi\, dx \leq \int_{\R^N}v^r\varphi_H\, dx, \qquad \forall \, \varphi \in L^{\infty}(\R^N), \,\, \varphi\geq 0.
\end{equation}
\end{lemma}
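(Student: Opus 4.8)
The plan is to mimic, step by step, the argument used for the bounded-domain version in Lemma \ref{laplaciano polarizado}, replacing the operator $-\Delta$ by $-\Delta+I$ and the maximum principle on $B$ by the maximum principle for $-\Delta+I$ on half-spaces of $\R^N$ (which holds because the zero-order term is positive, so there is no loss of a comparison principle when passing to an unbounded domain, provided the functions decay at infinity, which they do since $u,v\in W^{2,t}(\R^N)$). First I would reduce to the case where $f$ is smooth and compactly supported by density, noting that the maps $f\mapsto u$ and $f\mapsto f_H$ are continuous in the relevant norms (polarization is an $L^t$-isometry, and $(-\Delta+I)^{-1}$ is bounded $L^t\to W^{2,t}$), so the integral inequalities pass to the limit. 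Write $\overline{u}(x)=u(\overline x)$, etc., and recall the pointwise identity $f-f_H=\overline{f_H}-\overline{f}$, valid a.e.

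The core computation then runs exactly as in the previous lemma. Since $-\Delta(v-\overline v)+(v-\overline v)=f_H-\overline{f_H}\ge 0$ in the half-space $H$ and $v-\overline v\to 0$ at infinity, the maximum principle for $-\Delta+I$ gives $v\ge\overline v$ in $H$, i.e. $v=v_H$ in $\R^N$. Next, from $f-f_H=\overline{f_H}-\overline f$ we get $-\Delta(\overline u+u-\overline v-v)+(\overline u+u-\overline v-v)=0$ in $\R^N$, and by uniqueness for the isomorphism $-\Delta+I$ this forces $\overline u+u=\overline v+v$ everywhere; in particular $u=v$ on $\partial H$. Then $-\Delta(v-u)+(v-u)=f_H-f\ge 0$ and $-\Delta(v-\overline u)+(v-\overline u)=f_H-\overline f\ge 0$ in $H$, both with vanishing limits at infinity, so the maximum principle yields $v\ge u$ and $v\ge\overline u$ in $H$, i.e. $v\ge u_H$ in $H$. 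For \eqref{lemma_brock1SRN}, given $\varphi\in L^{r'}(\R^N)$ I would split the integral over $H$ and its reflection, use $\overline v=u+\overline u-v$ and $\overline{\varphi_H}=\varphi+\overline\varphi-\varphi_H$, and rewrite the difference as
\[
\int_{H}\bigl[(\varphi_H-\varphi)(v-\overline u)+(\varphi_H-\overline\varphi)(v-u)\bigr]\,dx\ge 0,
\]
which is nonnegative termwise: on $H$ one has $\varphi_H\ge\varphi$, $\varphi_H\ge\overline\varphi$, and we just proved $v\ge u$, $v\ge\overline u$ there. (Here the integrand is integrable: $u,\overline u,v$ lie in $L^r$ by Sobolev embedding $W^{2,t}\hookrightarrow L^r$ using $\frac{t-1}{t}+\frac1r=\frac{N-2}{N}$, and $\varphi,\overline\varphi,\varphi_H\in L^{r'}$.) Finally, when $f\ge 0$ we also have $u\ge 0$, $v\ge 0$, and since $v=v_H$, the inequality \eqref{lemma_brock2SRN} follows from \eqref{lemma_brock1SRN} by applying \cite[Lemma 9.1]{BrockSolynin} with $j(r)=r^{s}$ (here $s=r$), exactly as in the proof of Lemma \ref{laplaciano polarizado}; one must check that $u^r\varphi$ is integrable, which holds for $\varphi\in L^\infty$ since $u\in L^r$.

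The main obstacle I anticipate is \emph{justifying the maximum principle and the decay-at-infinity arguments on the unbounded half-space}: on $B$ one simply invokes the classical maximum principle, but on $H\subset\R^N$ one must ensure no mass escapes to infinity. The clean way is the density reduction to smooth compactly supported $f$, for which $u,v\in W^{2,t}$ with $t<N/2$ actually decay (via the explicit Bessel potential representation $u=(-\Delta+I)^{-1}f=G*f$ with $G$ the Bessel kernel, which is positive and decays exponentially), so all the differences $v-\overline v$, $v-u$, $v-\overline u$, $\overline u+u-\overline v-v$ tend to $0$ at infinity and the maximum principle for $-\Delta+I$ (equivalently, testing against the negative part and using the positive zero-order term) applies without boundary-at-infinity subtleties; one then removes the smoothness and compact-support assumptions by the continuity of all the maps involved. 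A secondary, purely bookkeeping point is to track the exponents so that every integral written is finite — this is where the specific relation $\frac{t-1}{t}+\frac1r=\frac{N-2}{N}$ and the constraint $t<N/2$ enter, guaranteeing $1<r<\infty$ and the embedding $W^{2,t}(\R^N)\hookrightarrow L^r(\R^N)$.
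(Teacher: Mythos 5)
Your proposal is correct and follows exactly the route the paper intends: the paper omits this proof, stating only that it is "similar to the proof of Lemma \ref{laplaciano polarizado}" after noting the embedding $W^{2,t}(\R^N)\hookrightarrow L^r(\R^N)$, and you carry out precisely that adaptation (replacing $-\Delta$ by $-\Delta+I$, using $\overline u+u=\overline v+v$, the termwise-nonnegative rewriting of the integral difference, and \cite[Lemma 9.1]{BrockSolynin} for the last assertion). Your extra care with the maximum principle on the unbounded half-space — reducing to compactly supported $f$ so that the Bessel-potential representation gives decay at infinity — correctly supplies the one detail that genuinely differs from the bounded-domain case.
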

\begin{proof}
We observe that $u,v \in W^{2,t}(\R^N) \hookrightarrow L^r(\R^N)$. The proof is similar to the proof of Lemma \ref{laplaciano polarizado} and so will be omitted here. 
\end{proof}

\subsection{Symmetry results for a system in $\R^N$ using polarization arguments}\label{subsec:SignSymmetryRN}
In this part we consider the system
\begin{equation}\label{problemRN}
\left\{
\begin{array}{l}
-\Delta u + u  = |v|^{q-1}v \quad \text{in} \quad \R^N,\\
-\Delta v + v  = |u|^{p-1}u \quad \text{in} \quad \R^N,
\end{array}
\right.
\end{equation}
and we assume that the pair $(p,q)$ satisfies the hypothesis \eqref{eq:(p,q)_for_Ederson_part}, which for convenience we recall here
\begin{equation}\label{eq:(p,q)_for_Ederson_partSRN}
p, q> 0, \qquad 1 > \frac{1}{p+1} + \frac{1}{q+1} > \frac{N-2}{N}.  \tag{H3} 
\end{equation}

It is proved in \cite[Theorems 1.8 and 1.9]{BonheureSantosRamosTAMS} that \eqref{problemRN} has a ground state solution and that any ground state solution of \eqref{problemRN} has definite sign, that is, $u,v>0$ in $\R^N$ or $u, v<0$ in $\R^N$ (for short, $uv>0$ in $\R^N$). It was also proved in \cite{BonheureSantosRamosTAMS} that at least one positive ground state solution of \eqref{problemRN} is Schwarz symmetric. However, it was left as an open problem whether every ground state solution of \eqref{problemRN} has radial symmetry or not. Here we give a positive answer to this question.

\begin{theorem}\label{Th:radialsymmetryRN}
Assume \eqref{eq:(p,q)_for_Ederson_partSRN}. Then \eqref{problemRN} has a ground state solution. Any ground state solution $(u,v)$ of \eqref{problemRN} is such that $uv>0$ in $\R^N$. Moreover, if $(u,v)$ is a positive ground state solution of \eqref{problemRN} then, up to a common translation, $u$ and $v$ are Schwarz symmetric.
\end{theorem}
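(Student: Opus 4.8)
The plan is to reduce \eqref{problemRN} to the scalar variational setting of Section \ref{sec:QuartaOrdem} (now posed on $\R^N$ with the operator $-\Delta+I$ in place of $-\Delta$), and then to run a polarization argument based on Lemma \ref{laplaciano polarizadoSRN}, in the same spirit as the proof of Theorem \ref{thm:qualitativeprop_dual}(ii). First I would record that, exactly as in Proposition \ref{Reg1}, a ground state $(u,v)$ of \eqref{problemRN} corresponds to a critical point of the reduced functional
\[
J_{\R^N}(u) = \frac{q}{q+1}\int_{\R^N}\bigl|(-\Delta+I)^{?}\ldots\bigr|,
\]
more precisely $v=\phi_q^{-1}(Lu)$ with $L=-\Delta+I$; the existence of a positive ground state and the sign property $uv>0$ are already proved in \cite{BonheureSantosRamosTAMS} and are restated in Theorem \ref{Th:radialsymmetryRN}, so I would only need to invoke them. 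The crux is the last sentence: any \emph{positive} ground state is, up to a common translation, Schwarz symmetric, hence in particular radially symmetric, which answers the open problem.

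For the symmetry part I would argue as follows. Fix a positive ground state $(u,v)$; by regularity $u,v\in W^{2,t}(\R^N)$ for the relevant $t$ and $u,v>0$. As in the bounded-domain proof, the ground state level is characterized by a min-max (or Nehari) formula with a one-parameter scaling $t\mapsto (tf,t^{q(p+1)/(p(q+1))}g)$ in the dual variables $f=\phi_p(u)$, $g=\phi_q(v)$. Given any half-space $H\in\mathcal H_*$, apply Lemma \ref{laplaciano polarizadoSRN} to $f$ and to $g$: writing $u=Kg$, $v=Kf$ with $K=(-\Delta+I)^{-1}$, one gets $Kg_H\ge (Kg)_H$ and, using \eqref{lemma_brock1SRN}, the key monotonicity
\[
\int_{\R^N} f_H\,K g_H\,dx \ \ge\ \int_{\R^N} f\,K g\,dx ,
\]
while the $L^{(p+1)/p}$ and $L^{(q+1)/q}$ norms of $f$ and $g$ are preserved under polarization. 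Hence the dual energy $\Phi$ (its $\R^N$ analogue) satisfies $\Phi(tf_H,t^{\cdot}g_H)\le \Phi(tf,t^{\cdot}g)$ for all $t>0$, and therefore, running the min-max comparison used in the proof of Theorem \ref{thm:qualitativeprop_dual}(ii), the polarized pair also achieves the ground state level. By the uniqueness of the maximizing $t$ (shown there), one concludes $f=f_H$ and $g=g_H$ for every $H\in\mathcal H_*$, provided $0\in\mathrm{int}\,H$; more carefully, one first fixes the center of mass. The standard way around the translation issue: the previous argument with half-spaces through an arbitrary point shows that for \emph{each} point $x_0$ there is at most a restricted family of admissible $H$, and a now-classical argument (as in \cite{FerreroGazzolaWeth,WethSurvey}, invoking Proposition \ref{Prop:polarizationXsymmetry}(i)) shows that a positive $L^1\cap C$ function that is equal to all its polarizations $f_H$ for $H$ in $\mathcal H_*$ translated to the barycenter of $f$ must be Schwarz symmetric about that barycenter; one checks that $u$ and $v$ share the same barycenter because they are tied by $u=Kg$, $v=Kf$ and $K$ commutes with translations and with polarization in the appropriate sense. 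Thus, after the common translation sending that barycenter to the origin, $u=u^*$ and $v=v^*$.

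The main obstacle I expect is precisely the translation/barycenter step: on $\R^N$ the polarization characterization of Schwarz symmetry requires first pinning down the center, and one must verify that the \emph{same} translation works simultaneously for $u$ and $v$ — this is where the coupling $v=Kf=K\phi_p(u)$ is used, since $K$ and $\phi_p$ are translation-equivariant, forcing the barycenters to coincide. A secondary technical point is checking that Lemma \ref{laplaciano polarizadoSRN} applies with the exponents dictated by \eqref{eq:(p,q)_for_Ederson_partSRN} (i.e. that $f\in L^t$ with $1<t<N/2$ and the dual exponent $r$ matches $p+1$, resp. $q+1$), and that the min-max characterization of $c(\R^N)$ via the scaling $t\mapsto (tf,t^{q(p+1)/(p(q+1))}g)$ is valid in the $\R^N$ setting — both are straightforward adaptations of the bounded-domain arguments in Sections \ref{sec:DualMethod} and \ref{sec:QuartaOrdem}, using the compactness/existence already granted by \cite{BonheureSantosRamosTAMS}.
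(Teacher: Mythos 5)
Your overall strategy---reduce to the scalar problem by inversion with $L=-\Delta+I$, polarize, compare energies through Lemma \ref{laplaciano polarizadoSRN}, and conclude with Proposition \ref{Prop:polarizationXsymmetry}(i)---is the same as the paper's (the paper phrases it in terms of the Sobolev quotient $\alpha_{p,q}$ rather than the dual functional $\Phi$, but that is cosmetic). However, there is a genuine gap at the decisive step: from the chain of equalities $c=\Phi(f_H,g_H)=\Phi(f,g)$ together with uniqueness of the fiber maximum you conclude $f=f_H$ and $g=g_H$. This does not follow. Uniqueness of the maximizing $t$ only yields $t_0=1$, and equality of energies only says that the polarized pair is \emph{another} ground state, not that it coincides with $(f,g)$. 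Indeed the implication is false: if $(u,v)$ is a translate of a Schwarz symmetric ground state centered at $x_0\neq 0$ and $H\in\mathcal{H}_*$ is chosen with $x_0\notin H$, then $f_H=\overline{f}$, $g_H=\overline{g}$, and $\int_{\R^N} f_H Kg_H\,dx=\int_{\R^N} \overline{f}\,K\overline{g}\,dx=\int_{\R^N} fKg\,dx$ by reflection invariance of $K$ on $\R^N$, so all your inequalities are equalities although $f\neq f_H$. Unlike the Schwarz rearrangement comparison of Lemma \ref{simetrizacao}, whose equality case characterizes $f=f^*$, the polarization inequality \eqref{lemma_brock1SRN} carries no such rigidity statement, so equality of energies cannot be upgraded to $f=f_H$.

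What is missing is an intermediate maximum principle step. The correct route is: (a) from the energy equality deduce only that $u_H$ is again a minimizer of $\alpha_{p,q}$ for \emph{every} half-space $H$, hence satisfies the same Euler--Lagrange equation; (b) writing $v=|Lu|^{\frac1q-1}Lu$ and $w=|Lu_H|^{\frac1q-1}Lu_H$, apply the strong maximum principle to $L(|u-\overline{u}|)=(w^q-v^q)+(w^q-\overline{v}^{\,q})\geq 0$ in $int(H)$ (using $w\geq v_H$ from Lemma \ref{laplaciano polarizadoSRN}) to obtain the trichotomy $u>\overline{u}$ in $int(H)$, $u<\overline{u}$ in $int(H)$, or $u\equiv\overline{u}$ in $H$; (c) only then translate so that $u$ attains its global maximum at the origin, which rules out the alternative $u<\overline{u}$ for every $H\in\mathcal{H}_*$, gives $u=u_H$, and allows Proposition \ref{Prop:polarizationXsymmetry}(i) to be applied; the symmetry of $v$ follows by returning to the system. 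Your barycenter device is both unnecessary and, as sketched, not a substitute for step (b): without the pointwise trichotomy there is no way to pass from ``$u_H$ is also a minimizer'' to ``$u=u_H$'' for the half-spaces through the normalizing point.
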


Here our approach is based on the reduction by inversion as in Section \ref{sec:QuartaOrdem}, see also \cite{BonheureSantosRamosTAMS}. Set $L : = -\Delta + I$. Then \eqref{problemRN} is equivalent to 
\[
L (|Lu|^{\frac{1}{q}-1} Lu) = |u|^{p-1} u, \qquad u \in W^{2, \frac{q+1}{q}}(\R^N),
\]
and the study of ground state solutions of \eqref{problemRN} is then reduced to the study of minimizers for the best Sobolev constant
\begin{equation}\label{sobolevconstantRN}
\alpha_{p,q} : = \inf \left\{  \int_{\R^N} |Lu|^{\frac{q+1}{q}} dx; \; u \in W^{2, \frac{q+1}{q}}(\R^N), \; \| u \|_ {p+1} =1 \right\}.
\end{equation}

The results listed below are proved in \cite[Section 3]{BonheureSantosRamosTAMS}:
\begin{enumerate}[-]
\item there exists at least one minimizer to \eqref{sobolevconstantRN};
\item any such minimizer is such that $u, Lu >0$ in $\R^N$ or $u, Lu< 0$ in $\R^N$;
\item at least one minimizer is Schwarz symmetric.
 \end{enumerate}
 
Therefore, the conclusion of the proof of Theorem \ref{Th:radialsymmetryRN} reduces to proving the following proposition.

\begin{proposition}\label{Prop:RadialSymmetryMinimizerRN}
Assume \eqref{eq:(p,q)_for_Ederson_partSRN}. Let $u  \in W^{2, \frac{q+1}{q}}(\R^N)$ be a minimizer of \eqref{sobolevconstantRN} such that $u, Lu >0$ in $\R^N$. Then, up to translation, $u, Lu$ are Schwarz symmetric.
\end{proposition}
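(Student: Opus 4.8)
The plan is to mimic, in the whole-space setting, the symmetrization argument used in the proof of Theorem~\ref{teounicidadebola} (and of Theorem~\ref{thm:qualitativeprop_dual}(ii)), replacing the single Schwarz symmetrization step by an iteration of polarizations and then invoking a characterization result of the type of Proposition~\ref{Prop:polarizationXsymmetry}(i). Concretely, set $L=-\Delta+I$, write $f:=Lu>0$, and recall that, by hypothesis, $u$ minimizes $\alpha_{p,q}$ in \eqref{sobolevconstantRN} with $\|u\|_{p+1}=1$. First I would fix an arbitrary half-space $H\in\mathcal H_*$ (so that $0\in\mathrm{int}(H)$) and let $v$ solve $Lv=f_H$ in $\R^N$. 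By Lemma~\ref{laplaciano polarizadoSRN} we have $v=v_H$, and applying \eqref{lemma_brock2SRN} with $r=p+1$ (note that the compatibility condition $\frac{t-1}{t}+\frac1r=\frac{N-2}{N}$ with $t=\frac{q+1}{q}$ is exactly the criticality relation, which holds strictly here because we are subcritical, so one has the relevant inequality with room to spare) and $\varphi\equiv 1$ gives
\[
\|u\|_{p+1}^{p+1}=\int_{\R^N}u^{p+1}\,dx\le \int_{\R^N}v^{p+1}\,dx=\|v\|_{p+1}^{p+1},
\]
while $\int_{\R^N}|Lv|^{\frac{q+1}{q}}\,dx=\int_{\R^N}|f_H|^{\frac{q+1}{q}}\,dx=\int_{\R^N}|f|^{\frac{q+1}{q}}\,dx=\int_{\R^N}|Lu|^{\frac{q+1}{q}}\,dx=\alpha_{p,q}$, since polarization preserves the $L^{\frac{q+1}{q}}$ norm. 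Hence, normalizing $v$ (replacing $v$ by $v/\|v\|_{p+1}$ and using homogeneity), the competitor $v/\|v\|_{p+1}$ has Rayleigh quotient $\le\alpha_{p,q}$, so it is also a minimizer and equality must hold throughout; in particular $\|u\|_{p+1}=\|v\|_{p+1}$ forces equality in \eqref{lemma_brock2SRN}, which by the equality case of Lemma~\ref{laplaciano polarizadoSRN} (itself via the equality discussion in \cite[Lemma 9.1]{BrockSolynin} and the strict monotonicity argument as in Lemma~\ref{laplaciano polarizado}) yields $f=f_H$, i.e. $u=u_H$ as well (since $u=L^{-1}f$ and $v=L^{-1}f_H=u_H$ would then coincide).

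The delicate point is the equality analysis: one must argue that $\int u^{p+1}=\int v^{p+1}$ together with $v\ge u_H$ in $H$ and $u+\overline u=v+\overline v$ forces $u=u_H$ pointwise a.e. This should follow by going back to the proof of \eqref{lemma_brock1SRN}: the inequality there is a sum of four manifestly nonnegative integrands over $H$, and equality in the $p+1$ version propagates (via the strict convexity/monotonicity of $r\mapsto r^{p+1}$) to equality in each of those terms, which gives $(\varphi_H-\varphi)(v-\overline u)=0$ etc.; choosing test functions $\varphi$ appropriately (or simply taking $\varphi\equiv1$ and then $\varphi=\chi_{H}$, $\varphi=\chi_{\R^N\setminus H}$) forces either $u(x)=u(\overline x)$ or the ordering already being in the polarized direction, hence $u=u_H$. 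I would be careful here to do this cleanly, as it is the crux; a slightly safer route is to observe directly that $v=u_H$ always (this is what Lemma~\ref{laplaciano polarizadoSRN} really says once one identifies $L^{-1}(f_H)$ — but in general $L^{-1}(f_H)\ne (L^{-1}f)_H$, only $\ge$), so instead I would phrase the whole comparison as: $u_H$ is an admissible competitor for \eqref{sobolevconstantRN} with the same numerator (polarization preserves $L^{\frac{q+1}{q}}$ of $f$... but wait, the numerator is $\int|Lu|^{q+1/q}$, not $\int|f|$; and $L(u_H)\ne f_H$ in general). This subtlety is precisely why the lemma introduces the auxiliary $v$ solving $Lv=f_H$: one compares $u$ with $v$, not with $u_H$, using $v=v_H$ and $v\ge u_H$.

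Once I have shown $u=u_H$ for every $H\in\mathcal H_*$, Proposition~\ref{Prop:polarizationXsymmetry}(i) (applicable since $u\in C(\R^N)$, which holds because $u\in W^{2,\frac{q+1}{q}}(\R^N)$ and bootstrap/regularity as in Section~\ref{sec:QuartaOrdem} give $u$ continuous — actually one also needs $u\to 0$ at infinity, which follows from $u\in W^{2,t}(\R^N)\hookrightarrow L^r$ and elliptic regularity, so $u\in C_0(\R^N)$) immediately gives that $u$ is Schwarz symmetric with respect to the origin. The same conclusion for $Lu=f$ follows either by the identical argument applied to $f$ (we showed $f=f_H$ for all $H\in\mathcal H_*$, so $f$ itself is Schwarz symmetric by Proposition~\ref{Prop:polarizationXsymmetry}(i)), or by noting that $u$ radial and radially decreasing implies $Lu=-\Delta u+u$ is radial. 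The phrase ``up to translation'' enters because $\mathcal H_*$ consists of half-spaces through the origin; a priori $u$ is symmetric about some point, but since the minimization problem \eqref{sobolevconstantRN} is translation invariant one may first translate $u$ so that a suitable ``center'' (e.g. a point where $u$ attains its maximum, or the barycenter of $u^{p+1}\,dx$) sits at the origin, and then run the polarization argument relative to $\mathcal H_*$. The main obstacle I anticipate is making the equality case of the polarization inequality fully rigorous in the whole-space setting — in particular justifying that strict inequality would occur unless $f=f_H$, which requires either the strong maximum principle applied to the (positive) solutions $v-u$ and $v-\overline u$ of $Lw\ge 0$ (as in Lemma~\ref{laplaciano polarizado}, but now on the unbounded domain $H$, where one must invoke a maximum principle valid for $L=-\Delta+I$ on unbounded domains, which is fine since $L$ satisfies a maximum principle without sign conditions on the domain) together with the continuity and decay of the functions involved, or a careful tracking of the four-term identity in the proof of \eqref{lemma_brock1SRN}.
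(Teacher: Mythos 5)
Your overall architecture (show that a polarized competitor is again a minimizer, deduce a pointwise comparison between $u$ and $\overline{u}$ on each half-space, then invoke Proposition \ref{Prop:polarizationXsymmetry}(i) after centering $u$ at its maximum) matches the paper's, and your first step --- comparing $u$ with $v:=L^{-1}\bigl((Lu)_H\bigr)$, whose numerator $\int|Lv|^{\frac{q+1}{q}}$ is unchanged because polarization preserves $L^s$-norms and whose denominator does not decrease by Lemma \ref{laplaciano polarizadoSRN} --- is a legitimate, slightly more direct variant of the paper's Step 1. The paper instead polarizes the source term $u^p$, defines $\widetilde{u}$ by $T_{\R^N}(\widetilde{u})=\alpha_{p,q}u_H^p$, proves via Lemma \ref{lemma_radialSRN} and H\"older that $\widetilde{u}$ is a minimizer, and then identifies $\widetilde{u}=u_H$ by combining this with the Euler--Lagrange equation $T_{\R^N}(\widetilde{u})=\alpha_{p,q}\widetilde{u}^{\,p}$; this bypasses any equality-case analysis of the rearrangement inequality.

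The genuine gap is your central claim that equality in \eqref{lemma_brock2SRN} ``yields $f=f_H$''. It does not: if $u$ is Schwarz symmetric about a point $x_0\neq 0$ and $H\in\mathcal{H}_*$ is chosen with $x_0\notin H$, then $u\leq\overline{u}$ in $H$, so $f_H=\overline{f}$, $v=\overline{u}$, and every inequality in your chain is an equality, yet $f\neq f_H$. Since minimizers of \eqref{sobolevconstantRN} form a translation-invariant family, such $u$ exist, so no equality-case argument can output $f=f_H$ before the translation has been performed; the translation is not a cosmetic final remark but the ingredient that selects the correct branch. What minimality of the competitor actually buys is the trichotomy of the paper's Step 2 --- $u>\overline{u}$ in $\mathrm{int}(H)$, or $u<\overline{u}$ in $\mathrm{int}(H)$, or $u\equiv\overline{u}$ --- obtained by writing the problem as a system, noting that the second components satisfy $w\geq v_H$ in $H$ by Lemma \ref{laplaciano polarizadoSRN}, and applying the strong maximum principle to $L(|u-\overline{u}|)\geq 0$ in $\mathrm{int}(H)$; only after normalizing $u(0)=\max u$ does the choice $H\in\mathcal{H}_*$ exclude the middle alternative and give $u=u_H$. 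Your second paragraph circles around exactly this difficulty without resolving it, and the equality analysis you defer to \cite[Lemma 9.1]{BrockSolynin} is left unproven; as written the argument does not close. Either carry out the strict-convexity equality analysis and still combine it with the maximum-principle trichotomy and the normalization, or follow the paper's Euler--Lagrange route.
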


We denote by $T_{\R^N}: W^{2, \frac{q+1}{q}}(\R^N)\to \left(W^{2, \frac{q+1}{q}}(\R^N)\right)'$ the operator given by
\begin{equation}\label{operator_TRN}
\langle T_{\R^N}(u), \varphi \rangle = \int_{\R^N}| L u|^{\frac{1}{q} - 1} L u L \varphi  dx, \qquad \forall \, u, \varphi \in W^{2, \frac{q+1}{q}}(\R^N).
\end{equation}
Then $T_{\R^N}$ is a nonlinear homeomorphism, cf. \cite[Lemma 3.2]{Ederson2008} for a similar result. For every $w \in L^{\frac{p+1}{p}}(\R^N)$, the imbedding $W^{2, \frac{q+1}{q}}(\R^N)\hookrightarrow L^{p+1}(\R^N)$ guarantees that the map
\[
\varphi \mapsto \int_{\R^N} w \varphi \, dx, \qquad \varphi \in W^{2, \frac{q+1}{q}}(\R^N),
\]
defines a continuous linear functional on $W^{2, \frac{q+1}{q}}(\R^N)$, and so there exists a unique $u \in W^{2, \frac{q+1}{q}}(\R^N)$ such that 
$T_{\R^N}(u)=w$, that is
\begin{equation}\label{u w rieszRN}
\int_{\R^N} | L u|^{\frac{1}{q} - 1} L u L \varphi dx = \int_{\R^N} w \varphi \, dx, \qquad \forall \, \varphi \in W^{2, \frac{q+1}{q}}(\R^N).
\end{equation}

\begin{lemma} \label{lemma_radialSRN}
Let $H$ be any half-space in $\R^N$, $w\in L^{\frac{p+1}{p}}(\R^N)$ be nonnegative and $u, \widetilde{u} \in W^{2, \frac{q+1}{q}}(\R^N)$ be such that $T_{\R^N}(u)=w$ and  $T_{\R^N}(\widetilde{u})=w_H$. Then  $\langle T_{\R^N}(u),u\rangle\leq \langle T_{\R^N}(\widetilde{u}),\widetilde{u}\rangle$.
\end{lemma}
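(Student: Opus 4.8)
The plan is to mimic the argument for the bounded-domain case, reducing the claim about the bilinear form $\langle T_{\R^N}(\cdot),\cdot\rangle$ to the pointwise comparison provided by Lemma \ref{laplaciano polarizadoSRN}. First I would recall the definition \eqref{operator_TRN}: setting $f := |Lu|^{1/q-1}Lu$ and $\widetilde f := |L\widetilde u|^{1/q-1}L\widetilde u$, the defining relations $T_{\R^N}(u)=w$, $T_{\R^N}(\widetilde u)=w_H$ mean precisely that $Lf = w$ and $L\widetilde f = w_H$ (as identities in $L^t(\R^N)$, after one checks the required integrability — here $t=(q+1)/q$ and $w\in L^{(p+1)/p}$, which lies in the admissible range because $(p,q)$ satisfies \eqref{eq:(p,q)_for_Ederson_partSRN}). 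Thus $f$ and $\widetilde f$ play the roles of $u$ and $v$ in Lemma \ref{laplaciano polarizadoSRN}, with datum $w\geq 0$ and its polarization $w_H$. The conclusion of that lemma gives $\widetilde f = \widetilde f_H$ and $\widetilde f \geq f_H$ in $H$, and in particular $\widetilde f \geq 0$ since $f\geq 0$ (as $w\geq 0$).

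Next I would rewrite both sides of the desired inequality. Since $Lu = \phi_q(f)$ with $\phi_q(t)=|t|^{q-1}t$ the inverse of $t\mapsto |t|^{1/q-1}t$, one has
\[
\langle T_{\R^N}(u),u\rangle = \int_{\R^N} |Lu|^{\frac{1}{q}-1}Lu\, Lu\, dx = \int_{\R^N} |Lu|^{\frac{q+1}{q}}\, dx = \int_{\R^N} |f|^{q+1}\, dx,
\]
and similarly $\langle T_{\R^N}(\widetilde u),\widetilde u\rangle = \int_{\R^N}|\widetilde f|^{q+1}\, dx$. Alternatively, and this is the form I would actually use, integrate by parts to get $\langle T_{\R^N}(u),u\rangle = \int_{\R^N} f\, Lu\, dx = \int_{\R^N} (Lf)\, u\, dx = \int_{\R^N} w\, u\, dx$, and likewise $\langle T_{\R^N}(\widetilde u),\widetilde u\rangle = \int_{\R^N} w_H\, \widetilde u\, dx$. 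So it suffices to prove
\[
\int_{\R^N} w\, u\, dx \leq \int_{\R^N} w_H\, \widetilde u\, dx.
\]
Now $u = L^{-1}\big(\phi_q(f)\big)$ and $\widetilde u = L^{-1}\big(\phi_q(\widetilde f)\big)$; since $\widetilde f \geq f_H \geq 0$ and $\phi_q$ is nondecreasing, $\phi_q(\widetilde f)\geq \phi_q(f_H) = (\phi_q(f))_H$ (polarization commutes with the monotone scalar map $\phi_q$), so by positivity of $L^{-1}=(-\Delta+I)^{-1}$ and the already-established $v\geq u_H$ part of Lemma \ref{laplaciano polarizadoSRN} applied with datum $\phi_q(f)$, one gets $\widetilde u \geq (L^{-1}\phi_q(f))_H$ in $H$; but more efficiently I would invoke \eqref{lemma_brock1SRN} from Lemma \ref{laplaciano polarizadoSRN} directly with $\varphi = $ the appropriate test function. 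Concretely, apply \eqref{lemma_brock1SRN} to the pair of solutions of $-\Delta U + U = \phi_q(f)$, $-\Delta V + V = (\phi_q(f))_H$: this yields $\int U\varphi \le \int V \varphi_H$ for all admissible $\varphi$; take $\varphi = w \ge 0$, note $\varphi_H = w_H$ and $U = u$, and use $V \le \widetilde u$ (from $(\phi_q(f))_H = \phi_q(f_H) \le \phi_q(\widetilde f)$ and monotonicity of $L^{-1}$) together with $w_H \ge 0$ to conclude $\int w u \le \int w_H V \le \int w_H \widetilde u$, as desired.

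The main obstacle I anticipate is bookkeeping rather than conceptual: one must check that all the integrals above are finite and that the integration-by-parts / duality manipulations are legitimate in $\R^N$, i.e. that $f,\widetilde f\in W^{2,(q+1)/q}(\R^N)\hookrightarrow L^{r}(\R^N)$ with the exponents matching so that products like $w\,u$ and $w_H\,\widetilde u$ are integrable — this is exactly where the constraint $1<t<N/2$ and the Sobolev pairing $\frac{t-1}{t}+\frac1r = \frac{N-2}{N}$ in the statement of Lemma \ref{laplaciano polarizadoSRN} come in, and one should record that \eqref{eq:(p,q)_for_Ederson_partSRN} guarantees these. A second minor point is justifying that polarization commutes with the monotone nonlinearity $\phi_q$ and with $w\mapsto w_H$ being an involution-type operation; both are elementary. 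Apart from that, the proof is a direct transcription of the bounded-domain argument using Lemma \ref{laplaciano polarizadoSRN} in place of Lemma \ref{laplaciano polarizado}, so I would keep it brief and refer back to those lemmas for the analytic content.
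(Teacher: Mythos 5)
Your setup is the paper's: with $f=|Lu|^{\frac1q-1}Lu$ and $\widetilde f=|L\widetilde u|^{\frac1q-1}L\widetilde u$ one has $Lf=w$, $L\widetilde f=w_H$, and $\langle T_{\R^N}(u),u\rangle=\int|f|^{q+1}$, $\langle T_{\R^N}(\widetilde u),\widetilde u\rangle=\int|\widetilde f|^{q+1}$. But the comparison route you then choose has a genuine gap at the step ``$V\le\widetilde u$, from $(\phi_q(f))_H=\phi_q(f_H)\le\phi_q(\widetilde f)$ and monotonicity of $L^{-1}$''. Lemma \ref{laplaciano polarizadoSRN} gives $\widetilde f\ge f_H$ \emph{only in $H$}, whereas positivity of $L^{-1}$ requires the inequality between the data to hold on all of $\R^N$. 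In fact the reverse inequality holds off $H$: since $w_H+\overline{w_H}=w+\overline{w}$, uniqueness of solutions gives $\widetilde f+\overline{\widetilde f}=f+\overline{f}$, and combining this with $\widetilde f\ge\max\{f,\overline f\}=f_H$ in $H$ yields $\widetilde f\le\min\{f,\overline f\}=f_H$ in $\R^N\setminus H$. Hence $L(\widetilde u-V)=\phi_q(\widetilde f)-\phi_q(f_H)$ changes sign and no maximum-principle comparison between $\widetilde u$ and $V$ is available; the chain $\int wu\le\int w_H V\le\int w_H\widetilde u$ breaks at the second inequality. (That inequality is actually true, but the natural proof — rewrite the two sides as $\int\widetilde f\,(f_H)^q$ and $\int\widetilde f^{\,q+1}$, then use H\"older and $\|f\|_{q+1}\le\|\widetilde f\|_{q+1}$ — already presupposes the $L^{q+1}$ comparison you are trying to avoid.) The same objection applies to your earlier claim that $\widetilde u\ge(L^{-1}\phi_q(f))_H$ in $H$ ``by positivity of $L^{-1}$''.

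The repair is the identity you wrote down first and then abandoned: the paper simply observes that the two quantities to be compared are $\int_{\R^N}|f|^{q+1}$ and $\int_{\R^N}|\widetilde f|^{q+1}$, and concludes by applying \eqref{lemma_brock2SRN} with $\varphi\equiv 1$ (exponent $q+1$) to the pair $Lf=w$, $L\widetilde f=w_H$ — i.e.\ the fact that polarizing a nonnegative datum increases the $L^{q+1}$-norm of the solution, which is exactly where the Brock--Solynin convexity argument enters and where the one-sided information ``$\widetilde f\ge f_H$ in $H$ together with $f+\overline f=\widetilde f+\overline{\widetilde f}$'' is correctly exploited. Your integrability remarks (the pairing of $t=(p+1)/p$ with the Sobolev exponent $r$) are fine and match the paper's use of Lemma \ref{laplaciano polarizadoSRN}.
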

\begin{proof} Let $v$ and $\widetilde{v}$ be the strong solutions of
$$
L v  = w, \quad L \widetilde{v}  = w_{H}\quad \text{in} \quad \R^N.
$$
Then, $u$ and $\widetilde{u}$ are the strong solutions of
$$
L u  = |v|^{q-1}v, \quad L \widetilde{u}  =  |\widetilde{v}|^{q-1}\widetilde{v} \quad \text{in} \quad \R^N,
$$
and, by definition,
\[
\langle T_{\R^N}(u), u \rangle = \int_{\R^N} |L u|^{\frac{q+1}{q}} dx = \int_{\R^N}  |v|^{q+1} dx \quad \hbox{and} \quad \langle T_{\R^N}(\widetilde{u}), \widetilde{u} \rangle = \int_{\R^N} |\widetilde{v}|^{q+1} dx.
\]
The conclusion follows then from \eqref{lemma_brock2SRN} with $\varphi \equiv 1$.
\end{proof}

\begin{proof}[Proof of Proposition \ref{Prop:RadialSymmetryMinimizerRN} completed] By definition we have that
\[
\alpha_{p,q}:=\inf_{u\in W^{2, \frac{q+1}{q}}(\R^N), u\neq 0}\frac{\langle T_{\R^N}(u),u \rangle}{\left( \int_{\R^N}|u|^{p+1}\, dx \right)^{(q+1)/q(p+1)}}\cdot
\]
Let $\alpha_{p,q}$ be achieved by a (positive) function $u$ such that  $\int_{\R^N}u^{p+1}\, dx=1$.

\medbreak 

\noindent{\it Step 1:} For every half-space $H$ in $\R^N$,  $u_H$ is also a minimizer for $\alpha_{p,q}$. 

\noindent Indeed, since $u$ is a minimizer for $\alpha_{p,q}$, we have that $T_{\R^N}(u)=\alpha_{p,q}u^p$. Let $\widetilde{u}$ be such that $T_{\R^N}(\widetilde{u})=\alpha_{p,q}u_H^p$. Then, by Lemma \ref{lemma_radialSRN}, $\langle T_{\R^N}(u),u \rangle \leq \langle T_{\R^N}(\widetilde{u}),\widetilde{u} \rangle$. By using the H\"older inequality we deduce that
\begin{multline*}
\alpha_{p,q}= \alpha_{p,q}\int_{\R^N} |u|^{p+1} dx = \langle T_{\R^N}(u),u \rangle \leq \langle T_{\R^N}(\widetilde{u}),\widetilde{u} \rangle =\alpha_{p,q}\int_{\R^N}  \widetilde{u} u_H^p dx\\
\leq \alpha_{p,q}\left(  \int_{\R^N} \widetilde{u}^{p+1}dx\right)^{1/(p+1)}  \left(\int_{\R^N} u_H^{p+1}dx\right)^{p/(p+1)} = \alpha_{p,q}\left(  \int_{\R^N} \widetilde{u}^{p+1}dx\right)^{1/(p+1)}
\end{multline*}
yielding that  $\int_{\R^N} \widetilde{u}^{p+1}dx\geq 1$ and so
\begin{multline*}
\alpha_{p,q} \leq  \frac{\langle T_{\R^N}(\widetilde{u}),\widetilde{u} \rangle}{\left( \int_{\R^N}|\widetilde{u}|^{p+1}dx\right)^{(q+1)/q(p+1)}} 
\leq \alpha_{p,q} \frac{\left(  \int_{\R^N} \widetilde{u}^{p+1}dx\right)^{1/(p+1)}}{\left( \int_{\R^N}|\widetilde{u}|^{p+1}dx\right)^{(q+1)/q(p+1)}} \\
=\alpha_{p,q} \left( \int_{\R^N} \widetilde{u}^{p+1}dx\right)^{-1/q(p+1)}\leq \alpha_{p,q}.
\end{multline*}
It follows that $\int_{\R^N} \widetilde{u}^{p+1}=1$  and $\widetilde{u}$ is a minimizer for $\alpha_{p,q}$, so that $T_{\R^N}(\widetilde{u})=\alpha_{p,q}\widetilde{u}^p$.  Hence $\widetilde{u}=u_H$ and we conclude that $u_H$ is a minimizer for $\alpha_{p,q}$.

\medbreak
\noindent{\it Step 2:} For every half-space $H$ in $\R^N$, we have $u>\overline{u}$ in $int(H)$, $u<\overline{u}$ in $int(H)$ or else $u=\overline{u}$ in $H$.

\noindent Indeed, up to normalization, with $v:=|L u|^{\frac{1}{q}-1}L u$ and $w:=|L u_H|^{\frac{1}{q}-1}L u_H$, we have that $L u= v^q$, $L v=u^p$, $L u_H=w^q$, $L w=u_H^p$ in $\R^N$. In particular, we infer from the equations $L v=u^p$ and $L w=u_H^p$ that $w\geq v_H $ in $H$, cf.  Lemma \ref{laplaciano polarizadoSRN}. Then, since, by definition, $|u-\overline{u}|=2u_H-u-\overline{u}$ in $H$, we see that
$$
L(|u-\overline{u}|)=( (w^q-v^q)+(w^q-\overline{v}^q))\geq 0 \qquad \text{in} \quad int(H).
$$
This implies that either $u>\overline{u}$ in $int(H)$, $u<\overline{u}$ in $int(H)$ or else $u=\overline{u}$ in $H$. Going back to the system, we must have that either $v>\overline{v}$ in $int(H)$, $v<\overline{v}$ in $int(H)$ or else $v=\overline{v}$ in $H$ respectively.

\medbreak
\noindent{\it Step 3:} Up to translation, $u, Lu$ are Schwarz symmetric.
 
\noindent Up to translation, we may assume that $u(0) = \max_{x \in \R^N}u(x)$. Now take any half-space $H \in \mathcal{H_*}$. Then, from Step 2, we have that $u \geq \overline{u}$ in $H$, that is, $u = u_{H}$. Hence, by Proposition \ref{Prop:polarizationXsymmetry} (i), and going back to the system, it follows that $u, Lu$ are Schwarz symmetric.
\end{proof}

\subsection{A partial symmetry results}
We provide here an example where the reduction by inversion approach was used to derive a partial symmetry result for the ground state solutions of the Hénon type system
\begin{equation}\label{eq:henonsysPS}
\left\{
\begin{array}{l}
 - \Delta u  = |x|^{\beta}|v|^{q-1}v \quad \text{in} \quad B,\\
 -\Delta v = |x|^{\alpha} |u|^{p-1}u \quad \text{in} \quad B,\\
 u, v = 0 \quad \text{on} \quad \partial B,
 \end{array}
 \right.
\end{equation}
where $B$ is the open ball in $\R^{N}$ centered at the origin of radius one and $\alpha,\beta\geq0$. In this part we assume again that the powers $p,q$ satisfy the hypothesis \eqref{eq:(p,q)_for_Ederson_partSRN}. The procedure in this part is borrowed from \cite[Theorems 1.2 and 1.3]{BonheureSantosRamosJFA} from where we quote:

\begin{theorem}[\cite{BonheureSantosRamosJFA}]\label{GSHENON}
Assume \eqref{eq:(p,q)_for_Ederson_partSRN}. Then \eqref{eq:henonsysPS} has a ground state (classical) solution. In addition, any ground state solution of \eqref{eq:henonsysPS} has definite sign, i.e. either $u,v> 0$ in $B$ or $u,v < 0$ in $B$. 

Moreover, for any $\alpha, \beta\geq 0$, every (positive) ground state solution $(u,v)$ of \eqref{eq:henonsysPS} is such that $u$ and $v$ are both foliated Schwarz symmetric with respect to the same unit vector $ e \in \R^N$. Furthermore, either $u$ and $v$ are radially symmetric, or $u$ and $v$ are strictly decreasing in $\theta= \arccos( x\cdot  e) \in [0, \pi]$ for $0< |x| < 1$.
\end{theorem}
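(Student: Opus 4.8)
The plan is to follow the reduction by inversion strategy of Section \ref{sec:QuartaOrdem}, combined with polarization arguments as in Subsection \ref{workingwithpolarization}. First, since $-\Delta$ is invertible with a positive compact inverse on the ball $B$, I would write $v := |\Delta u|^{\frac{1}{q}-1}(-\Delta u)$ (with the weight $|x|^\beta$ now entering the fourth-order reduction), so that critical points of a single scalar functional $J_H$ of the form $J_H(u) = \frac{q}{q+1}\int_B |x|^{-\beta q /(q+1)?}\cdots$ — more precisely the natural energy attached to $\Delta(|x|^{-\beta/q}|\Delta u|^{1/q-1}\Delta u)=|x|^\alpha|u|^{p-1}u$ — correspond to classical solutions of \eqref{eq:henonsysPS}; the regularity statements analogous to Proposition \ref{Reg1} give that weak solutions are classical, using the continuity of $|x|^\alpha,|x|^\beta$ up to the boundary. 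Existence of a ground state then follows exactly as in Lemmas \ref{equivalencia}--\ref{alphapqatingido}: the ground state level equals $\frac{pq-1}{(p+1)(q+1)}\alpha_{p,q}^{q(p+1)/(pq-1)}$ where $\alpha_{p,q}$ is the best constant for the embedding of the fourth-order space into the weighted $L^{p+1}$, and $\alpha_{p,q}$ is attained by compactness of the embedding on the bounded domain $B$.

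Next, for the definite sign, I would reproduce the argument of Lemma \ref{positividade}: letting $f := -\Delta u$ and $w$ solve $-\Delta w = |f|$, the strong maximum principle forces $w > |u|$ unless $f$ has constant sign; since replacing $u$ by $w$ strictly decreases the Rayleigh quotient defining $\alpha_{p,q}$ (the weights $|x|^\alpha,|x|^\beta$ are unchanged under $u\mapsto w$ because they depend only on $x$), this contradicts minimality, so $-\Delta u$ has one sign, hence by the maximum principle $u$ has one sign, and going back to the system $uv>0$ in $B$. For the foliated Schwarz symmetry, the key tool is Lemma \ref{laplaciano polarizado}: for any half-space $H\in\mathcal H_e$, polarization preserves the radial weights (since $|x| = |\overline x|$, one has $(|x|^\alpha g)_H = |x|^\alpha g_H$ for $g\geq 0$), so applying the lemma twice down the chain $u \rightsquigarrow v \rightsquigarrow$ (intermediate) $\rightsquigarrow$ back to $u$ shows that $u_H$ is again an admissible competitor with Rayleigh quotient no larger than that of $u$, together with the strict monotonicity/equality case of Lemma \ref{laplaciano polarizado}: either $u = u_H$ or $u = \overline{u_H}$ (equivalently $u$ and $\overline u$ are comparable on $H$). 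This dichotomy, holding for \emph{every} $H\in\mathcal H$, combined with the characterization in Proposition \ref{Prop:polarizationXsymmetry}(ii) and a continuity/selection argument to produce a single vector $e$ working for all $H\in\mathcal H_e$ (one chooses $e$ so that $u(re) = \max_{|x|=r}u(x)$ for a suitable $r$, which can be done simultaneously for $u$ and $v$ because they are comparable on the same half-spaces), yields that $u$ and $v$ are both foliated Schwarz symmetric with respect to the same $e$. The final alternative — radial, or strictly decreasing in $\theta$ on $0<|x|<1$ — comes from the strong maximum principle applied to $L(|u-\overline u|)\geq 0$ on $\mathrm{int}(H)$ as in Step 2 of the proof of Proposition \ref{Prop:RadialSymmetryMinimizerRN}: on each half-space the difference is either identically zero or strictly signed, and propagating this over the rotations fixing $e$ gives the trichotomy, which collapses to the two stated cases.

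I expect the main obstacle to be the selection of a \emph{single common} symmetry axis $e$ for the pair $(u,v)$ and verifying that the polarization inequalities remain strict in the appropriate places so as to rule out intermediate configurations. The subtlety is that a priori the polarization argument only gives, for each $H$, that $u$ is symmetric \emph{or} antisymmetric with respect to $\partial H$; upgrading a family of such local statements to global foliated Schwarz symmetry requires care (this is where Proposition \ref{Prop:polarizationXsymmetry}(ii) and a fixed-point/continuity argument on the sphere of directions enter), and one must check that the weights $|x|^\alpha,|x|^\beta$, while compatible with polarization, do not destroy the strict comparison in Lemma \ref{laplaciano polarizado} needed to exclude the case where $u$ equals neither $u_H$ nor $\overline{u_H}$. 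The symmetry-breaking remark (that genuine non-radial minimizers can occur for large $\alpha,\beta$) is not part of this statement, but it explains why one cannot hope to prove full radial symmetry here and must settle for the foliated version.
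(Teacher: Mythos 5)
Your proposal follows essentially the same route as the paper: reduction by inversion to the weighted fourth-order minimization problem for the best constant $S$, the van der Vorst comparison trick for the definite sign, and polarization combined with Talenti's comparison principle (Lemma \ref{laplaciano polarizado}, applied with $\varphi(x)=|x|^{\beta}$, which is fixed by polarizations about half-spaces through the origin) together with Proposition \ref{Prop:polarizationXsymmetry}(ii) for the foliated Schwarz symmetry. The axis-selection step you flag as the main obstacle is resolved exactly as in your parenthetical and requires no fixed-point or continuity argument: one takes $e=x_0/|x_0|$ with $u(x_0)=\max_{|x|=|x_0|}u$, so the trichotomy $u>\overline{u}$, $u<\overline{u}$ or $u=\overline{u}$ on $\mathrm{int}(H)\cap B$ forces $u=u_H$ for every $H\in\mathcal H_e$, and $v$ inherits the same comparison on the same half-spaces by going back to the system.
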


In this part we will present the arguments involved to prove the Schwarz foliated symmetry of (positive) ground state solutions of \eqref{eq:henonsysPS}. For the complete proof of Theorem \ref{GSHENON}, we refer to \cite{BonheureSantosRamosJFA}. For further results on systems like \eqref{eq:henonsysPS} we refer to \cite{Bidaut-VeronFrancoise,CalanchiRuf,deFigueiredoPeralRossi,deFigueiredoSantosMiyagaki,LiuYang} and references therein.

\medbreak

Arguing as in Section \ref{sec:QuartaOrdem}, the system \eqref{eq:henonsysPS} can be rewritten as the scalar equation 
\begin{equation}\label{EPS}
\Delta(| x|^{-\frac{\beta}{q}} | \Delta u |^{\frac{1}{q} -1}\Delta u)  = | x|^{\alpha}|u |^{p-1} u \quad \hbox{in} \quad B, \quad \hbox{with} \quad u, \Delta u = 0 \quad \hbox{on} \quad \partial B.
\end{equation}
and the study of ground state solutions of \eqref{problemRN} is then reduced to the study of minimizers for the Sobolev constant
\begin{equation}\label{sobolevconstantBOLA}
S : = \inf \left\{  \int_{B} |\Delta u|^{\frac{q+1}{q}} |x|^{-\frac{\beta}{q}} dx; \; u \in E_{\frac{q+1}{q}, \frac{\beta}{q}}, \; \int |u |^{p+1} |x|^{\alpha}dx =1 \right\},
\end{equation}
where, for each $1< s <\infty$ and $\gamma >0$ 
\[
E_{s, \gamma} := \left\{ u \in W^{2,s}(B) \cap W^{1,s}_0(B); \int_B | \Delta u |^{s} |x |^{-\gamma}dx < + \infty  \right\}.
\]

To simplify notation, we will denote $E_{\frac{q+1}{q}, \frac{\beta}{q}}$ simply by $E$. It was proved in \cite[Lemma 3.4]{BonheureSantosRamosJFA} that the nonlinear operator $T: E\to E'$
given by
\begin{equation}\label{operator_TBOLA}
\langle T(u), \varphi \rangle = \int_B| \Delta u|^{\frac{1}{q} - 1} \Delta u \Delta \varphi |x|^{-\beta/q} dx, \qquad \forall \, u, \varphi \in E.
\end{equation}
in a nonlinear homeomorphism. On the other hand, for every $w \in L^{\frac{p+1}{p}}(B)$, the imbedding $E \hookrightarrow L^{p+1}(B)$ guarantees that the map
\[
\varphi \mapsto \int_B w \varphi \, dx, \qquad \varphi \in E,
\]
defines a continuous linear functional on $E$. So there exists a unique $u \in E$ such that 
$T(u)=w$, that is
\begin{equation}\label{u w riesz}
\int_B | \Delta u|^{\frac{1}{q} - 1} \Delta u \Delta \varphi |x|^{-\beta/q} dx = \int_B w \varphi \, dx, \qquad \forall \, \varphi \in E.
\end{equation}

 The next lemma helps to complete the proof of Theorem \ref{GSHENON} in which concerns the symmetry of (positive) ground state solution.

\begin{lemma} \label{lemma_foliated}
Let $w\in L^{\frac{p+1}{p}}(B)$ be nonnegative and $u, \widetilde{u} \in E$ be such that $T(u)=w$ and  $T(\widetilde{u})=w_H$. Then  $\langle T(u),u\rangle\leq \langle T(\widetilde{u}),\widetilde{u}\rangle$.
\end{lemma}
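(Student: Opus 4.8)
The plan is to mimic the proof of Lemma~\ref{lemma_radialSRN}, replacing the operator $L=-\Delta+I$ on $\R^N$ by the weighted bilaplacian-type operator implicit in \eqref{operator_TBOLA}, and using Lemma~\ref{laplaciano polarizado} on the ball $B$ in place of Lemma~\ref{laplaciano polarizadoSRN}. The key is to express $\langle T(u),u\rangle$ through an auxiliary function solving a \emph{second order} Dirichlet problem, so that the polarization comparison of Lemma~\ref{laplaciano polarizado} applies twice.

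First I would introduce the intermediate variables. Given $w\in L^{\frac{p+1}{p}}(B)$ nonnegative, let $u\in E$ be the unique solution of $T(u)=w$, i.e. \eqref{u w riesz} holds. Set $v:=|\Delta u|^{\frac{1}{q}-1}(-\Delta u)$; then, reading off the equation defining $T$, $v$ is the strong solution of $-\Delta v = w$ in $B$, $v=0$ on $\partial B$, while $-\Delta u = |v|^{q-1}v$ in $B$, $u=0$ on $\partial B$. Similarly, let $\widetilde u\in E$ solve $T(\widetilde u)=w_H$ and put $\widetilde v:=|\Delta\widetilde u|^{\frac{1}{q}-1}(-\Delta\widetilde u)$, so that $-\Delta\widetilde v=w_H$ and $-\Delta\widetilde u=|\widetilde v|^{q-1}\widetilde v$ in $B$ with zero boundary data. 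By the very definition of $T$ in \eqref{operator_TBOLA} and the fact that $-\Delta u=|v|^{q-1}v$ (hence $|\Delta u|^{\frac{q+1}{q}}=|v|^{q+1}$ and similarly for $\widetilde u$), one computes
\[
\langle T(u),u\rangle=\int_B |\Delta u|^{\frac{q+1}{q}}|x|^{-\beta/q}\,dx=\int_B |v|^{q+1}|x|^{-\beta/q}\,dx,
\]
and likewise $\langle T(\widetilde u),\widetilde u\rangle=\int_B |\widetilde v|^{q+1}|x|^{-\beta/q}\,dx$. (Here the weight $|x|^{-\beta/q}$ is radial, hence invariant under $\sigma_H$, which will be essential below.)

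Next I would run the comparison. Since $w\geq 0$, we have $v\geq 0$ and $\widetilde v\geq 0$ by the maximum principle. Applying Lemma~\ref{laplaciano polarizado} to $f=w$, $f_H=w_H$ (so $u\leftrightarrow v$, $v\leftrightarrow \widetilde v$ in the notation there), we get $\widetilde v=\widetilde v_H$ in $B$ and $\widetilde v\geq v_H$ in $H\cap B$; moreover \eqref{lemma_brock2} gives, for any nonnegative $\varphi\in L^\infty(B)$ and $s>1$,
\[
\int_B v^s\varphi\,dx\leq \int_B \widetilde v^{\,s}\varphi_H\,dx.
\]
Choosing $s=q+1$ and $\varphi(x)=|x|^{-\beta/q}$ — which is nonnegative, and whose polarization relative to a half-space $H$ with $0\in\partial H$ satisfies $\varphi_H=\varphi$ because $\varphi$ is radial hence $\sigma_H$-invariant — yields exactly
\[
\langle T(u),u\rangle=\int_B v^{q+1}|x|^{-\beta/q}\,dx\leq \int_B \widetilde v^{\,q+1}|x|^{-\beta/q}\,dx=\langle T(\widetilde u),\widetilde u\rangle,
\]
which is the claim. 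The main obstacle, and the point deserving care, is the double-polarization bookkeeping: $\varphi=|x|^{-\beta/q}$ is in $L^\infty(B)$ only if the exponent is $\leq 0$, which it is, but one should also confirm the weight is genuinely $L^\infty$ near $x=0$ (true since $-\beta/q\leq 0$); and one must verify that \eqref{lemma_brock2} indeed applies with this $\varphi$, i.e. that $\varphi_H=\varphi$, so that the right-hand side is $\int_B\widetilde v^{q+1}|x|^{-\beta/q}dx$ rather than something involving $\overline{\varphi}$. Since $\sigma_H$ is an isometry fixing the origin, $|\overline x|=|x|$ and the radial weight is untouched, so this is immediate. A minor additional check is the regularity needed to invoke Lemma~\ref{laplaciano polarizado}: $w\in L^{\frac{p+1}{p}}(B)$ with $\frac{p+1}{p}>1$ suffices since that lemma only requires $f\in L^t(B)$ with $1<t<\infty$, and one may as usual reduce to smooth $f$ by density, the inequality passing to the limit.
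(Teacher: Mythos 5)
Your overall strategy is the right one and is exactly the paper's: reduce to two second-order Dirichlet problems on $B$ and apply the polarization comparison \eqref{lemma_brock2} of Lemma \ref{laplaciano polarizado}. However, your bookkeeping of the weight is wrong, and as written the key step fails whenever $\beta>0$. The equation $T(u)=w$ means $\Delta\bigl(|x|^{-\beta/q}|\Delta u|^{\frac1q-1}\Delta u\bigr)=w$; hence the function solving $-\Delta v=w$ in $B$ with zero boundary data is $v:=|x|^{-\beta/q}|\Delta u|^{\frac1q-1}(-\Delta u)$, \emph{with} the weight, not $|\Delta u|^{\frac1q-1}(-\Delta u)$ as you set it. Your unweighted $v$ does not solve $-\Delta v=w$, so Lemma \ref{laplaciano polarizado} cannot be applied to your pair $(v,\widetilde v)$. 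With the correct $v$ one has $-\Delta u=|x|^{\beta}v^{q}$, and then
\[
\langle T(u),u\rangle=\int_B |\Delta u|^{\frac{q+1}{q}}|x|^{-\beta/q}\,dx=\int_B |x|^{\beta}v^{q+1}\,dx,
\]
so the test function to use in \eqref{lemma_brock2} is $\varphi(x)=|x|^{\beta}$, not $|x|^{-\beta/q}$.

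This matters for a second reason: your justification that $|x|^{-\beta/q}\in L^\infty(B)$ ``because the exponent is $\leq 0$'' is backwards. On the unit ball $|x|\leq 1$, so a nonpositive exponent makes $|x|^{-\beta/q}\geq 1$ and unbounded near the origin whenever $\beta>0$; thus \eqref{lemma_brock2} could not be invoked with that weight. The correct weight $|x|^{\beta}$ with $\beta\geq 0$ is bounded by $1$ on $B$, nonnegative, and radial, hence invariant under $\sigma_H$ for any $H\in\mathcal{H}$ (so $\varphi_H=\varphi$), exactly as you argued for your weight; with this repair your proof coincides with the paper's.
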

\begin{proof} Let $v$ and $\widetilde{v}$ be the strong solutions of
$$
-\Delta v  = w, \quad -\Delta \widetilde{v}  = w_{H}\quad \mbox{ in } B,\qquad 
v, \widetilde{v}= 0 \quad \mbox{ on } \partial B.$$
Then, $u$ and $\widetilde{u}$ are the strong solutions of
$$
-\Delta u  = |x|^{\beta} v^q, \quad -\Delta \widetilde{u}  = |x|^{\beta} (\widetilde{v})^q\quad \mbox{ in } B,\qquad 
u, \widetilde{u}=0 \quad \mbox{ on } \partial B$$
and, by definition,
\[
\langle T(u), u \rangle = \int_B |\Delta u|^{\frac{q+1}{q}} |x|^{-\beta/q} dx = \int_B |x|^{\beta} v^{q+1} dx 
\]
and
\[
\langle T(\widetilde{u}), \widetilde{u} \rangle = \int_B |x|^{\beta} (\widetilde{v})^{q+1} dx.
\]
The conclusion follows then from \eqref{lemma_brock2} with $\varphi(x)=|x|^{\beta}$.
\end{proof}

\begin{proof}[Proof of the foliated Schwarz symmetry of ground state solution of \eqref{eq:henonsysPS}]
Let
\[
S:=\inf_{u\in E, u\neq 0}\frac{\langle T(u),u \rangle}{\left( \int_{B}|u|^{p+1}|x|^{\alpha}\, dx \right)^{(q+1)/q(p+1)}}\cdot
\]
Let $S$ be achieved by a (positive) function $u$ such that  $\int_{B}u^{p+1}|x|^{\alpha}\, dx=1$. 

\medbreak 
\noindent{\it Step 1:} For every half-space $H \in \mathcal{H}$,  $u_H$ is also a minimizer for $\alpha_{p,q}$. 

\noindent The proof follows the same procedure as in Step 1 of the last section, and so will be omitted here.

\medbreak
\noindent{\it Step 2:} For every half-space $H \in \mathcal{H}$, we have $u>\overline{u}$ in $int(H)\cap B$, $u<\overline{u}$ in $int(H)\cap B$ or else $u=\overline{u}$ in $H\cap B$.

\noindent Again the proof follows exactly as the proof of Step 2 in the last section, based on the strong maximum principle, and so will also be omitted here.

\medbreak
\noindent{\it Step 3:} Fix any point $x_0\in B\menos\{0\}$ with $u(x_0)=\max\{ u(x): x\in B, \, |x|=|x_0|\}$. Set $e = \frac{x_0}{|x_0|}$. Then $u, -\Delta u$ are Schwarz foliated symmetric with respect to $e$.

\noindent Indeed, let $H \in \mathcal{H}_e$. Then, by Step 2, it follows that $u \geq \overline{u}$ in $H \cap B$, that is, $u = u_{H}$. Hence, by Proposition \ref{Prop:polarizationXsymmetry} (ii), and going back to the system, it follows that $u, -\Delta u$ are foliated Schwarz symmetric with respect to $e$.
\end{proof}

\subsection{An example of symmetry breaking}\label{sec:symmetrybreaking}
We provide here an example where two different reduction approaches were used to derive a loss of symmetry of the ground state solutions. Namely we consider once again the Hénon type system \eqref{eq:henonsysPS}, where $B$ is the unit ball in $\R^{N}$, $\alpha,\beta>0$, and $(p,q)$ satisfy \eqref{eq:(p,q)_for_Ederson_partSRN}. 

First we show that a symmetry breaking can be deduced by direct energy comparison using the reduction by inversion from Section \ref{sec:QuartaOrdem}. Secondly we will use the Lyapunov-Schmidt type reduction from Section \ref{sec:Reduced Functional} to deduce a symmetry breaking in a different range of the parameters $\alpha,\beta,p,q$ from a Morse type argument, namely deducing some estimates from the positivity of the second derivative of the family of reduced functional $\J_{\lambda}$ with an adequate choice of the parameter $\lambda$. 


Arguing as in Section \ref{sec:QuartaOrdem}, the system \eqref{eq:henonsysPS} can be rewritten as the scalar equation 
\begin{equation}\label{E}
\Delta(| x|^{-\frac{\beta}{q}} | \Delta u |^{\frac{1}{q} -1}\Delta u)  = | x|^{\alpha}|u |^{p-1} u \quad \hbox{in} \quad B, \quad \hbox{with} \quad u, \Delta u = 0 \quad \hbox{on} \quad \partial B.
\end{equation}
The functional associated to this equation is
\[
J(u) = \frac{q}{q+1} \int_B | \Delta u |^{\frac{q+1}{q}} |x|^{-\frac{\beta}{q}} dx - \frac{1}{p+1}\int_B |u|^{p+1}|x|^{\alpha}dx, 
\]
defined on the functional space
\[
E_{s, \gamma} = \left\{ u \in W^{2,s}(B) \cap W^{1,s}_0(B): \int_B | \Delta u |^{s} |x |^{-\gamma}dx < + \infty  \right\},
\]
with $s=\frac{q+1}{q}$ and $\gamma=\frac{\beta}{q}$.
Radial solutions can be obtained by working with the same functional in the same Sobolev  space restricted to radially symmetric functions, that is
\[
E_{s, \gamma, {\rm rad}} = \left\{ u \in W^{2,s}_{{\rm rad}}(B) \cap W^{1,s}_0(B): \int_B | \Delta u |^{s} |x |^{-\gamma}dx < + \infty  \right\},
\]
still with $s=\frac{q+1}{q}$ and $\gamma=\frac{\beta}{q}$. In order to deduce a symmetry breaking of the least energy solution, the simplest strategy consists in providing estimates of the least energy level (among solutions or equivalently on the corresponding Nehari manifold). 
Assume 
$$\frac{N + \alpha}{p+1} + \frac{N + \beta}{q+1} > N-2.$$
It is proved in \cite{BonheureSantosRamosJFA} that there exist $a,b,\delta,\alpha_0>0$ such that, for every
$\alpha\geq \alpha_0$, $0\leq \beta\leq \delta \alpha$, we have
\begin{equation}\label{eq:crad}
a  \alpha^{\frac{(p+2)(q+1)}{pq-1}} \left(1+\beta^{\frac{p+1}{pq-1}}\right)\leq c_{{\rm rad}}\leq b \alpha^{\frac{(p+2)(q+1)}{pq-1}} \left(1+\beta^{\frac{p+1}{pq-1}}\right), 
\end{equation}
$c_{{\rm rad}}$ being the least energy level among radial solutions. On the other hand, It is shown in \cite[Theorem 2 (c)]{CalanchiRuf} that 
\begin{equation}\label{upper_estimate}
{c}_{\alpha,\beta} \leq C_0\alpha^{\frac{2(p+1)(q+1)-N(pq-1)}{pq-1}}\qquad (C_0>0, \; \beta\leq \alpha, \; \alpha\geq \alpha_0),
\end{equation}
where ${c}_{\alpha,\beta}$ is the least energy level among all (not necessarily radial) solutions of \eqref{eq:henonsysPS}.
Actually, in \cite{CalanchiRuf} it is assumed  $p>1$, $q>1$ and  $\beta<(q+1)N$ but a close inspection of their proof shows that \eqref{upper_estimate} remains valid as long as $\beta \leq \alpha$ and $\alpha$ is sufficiently large.
These estimates lead to the following conclusion: a symmetry breaking occurs when $\alpha$ and $\beta$ are comparable. 
\begin{theorem}\label{alpha_and_beta_comparable} Assume \eqref{eq:(p,q)_for_Ederson_part}, $N>1$ and there exists $C>0$ such that $\beta\leq \alpha\leq C \beta$ as $\alpha\to \infty$. Then  there exists $\alpha_0>0$ such that for $\alpha\geq \alpha_0$, no ground state solution of \eqref{eq:henonsysPS} is radially symmetric.
\end{theorem}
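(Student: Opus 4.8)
<br>

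\emph{Strategy.} The plan is a direct energy comparison between the least energy level among \emph{radial} solutions and the least energy level among \emph{all} solutions, using the two estimates collected before the statement: the two-sided bound \eqref{eq:crad} for the radial level $c_{\rm rad}$ (from \cite{BonheureSantosRamosJFA}) and the upper bound \eqref{upper_estimate} for the global level $c_{\alpha,\beta}$ (from \cite{CalanchiRuf}). Recall that, by Theorem \ref{GSHENON}, the system \eqref{eq:henonsysPS} has a ground state solution, whose energy equals $c_{\alpha,\beta}$, and that trivially $c_{\alpha,\beta}\le c_{\rm rad}$ since one is minimizing over a larger set of solutions. Hence it suffices to prove that $c_{\rm rad}>c_{\alpha,\beta}$ for $\alpha$ large: if some ground state were radially symmetric, then by definition of $c_{\rm rad}$ we would get $c_{\rm rad}\le c_{\alpha,\beta}$, a contradiction.

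\emph{Step 1: both estimates apply in the regime under consideration.} Assume $\beta\le\alpha\le C\beta$ as $\alpha\to+\infty$. The side condition $\frac{N+\alpha}{p+1}+\frac{N+\beta}{q+1}>N-2$ needed for \eqref{eq:crad} holds for $\alpha$ large, since $\frac{N+\alpha}{p+1}\to+\infty$; moreover $\beta$ is of the same order as $\alpha$, so that the requirement $0\le\beta\le\delta\alpha$ is met (for $\alpha$ large, after at worst adjusting the comparability constant). The bound \eqref{upper_estimate} holds as soon as $\beta\le\alpha$ and $\alpha$ is large, which is exactly our situation.

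\emph{Step 2: exponent comparison.} From \eqref{eq:crad} and $\beta\ge\alpha/C$ (consequence of $\alpha\le C\beta$),
\[
c_{\rm rad}\ \ge\ a\,\alpha^{\frac{(p+2)(q+1)}{pq-1}}\,\beta^{\frac{p+1}{pq-1}}\ \ge\ a\,C^{-\frac{p+1}{pq-1}}\,\alpha^{\frac{(p+2)(q+1)+(p+1)}{pq-1}}\ =\ a\,C^{-\frac{p+1}{pq-1}}\,\alpha^{\frac{pq+2p+2q+3}{pq-1}},
\]
while from \eqref{upper_estimate}
\[
c_{\alpha,\beta}\ \le\ C_0\,\alpha^{\frac{2(p+1)(q+1)-N(pq-1)}{pq-1}}\ =\ C_0\,\alpha^{\frac{2pq+2p+2q+2-N(pq-1)}{pq-1}}.
\]
By \eqref{eq:(p,q)_for_Ederson_part} one has $pq>1$, so $pq-1>0$ and it is enough to compare the numerators of the two exponents. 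Their difference (radial minus global) is
\[
(pq+2p+2q+3)-\big(2pq+2p+2q+2-N(pq-1)\big)\ =\ -(pq-1)+N(pq-1)\ =\ (N-1)(pq-1),
\]
which is strictly positive because $N>1$ and $pq>1$. Therefore the power of $\alpha$ in the lower bound for $c_{\rm rad}$ is strictly larger than the power of $\alpha$ in the upper bound for $c_{\alpha,\beta}$; letting $\alpha\to+\infty$ we obtain $\alpha_0>0$ such that $c_{\rm rad}>c_{\alpha,\beta}$ for all $\alpha\ge\alpha_0$, and by the reduction of Step~0 no ground state solution of \eqref{eq:henonsysPS} is radially symmetric for $\alpha\ge\alpha_0$.

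\emph{Main obstacle.} The genuinely hard analytic work—namely the sharp lower bound on $c_{\rm rad}$ in \eqref{eq:crad} and the matching upper bound \eqref{upper_estimate}—is already available in the quoted references (the former obtained through the reduction by inversion of Section \ref{sec:QuartaOrdem}, the latter via a suitable family of test functions concentrating near the boundary). The only point that requires a little care is checking that the admissible ranges of $\beta$ in \eqref{eq:crad} and \eqref{upper_estimate} are jointly compatible with the comparability hypothesis $\beta\le\alpha\le C\beta$; once this is granted, the proof reduces to the exponent bookkeeping above, whose punchline $(N-1)(pq-1)>0$ makes transparent why both $N>1$ and the superlinearity condition from \eqref{eq:(p,q)_for_Ederson_part} are needed.
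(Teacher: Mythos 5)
Your proof is correct and is precisely the ``simple comparison of the levels $c_{\alpha,\beta}$ and $c_{\rm rad}$'' that the paper invokes, with the exponent bookkeeping made explicit: the punchline $(N-1)(pq-1)>0$ is exactly why the lower bound on $c_{\rm rad}$ eventually dominates the upper bound on $c_{\alpha,\beta}$. The one loose end you flag---that \eqref{eq:crad} is stated only for $0\le\beta\le\delta\alpha$ whereas the comparability hypothesis allows $\beta$ up to $\alpha$, so the case $\delta<1$ and $\beta$ close to $\alpha$ is not literally covered by the quoted estimate---is a wrinkle inherited from the paper's own presentation rather than a defect of your argument.
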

\begin{proof}
This follows from a simple comparison of the levels ${c}_{\alpha,\beta}$ and $c_{{\rm rad}}$.
\end{proof}
In the case where $\alpha$ and $\beta$ are no longer comparable, namely $\beta={\rm o}(\alpha)$ as $\alpha\to \infty$, the situation is much more delicate to handle. If $\beta$ is fixed, we still deduce a loss of symmetry when  $(p,q)$ is sufficiently close to the critical hyperbola, so that
\[
2(p+1)(q+1)-N(pq-1)< (p+2)(q+1).
\] 
This follows from Theorem \ref{alpha_and_beta_comparable} by taking the estimates \eqref{upper_estimate}
and \eqref{eq:crad} into account.

As shown in \cite{SmetsSuWillem}, the simplest way to prove symmetry breaking for the scalar Hénon equation is to observe that the ground critical level of the associated functional is asymptotically strictly smaller than the action on any radial solution; for the system \eqref{eq:henonsysPS} the situation is more tricky since both the corresponding ground critical levels, the radial one and the global one, may grow asymptotically at the same rate, see \cite{BonheureSantosRamosJFA}, and this is in great contrast with the case of a single equation, as treated by \cite{SmetsSuWillem}. We mention that the estimate \eqref{eq:crad} corrects the wrong estimate \cite[Theorem 2c]{CalanchiRuf}. 

To deduce a loss of symmetry in a regime where for instance $\beta\geq 0$ is fixed and $\alpha\to \infty$, another strategy can be exploited, namely another one from \cite{SmetsSuWillem} (adapted for the single equation) which is based on the computation of the second derivative of the underlying energy functional. Here the Lyapunov type reduction of Section \ref{sec:Reduced Functional} is really the good approach to be used since the other ones are not convenient for Morse index type arguments. 

\begin{theorem}
{\rm ($\beta={\rm o}(\alpha)$ and $\beta\to \infty$ or $\beta$ fixed)}  
Assume \eqref{eq:(p,q)_for_Ederson_part}, $p\geq 1$, $q\geq 1$, $N\geq 3$ and that $\beta={\rm o}(\alpha)$ and $\beta\to \infty$ as $\alpha\to \infty$ or $\beta$ is fixed but taken sufficiently large. Then  there exists $\alpha_0>0$ such that for $\alpha\geq \alpha_0$, no ground state solution of \eqref{eq:henonsysPS} is radially symmetric.

\end{theorem}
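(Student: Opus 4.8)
The plan is to follow the strategy of Smets, Su and Willem for the scalar Hénon equation, but adapted to the system via the family of reduced functionals $\J_\lambda$ introduced in Subsection \ref{sec:reduced_with_lambda}. Assume by contradiction that for a sequence $\alpha\to\infty$ (and $\beta=\mathrm{o}(\alpha)$, $\beta\to\infty$ or $\beta$ fixed large) there exists a radial ground state solution $(u_\alpha,v_\alpha)$ of \eqref{eq:henonsysPS}. Since $p\geq 1$, $q\geq 1$, we may reduce as in Remark \ref{rem:p=q} to the case $1<p=q<2^*-1$ (the general case follows by the same truncation arguments of Subsection \ref{sec:p_diferente_q}, keeping track of the weights $|x|^\alpha,|x|^\beta$); then with an appropriate choice of $\lambda=\lambda_{\alpha,\beta}$ (balancing the two weights, so that $\lambda w+\psi_{\lambda,w}$ and $w-\psi_{\lambda,w}/\lambda$ have comparable norms), the reduced functional $\J_\lambda$ is $C^2$ on $H^1_0(B)$, its nontrivial critical points correspond to solutions of \eqref{eq:henonsysPS}, and by Lemma \ref{clambda} the mountain pass / Nehari level of $\J_\lambda$ equals $c_{\alpha,\beta}$. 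The radial ground state corresponds to a radial critical point $w_\alpha$ of $\J_\lambda$ which minimizes $\J_\lambda$ over the Nehari manifold; in particular $w_\alpha$ is a radial \emph{mountain pass} critical point, so its Morse index as a critical point of $\J_\lambda$ restricted to the radial subspace is at most one, while as a critical point on the whole $H^1_0(B)$ it must also have Morse index at most one (indeed, exactly one, by the mountain pass characterization together with $\J_\lambda''$ being negative on a single direction).

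First I would establish the asymptotic profile of the radial ground state as $\alpha\to\infty$. As in \cite{SmetsSuWillem} and \cite{BonheureSantosRamosJFA}, radial solutions concentrate near the boundary $\partial B$: after the change of variables $r\mapsto$ a boundary-layer coordinate, $w_\alpha$ rescales to a fixed one-dimensional (or limiting-domain) profile, and the rescaled second variation converges to the second variation of the limit problem. The key point is that, because of concentration at $|x|=1$, the radial solution ``sees'' a large neighbourhood of the sphere, so one can insert test directions $\varphi$ in $\J_\lambda''(w_\alpha)[\varphi,\varphi]$ of the form $Y_k(x/|x|)\,\phi_\alpha(|x|)$, with $Y_k$ a spherical harmonic of degree $k\geq 1$ and $\phi_\alpha$ a suitable radial cutoff of the solution profile. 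The second step is to compute $\J_\lambda''(w_\alpha)[\varphi,\varphi]$ for such a direction. Splitting into the radial part (which reproduces the linearized operator along the solution, and whose contribution is controlled by the fact that $w_\alpha$ is a ground state, hence has radial Morse index one) and the angular part (which contributes an extra $-\lambda_k(\mathbb S^{N-1})/|x|^2$ term, $\lambda_k$ being the $k$-th eigenvalue of the Laplace–Beltrami operator), one finds that as $\alpha\to\infty$ the angular correction becomes negligible compared with the dominant boundary-layer terms, so that $\J_\lambda''(w_\alpha)$ is negative on a space of dimension at least $1+N$ (the radial negative direction plus the $N$-dimensional space spanned by degree-one spherical harmonics). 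This contradicts Morse index $\leq 1$, and the contradiction proves the theorem.

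The technical heart — and the step I expect to be the main obstacle — is making the boundary-layer analysis of $w_\alpha$ and the convergence of the rescaled quadratic form $\J_\lambda''(w_\alpha)$ rigorous for the \emph{system}: unlike the scalar Hénon case, one must track two coupled profiles $(u_\alpha,v_\alpha)$, the reduction map $w\mapsto\psi_{\lambda,w}$ enters $\J_\lambda''$ nonlinearly (through $\I''(\cdot)(\,\cdot\,,\psi'_{\lambda,w}(\cdot))$, see \eqref{eq:J'}), and one needs uniform control of $\psi_{\lambda,w_\alpha}$ and its derivative along the concentrating sequence. Here the $L^\infty$-bounds and elliptic-regularity bootstrap of Theorem \ref{thm:aprioribounds}, together with the energy estimates \eqref{upper_estimate} and \eqref{eq:crad}, provide the needed a priori control; the relation $m_{\J_\lambda}(w)\leq m_{H^-}(u+\Psi_u,u-\Psi_u)$ (and the fact that for ground states both are equal to one) is what converts ``mountain pass'' into the sharp Morse-index bound that is being contradicted. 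Once these uniform estimates are in place, the computation of the angular term reduces to comparing $\int |x|^{-2}|\varphi|^2$ against the boundary-layer normalization of $w_\alpha$, which is the calculation carried out in \cite{SmetsSuWillem} and which I would not reproduce in detail here.
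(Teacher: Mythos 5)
You have correctly identified the paper's strategy: assume a radial ground state exists, use the fact that it minimizes $\J_{\lambda}$ on the Nehari manifold (so $\J_{\lambda}''\geq 0$ on a codimension-one subspace, i.e.\ Morse index at most one), and test the second variation with angular directions built from degree-one spherical harmonics to reach a contradiction. However, there is a genuine gap in the mechanism you propose for making the quadratic form negative. You attribute the smallness of the angular correction to $\alpha\to\infty$ (``as $\alpha\to\infty$ the angular correction becomes negligible compared with the dominant boundary-layer terms''), and nowhere in your sketch does the hypothesis that $\beta$ be large actually enter. This cannot be right: the nonnegativity of $\J_{\lambda}''$ at the radial minimizer yields the lower bound \eqref{calpha,betaleqslant}, $c_{\alpha,\beta}\leq C_0\,\|\nabla u_{\alpha,\beta}\|_2\,\bigl(\int_B v_{\alpha,\beta}^2|x|^{-2}\,dx\bigr)^{1/2}$, while the matching upper bound \eqref{calpha,betageqslant} for radial solutions carries the factor $C_1/(1+\sqrt{\beta})$ and holds for \emph{all} $\beta\geq 0$ once $\alpha\geq\alpha_0$ and $\beta\leq\delta\alpha$. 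The contradiction is obtained only when $C_0C_1/(1+\sqrt{\beta})<1$, i.e.\ only for $\beta$ large; for $\beta=0$ and $\alpha\to\infty$ the angular Hardy-type term is \emph{not} negligible relative to the energy, and no contradiction arises. An argument that concludes from $\alpha\to\infty$ alone would ``prove'' symmetry breaking for $\beta=0$, which this method does not give. Relatedly, you do not explain where $p\geq1$, $q\geq1$ (needed so that the leading radial contribution $(2-p-q)\int_B|x|^{\alpha}u^{p+1}\,dx$ is strictly negative and the functional is $C^2$) and $N\geq3$ (needed for the Hardy inequality controlling $\int_B v^2|x|^{-2}\,dx$) are used; the paper points out these hypotheses enter precisely in the proof of \eqref{calpha,betaleqslant}.

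A second, more practical concern is that your route goes through a full boundary-layer rescaling of $(u_{\alpha,\beta},v_{\alpha,\beta})$ and convergence of the rescaled second variation to that of a limit problem. The paper deliberately avoids this: it works only with the two integral estimates \eqref{calpha,betaleqslant} and \eqref{calpha,betageqslant}, never identifying a limit profile. For the system this is not a cosmetic choice — uniqueness and nondegeneracy of solutions of the limiting problem are not known (as the paper remarks in the concentration section), and with two parameters $\alpha,\beta$ diverging at different rates there is no single clean rescaling. So the ``technical heart'' you defer to is not merely technical; as formulated it is likely not provable, whereas the integral-estimate version of the same idea is. Finally, a small point: the angular contribution to the quadratic form is $+\lambda_k\int\phi^2|x|^{-2}$, not $-\lambda_k|x|^{-2}$; the logic (a positive correction too small to restore nonnegativity) survives, but as written the sign is misleading.
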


We still denote by $c_{\alpha,\beta}$ the radial ground state level associated to a least energy radial solution $(u_{\alpha,\beta},v_{\alpha,\beta})$. The main ingredient in the proofs of the last wo theorems is the following estimate 
\begin{equation}\label{calpha,betaleqslant}
c_{\alpha,\beta}\leq C_0 \left( \int_{B} |\nabla u_{\alpha,\beta}|^2\, dx \right)^{1/2}  \left( \int_{B}\frac{v_{\alpha,\beta}^2}{|x|^2}\, dx\right)^{1/2}\qquad \forall  \alpha, \beta\geq 0;
\end{equation}
where the constant $C_0=C_0(p,q,N)$ in  \eqref{calpha,betaleqslant} is independent of $\alpha$ and $\beta$. The proof is rather long and basically follows from the non negativity of the second derivative of the reduced functional $\J_\lambda$ at a minimizer with a good choice of $\lambda$. We refer to \cite{BonheureSantosRamosJFA}. Notice that it is in the proof of this estimate that one requires $p\geq 1$, $q\geq 1$ and $N\geq 3$. On the other hand, one can show \cite{BonheureSantosRamosJFA} that there exists $\alpha_0>0$ such that 
\begin{equation}\label{calpha,betageqslant}
\left( \int_{B} |\nabla u_{\alpha,\beta}|^2\, dx\right)^{1/2}  \left( \int_{B}\frac{v_{\alpha,\beta}^2}{|x|^2}\, dx\right)^{1/2}\leq
 \frac{C_1}{1+\sqrt{\beta}} \; c_{\alpha,\beta}  
 \qquad \forall \alpha\geq \alpha_0, \beta\geq 0
\end{equation}
 for some positive constant $C_1=C_1(p,q,N)$, as long as $\beta/\alpha\to 0$. In fact, it is sufficient to have  $\beta/\alpha\leq \delta $ for some $\delta<1/(p+2)$.
 
 By comparing \eqref{calpha,betaleqslant} and \eqref{calpha,betageqslant} we obtain a contradiction provided $\beta$ is taken sufficiently large so that $(u_{\alpha,\beta},v_{\alpha,\beta})$ is not a ground state solution. 
\section{Concentration phenomena}\label{sec:concentration}

Throughout this section we will assume $N\geq 3$. To start with, let us recall some concentration results for solutions of
\begin{equation}\label{eq:single_equation_conc}
-\eps^2\Delta u+V(x)u=f(u) \qquad \text{ in } \Omega,
\end{equation}
with $f(s)$ being a power type nonlinearity, superlinear at the origin and subcritical at infinity. One of the interesting phenomena concerning \eqref{eq:single_equation_conc} is the existence of concentration for some classes of solutions under different boundary conditions and assumptions on $V$. The task of enumerating all contributions to this subject would give a survey paper by itself, hence here we just state some of the most relevant papers whose statements have guided the corresponding results for Hamiltonian systems.

One of the first results regarding concentration phenomena for \eqref{eq:single_equation_conc} was the one by Floer and Weinstein \cite{FloerWeinstein}; for $\Omega=\R$, $f(s)=|s|^2s$ and $V$ bounded, the authors show (through a Liapunov-Schmidt reduction argument) that, given a non degenerate critical point of $V$, there exists a solution of \eqref{eq:single_equation_conc} which concentrates around that point. Later, Oh \cite{Oh1,Oh2,Oh3} uses a similar approach while extending the results to higher dimensions, dealing also with multispike solutions.

The first results with a non-degeneracy assumption on $V$ go back to the works by Rabinowitz \cite{Rabinowitz} and Wang \cite{Wang}. In \cite{Rabinowitz}, it is shown the existence of a ground state solutions when $\Omega=\R^N$ and $V$ satisfies $0<\inf_{x\in \R^N} V(x)<\liminf_{|x|\to \infty}V(x)$ for sufficiently small $\eps\to0$. In \cite{Wang}, it is proved that these solutions concentrate around a global minimum point of $V$, as $\eps\to 0$.

A further step in the study of these questions was done by del Pino and Felmer \cite{delPinoFelmerUmpico}, where through a penalization method the authors find solutions which concentrate around a local minimum of $V$ (possibly degenerate). This was extended in \cite{delPinoFelmerMultipicos} to find multiple spike solutions concentrating around a finite prescribed number of local minima of $V$, or around given topologically nontrivial critical points of $V$ \cite{delPinoFelmerTop2,delPinoFelmerTop1}. We refer to the introduction of \cite{RamosJMAA} for more references on this subject.

Similar phenomena where studied for $V(x)\equiv 1$ and $\Omega$ bounded. For the Dirichlet case, Ni and Wei \cite{NiWei} proved that ground state solutions concentrate at the point which is at the maximum distance from the boundary, whereas in the Neumann case Ni and Takagi \cite{NiTakagi} showed that they concentrate around a point of $\partial \Omega$ having maximum mean curvature. The proof of these two results were simplified and extended to more general nonlinearities in \cite{delPinoFelmerDirichletNeumann}, and we refer to the introduction of the latter paper for a more detailed description of the results and for an excellent review of the subject. We would like also to refer to \cite{AmbrosettiMalchiodiSurvey} for more recent results.

At this point, it is natural to ask whether these results extend to the Hamiltonian system
\begin{equation}\label{eq:anothersystem}
\left\{
\begin{array}{ll}
-\Delta u+V(x)u=g(v) & \text{ in } \Omega\\
-\Delta v+V(x)v=f(u) & \text{ in } \Omega.
 \end{array}
\right.
\end{equation}
The answer is yes, and this task was done mainly in the works of Ramos et al \cite{PistoiaRamosNeumann,PistoiaRamosDirichlet,RamosJMAA,RamosSoares,RamosTavares,RamosYang}. We will write ahead the exact statements, mainly because some of the results in the mentioned papers need clarification, and also because by using the arguments of Section \ref{sec:Reduced Functional}, we can make nowadays less restrictive assumptions on $f,g$. In all statements it is required that $f,g$ satisfy $(fg1)$--$(fg2)$--$(fg3)$.

The proofs in these papers use mainly the approach and ideas followed on Section \ref{sec:Reduced Functional}, and they \emph{do not} consist on simple adaptations of the arguments used in the single equation case. In addition to what has been already said thoughtout this paper, there are mainly three difficulties that one needs to take in consideration when passing to the system. Firstly, as far as we know no uniqueness result seems to be known for the limiting problem \eqref{eq:anothersystem} with $\Omega=\R^N$ and $V\equiv 1$, which is a crucial assumption in some of the papers dealing with the single equation case (see for instance \cite[Assumption $(f4)$]{delPinoFelmerMultipicos}). Secondly, as said before, the energy functional \eqref{eq:usual_action_functional} is strongly indefinite and the underlying linking theorems are of more complex nature; this fact makes energy estimates much harder to obtain. The third issue is the fact that for $(p,q)$ satisfying assumption \eqref{eq:p_and_q_reduzidoGERAL}, it might happen that either $p$ or $q$ is larger than the critical Sobolev exponent. However, if one has suitable  upper bounds for the energy, one can then argue as in Subsection \ref{sec:p_diferente_q}.

Consider the system
\begin{equation}\label{eq:sistema_concentration}
\left\{
\begin{array}{ll}
-\eps^2\Delta u + u=g(v) & \text{ in } \Omega\\
-\eps^2\Delta v+v=f(u) & \text{ in } \Omega
 \end{array}
\right.
\end{equation}
under Neumann
\begin{equation}\label{eq:Neumann_bcond}
\frac{\partial u}{\partial n}=\frac{\partial v}{\partial n}=0 \qquad \text{ on } \partial \Omega
\end{equation}
or Dirichlet boundary conditions:
\begin{equation}\label{eq:Dirichlet_bcond}
u=v=0 \qquad \text{ on }\partial \Omega.
\end{equation}

As far as we know, the first paper to obtain concentration phenomena in Hamiltonian system is due to \'Avila and Yang \cite{AvilaYang}, using the dual variational formulation we presented in Section \ref{sec:DualMethod} (related results using this approach can be found in \cite{AlvesSoaresYang,AlvesSoares}). Their results were later on improved in \cite{PistoiaRamosNeumann,RamosYang}.

\begin{theorem}[\cite{PistoiaRamosNeumann,RamosYang}] \label{thm:primeiroteorema_conc}
Let $\Omega$ be a bounded domain. Under $(fg1)$--$(fg2)$--$(fg3)$, there exists $\eps_0>0$ such that for any $0<\eps<\eps_0$ problem \eqref{eq:sistema_concentration}--\eqref{eq:Neumann_bcond} has non constant positive solutions $u_\eps,v_\eps$. Moreover, $u_\eps+v_\eps$ attains its maximum value at some unique  point $x_\eps\in \partial \Omega$. Up to a subsequence, $x_\eps\to \bar x\in \partial \Omega$ with
\[
H(\bar x)=\max_{x\in \partial \Omega} H(x),
\]
where $H(x)$ denotes the mean curvature at a point $x\in \partial \Omega$.
\end{theorem}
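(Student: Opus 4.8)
The plan is to transpose, to the Hamiltonian system, the reduction strategy of Section~\ref{sec:Reduced Functional} combined with the penalization/energy–estimate scheme of del Pino and Felmer \cite{delPinoFelmerDirichletNeumann} and Ni--Takagi \cite{NiTakagi}. First I would rescale: setting $x = \eps y$ (or better, working with the $\eps$-dependent norm $\|u\|_\eps^2 = \int_\Omega (\eps^2|\nabla u|^2 + u^2)$), the system \eqref{eq:sistema_concentration}--\eqref{eq:Neumann_bcond} becomes the Euler--Lagrange system of the functional
\[
\I_\eps(u,v) = \int_\Omega \left(\eps^2 \langle \nabla u,\nabla v\rangle + uv\right) dx - \int_\Omega F(u)\, dx - \int_\Omega G(v)\, dx
\]
on $H^1(\Omega)\times H^1(\Omega)$, which is strongly indefinite exactly as in Section~\ref{sec:Reduced Functional}. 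To kill the indefiniteness I would apply the Lyapunov--Schmidt reduction of Subsection~\ref{subset:reducedFunctional}: for each $u$ there is a unique $\Psi_u^\eps$ maximizing $\psi\mapsto \I_\eps(u+\psi,u-\psi)$, and the reduced functional $\J_\eps(u) := \I_\eps(u+\Psi_u^\eps, u-\Psi_u^\eps)$ is $C^1$, satisfies Palais--Smale (Lemma~\ref{lemma:I_PalaisSmale} carries over), and has a mountain-pass geometry whose critical points correspond to solutions of the system (Lemma~\ref{lem:homeo}). The point is that $\J_\eps$ behaves like the single-equation energy, so one can import the del Pino--Felmer machinery essentially verbatim.

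The key steps, in order, would be: (1) set up the penalized problem à la del Pino--Felmer by truncating $f$ and $g$ outside a neighborhood of $\partial\Omega$ so as to recover a Palais--Smale-type compactness and to force any solution of the penalized problem to be, a posteriori, a genuine solution; here one uses assumption $(fg3)$ (the super-quadraticity condition) in place of the usual Ambrosetti--Rabinowitz condition, and Subsection~\ref{sec:p_diferente_q} to handle the case where $p$ or $q$ exceeds the critical exponent once the upper energy bound is in hand. (2) Produce a mountain-pass critical point $u_\eps$ of the reduced penalized functional $\J_\eps$, with the sharp upper bound
\[
\J_\eps(u_\eps) \leq \eps^N\left(c_{\R^N_+} + \eps\, H(\bar x)\, C_1 + \mathrm{o}(\eps)\right),
\]
obtained by using as test functions suitable translates/rescalings of a ground state of the half-space limiting problem $-\Delta U + U = g(W)$, $-\Delta W + W = f(U)$ on $\R^N_+$ with Neumann conditions, glued near a boundary point of maximal mean curvature; the expansion of the quadratic form $\int_\Omega(\eps^2\langle\nabla u,\nabla v\rangle + uv)$ in terms of boundary curvature is the classical Ni--Takagi computation. (3) Establish the matching lower bound and the concentration: the energy lower estimate forces $u_\eps+v_\eps$ (or rather the pair $(u_\eps, v_\eps)$, via the reduction $v_\eps \approx u_\eps - \Psi_{u_\eps}^\eps$) to concentrate at a single boundary point $x_\eps$, and the comparison of upper and lower energy expansions pins $x_\eps \to \bar x$ with $H(\bar x) = \max_{\partial\Omega} H$. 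Uniqueness of the maximum point and the fact that $x_\eps\in\partial\Omega$ follow from a blow-up/Harnack argument on the rescaled solutions together with the exponential decay of the limiting profile.

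The main obstacle, as the text itself flags, is the \emph{absence of a uniqueness (or non-degeneracy) result for the limiting system} $-\Delta U + U = g(W)$, $-\Delta V + V = f(U)$ on $\R^N$ or $\R^N_+$. In the single-equation case del Pino--Felmer can invoke uniqueness of the ground state of $-\Delta U + U = f(U)$; here one must instead work only with the \emph{least-energy level} $c_{\R^N_+}$ of the limiting problem and argue purely variationally, comparing mountain-pass levels rather than comparing solutions. Concretely, the delicate part is showing that any concentrating family of solutions of the penalized problem has energy bounded below by $\eps^N(c_{\R^N_+} + \mathrm{o}(1))$ and that a single concentration point captures \emph{all} of the energy — this requires a careful splitting lemma (a concentration-compactness dichotomy adapted to the reduced functional $\J_\eps$) and the super-quadraticity $(fg3)$ to rule out energy escaping to infinity or splitting between several points below the mountain-pass threshold. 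A secondary technical nuisance is that the reduction map $u\mapsto\Psi_u^\eps$ and its $\eps$-dependence must be controlled uniformly, so that the concentration statements for $u_\eps$ transfer to $v_\eps$; this is handled by the $C^1$-dependence in Lemma~\ref{lemma:Theta_is_C1} together with elliptic estimates, and is routine once the energy estimates are secured.
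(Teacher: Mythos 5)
Your outline follows essentially the same route as the proofs in \cite{PistoiaRamosNeumann,RamosYang} that the survey is quoting (and does not reproduce): the Lyapunov--Schmidt reduction of Section~\ref{sec:Reduced Functional} to remove the strong indefiniteness, a mountain-pass level for the reduced functional, a Ni--Takagi type curvature expansion of the energy, and a purely variational comparison of levels to compensate for the absence of a uniqueness result for the limiting profile. Two remarks: the spatial penalization of del Pino--Felmer is not actually needed here, since $\Omega$ is bounded and $V\equiv 1$ --- the truncation that \emph{is} required is the range-truncation of Subsection~\ref{sec:p_diferente_q} to handle a possibly $H^1$-supercritical exponent, validated a posteriori by the uniform $L^\infty$ bounds of Theorem~\ref{thm:aprioribounds} once the upper energy estimate is in hand; and the quantitative ingredient that makes the lower energy bound work in the indefinite setting, which your sketch leaves implicit, is the almost-critical-point estimate
\[
\sup\{\I(t(u_n,v_n)+(\phi,-\phi)):\ t\geq 0,\ \phi\in H^1 \}=\I(u_n,v_n)+\mathrm{O}(\mu_n^2)
\]
for Palais--Smale sequences, i.e.\ \cite[Proposition 2.5]{RamosTavares}, which is precisely the piece the survey identifies as needed to pass from the original hypotheses to $(fg1)$--$(fg2)$--$(fg3)$. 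Finally, note that the available arguments only yield the concentration of $u_\eps+v_\eps$ (equivalently of the reduced variable), not of $u_\eps$ and $v_\eps$ at a common maximum point, so your formulation in terms of the sum is the correct one.
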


At this point we would like to stress the differences between the later statement and the actual statement made in \cite{PistoiaRamosNeumann,RamosYang}. First of all, in the last papers the assumptions of $f,g$ were more restrictive (for instance a requirement that $f^2(s)\leq 2f'(s)F(s)$ and $g^2(s)\leq 2g'(s)G(s)$ was made, among others). A close look at the proofs, however, show that, having proved the results of Section \ref{sec:Reduced Functional} under $(fg1)$--$(fg2)$--$(fg3)$, the only thing missing is to check that the following stronger result holds under these assumptions:
\begin{quote}
For any given Palais-Smale sequence $\I$, that is $\I(u_n,v_n)$ bounded with $\mu_n:=\|\I'(u_n,v_n)\|\to 0$, then
\[
\sup\{\I(t(u_n,v_n)+(\phi,-\phi):\ t\geq 0,\ \phi\in H^1_0 \}=\I(u_n,v_n)+\textrm{O}(\mu_n^2).
\]
\end{quote}
This was proved in \cite[Proposition 2.5]{RamosTavares} assuming only $(fg1)$--$(fg2)$--$(fg3)$.

The second and last difference is that the theorem we stated speaks about concentration of $u_\eps+v_\eps$, while \cite{PistoiaRamosNeumann,RamosYang} state that both $u_\eps$, $v_\eps$ concentrate at the \emph{same} unique local maximum $x_\eps$. It seems however that the proofs there do not imply such a strong result\footnote{This fact was also confirmed by M. Ramos in a private communication.}, but only the one we stated on Theorem \ref{thm:primeiroteorema_conc} (more precisely, step 4 in the argument of the proof of \cite[Theorem 3.1]{RamosYang} works only for the sum $u_\eps+v_\eps$). Alternatively, one can prove the weaker result that $u_\eps$ and $v_\eps$ admit global maximum points $x_\eps$ and $y_\eps$, and these satisfy $|x_\eps-y_\eps|/\eps\to 0$ as $\eps\to 0$. In particular, $x_\eps,y_\eps\to \bar x\in \partial \Omega$ which maximizes the mean curvature at the boundary.

Similar comments can be made for the remaining statements of this section, and we refer to \cite{RamosTavares} for more comments regarding the assumptions on $f$ and $g$, and \cite[Section 5]{TavaresThesis} for more details concerning both questions.

As far as Dirichlet boundary conditions are concerned, the generalization of \cite{NiWei} for Hamiltonian systems is the following.

\begin{theorem}[\cite{PistoiaRamosDirichlet}]
Let $\Omega$ be a bounded domain. Under $(fg1)$--$(fg2)$--$(fg3)$, there exists $\eps_0>0$ such that for any $0<\eps<\eps_0$ the problem \eqref{eq:sistema_concentration}--\eqref{eq:Dirichlet_bcond} admits a positive ground state solution $(u_\eps,v_\eps)$, which satisfies the following properties, as $\eps\to 0$:
\begin{enumerate}
\item[(i)] $u_\eps+v_\eps$ attains its maximum value at some unique point $x_\eps\in \Omega$;
\item[(ii)] $d_\eps:=\textrm{dist}(x_\eps,\partial \Omega)\to \max_{x\in \Omega}\textrm{dist}(x,\partial \Omega)$;
\item[(iii)] $c_\eps(\Omega)=\eps^N\left(c(\R^N)+e^{-2(1+\textrm{o}(1))d_\eps/\eps}\right)$,
\end{enumerate}
where $c_\eps(\Omega)$ denotes the ground state level of \eqref{eq:sistema_concentration}, and $c(\R^N)$ denotes the groundstate level of the limiting problem \eqref{eq:sistema_concentration} with $\eps=1$ and $\Omega=\R^N$.
\end{theorem}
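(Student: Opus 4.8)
The plan is to carry out, at the level of the Lyapunov--Schmidt reduced functional of Section \ref{sec:Reduced Functional}, the strategy of Ni and Wei \cite{NiWei} for the scalar equation (in the streamlined form of \cite{delPinoFelmerDirichletNeumann}). First I would rescale $x\mapsto \eps x$, so that \eqref{eq:sistema_concentration}--\eqref{eq:Dirichlet_bcond} becomes the $\eps$-independent system $-\Delta u+u=g(v)$, $-\Delta v+v=f(u)$ on the dilated domain $\Omega_\eps:=\Omega/\eps$, with associated energy $\I_\eps$ on $H^1_0(\Omega_\eps)\times H^1_0(\Omega_\eps)$ and $c_\eps(\Omega)=\eps^N\hat c_\eps$, where $\hat c_\eps$ is its ground state level. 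Passing to the reduced functional $\J_\eps(u)=\I_\eps(u+\Psi_u,u-\Psi_u)$, which has a mountain-pass geometry and, adapting Lemma \ref{lemma:I_PalaisSmale} to $\Omega_\eps$, satisfies the Palais--Smale condition, one obtains a positive ground state solution; the case in which $p$ or $q$ lies above the critical Sobolev exponent is handled by the truncation of Subsection \ref{sec:p_diferente_q}, combining Theorem \ref{thm:aprioribounds} with the energy upper bound below to remove the truncation for small $\eps$.

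The second step is the energy asymptotics. For the upper bound I would fix $x_0\in\Omega$ realizing $d_0:=\max_{x\in\Omega}\textrm{dist}(x,\partial\Omega)$, take a least energy solution $(U,V)$ of the limiting problem on $\R^N$ (whose components decay like $e^{-|x|}$), and insert a cut-off of $(U(\cdot-x_0/\eps),V(\cdot-x_0/\eps))$ into the variational characterization of Proposition \ref{prop:gs_caract3}; the exponential decay makes the cut-off error of order $e^{-2(1+\textrm{o}(1))d_0/\eps}$, giving $\hat c_\eps\leq c(\R^N)+e^{-2(1+\textrm{o}(1))d_0/\eps}$. For the lower bound I would argue by contradiction: given ground states $(u_\eps,v_\eps)$, a concentration-compactness analysis of the reduced functional --- together with the fact, proved in \cite[Proposition 2.5]{RamosTavares}, that along Palais--Smale sequences the supremum in Proposition \ref{prop:gs_caract3} is attained up to $\textrm{O}(\mu_n^2)$ --- forces a translate of $(u_\eps,v_\eps)$ to converge to a positive finite-energy solution of the limiting problem either on $\R^N$ or on a half-space; the half-space alternative is excluded because no such problem (with Dirichlet conditions) admits a positive finite-energy solution, so the limit lives on $\R^N$ and lower semicontinuity yields $\liminf\hat c_\eps\geq c(\R^N)$. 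Hence $\hat c_\eps\to c(\R^N)$.

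Concentration then follows. Elliptic regularity and the uniform $L^\infty$ bounds upgrade the $H^1$ convergence of the translated profiles to uniform convergence to a limiting ground state, so for small $\eps$ the function $u_\eps+v_\eps$ has a unique global maximum point $x_\eps$ near the concentration center; if $\textrm{dist}(x_\eps,\partial\Omega)/\eps$ remained bounded, the limit would be a positive solution on a half-space, a contradiction, so $\textrm{dist}(x_\eps,\partial\Omega)/\eps\to\infty$. Finally, comparing the refined upper bound with a matching refined lower bound --- built by reflecting the solution across the nearest boundary point and invoking the maximum principle, as in \cite{NiWei} --- pins down both $d_\eps=\textrm{dist}(x_\eps,\partial\Omega)\to d_0$ and the exact rate $e^{-2(1+\textrm{o}(1))d_\eps/\eps}$ in (iii).

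The hard part will be this sharp two-sided estimate (iii) with the precise constant $2$ in the exponent. It requires a good grip on the exponential decay of the limiting ground state of the \emph{system} --- and, no uniqueness being known for that problem, one must work with an arbitrary least energy solution and the decay $e^{-|x|}$ of its components --- together with a delicate interaction estimate between the concentrated bump and its reflection across $\partial\Omega$, which must be performed on the reduced functional $\J_\eps$: the strong indefiniteness of $\I_\eps$ is precisely what prevents running the scalar Ni--Wei argument directly and forces the detour through the reduction of Section \ref{sec:Reduced Functional}.
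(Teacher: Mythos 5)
The survey does not prove this theorem — it is quoted directly from \cite{PistoiaRamosDirichlet} — and your outline follows precisely the route the survey attributes to that work: rescaling to $\Omega/\eps$, the Lyapunov--Schmidt reduced functional of Section \ref{sec:Reduced Functional}, the truncation device of Subsection \ref{sec:p_diferente_q} to dispose of supercritical exponents once the energy upper bound is in hand, and a Ni--Wei \cite{NiWei} type two-sided energy expansion (with the exclusion of half-space limits and the use of $u_\eps+v_\eps$ rather than the individual components for the unique maximum point, consistent with the survey's own caveats). At the level of a sketch this is faithful; the one substantive ingredient you name but do not supply is the sharp lower bound on the excess energy $\hat c_\eps-c(\R^N)$ — the reflection/interaction estimate carried out on the reduced functional for the \emph{system}, together with the half-space nonexistence — which is indeed where essentially all of the work in the cited paper lies.
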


Next, we turn to 
\begin{equation}\label{eq:sistema_concentration_with_V}
\left\{
\begin{array}{ll}
-\eps^2\Delta u + V(x)u=g(v) & \text{ in } \Omega\\
-\eps^2\Delta v+V(x)v=f(u) & \text{ in } \Omega.
 \end{array}
\right.
\end{equation}
In case $\Omega=\R^N$, we have the following generalization of \cite{Rabinowitz,Wang}:
\begin{theorem}[\cite{RamosSoares}]
Let $\Omega=\R^N$. Assume that the nonlinearities $f$ and $g$ satisfy $(fg1)$--$(fg2)$--$(fg3)$, and that the potential $V$ is continuous and satisfies
\[
0<\inf_{x\in \R^N}V(x)<\liminf_{|x|\to \infty} V(x).
\]
Then there exists $\eps_0>0$ such that for any $0<\eps<\eps_0$ there exists a positive ground state solution $(u_\eps,v_\eps)$ of \eqref{eq:sistema_concentration_with_V} such that $u_\eps+v_\eps$ attains its maximum value at some unique point $x_\eps\in \R^N$. Moreover, $\{x_\eps\}_\eps$ is bounded and, up to a subsequence, it converges to $x_0\in \R^N$ satisfying
\[
V(x_0)=\min_{x\in \R^N} V(x).
\]
\end{theorem}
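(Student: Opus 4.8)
The plan is to combine the Lyapunov--Schmidt reduction of Section~\ref{sec:Reduced Functional} with the concentration--compactness strategy of del Pino--Felmer and Rabinowitz--Wang, adapted to Hamiltonian systems as in \cite{RamosSoares}. \textbf{Step 1 (rescaling and reduction).} The change of variables $x=\eps z$ turns \eqref{eq:sistema_concentration_with_V} into
\[
-\Delta u+V(\eps z)u=g(v),\qquad -\Delta v+V(\eps z)v=f(u)\quad\text{in }\R^N,
\]
with associated strongly indefinite functional $\I_\eps$ on $H^1(\R^N)\times H^1(\R^N)$. By Remark~\ref{rem:p=q} we may assume $1<p=q<2^*-1$; more generally, whenever an $\eps$-uniform upper bound on the energy is at hand, Theorem~\ref{thm:aprioribounds} and the truncation procedure of Subsection~\ref{sec:p_diferente_q} reduce us to that case even if $p$ or $q$ exceeds $2^*-1$. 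Since $V(\eps\,\cdot)$ is positive and bounded, the reduction of Subsection~\ref{subset:reducedFunctional} applies (it extends to $\R^N$, see the corresponding Remark in Section~\ref{sec:Reduced Functional}) and furnishes a $C^1$ scalar functional $\J_\eps:H^1(\R^N)\to\R$, with mountain pass geometry and the Palais--Smale properties inherited from $\I_\eps$ via Lemma~\ref{lemma:I_PalaisSmale}; its nonzero critical points correspond through $w\mapsto(w+\Psi^\eps_w,\,w-\Psi^\eps_w)$ to solutions of the system, and we work with the ground state level $c_\eps:=\inf_{w\neq0}\sup_{t\geq0}\J_\eps(tw)$.

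\textbf{Step 2 (energy estimates).} Write $a:=\inf_{\R^N}V$ and $V_\infty:=\liminf_{|x|\to\infty}V(x)>a$, and let $c(\lambda)$ be the ground state level of the autonomous system $-\Delta U+\lambda U=g(W),\ -\Delta W+\lambda W=f(U)$ on $\R^N$, which is increasing in $\lambda$ (see Section~\ref{sec:Reduced Functional}). Testing the minimax formula of Proposition~\ref{prop:gs_caract3} with a cut-off translate of a ground state of the autonomous problem with potential $a+\delta$, centered where $V\leq a+\delta$, gives $\limsup_{\eps\to0}c_\eps\leq c(a)$. Conversely, one performs a concentration--compactness analysis of the ground states $(u_\eps,v_\eps)$ (equivalently of $w_\eps=(u_\eps+v_\eps)/2$): vanishing is excluded by the uniform lower bound $\|(u_\eps,v_\eps)\|\geq R>0$ forced by the mountain pass geometry; dichotomy/splitting is excluded because any part of the mass escaping to infinity would account for energy at least $c(V_\infty)>c(a)$, contradicting the upper bound; hence, translating by the rescaled maxima $z_\eps$ of $w_\eps$, the sequence $(u_\eps(\cdot+z_\eps),v_\eps(\cdot+z_\eps))$ converges, along a subsequence and in $C^1_{loc}$ by the elliptic bootstrap of Subsection~\ref{sec:p_diferente_q}, to a nontrivial ground state of the autonomous system with potential $V(x_0)$, where $x_0:=\lim_{\eps\to0}\eps z_\eps$, and $c_\eps\to c(V(x_0))$.

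\textbf{Step 3 (location of the peak, positivity, uniqueness).} From $c(V(x_0))=\lim c_\eps\leq c(a)$ and the strict monotonicity of $c(\cdot)$ we deduce $V(x_0)=a=\min V$; moreover $\{x_\eps\}=\{\eps z_\eps\}$ is bounded, for otherwise $x_0$ would be at infinity and $V(x_0)\geq V_\infty>a$. Positivity follows as in Theorem~\ref{thm:qualitativeprop_dual}/Theorem~\ref{teoexistence}: replacing $(u_\eps,v_\eps)$ by $(|u_\eps|,|v_\eps|)$ does not increase the energy along the minimax paths, so a ground state can be chosen nonnegative, and the strong maximum principle for $-\Delta+V(\eps\,\cdot)$ yields $u_\eps,v_\eps>0$. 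Finally, the exponential decay of the limit profile together with the $C^1_{loc}$ convergence confines, for $\eps$ small, all global maxima of $u_\eps+v_\eps$ to a small ball about $z_\eps$; and a second, far-away peak would produce energy $\geq 2c(a)$, impossible by Step~2 --- hence for $\eps$ small the maximum point $x_\eps$ is unique, and $x_\eps\to x_0$.

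\textbf{The main difficulty} is that, in contrast to the scalar case, no uniqueness or nondegeneracy is known for the limiting autonomous system on $\R^N$; this is exactly why the Lyapunov--Schmidt reduction to $\J_\eps$ is indispensable --- it restores a mountain pass geometry and makes available the sharp estimate relating the value of $\J_\eps$ on a Palais--Smale sequence to the supremum along the linking path up to $\mathrm{O}(\mu^2)$ (\cite[Proposition 2.5]{RamosTavares}), on which the energy comparisons rest --- so that the localisation of the concentration point and the uniqueness of the spike must be extracted from energy estimates alone rather than from a nondegeneracy of the limit, while the possibly supercritical growth of $f,g$ is absorbed through the a priori bounds of Theorem~\ref{thm:aprioribounds}.
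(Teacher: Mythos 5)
The paper does not actually prove this theorem: it is quoted from \cite{RamosSoares}, and the surrounding text only describes the strategy (the Lyapunov--Schmidt reduction of Section~\ref{sec:Reduced Functional} combined with the Rabinowitz--Wang/del Pino--Felmer concentration scheme) together with the three difficulties specific to systems. Your outline reproduces that strategy faithfully --- rescaling, reduction to $\J_\eps$, the upper bound $\limsup c_\eps\le c(\inf V)$ from the minimax characterization, exclusion of vanishing and of escape to infinity via $c(V_\infty)>c(\inf V)$ and the monotonicity of $\lambda\mapsto c(\lambda)$, and absorption of supercritical exponents through the truncation and a priori bounds of Subsection~\ref{sec:p_diferente_q} --- and you correctly identify the absence of uniqueness for the limit system as the reason the reduction is needed. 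So in outline this is the same proof.

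The one step that is genuinely too quick is the uniqueness of the maximum point of $u_\eps+v_\eps$. Confining all global maxima to a small ball around $z_\eps$ and ruling out a second \emph{far-away} peak by an energy count of $2c(\inf V)$ does not exclude several maxima inside that small ball. What is actually needed is that the limit profile $U+W$ has a \emph{non-degenerate} strict maximum at the origin, so that $C^2_{\rm loc}$ convergence forces a single critical point of $u_\eps+v_\eps$ nearby for small $\eps$. This is exactly the delicate point the paper flags in its comments after Theorem~\ref{thm:primeiroteorema_conc} and Theorem~\ref{thm:ramostavares}: from the radial ODE at the origin one gets $NU''(0)=U(0)-g(W(0))\le 0$ and $NW''(0)=W(0)-f(U(0))\le 0$, and one can only guarantee that \emph{one} of the two is nonzero --- which is precisely why the non-degeneracy, and hence the uniqueness claim, holds for the sum $U+W$ but not for each component separately, and why the theorem is stated for $u_\eps+v_\eps$ rather than for $u_\eps$ and $v_\eps$ individually. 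You should make this second-derivative argument explicit; without it the uniqueness assertion in your Step~3 is not justified.
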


Finally, we want to state the generalization for Hamiltonian systems of the results by del Pino and Felmer \cite{delPinoFelmerUmpico,delPinoFelmerMultipicos}. Let us assume that $V$ is locally H\"older continuous and 
\begin{enumerate}
\item[$(V1)$] $V(x)\geq \alpha>0\text{ for all } x\in \Omega$;
\item[$(V2)$] there exists bounded domains $\Lambda_i$, mutually disjoint, compactly contained in $\Omega$ ($i=1,\ldots,k$) such that
\[
\inf_{\Lambda_i} V<\inf_{\partial \Lambda_i} V
\]
(that is, $V$ admits at least $k$ local strict minimum points, possibly degenerate). 
\end{enumerate}

\begin{theorem}[{\cite{RamosTavares},\cite[Section 5]{TavaresThesis}}]\label{thm:ramostavares}
Take $\Omega$ a regular (eventually unbounded) domain. Assume that $(fg1)$--$(fg3)$ and $(V1)$--$(V2)$ holds. Then by taking $\eps>0$ small enough we have that \eqref{eq:sistema_concentration_with_V} admits a positive solution $(u_\eps,v_\eps)$ having the following properties:
\begin{enumerate}
\item[$(i)$] $u_\eps+v_\eps$ possesses exactly $k$ local maximum points $x_{i,\eps}\in \Omega_i$, $i=1,\ldots,k$;
\item[$(ii)$] $u_\eps(x_{i,\eps})+v_\eps(x_{i,\eps})\geq b>0$, and $V(x_{i,\eps})\to \inf_{\Lambda_i} V$ as $\eps\to 0$;
\item[$(iii)$] $u_\eps(x),v_\eps(x)\leq \gamma e^{-\frac{\beta}{\eps}|x-x_{i,\eps}|}$, $\forall x\in \Omega\setminus \cup_{j\neq i} \Lambda_j$;
\end{enumerate}
for some constants $b,\gamma,\beta>0$.
\end{theorem}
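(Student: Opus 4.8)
The plan is to adapt the penalization scheme of del Pino and Felmer \cite{delPinoFelmerUmpico,delPinoFelmerMultipicos} to the reduced functional of Section \ref{sec:Reduced Functional}. After the rescaling $x\mapsto \eps x$, system \eqref{eq:sistema_concentration_with_V} becomes a system with $\eps=1$ posed on the dilated domain $\Omega_\eps:=\eps^{-1}\Omega$ with potential $V_\eps(x):=V(\eps x)$, and the conclusions $(i)$--$(iii)$ amount to producing, for each small $\eps$, a positive solution of the rescaled problem with $k$ well-separated spikes located near the dilated sets $\eps^{-1}\Lambda_i$. By the last remark of Subsection \ref{subset:reducedFunctional} the whole machinery of Section \ref{sec:Reduced Functional} applies in $H^1(\Omega_\eps)\times H^1(\Omega_\eps)$ (and in $H^1(\R^N)\times H^1(\R^N)$ if $\Omega=\R^N$), so I would set up the analysis there.

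First I would introduce the penalized nonlinearities. Fix a large constant $\mu$ (depending only on $k$, $\alpha$ and the relevant Sobolev constant) and set $\Lambda:=\bigcup_{i}\Lambda_i$; outside $\Lambda$ replace $f(s)$ by $\hat f(s):=\min\{f(s),(\alpha/\mu)|s|\}\,\mathrm{sgn}(s)$ and similarly for $g$, keeping $\hat f=f$, $\hat g=g$ on $\Lambda$. Inside $\Lambda$ the pair $(\hat f,\hat g)$ still enjoys $(fg1)$--$(fg3)$-type bounds, while outside $\Lambda$ the nonlinearity is dominated by the linear term; this is what makes the penalized energy $\hat\I_\eps(u,v)=\int_{\Omega_\eps}\langle\nabla u,\nabla v\rangle-\eps^{-N}\!\int F_\eps(u)-\eps^{-N}\!\int G_\eps(v)$ satisfy the Palais--Smale condition globally and forces any low-energy critical point to be small outside $\Lambda$. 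Since $\hat\I_\eps$ has the same saddle geometry as \eqref{eq:usual_action_functional}, by Subsection \ref{subset:reducedFunctional} there is a unique map $u\mapsto\Psi^\eps_u$ and a $C^1$ reduced functional $\hat\J_\eps:H^1(\Omega_\eps)\to\R$ with mountain pass geometry, whose critical points are in one-to-one correspondence with those of $\hat\I_\eps$ (Lemma \ref{lem:homeo}) and which satisfies Palais--Smale as in Lemma \ref{lemma:I_PalaisSmale}.

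Next I would build a min--max value for $\hat\J_\eps$ that ``sees'' all $k$ wells. Set $m_i:=\inf_{\Lambda_i}V$ and let $\omega_i$ be a least energy (positive, radial) solution of the autonomous limiting system with constant potential $m_i$; its energy is $c(m_i)$, increasing in $m_i$ by the last remark of Subsection \ref{subset:reducedFunctional}. Gluing translated and cut-off copies of the $\omega_i$ centred at points $\xi_i\in\Lambda_i$ produces a $k$-parameter family of competitors, whose associated min--max level $\hat c_\eps$ is bounded above, thanks to the variational characterization of Proposition \ref{prop:gs_caract3} (which lets one estimate $\sup\{\hat\I_\eps(t(u,v)+(\phi,-\phi))\}$ by plugging in a single test configuration), by $\eps^{N}\big(\sum_{i=1}^{k}c(m_i)+o(1)\big)$. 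The matching lower bound, together with the fact that $\hat c_\eps$ is attained by a critical point $(\hat u_\eps,\hat v_\eps)$ of $\hat\I_\eps$ that genuinely concentrates near the $\xi_i$'s, is obtained by a concentration--compactness/splitting argument carried out on the reduced functional: any ``bubble'' extracted from a minimizing sequence carries energy at least $\min_i c(m_i)$, and the geometry of $\hat\J_\eps$ forces exactly one bubble per $\Lambda_i$.

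Finally, the energy bound $\hat\I_\eps(\hat u_\eps,\hat v_\eps)\le C\eps^{N}$ feeds into the bootstrap of Subsection \ref{sec:p_diferente_q} and Theorem \ref{thm:aprioribounds} to yield uniform $L^\infty$ bounds for the rescaled spikes (this is also where $p$ or $q$ above $2^\ast-1$ is handled), so that $\hat f,\hat g$ may a fortiori be replaced by $f,g$ in a neighbourhood of the spikes; a comparison argument with $-\Delta+\tfrac{\alpha}{2}$, using sub/super-solutions of the form $\gamma e^{-\beta|x-x_{i,\eps}|}$, then gives the exponential decay $(iii)$. In particular $\hat u_\eps+\hat v_\eps$ is exponentially small off $\Lambda$, the penalization is inactive, and $(\hat u_\eps,\hat v_\eps)$ solves \eqref{eq:sistema_concentration_with_V}; statement $(ii)$ follows because if some maximum point stayed away from $\mathrm{argmin}_{\Lambda_i}V$, the energy would exceed $\eps^N\sum_i c(m_i)+o(\eps^N)$, contradicting the upper bound; and $(i)$ is a consequence of the one-bubble-per-well structure. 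The main obstacle, as emphasized before Theorem \ref{thm:primeiroteorema_conc}, is the lack of any uniqueness or non-degeneracy result for the limiting system: one cannot run a finite-dimensional Lyapunov--Schmidt reduction around a known profile, so the lower energy estimate and the bubble count must be argued purely variationally via the reduced functional $\hat\J_\eps$ — precisely where the clean min--max characterization of $c(\Omega)$ and the smooth dependence $u\mapsto\Psi_u$ from Section \ref{sec:Reduced Functional} carry the weight.
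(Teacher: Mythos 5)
Your skeleton coincides with the paper's up to a point: both proofs work with the reduced functional of Section \ref{subset:reducedFunctional} (after normalizing to $1<p=q<2^*$), both truncate $f,g$ outside $\bigcup_i\Lambda_i$ in the del Pino--Felmer fashion to restore Palais--Smale on a possibly unbounded domain, and both recover the true system at the end via the $L^\infty$ bootstrap of Subsection \ref{sec:p_diferente_q} and a comparison argument giving the exponential decay. The divergence, and the genuine gap, is in how the $k$-bump structure is produced. You propose a min--max level built from glued, cut-off copies of the limiting profiles, and then assert that a concentration--compactness splitting plus ``the geometry of $\hat\J_\eps$'' forces exactly one bubble per well. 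That step does not follow from what you have set up. The splitting argument only tells you that the critical point decomposes into bubbles, each located in some $\Lambda_i$ (bubbles elsewhere being killed by the penalization) and each costing at least $c(m_i)$ if it sits in $\Lambda_i$. The upper bound $\sum_{i=1}^k c(m_i)+o(1)$ does \emph{not} exclude, say, two bubbles in the well where $c(m_i)$ is smallest and none in another well, since $2c(m_1)<c(m_1)+c(m_2)$ whenever $c(m_1)<c(m_2)$; nor does a single min--max value over a $k$-parameter family automatically sit at the level $\sum_i c(m_i)$ rather than collapsing to something lower. Forcing one nontrivial piece of the solution into \emph{each} $\Lambda_i$ requires an extra device that your sketch omits.

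The paper supplies exactly that device: instead of a min--max, it performs a constrained minimization of the reduced functional $\mathcal{J}_\eps$ over the localized Nehari-type set
\[
\Ncal_\eps=\Bigl\{u\in H^1_0(\Omega)\ \Big|\ \mathcal{J}_\eps'(u)(u\phi_i)=0\ \text{ and }\ \int_{\Lambda_i}u^2\,dx>\eps^{N+1},\ i=1,\dots,k\Bigr\},
\]
where $\phi_i$ is a cut-off equal to $1$ on $\Lambda_i$ and supported in a slightly larger $\widetilde\Lambda_i$. The $k$ separate Nehari conditions $\mathcal{J}_\eps'(u)(u\phi_i)=0$ localize the functional near each well, and the quantitative non-vanishing condition $\int_{\Lambda_i}u^2>\eps^{N+1}$ (improved a posteriori to $\int_{\Lambda_i}u^2\ge\eta\eps^N$) is precisely what prevents the minimizer from dropping a bump in any given $\Lambda_i$; one then shows $c_\eps=\inf_{\Ncal_\eps}\mathcal{J}_\eps$ is a free critical value. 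If you want to keep your min--max formulation you would need an equivalent mechanism — e.g.\ per-well local energy constraints or invariance under the gradient flow of an annular neighbourhood of the expected $k$-bump set — and that is the part of the argument that actually carries the weight.
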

\begin{proof}[Comments on the proof] As explained in Section \ref{sec:Reduced Functional}, one can work without loss of generality with $1<p=q<2^*$. We will denote by $\mathcal{J}_\eps$ the reduced functional associated with \eqref{eq:sistema_concentration_with_V} in the sense of Subsection \ref{subset:reducedFunctional}. 
Fix bounded domains $\widetilde \Lambda_i$ ($i=1,\ldots,k$), such that $\Lambda_i\Subset \widetilde\Lambda_i $. Take cut-off functions $\phi_i$ such that $\phi_i=1$ in $\Lambda_i$ and $\phi_i=0$ in $\R^N\setminus \widetilde \Lambda_i$. Since $\Omega$ might be an unbounded domain, following \cite{delPinoFelmerUmpico,delPinoFelmerMultipicos}, one can truncate the functions $f$ and $g$ outside $\Lambda_i$ in such a way that one recovers the Palais-Smale condition. Consider the following Nehari--type set
\[
\Ncal_\eps:=\left\{u\in H^1_0(\Omega)\mid \mathcal{J}_\eps'(u)(u\phi_i)=0,\ \text{ and } \int_{\Lambda_i}u^2\, dx>\eps^{N+1} \right\}.
\]
Roughly speaking, this set (which can be proved to be a manifold) localizes the functional $\mathcal{J}_\eps$ near each $H^1_0(\Lambda_i)\times H^1_0(\Lambda_i)$. The technical condition $\int_{\Lambda_i} u^2\, dx>\eps^{N+1}$ insures that the set is closed (actually, one proves that $\int_{\Lambda_i} u^2\, dx\geq \eta \epsilon^N$ for some $\eta>0$). It can be proved that $c_\eps=\inf_{\Ncal_\eps}\mathcal{J}_\eps$ is a critical point, corresponding to a solution of \eqref{eq:sistema_concentration_with_V} satisfying all the desired properties for sufficiently small $\eps>0$. As it was said before, we would like to observe that the statement in \cite{RamosTavares} is slightly incorrect, because it concludes that $u_\eps$ and $v_\eps$ have \emph{common} local maximums. The veracity of this stronger statement is not known to hold; this is related with the fact that the if $(u,v)$ is a \emph{nontrivial} solution of the limiting problem 
\begin{align*}
&-u''-\frac{N-1}{r} u'+u=g(v),\quad -v''-\frac{N-1}{r}v'+v=f(u),\\  &Nu''(0)=u(0)-g(v(0)),\quad Nv''(0)=v(0)-g(u(0)),
\end{align*} 
then one can only guarantee that either $u''(0)\neq 0$ or $v''(0)\neq 0$. The full proof of the statement of Theorem \ref{thm:ramostavares} is presented in \cite[Section 5]{TavaresThesis}.
\end{proof}

We would like to close this section referring to the work of Ramos \cite{RamosJMAA}, where the author (under the dimensional restriction $3\leq N\leq 6$) exhibits solutions of \eqref{eq:sistema_concentration_with_V} which concentrate around a prescribed critical point of $V$, which is not necessarily a minimum. It remais an open question whether this extends to higher dimensions or if there exist multi-peak solutions of the system for small $\eps>0$ concentrating around topologically nontrivial critical points of $V$ (in the sense of \cite[Theorem 1.2]{delPinoFelmerTop2}).

%
%

\section{Multiplicity results in the spirit of the Symmetric Mountain Pass Lemma}\label{sec:MultiplicityResults}

Let $\Omega$ be a bounded domain of $\R^N$, $N\geq 3$. It is well known that the superlinear and subcritical problem
\begin{equation*}
-\Delta u = |u|^{p-1}u \; \mbox{ in } \Omega, \qquad u=0 \;  \mbox{ on } \partial \Omega,
\end{equation*}
possesses infinitely many solutions in $H^{1}_{0}(\Omega)$ as follows from the natural $\IZ_2$-symmetry and the symmetric Mountain Pass Lemma of Ambrosetti and Rabinowitz \cite{AmbrosettiRabinowitz}. 
The non homogeneous problem
\begin{equation}\label{-Delta u = |u|^p-2u+h(x)}
-\Delta u = |u|^{p-1}u + h(x)\; \mbox{ in } \Omega, \qquad u=0 \;  \mbox{ on } \partial \Omega,
\end{equation}
where $h\in L^{2}(\Omega)$, can therefore be seen as a (large) perturbation of a symmetric situation and thus a large number of solutions is expected. One can indeed obtain infinitely many solutions, provided the growth range of the nonlinearity is suitably restricted. Namely, Bahri and Berestycki \cite{BahriBerestycki}, Struwe \cite{Struwe}, and, with a different approach, Rabinowitz \cite{Rabinowitz2, RabinowitzBook} proved the existence of infinitely many solutions for problem \eqref{-Delta u = |u|^p-2u+h(x)} under the restriction
\begin{equation}\label{frac2p + frac1p-1 > frac2N-2N}
p>1,\qquad \frac{2}{p+1} + \frac{1}{p} > \frac{2N-2}{N},
\end{equation}
while, later on, Bahri and Lions \cite{BahriLions} and Tanaka \cite{Tanaka} (see also \cite{KajikiyaTanaka}) showed that it is sufficient to assume
\begin{equation}\label{p<frac2N-2N-2}
p>1,\qquad p+1<\frac{2N-2}{N-2}.
\end{equation}
The main ingredient in Bahri and Lions \cite{BahriLions} and Tanaka \cite{Tanaka} is the use of the Morse index leading to more precise estimates and a better conclusion.
Moreover, assuming the ``natural" growth restriction $(p+1)(N-2)<2N$, Bahri \cite{Bahri} proved that there is an open dense set of functions $h\in H^{-1}(\Omega)$ for which
\eqref{-Delta u = |u|^p-2u+h(x)} admits infinitely many weak solutions.

For the corresponding Hamiltonian elliptic system 
\begin{equation}\label{S2}
\left\{
\begin{array}{rcll}
-\Delta u & = & \left| v\right|^{q-1} v + k(x) & \hbox{in} \,\, \Omega, \\
-\Delta v & = & \left| u\right|^{p-1} u  + h(x) & \hbox{in} \,\, \Omega, \\
u, v &= & 0 & \hbox{on} \,\, \partial \Omega,
\end{array}
\right.
\end{equation}
the symmetric case $h(x)\equiv k(x)\equiv0$ has been studied by several authors. By means of a Galerkin type approximation combined with the approach presented in Subsection \ref{subset:Fractional}, one can reduce the strongly indefinite functional to a semidefinite situation. We sketch this approach in Section \ref{se:Galer}. A different approach to the problem of symmetric indefinite functional was given by Angenent and van der Vorst in \cite{AngenentvanderVorst}, who applied Floer's version of Morse theory to Hamiltonian elliptic systems, in the spirit of \cite{BahriLions} (see also \cite{AngenentvanderVorst2000}).

We show in Subsection \ref{sec:symreduc} and Subsection \ref{sec:symfount} ahead that the variational approaches presented in the preceding sections allow one to derive the simplest proofs for the symmetric situation. Moreover, in the case where $h(x)\not\equiv 0$ and $k(x)\not \equiv0$, the reduction method of Section \ref{sec:Reduced Functional} is successful to obtain the equivalent of the Bahri-Lions' result for the scalar equation (for $p,q>1$). Indeed, Rabinowitz's approach mainly relies on an estimate of the deviation from symmetry and the use of an auxiliary functional. It turns out that the reduced functional $\J$ suits very well in Rabinowitz's approach (with some adaptation). Moreover, Morse index information \emph{a la} Bahri-Lions can be considered since with $\J$ we recover the geometry of the single equation case functional.

In dimension $1$ (see Subsection \ref{sec:pertusym}), we show that the reduction by inversion allows to treat the more general case $pq>1$. 

It must be stressed that since in general no a priori bounds for positive solutions are known to hold for \eqref{S2}, the results in this section do not give any information about the sign of the infinitely many solutions. In Subsection \ref{sec:infinitely_sign_changing} we will discuss this in more detail, providing also multiplicity of sign-changing solutions.

\subsection{The direct approach and Galerkin type approximation}\label{se:Galer}

In this section we consider the symmetric problem
\begin{equation}\label{symmsystem}
\left\{
\begin{array}{rcll}
-\Delta u & = & \left| v\right|^{q-1} v  & \hbox{in} \,\, \Omega, \\
-\Delta v & = & \left| u\right|^{p-1} u  & \hbox{in} \,\, \Omega, \\
u, v &= & 0 & \hbox{on} \,\, \partial \Omega,
\end{array}
\right.
\end{equation}
following closely the presentation by Tarsi \cite{Tarsi}. We will use the fractional Sobolev space approach of Subsection \ref{subset:Fractional}, and so we take $(p,q)$ satisfying \eqref{eq:p_and_q_for_E^s_approach}, which yields the existence of $0<s<2$ so that the functional $\I_s$ in \eqref{eq:the_use_of_s} is well defined. We show that infinitely many solutions can be found as critical points of $\I_s$ by means of a version of the symmetric Mountain Pass Lemma of Ambrosetti and Rabinowitz \cite{AmbrosettiRabinowitz}, valid for strongly indefinite functionals. Let us introduce some notations.

Take a Banach space $E$ with norm $\|\cdot\|$. Suppose that $E=E^+\oplus E^-$ with both $E^+$,
$E^-$ having infinite dimension, spanned respectively by $(e^+_i)_i$ and $(e^-_i)_i$. Set, for $n,m\in \N$
\begin{equation}\label{defX_n}
X_n=\spann \{e^+_1,...,e^+_n\} \oplus E^-, \qquad
X^m=E^+\oplus \spann \{e^-_1,...,e^-_m\},
\end{equation}
and let $(X^m)^{\bot}=\spann \{e_{m+1}^-,e_{m+2}^-,\ldots\}$ denote the complement of $X^m$ in $E$. For
a functional $I\in\mathcal{C}^1(E,\mathbb{R})$, we define $I_n:=I|_{X_n}$ as
the restriction of $I$ on $X_n$. 
Then we have the following theorem due to de Figueiredo and Ding \cite[Proposition 2.1]{deFigueiredoDing}, see also Bartsch and de Figueiredo \cite{BartschdeFigueiredo}.
\begin{theorem} \label{simMP}
Let E be as above and let $I\in \mathcal{C}^1(E, \mathbb{R})$ be even
with $I(0)=0$. In addition, suppose that for each $m\in \mathbb{N}$,
the following conditions hold:
\begin{itemize}
\item[(I$_1$)] there exists $R_m >0$ such that $I(z)\leq 0$
 for all $z\in X^m$ with $\|z\| \geq R_m$; 
\item[(I$_2$)] there exist $r_m>0$ and $a_m \rightarrow +\infty$ such that
$I(z)\geq a_m$ for all $z\in (X^{m-1})^{\bot}$
with $\|z\|=r_m$;
\item[(I$_3$)] $I$ is bounded from
above on bounded sets of $X^m$;
\end{itemize}
 and that
 \begin{itemize}
\item[(I$_4$)] $I$ satisfies the
$(PS)_c^{*}$ condition for any $c\geq 0$, that is, any sequence
$\{ z_n \} \subset E$ such that $z_n \in X_n$
for any $n \in \mathbb{N}$, $I(z_n) \rightarrow c$, and
$I'_n(z_n) \equiv I|_{X_n}'(z_n)
\rightarrow 0$ as $n \rightarrow +\infty$ possesses a convergent
subsequence.
\end{itemize}
Then the functional $I$ possesses an unbounded sequence $\{c_m\}$ of critical values.
\end{theorem}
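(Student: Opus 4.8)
The plan is a Galerkin (finite–dimensional) approximation combined with an equivariant minimax scheme in the spirit of the symmetric Mountain Pass Lemma, invoking the compactness condition $(I_4)$ only once, at the passage to the limit. For each $n\in\N$ put $I_n:=I|_{X_n}$. This is an even $C^1$ functional with $I_n(0)=0$; by $(I_1)$ and $(I_3)$ it is bounded above on $X_n$, and $I_n(z)\to-\infty$ along any ray of $X_n$. Fix $m\in\N$. Following the classical symmetric linking construction (Benci--Rabinowitz, de Figueiredo--Ding), one associates with $I_n$ a minimax value
\[
c_{m,n}=\inf_{A\in\Sigma_{m,n}}\ \sup_{z\in A}I_n(z),
\]
where $\Sigma_{m,n}$ is a family of symmetric ($-A=A$) subsets of $X_n$ selected via the Krasnoselskii genus so that: (a) every $A\in\Sigma_{m,n}$ meets the ``positive'' sphere $\{z\in(X^{m-1})^\perp:\|z\|=r_m\}$, on which $(I_2)$ gives $I\ge a_m$; and (b) the bounded symmetric set $Q_m:=X^m\cap\{\|z\|\le R_m\}$ belongs (essentially) to $\Sigma_{m,n}$, where after enlarging $R_m$ if necessary we assume $r_m<R_m$, so that $I\le0$ on $\partial Q_m$ by $(I_1)$ and $c_{m,n}\le\sup_{Q_m}I=:b_m<\infty$ by $(I_3)$. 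The net effect is a two–sided bound, \emph{uniform in $n$}:
\[
0<a_m\le c_{m,n}\le b_m<\infty.
\]

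Next, for fixed $m$ and $n$ one shows that $c_{m,n}$ is an asymptotic critical value of $I_n$ on $X_n$: were it not, then $\|I_n'\|\ge\delta>0$ on a strip $I_n^{-1}([c_{m,n}-2\eps,c_{m,n}+2\eps])$ for suitable $\delta>0$ and $\eps>0$ with $2\eps<a_m\le c_{m,n}$, and the quantitative deformation lemma in its equivariant form (legitimate since $I_n$ is even), applied to the compact symmetric members of $\Sigma_{m,n}$ via a pseudo–gradient flow of $I_n$ on $X_n$, would lower the minimax level below $c_{m,n}$ while keeping the deformed sets in $\Sigma_{m,n}$ — the deformation being the identity where $I_n<c_{m,n}-2\eps$, in particular near $\partial Q_m$ — a contradiction. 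Hence there is a Palais--Smale sequence for $I_n$ at level $c_{m,n}$; picking one term far enough along it, one obtains $w_{m,n}\in X_n$ with $|I(w_{m,n})-c_{m,n}|<1/n$ and $\|I_n'(w_{m,n})\|_{X_n^*}<1/n$.

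Finally one passes to the limit. Since $a_m\le c_{m,n}\le b_m$, along a subsequence of $n$ one has $c_{m,n}\to c_m\in[a_m,b_m]$, and the corresponding $(w_{m,n})_n$ satisfies $w_{m,n}\in X_n$, $I(w_{m,n})\to c_m$ and $I|_{X_n}'(w_{m,n})\to0$. Condition $(I_4)$ (which is inherited by any increasing subexhaustion of $(X_n)_n$) then produces a subsequence converging in $E$ to some $z_m$, necessarily a critical point of $I$ with $I(z_m)=c_m\ge a_m$. As $a_m\to+\infty$ by $(I_2)$, the values $c_m=I(z_m)$ are unbounded, and after relabelling $\{c_m\}$ is the required unbounded sequence of critical values.

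\textbf{Main obstacle.} The evenness/deformation bookkeeping and the final limiting step via $(I_4)$ are routine. The genuinely delicate point is the construction of the symmetric families $\Sigma_{m,n}$ on the Galerkin spaces together with the Borsuk--Ulam type intersection lemma behind property (a): this requires careful dimension counting, compatible with the fact that $\dim(X_n\cap X^{m-1})$ grows with $n$, which is precisely why the whole scheme must be carried out inside the $X_n$ rather than directly in $E$. A secondary technical issue is that $X_n$ is itself infinite–dimensional, so the equivariant deformation lemma must be run there; the saving feature is that the ``extra'' infinite–dimensional directions of $X_n$ are exactly those along which $I_n$ is uniformly concave and coercive from above, so no new lack of compactness is introduced at fixed $n$, all of it being absorbed into the single hypothesis $(I_4)$ used in the limit.
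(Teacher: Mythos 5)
Your proof follows essentially the same route as the paper's sketch (and the de Figueiredo--Ding argument it cites): a Galerkin approximation on the spaces $X_n$, a symmetric minimax level $c_{m,n}$ squeezed uniformly in $n$ between $a_m$ and $\sup_{Q_m} I$ via an equivariant intersection lemma plus a quantitative deformation argument at fixed $n$, and a final passage to the limit in $n$ using the $(PS)^*_c$ condition. The only cosmetic difference is that you phrase the minimax over genus-constrained symmetric families $\Sigma_{m,n}$, whereas the paper uses the equivalent classes $\Gamma^m_n$ of odd maps equal to the identity on $\partial B^m_n$.
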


\begin{proof}[Idea of the proof] The sequence of critical values can be constructed by means of
a Galerkin approximation. Using the previous notations,
set
\begin{equation*}\label{defB_k}
B^m:= \left\{z\in X^m:\| z\| \leq R_m \right\}
\end{equation*}
as being the ball of radius $R_m$ in $X^m$, take
\begin{equation*}\label{defB_k^n}
B^m_n:= B^m \cap X_n = \left\{z\in X^m \cap X_n:\| z\| \leq R_m \right\},
\end{equation*}
and define the following sets of continuous maps
\begin{equation*}\label{defGamma_k^n}
\Gamma_n^m:= \left\{h\in \mathcal{C} (B^m_n, X_n): h(-z)=-h(z)\ \forall z\in B^m_n,%
\quad h(z)=z\ \forall z\in \partial B^m_n \right\}.
\end{equation*}
Finally define the values
\begin{equation*}\label{c_k^n}
c_n^m:=\inf_{h\in \Gamma _n^m}{\sup_{z \in B_n^m}{I(h(z))}}.
\end{equation*}
It can be proved that, for sufficiently large $m\in \mathbb{N}$, the sequences $c_n^m$ converge to
critical values $c^m$ of the functional $I$ as $n\to +\infty$. Thus the limits
\begin{equation*}\label{defc_k}
c_m:= \lim_{n\rightarrow +\infty} c_n^m
\end{equation*}
are critical values of the symmetric functional $I$ for large $m$.
\end{proof}

Other versions of the same theorem are known, where the $(PS)_c^{*}$
condition is replaced by other variants, or by the usual Palais-Smale condition (see for instance
\cite{Bartsch, BenciRabinowitz, Ding} and references therein).

We briefly present the functional framework in which the functional $\I_s$ associated to the system \eqref{symmsystem} satisfies the hypotheses of Theorem
\ref{simMP}. First, fix again in $H^1_0 (\Omega )$ a system of orthogonal and $L^2$-normalized eigenfunctions $\phi
_1, \phi _2, \phi _3,...$, of $-\Delta $, $\phi _1 >0$,
corresponding to positive eigenvalues $\lambda _1 <\lambda _2 \leq
\lambda _3 \leq ...\uparrow +\infty $, counted with their
multiplicity. Remember the definition of 
\[
A^su=A^s\left( \sum_{n=1}^\infty a_n \phi_n \right)=\sum_{n=1}^\infty \lambda_n^{s/2}a_n \phi_n
\]
defined in the space $E^s(\Omega)$, see \eqref{eq:Es}. Like in Subsection \ref{subset:Fractional}, we use the notation $E_s=E^s(\Omega)\times E^t(\Omega)$ with $t=2-s$.
In order to apply Theorem \ref{simMP}, one uses  the decomposition $E_s=E_s^+\oplus E_s^-$, with
\[
E_s^+=\{(u,A^{-t}A^s u):\  u\in E^s(\Omega)\}, \qquad  E_s^{-}=\{(u,-A^{-t}A^s u):\ u\in E^s(\Omega)\},
\]
and basis $(\phi_n,\pm A^{-t}A^s \phi_n)_n$. For the details we refer to Tarsi \cite[Theorem 1.1]{Tarsi} or to de Figueiredo and Ding \cite{deFigueiredoDing} for a related but different problem. 

\subsection{The Symmetric Mountain Pass Lemma combined with the Lyapunov-Schmidt type reduction}\label{sec:symreduc}
We now show how the reduction approach of Section \ref{sec:Reduced Functional} combined with the Symmetric Mountain Pass Lemma \cite{AmbrosettiRabinowitz} provides a short proof of the existence of infinitely many solution in a symmetric framework. Observe that we have to restrict ourselves to the case $p,q>1$.
\begin{theorem} \label{h=k=0} 
Assume $(p,q)$ satisfies \eqref{eq:p_and_q_reduzidoGERAL}. 
Then the system \eqref{symmsystem} admits an unbounded sequence of solutions $(u_k,v_k)_{k}\subset H^1_0(\Omega)\times H^1_0(\Omega)$.
\end{theorem}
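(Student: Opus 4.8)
The plan is to combine the Lyapunov–Schmidt type reduction from Section~\ref{sec:Reduced Functional} with the classical Symmetric Mountain Pass Lemma of Ambrosetti and Rabinowitz \cite{AmbrosettiRabinowitz}. Since \eqref{symmsystem} is the particular instance of \eqref{eq:system_with_f_and_g} with $f(u)=|u|^{p-1}u$ and $g(v)=|v|^{q-1}v$, and by Remark~\ref{rem:p=q} we may assume without loss of generality that $1<p=q<2^\ast-1$, the functional $\I=\I_1$ on $H=H^1_0(\Omega)\times H^1_0(\Omega)$ is well defined, of class $C^2$, and \emph{even}, since both nonlinearities are odd. First I would invoke Lemma~\ref{lemma:Theta_is_C1} to obtain the $C^1$ map $u\mapsto \Psi_u$ and form the reduced functional $\J:H^1_0(\Omega)\to\R$, $\J(u)=\I(u+\Psi_u,u-\Psi_u)$, as in \eqref{eq:reducedfunctional_J}. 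The key observation is that $\J$ inherits the $\IZ_2$-symmetry: because $\I$ is even and $\Psi_u$ is uniquely characterized by $\I'(u+\Psi_u,u-\Psi_u)(\phi,-\phi)=0$ for all $\phi$, uniqueness forces $\Psi_{-u}=-\Psi_u$, whence $\J(-u)=\J(u)$. Moreover $\J(0)=0$, and by Lemma~\ref{lem:homeo} critical points of $\J$ correspond bijectively to critical points of $\I$, i.e.\ to solutions of \eqref{symmsystem}, with the same energy.

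Next I would verify that $\J$ fits the hypotheses of the Symmetric Mountain Pass Lemma. As already noted in Subsection~\ref{subset:reducedFunctional}, $\J$ satisfies the Palais–Smale condition (this follows from Lemma~\ref{lemma:I_PalaisSmale} together with the homeomorphism $\eta$) and has a mountain pass geometry around the origin: from Step~1 of the proof of Lemma~\ref{lemma:Existence_result} one gets $\J(u)=\I(u+\Psi_u,u-\Psi_u)\geq \I(u,u)\geq \alpha>0$ on $\partial B_\rho(0)\cap H^+$, which translates into $\J(u)\geq\alpha$ on a small sphere of $H^1_0(\Omega)$. For the linking at infinity on finite dimensional subspaces, I would fix an increasing sequence of subspaces $V_k\subset H^1_0(\Omega)$ with $\dim V_k=k$ (e.g.\ spanned by eigenfunctions of $-\Delta$) and show that $\J\to -\infty$ on $V_k$ as $\|u\|\to\infty$. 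This is where condition $(fg3)$ (here with $\delta=p-1>0$) is used: arguing exactly as in Step~2 of Lemma~\ref{lemma:Existence_result}, for $u\in V_k$ one estimates
\[
\J(u)\leq \I(u+\Psi_u,u-\Psi_u)\leq a_1\|u\|_{H^1_0}^2-a_5\int_\Omega |2u|^{p+1}\, dx-a_3,
\]
and since on the finite dimensional space $V_k$ all norms are equivalent and $p+1>2$, the right-hand side tends to $-\infty$; hence there is $R_k>0$ with $\J\leq 0$ on $V_k\setminus B_{R_k}$.

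With these ingredients the classical Symmetric Mountain Pass Lemma \cite{AmbrosettiRabinowitz} (or \cite{RabinowitzBook}) yields an unbounded sequence $(c_k)_k$ of critical values of $\J$, defined by the usual genus-based minimax $c_k=\inf_{A\in \Gamma_k}\max_{u\in A}\J(u)$ over symmetric sets of genus $\geq k$; the unboundedness of $(c_k)$ forces the corresponding critical points $u_k$ to be unbounded in $H^1_0(\Omega)$ (using that $\J$ maps bounded sets to bounded sets, as follows from Lemma~\ref{lemma:Theta_is_C1} and the growth of $f,g$). Setting $(u_k,v_k)=\eta(u_k)=(u_k+\Psi_{u_k},u_k-\Psi_{u_k})$ gives an unbounded sequence of solutions of \eqref{symmsystem}, which is the claim. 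I expect the main obstacle to be a clean justification of the $\IZ_2$-equivariance of the reduction — i.e.\ that $\Psi_{-u}=-\Psi_u$, so that $\J$ is genuinely even — together with the care needed to reduce, via Remark~\ref{rem:p=q} and Subsection~\ref{sec:p_diferente_q}, from the general subcritical range \eqref{eq:p_and_q_reduzidoGERAL} to the case $1<p=q<2^\ast-1$ where the $H^1_0\times H^1_0$ framework and the $C^2$ reduced functional are available; once this is in place, the rest is a routine application of the abstract symmetric minimax theorem.
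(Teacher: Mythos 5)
Your proposal is correct and follows essentially the same route as the paper: reduce to $1<p=q<2^*-1$ via the truncation of Subsection \ref{sec:p_diferente_q}, pass to the reduced functional $\J$, and apply the symmetric Mountain Pass Theorem. The only (harmless) difference is that you verify $\J\to-\infty$ on finite-dimensional subspaces by the direct convexity estimate of Step 2 of Lemma \ref{lemma:Existence_result} (exploiting that $(u+\Psi_u)+(u-\Psi_u)=2u$ kills the dependence on $\Psi_u$), whereas the paper organizes the equivalent computation as a contradiction argument; your explicit check that $\Psi_{-u}=-\Psi_u$, so that $\J$ is genuinely even, is a welcome detail the paper leaves implicit.
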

\begin{proof}
We can assume that $1<p= q<2^*-1$ (cf. Remark \ref{rem:p=q} or the discussion ahead) so that we can work with the functional space $H^1_0(\Omega)\times H^1_0(\Omega)$.
Remember from \eqref{eq:reducedfunctional_J} the definition of reduced functional
\[
\J(u) = \sup\left\{ \I\left(u + \psi,u-\psi\right): \psi\in H^1_0(\Omega)\right\}=\I(u+\Psi_u,u-\Psi_u),
\]
so that 
$$\J(u) \geq \I(u,u) \geq \|u\|_{H^1_0}^2$$ 
provided $ \|u\|=\rho$ with $\rho>0$ small enough. 
Now, take a finite dimensional subspace $X\subset H^1_0(\Omega)$. Assume by contradiction that there exists an unbounded sequence $(u_n)_n\subset X$ such that 
$$\liminf_{n\to\infty}\J(u_n)>-\infty.$$  
By computing $\J(u_n)$, we easily see that the sequence $(\|\Psi_{u_{n}}\|_{H^1_0}/\|u_n\|_{H^1_0})_n$ is bounded and
$$\lim_{n\to\infty}\int \left| \frac{u_n}{\|u_n\|_{H^1_0}}\pm \frac{\Psi_{u_{n}}}{\|u_n\|_{H^1_0}}\right| ^{p} = 0.$$
It then follows that $$\lim_{n\to\infty}\int \left|\frac{u_n}{\|u_n\|_{H^1_0}}\right| ^{p} = 0,$$ which is impossible since $X$ has finite dimension. Now since $\J$ satisfies the Palais-Smale condition and it is an even functional, we can therefore apply the $\IZ_2$-version of the Mountain Pass Theorem \cite[Theorem 2.8 \& Corollary 2.9]{AmbrosettiRabinowitz} to the functional $\J$, and the conclusion in the case $1<p= q<2^*-1$ follows from Lemma \ref{lem:homeo}.

Next, we observe that assuming $1<p=q < 2^*-1$ is not restrictive. Indeed, if for instance $p<2^*-1<q$, we define $g_{n}(s)$ as in Subsection \ref{sec:p_diferente_q}. Since $p<2^*-1$, extending the case of pure powers to our new settings, it is easily seen that for every $n\in \mathbb N$, the modified system 
\begin{equation}\label{eq:another_eq_with_n}
\left\{
\begin{array}{ll}
-\Delta u=g_{n}(v) & \text{ in } \Omega\\
 -\Delta v=|u|^{p-1}u & \text{ in } \Omega\\
 u,v=0  & \text{ on } \partial \Omega
 \end{array}
\right.
\end{equation}
has an unbounded sequence of solutions $(u_k,v_k)_{k}\subset H^1_0(\Omega)\times H^1_0(\Omega)$. Finally, arguing as in Section \ref{sec:p_diferente_q}, it comes out that those solutions are bounded in the $L^\infty$ norm independently of $n$. This means that for every $k\in \mathbb N$, the first $k$ solutions of \eqref{eq:another_eq_with_n} are indeed solutions of the original system provided $n$ is chosen large enough. Since this is true for every $k\in \mathbb N$, the conclusion follows. 
\end{proof}

\subsection{The Symmetric Mountain Pass Lemma combined with the reduction by inversion}\label{sec:symfount}
Here we show how the reduction by inversion approach of Section~\ref{sec:QuartaOrdem} combined with the Symmetric Mountain Pass Lemma \cite{AmbrosettiRabinowitz}, see also \cite[p. 5]{RabinowitzBook}, provide yet another short proof, even simpler than that of Theorem \ref{h=k=0}, of the existence of countable infinitely many solutions in a symmetric framework. Here we closely follow the presentation in \cite{dosSantosNodea2009}.

Let $\Omega$ be a smooth bounded domain in $\R^N$ with $N \geq 1$. We consider the system
\begin{equation}\label{S8}
\left\{
\begin{array}{rcll}
-\Delta u & = & \left| v\right|^{q-1} v & \hbox{in} \,\, \Omega, \\
-\Delta v & = & \left| u\right|^{p-1} u & \hbox{in} \,\, \Omega, \\
u, v &= & 0 & \hbox{on} \,\, \partial \Omega,
\end{array}
\right.
\end{equation}
under the same hypothesis made in Section \ref{sec:DualMethod}, namely 
\begin{equation}\label{eq:(p,q)_for_Ederson_partsection8}
p,q >0,\qquad 1> \gd{\frac{1}{p+1} + \frac{1}{q+1} > \frac{N- 2}{N}}.  \tag{H3}
\end{equation}

As we have seen at Section \ref{sec:QuartaOrdem}, the system \eqref{S8} is equivalent to the fourth-order equation
\begin{equation}\label{ENL8}
\left\{
\begin{array}{rcll}
\Delta \left( | \Delta u |^{\frac{1}{q} -1}\Delta u \right) & = & | u |^{p-1}u & \hbox{in}\,\,\Omega\\
u , \Delta u & = & 0 & \hbox{on}\,\,\partial \Omega.
\end{array}
\right.
\end{equation}

Let $E = W^{2,\frac{q+1}{q}}(\Omega) \cap W^{1, \frac{q+1}{q}}_0(\Omega)$. So, under \eqref{eq:(p,q)_for_Ederson_part2}, classical solutions $(u,v)$ of \eqref{S8} are the pairs such that $u$ is a critical point of the $C^1(E, \R)$ functional $J: E\to\R$ defined by
\[
J(u) = \frac{q}{q+1} \int_{\Omega} \left| \Delta u \right|^{\frac{q+1}{q}}\, dx - \frac{1}{p+1} \int_{\Omega}\left| u \right|^{p+1}\, dx.
\]

Here we prove the following theorem which improves Theorem \ref{h=k=0}.

\begin{theorem}\label{theorem}
Assume \eqref{eq:(p,q)_for_Ederson_partsection8}. Then \eqref{S8} has a sequence of classical solutions $(u_n,v_n)$ such that $J(u_n) \rt \infty$ as $n\rt \infty$.
\end{theorem}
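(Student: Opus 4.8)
The plan is to transfer the well-known multiplicity result for the single superlinear subcritical scalar equation to the functional $J$ via the reduction by inversion developed in Section~\ref{sec:QuartaOrdem}. Recall from Proposition~\ref{Reg1} that critical points of the $C^1$ functional
\[
J(u) = \frac{q}{q+1} \int_{\Omega} \left| \Delta u \right|^{\frac{q+1}{q}}\, dx - \frac{1}{p+1} \int_{\Omega}\left| u \right|^{p+1}\, dx
\]
on $E = W^{2,\frac{q+1}{q}}(\Omega) \cap W^{1, \frac{q+1}{q}}_0(\Omega)$ are in one-to-one correspondence with classical solutions of \eqref{S8}, via $v := |\Delta u|^{\frac{1}{q}-1}(-\Delta u)$, and that along such solutions $J(u) = I(u,v)$. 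So it suffices to produce a sequence $(u_n)\subset E$ of critical points of $J$ with $J(u_n)\to\infty$. The key structural facts are: $J$ is even, $J(0)=0$, the embedding $E\hookrightarrow L^{p+1}(\Omega)$ is compact (bounded smooth domain, exactly as in Lemma~\ref{alphapqatingido}), and $pq>1$ so the nonlinear term has higher homogeneity $p+1 > \frac{q+1}{q}$ than the quadratic-type principal part $\|u\|_E^{\frac{q+1}{q}}$.

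First I would verify the geometric hypotheses of the symmetric Mountain Pass Lemma of Ambrosetti--Rabinowitz \cite{AmbrosettiRabinowitz} (in the form for even functionals, e.g. \cite[Theorem 9.12]{RabinowitzBook}). Mountain-pass geometry near the origin: since $\|u\|_{p+1}^{p+1}\le C\|u\|_E^{p+1}$ by the compact embedding and $p+1>\frac{q+1}{q}$, we get $J(u)\ge \frac{q}{q+1}\rho^{\frac{q+1}{q}} - C\rho^{p+1}\ge\alpha>0$ on $\|u\|_E=\rho$ for $\rho$ small. For the linking data, fix any finite-dimensional subspace $W\subset E$; on $W$ all norms are equivalent, so for $u\in W$, $J(u)\le \frac{q}{q+1}\|u\|_E^{\frac{q+1}{q}} - c_W\|u\|_E^{p+1}\to-\infty$ as $\|u\|_E\to\infty$ because $p+1>\frac{q+1}{q}$; hence $J\le 0$ outside a large ball of $W$. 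Next, the Palais--Smale condition: a $(PS)_c$ sequence $(u_n)$ satisfies, using $J'(u_n)u_n = \|u_n\|_E^{\frac{q+1}{q}}-\|u_n\|_{p+1}^{p+1}$,
\[
c + o(1)\|u_n\|_E = J(u_n) - \tfrac{q}{q+1}J'(u_n)u_n = \left(\tfrac{q}{q+1}-\tfrac{1}{p+1}\right)\|u_n\|_{p+1}^{p+1} = \tfrac{pq-1}{(p+1)(q+1)}\|u_n\|_{p+1}^{p+1},
\]
which, combined with $\|u_n\|_E^{\frac{q+1}{q}} = \|u_n\|_{p+1}^{p+1}+o(1)\|u_n\|_E$ and $\frac{q+1}{q}<p+1$, forces $(u_n)$ bounded in $E$; then compactness of $E\hookrightarrow L^{p+1}$ upgrades weak convergence to strong convergence in the standard way (the nonlinear term converges, and the principal part is handled because $u\mapsto |\Delta u|^{\frac{1}{q}-1}\Delta u$ is the duality map of the uniformly convex space $E$, so it satisfies the $(S_+)$ property). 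This is essentially the argument already used for $J$-type functionals in Section~\ref{sec:QuartaOrdem} and for $\I_s$ in Lemma~\ref{I_s_satisfies_PS}.

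Then, since $E$ is infinite-dimensional and separable, pick an increasing sequence of finite-dimensional subspaces $E_1\subset E_2\subset\cdots$ with $\overline{\cup_k E_k}=E$; the paragraph above gives, for each $k$, a radius $R_k$ with $J\le 0$ on $E_k\setminus B_{R_k}$, while the mountain-pass estimate gives $J\ge\alpha>0$ on a small sphere of a complementary decomposition. The symmetric Mountain Pass Lemma then yields an unbounded sequence of critical values $c_k\to\infty$, hence critical points $u_k$ of $J$ with $J(u_k)=c_k\to\infty$. Applying Proposition~\ref{Reg1} to each $u_k$ produces the classical solutions $(u_k,v_k)$ of \eqref{S8} with $J(u_k)\to\infty$, which is the claim.

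I expect the only genuinely delicate point to be the strong convergence of Palais--Smale sequences in the $W^{2,(q+1)/q}$-setting when $q\ne 1$ (so $E$ is not Hilbert): one must argue via the $(S_+)$-property of the duality map $u\mapsto -\Delta(|\Delta u|^{\frac1q-1}\Delta u)$ on the uniformly convex Banach space $E$, rather than by the elementary Hilbert-space expansion. A secondary bookkeeping point is that the ``natural'' subcriticality hypothesis \eqref{eq:(p,q)_for_Ederson_partsection8} is exactly what makes $E\hookrightarrow L^{p+1}(\Omega)$ compact and $\frac{q+1}{q}<p+1$, so no truncation/bootstrap as in Subsection~\ref{sec:p_diferente_q} is needed here — everything happens directly in $E$, which is precisely why this proof is, as advertised, even simpler than the one of Theorem~\ref{h=k=0}.
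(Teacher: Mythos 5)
Your proof is correct and follows essentially the same route as the paper: the symmetric Mountain Pass Lemma applied directly to the even functional $J$ on $E$, using the mountain-pass geometry at the origin, the negativity of $J$ outside large balls of finite-dimensional subspaces (by equivalence of norms there, which is where $p+1>\frac{q+1}{q}$, i.e. $pq>1$, enters), the Palais-Smale condition, and finally Proposition \ref{Reg1} to convert critical points of $J$ into classical solutions of \eqref{S8}. The only cosmetic difference is in the compactness step: the paper settles strong convergence of Palais-Smale sequences via Lemma \ref{lemmaps}, writing $J'=\Psi-T$ with $\Psi$ a homeomorphism and $T$ compact (cf. \cite[Lemma 3.4]{Ederson2008}), whereas you invoke the $(S_+)$ property of the duality map on the uniformly convex space $E$ --- two standard and essentially equivalent ways of handling the non-Hilbertian setting.
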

\begin{proof}
By using Lemma \ref{lemmaps}, since \eqref{eq:(p,q)_for_Ederson_partsection8} is satisfied, it is possible to show that $J$ satisfies the Palais-Smale condition; cf. \cite[Lemma 3.4]{Ederson2008}. 

On the other hand, by using the superlinear hypothesis $1> \frac{1}{p+1} + \frac{1}{q+1}$, that is $pq>1$, and standard arguments it follows that $J$ has a mountain pass geometry around its local minimum at origin. Also, its is clear that $J$ is an even functional.

Then, by \cite[p. 5]{RabinowitzBook}, we just need to prove that for every finite dimensional subspace $F \con E$, there exists $R = R(F)> 0$ such that $J(u) \leq 0$ for $u \in F\menos B_{R(F)}$. This follows from the hypothesis $p > \frac{1}{q}$, since
\[
J(u) = \frac{q}{q+1} \| u \|_E^{\frac{q+1}{q}} - \frac{1}{p+1}\| u \|_{p+1}^{p+1}
\]
and that on finite dimensional subspaces all norms are equivalent.
\end{proof}

\subsection{Perturbation from symmetry}\label{sec:pertusym}

Throughout this subsection we will restrict the discussion to the case $N\geq 3$. Using the direct variational approach and adapting Rabinowitz's arguments \cite{RabinowitzBook}, Tarsi \cite{Tarsi} obtained a first result in the vein of  \cite{BahriBerestycki,RabinowitzBook,Rabinowitz,Struwe} for the system \eqref{symmsystem}.  As in \cite{deFigueiredoDing}, the approach relies on Galerkin type arguments. Tarsi \cite{Tarsi} proved the existence of infinitely many solutions for the perturbed system \eqref{S2} under the restriction (assuming also that $1<p\leq q$)
\begin{equation}\label{frac1p+frac1q+fracp(p-1)q}
\frac{1}{p+1}+\frac{1}{q+1}+\frac{p+1}{p(q+1)}> \frac{2N-2}{N}.
\end{equation}
We observe that this condition implies condition \eqref{frac2p + frac1p-1 > frac2N-2N} and it reduces to \eqref{frac2p + frac1p-1 > frac2N-2N} in case $p=q$. In particular, $p+1$ is not allowed to be close to the critical range $(2N-2)/(N-2)$ which appears in \eqref{p<frac2N-2N-2}. Observe also that both $p+1$ and $q+1$ have to be smaller than the critical Sobolev exponent $2N/(N-2)$.

Using the Lyapunov-Shmidt type reduction of Section \ref{sec:Reduced Functional}, Bonheure and Ramos \cite{BonheureRamosErratum,BonheureRamos} get rid of the indefiniteness of the energy functional associated to the system, giving rise to critical points whose energy is controlled (from below) by their Morse indices. This allows to obtain a result in the vein of Bahri and Lions \cite{BahriLions} and Tanaka \cite{Tanaka} improving \eqref{frac1p+frac1q+fracp(p-1)q}. 

\begin{theorem}[\cite{BonheureRamosErratum,BonheureRamos}]\label{main_theorem}
Let $h, k \in L^2(\Omega)$ and take $(p,q)$ satisfying 
\begin{equation}\label{fracN2(1-frac1p-frac1q)<fracp-1p}
1<p\leq q,\qquad \frac{N}{2} (1-\frac{1}{p+1}-\frac{1}{q+1})<\frac{p}{p+1}.
\end{equation}
Then the system \eqref{S2} admits an unbounded sequence of solutions $(u_k,v_k)_{k}\subset H^1_0(\Omega)\times H^1_0(\Omega)$.
\end{theorem}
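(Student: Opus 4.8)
The plan is to adapt Rabinowitz's perturbation-from-symmetry scheme (as presented in \cite{RabinowitzBook}) to the reduced functional $\J$ of Section~\ref{sec:Reduced Functional}, exploiting the fact that $\J$ inherits the variational geometry of a single superlinear equation. Since \eqref{fracN2(1-frac1p-frac1q)<fracp-1p} implies \eqref{eq:p_and_q_reduzidoGERAL}, by Remark~\ref{rem:p=q} (and the truncation argument of Subsection~\ref{sec:p_diferente_q}, which preserves $L^\infty$-bounds on solutions coming from variational levels with controlled energy) one may first reduce to the case $1<p=q<2^*-1$ and work in $H=H^1_0(\Omega)\times H^1_0(\Omega)$. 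Here, however, the nonlinearity is not odd because of the perturbations $h,k$. The perturbed energy is $\I_{h,k}(u,v)=\int_\Omega\langle\nabla u,\nabla v\rangle\,dx-\int_\Omega(F(u)+G(v))\,dx-\int_\Omega(k u+h v)\,dx$; one checks, exactly as in Lemma~\ref{lemma:sup_I(t(u,v)+(phi,-phi))} and Lemma~\ref{lemma:Theta_is_C1}, that for each $u$ with $u\neq -v$ the $H^-$-sup is still uniquely attained (the quadratic perturbation does not affect the strict concavity in the $\phi$-direction), producing a $C^1$ reduced functional $\J_{h,k}:H^1_0(\Omega)\to\R$ with $\J_{h,k}'(u)\phi=\I_{h,k}'(u+\Psi_u,u-\Psi_u)(\phi,\phi)$, whose critical points correspond to solutions of \eqref{S2}.

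The core of the argument then runs as in \cite[Chapter~10]{RabinowitzBook}. First I would introduce the auxiliary even functional obtained by a truncation/modification of the ``lower order'' terms, so that one recovers a genuinely $\IZ_2$-symmetric functional $\widehat\J$ for which the symmetric minimax values $b_k$ are defined via the standard cohomological index over symmetric sets, and $b_k\to+\infty$ at a controlled rate (here the growth rate $b_k\sim k^{(p+1)/(N(p+1)/2\,-\,\cdots)}$, or rather the comparison between $b_k$ and $b_{k+1}$, is what matters). Then one shows that if the perturbed problem had only finitely many solutions, the deformation lemma would force the perturbed minimax values $c_k$ of $\J_{h,k}$ to stay close to the $b_k$; the quantitative control is the estimate $c_k\le b_k+C(b_k^{1/(p+1)}+1)$ together with the reverse inequality $c_k\ge b_k-C(c_{k+1}^{1/(p+1)}+1)$, coming from the $\|(h,k)\|_2$-sized deviation from symmetry. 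This is where the Morse-index refinement of Bahri--Lions--Tanaka enters: since $\J_{h,k}$ is a scalar functional with the single-equation geometry, the critical points produced at level $c_k$ carry a lower bound on their (reduced) Morse index $m_\J\ge k-1$, and by the relation $m_{\J}(u)\le m_{H^-}(u+\Psi_u,u-\Psi_u)$ recalled in Subsection~\ref{sec:p_diferente_q} together with the a~priori $L^\infty$-estimate of Theorem~\ref{thm:aprioribounds}, one upgrades this into an $L^\infty$ (hence energy) bound $c_k\le C k^{2/N}$. Comparing this polynomial bound with the growth of $b_k$ under hypothesis \eqref{fracN2(1-frac1p-frac1q)<fracp-1p} yields a contradiction with $c_k\ge b_k-o(b_k)\to\infty$ unless there are infinitely many solutions.

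The steps, in order: (1) reduce to $1<p=q<2^*-1$ via Subsection~\ref{sec:p_diferente_q}; (2) construct $\J_{h,k}$ and verify it is $C^1$, satisfies Palais--Smale (adapting Lemma~\ref{lemma:I_PalaisSmale}, the linear terms being compact perturbations), and has a ``single-equation'' mountain-pass geometry; (3) set up the symmetric comparison functional $\widehat\J$ and its minimax values $b_k$, computing their growth rate; (4) run Rabinowitz's interlacing argument to get, under the standing finiteness assumption, two-sided estimates relating $c_k$ and $b_k$; (5) derive Morse-index lower bounds at level $c_k$ for $\J_{h,k}$, translate them through $m_\J\le m_{H^-}$ into $L^\infty$-bounds via Theorem~\ref{thm:aprioribounds}, hence an energy upper bound polynomial in $k$; (6) contradict the growth of $b_k$ under \eqref{fracN2(1-frac1p-frac1q)<fracp-1p}.

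The main obstacle I expect is step~(5): making rigorous the passage from the abstract index-based minimax level to a usable lower bound on the reduced Morse index, and then converting that Morse-index bound into an $L^\infty$ a~priori bound on the corresponding solution \emph{uniform in $k$}. In the single-equation case this is the technical heart of Bahri--Lions and Tanaka; for the system it additionally requires the bridge $m_{\J}(u)\le m_{H^-}(u+\Psi_u,u-\Psi_u)$ and the careful truncation argument of Subsection~\ref{sec:p_diferente_q} applied not just to ground states but to arbitrary critical points with controlled Morse index. All the remaining pieces (reduction, Palais--Smale, symmetric minimax setup, interlacing) are by now fairly routine once the reduced functional is in hand, so I would spend most of the effort on a clean statement and proof of the Morse-index $\Rightarrow$ $L^\infty$ implication for $\J_{h,k}$, referring to \cite{BonheureRamosErratum,BonheureRamos} for the full details.
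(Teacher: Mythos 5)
Your overall architecture --- Rabinowitz's perturbation-from-symmetry scheme run on the reduced functional $\J$, sharpened by a Morse-index estimate in the spirit of Bahri--Lions and Tanaka --- is exactly the route of \cite{BonheureRamosErratum,BonheureRamos}, to which the paper defers for the full proof. However, step (5), which you yourself identify as the heart of the matter, has the logic of the Morse-index estimate inverted, and as written it cannot close the argument. A lower bound $m^*\geq k$ on the (augmented, relative) Morse index of a critical point at level $c_k$ does not produce an \emph{upper} bound on its energy; it produces a \emph{lower} bound. The mechanism is the Cwikel--Lieb--Rosenbljum inequality (the ingredient the paper singles out and which your proposal omits entirely): the linearization of $\J$ --- or rather of $\J_\lambda$ for a suitable choice of the parameter $\lambda$, which is why the family of Subsection~\ref{sec:reduced_with_lambda} is needed --- at a critical point $u$ has at least $k$ non-positive eigenvalues, so $k\leq C\int V^{N/2}$ with $V$ built from $f'(u+\Psi_u)$ and $g'(u-\Psi_u)$; interpolating the resulting Lebesgue norms against $\int|u+\Psi_u|^{p+1}$ and $\int|u-\Psi_u|^{q+1}$, which are controlled by $\J(u)$, yields $c_k=\J(u_k)\geq Ck^{\gamma}-C$ with $\gamma$ computable, and hypothesis \eqref{fracN2(1-frac1p-frac1q)<fracp-1p} is precisely what makes $\gamma$ exceed the exponent $\beta$ in the upper bound $c_k\leq Ck^{\beta}$ that the interlacing inequalities force under the standing assumption of finitely many solutions. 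The contradiction is ``the lower bound grows faster than the upper bound'', not, as you write, an upper bound $c_k\leq Ck^{2/N}$ extracted from the Morse index.

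Relatedly, Theorem~\ref{thm:aprioribounds} and the relative-Morse-index a priori bounds of \cite{RamosAPriori} go in the opposite direction from the one you invoke in step (5): they convert an energy bound, respectively a Morse index \emph{upper} bound, into an $L^\infty$ bound. Their correct role here is in your step (1): after truncating $g$ when $q\geq 2^*-1$, one must check that the min-max critical points of the truncated reduced functionals (which are not ground states) are uniformly bounded in $L^\infty$, and for that one uses that they have relative Morse index at most of the order of $k$ (coming from the dimension of the min-max class), not at least $k$. Once you separate these two uses of the Morse index --- the upper bound for the truncation, the lower bound plus Cwikel--Lieb--Rosenbljum for the energy growth --- the remaining steps (2)--(4) and (6) are, as you say, routine adaptations of Rabinowitz's argument to the reduced functional.
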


\vspace{-.4cm}
\begin{center}
\includegraphics[scale=.26]{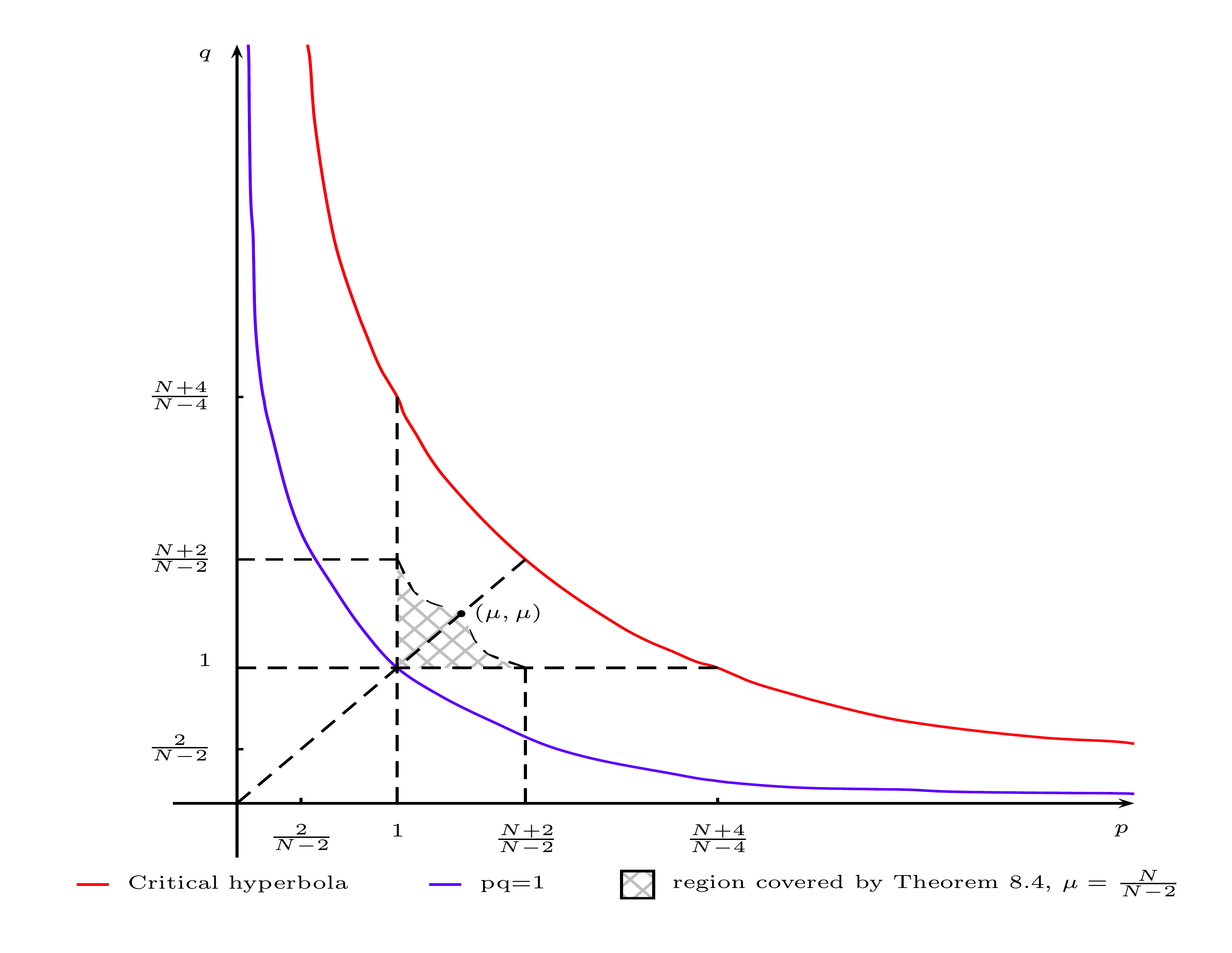}
\end{center}
\vspace{-.3cm}

Observe that the condition \eqref{fracN2(1-frac1p-frac1q)<fracp-1p} is sharp in the sense that it reduces to \eqref{p<frac2N-2N-2} in the case $p=q$. Moreover, this condition is implied by that expressed in \eqref{frac1p+frac1q+fracp(p-1)q}. On the other hand, \eqref{fracN2(1-frac1p-frac1q)<fracp-1p}  does force both $p+1$ and $q+1$ to be smaller than the Sobolev exponent $2N/(N-2)$. Observe also that we do assume both equations to be superlinear. Up to our knowledge, it is not known whether Theorem \ref{main_theorem} extends to superlinear systems under the milder assumption $pq>1$, except in dimension $N=1$, see the forthcoming Theorem \ref{thm:Denis-Ederson}.

The proof of Theorem \ref{main_theorem} combines the perturbation argument from Rabinowitz \cite{Rabinowitz} and Tanaka \cite{Tanaka} for the single equation \eqref{-Delta u = |u|^p-2u+h(x)} with the Lyapunov-Schmidt type reduction of Section \ref{sec:Reduced Functional} and makes use of a new estimate of the augmented Morse index of some min-max critical points of the reduced function $\J$. In fact, the main ingredients for proving this estimate is the family of reduced functional depending on a parameter $\J_{\lambda}$ (Subsection \ref{sec:reduced_with_lambda})  and the well-known Cwikel \cite{Cwikel}, Lieb \cite{Lieb} and Rosenbljum \cite{Rosenbljum} inequality  (see \cite{Simon} for a proof), which asserts that 
if $m^{*}_{V}(\alpha)$ denotes the number of eigenvalues $\mu\leq 1$ of the problem
$$
-\Delta \varphi=\mu V(x) \varphi, \qquad \varphi\in H^1_0(\Omega),
$$
with $N\geq 3$, then
\begin{equation*}
m^{*}_{V}(\alpha)\leq C \int V(x)^{N/2}\, dx
\end{equation*}
for some universal constant $C>0$. We refer to \cite{BonheureRamosErratum,BonheureRamos} for the complete proof. 

As discussed above, the condition on the exponents $p$ and $q$ can be improved in dimension $N=1$. The price to pay is that one needs to require more regularity on the perturbations $f,g$.
\begin{theorem}[\cite{BonheureSantos}]\label{thm:Denis-Ederson}
Suppose that $p,q >0$, $pq>1$ and $f,g \in C^1([0,1])$. Then the system 
\begin{equation}\label{sistema-DE}
\left\{
\begin{array}{rcll}
-u'' &= & \left| v \right|^{q-1}v + f(x) & x\in (0,1),\\
- v'' & = & \left| u \right|^{p-1}u + g(x) & x \in (0,1),\\
u(0)=v(0) & = & u(1)=v(1)=0 & \text{on} \,\, \{0,1\}.
\end{array}
\right.
\end{equation}
has infinitely many classical solutions.
\end{theorem}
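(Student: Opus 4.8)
\textbf{Proof proposal for Theorem \ref{thm:Denis-Ederson}.}

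The plan is to use the reduction by inversion of Section \ref{sec:QuartaOrdem} to transform the one-dimensional system \eqref{sistema-DE} into a single fourth-order equation, and then to adapt Rabinowitz's perturbation-from-symmetry argument (as in \cite{RabinowitzBook}) to the resulting scalar functional. The crucial point of working in dimension $N=1$ is that, since $2^*=+\infty$, the fourth-order functional is well defined on $E=W^{2,\frac{q+1}{q}}(0,1)\cap W^{1,\frac{q+1}{q}}_0(0,1)$ for \emph{every} $p,q>0$ with $pq>1$ --- there is no critical growth restriction, and moreover the compact embedding $E\hookrightarrow C([0,1])$ is available. First I would fix $v:=|\,u''|^{\frac1q-1}(-u'')$ and rewrite the perturbed system, so that $u$ formally solves
\[
\big(|u''|^{\frac1q-1}u''\big)'' = |u|^{p-1}u + g + \big(|k''|^{\ldots}\big)\ldots;
\]
the precise form of the perturbed fourth-order equation requires care because the perturbations $f,g$ enter both equations. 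The hypothesis $f,g\in C^1([0,1])$ is exactly what is needed here: when one inverts the first equation to write $v = L^{-1}(|u|^{p-1}u + g)$ and substitutes into the second, one differentiates twice the term involving $g$, which is why $C^1$-regularity (and not merely $L^2$) of the perturbations is imposed. I would write the corresponding $C^1$-functional $J_{f,g}:E\to\R$ whose critical points are the classical solutions of \eqref{sistema-DE}, and whose ``symmetric part'' $J_{0,0}$ is exactly the even functional $J$ of Section \ref{sec:QuartaOrdem}, which has a mountain-pass geometry (since $pq>1$) and satisfies the Palais--Smale condition (by Lemma \ref{lemmaps}, cf.\ \cite[Lemma 3.4]{Ederson2008}).

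The second main step is to run the Rabinowitz machinery. One introduces the auxiliary (truncated) functional $\widehat J_{f,g}(u)=J_{f,g}(u) - \chi(u)\,(\text{correction term})$, designed so that the Palais--Smale condition holds at the relevant levels and so that large critical points of $\widehat J_{f,g}$ are genuine critical points of $J_{f,g}$; one then defines the min-max values
\[
b_n := \inf_{h\in\Gamma_n}\ \sup_{u\in D_n} \widehat J_{f,g}(h(u))
\]
over an increasing family of symmetric sets $D_n$ built from the eigenfunctions, exactly as in the single-equation case in \cite[Chapter 10]{RabinowitzBook}. The key estimates are: a lower bound $b_n\geq c n^{\mu}$ for a suitable exponent $\mu>0$ coming from the growth of the eigenvalues of the fourth-order operator $u\mapsto |u''|^{\frac1q-1}u''$ composed appropriately (really, from the $L^{p+1}$--$E$ embedding constants on finite-dimensional subspaces), and an upper bound controlling the deviation-from-symmetry term $|\widehat J_{f,g}(u)-\widehat J_{0,0}(u)|$ by something like $C(1+\|u\|_E)^{\nu}$ with $\nu<\frac{q+1}{q}$; combining the two forces $b_n\to\infty$ and produces infinitely many critical values. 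In dimension one the exponent bookkeeping is far more generous than in \cite{Tarsi,BonheureRamos}, which is precisely why one only needs $pq>1$ and no further relation between $p$, $q$ and $N$.

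The step I expect to be the main obstacle is setting up the \emph{correct} perturbed fourth-order variational problem and verifying that its critical points coincide with classical solutions of \eqref{sistema-DE}. Unlike the unperturbed case, the substitution $v=L^{-1}(|u|^{p-1}u+g)$ produces a functional that is no longer simply $\frac{q}{q+1}\int|u''|^{\frac{q+1}{q}} - \frac{1}{p+1}\int|u|^{p+1}$ plus a linear term; the inhomogeneity $g$ and the inhomogeneity $f$ play asymmetric roles, and one must check that the regularity theory (Proposition \ref{Reg1} adapted to the perturbed equation) still gives the equivalence ``weak solution of the fourth-order equation $\Leftrightarrow$ classical solution of the system''. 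Once this equivalence and the $C^1$-smoothness of $J_{f,g}$ are established, together with the Palais--Smale condition for the truncated functional, the remaining min-max estimates are one-dimensional analogues of well-documented arguments and should go through without surprises; the details are carried out in \cite{BonheureSantos}.
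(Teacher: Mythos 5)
Your overall route --- reduction by inversion to a quasilinear fourth-order ODE followed by Rabinowitz's perturbation-from-symmetry scheme --- is the same as the paper's, and your observation that $N=1$ removes all growth restrictions and gives a compact embedding $E\hookrightarrow C([0,1])$ is correct. But there are two genuine gaps. First, you never actually produce the reduced equation. With the perturbation present the first equation reads $-u''=|v|^{q-1}v+f$, so $v$ is \emph{not} $|u''|^{\frac1q-1}(-u'')$, and your tentative substitution $v=L^{-1}(|u|^{p-1}u+g)$ inverts the wrong equation. The device used in the paper is elementary but essential: let $u_f$ solve $-u_f''=f$ with $u_f(0)=u_f(1)=0$ and set $w=u-u_f$; then $-w''=|v|^{q-1}v$ \emph{exactly}, so $v=|w''|^{\frac1q-1}(-w'')$, and the second equation becomes $\bigl(|w''|^{\frac1q-1}w''\bigr)''=|w|^{p-1}w+h(x,w)$ with $h(x,s)=|s+u_f(x)|^{p-1}(s+u_f(x))-|s|^{p-1}s+g(x)$, which is a bounded Carath\'eodory perturbation when the relevant exponent is at most $1$. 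In particular $g$ is never differentiated --- it enters additively in the right-hand side --- so your heuristic that the $C^1$-hypothesis comes from "differentiating the term involving $g$ twice" is not what happens.

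Second, and more seriously, you dismiss the lower bound $b_n\geq c\,n^{\mu}$ as a "one-dimensional analogue of well-documented arguments" that "should go through without surprises". This is precisely the delicate point, flagged explicitly in the paper. In Rabinowitz's scalar argument the lower bound on the min-max levels rests on the eigenvalue asymptotics of the \emph{linear} operator $-\Delta$, i.e.\ on Poincar\'e-type inequalities on the orthogonal complements of spans of eigenfunctions. Here the operator $w\mapsto\bigl(|w''|^{\frac1q-1}w''\bigr)''$ is quasilinear, has no spectral decomposition, and the natural space $E=W^{2,\frac{q+1}{q}}\cap W^{1,\frac{q+1}{q}}_0$ is not Hilbert. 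The substitute, following the adaptation of Garcia Azorero--Peral Alonso \cite{GarciaPeral} for the $p$-Laplacian, is the fact that $\{\sin(n\pi t)\}_{n\geq1}$ is a Schauder basis of $W^{2,r}((0,1))\cap W^{1,r}_0((0,1))$ for every $1<r<\infty$; the decay of the embedding constants on the tails of this basis is what drives $b_n\to\infty$. This is exactly the step that obstructs higher dimensions (for $\Omega=(0,1)^2$ the Dirichlet eigenfunctions are not a Schauder basis of $L^{(p+1)/p}$ when $p\neq1$, cf.\ \cite{Krantz}), so it cannot be treated as routine bookkeeping.
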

The proof consists first in reducing (\ref{sistema-DE}) to a single nonlinear fourth-order equation as in Section \ref{sec:QuartaOrdem}. Let $u_{f}$ be the unique solution of $-u'' = f(x),\ x\in (0,1)$ that vanishes on at $x=0$ and $x=1$. Considering $w = u-u_{f}$, one is led to study the equation
\begin{equation*}
\left\{\!\!
\begin{array}{rcll}
\left(\left| w''\right|^{\frac{1}{q}- 1}w''\right)''& =& | w |^{p-1}w + |w+u_{f}|^{p-1}(w+u_{f}) - | w |^{p-1}w +g(x) & \text{ in } (0,1),\\
w, w''&= & 0 & \text{ on } \{ 0,1\}.
\end{array}
\right.
\end{equation*}
Assuming that $0 < q \leq 1$, the function
\begin{equation*}
(x,s) \mapsto |s+u_{f}(x)|^{p-1}(s+u_{f}(x)) - | s |^{p-1}s +g(x)
\end{equation*}
is in $L^{\infty}([0,1] \times \R)$ as soon as $u_{f}$ and $g$ are bounded. This motivates to consider the model problem
\begin{equation}\label{eq2}
\left\{
\begin{array}{rcll}
\left(| w''|^{\frac{1}{q}- 1}w''\right)''& = & | w |^{p-1}w + h(x,w) &  \text{ in } (0,1),\\
w, w''&= &0 & \text{ on } \{ 0,1\},
\end{array}
\right.
\end{equation}
with $h \in L^{\infty}([0,1] \times \R)$ and Carathéodory. The proof next follows Rabinowitz's approach \cite{Rabinowitz2} adapted by Garcia Azorero and Peral Alonso \cite{GarciaPeral} to deal with perturbations from symmetry involving the $p$-Laplacian operator. A crucial argument in Rabinowitz's method is the use of the asymptotic estimates for the eigenvalues of the Laplacian. In the case we have the Laplacian operator, since it is linear, these asymptotic estimates lead directly to Poincaré type inequalities on the orthogonal of the spaces generated by the $n$--th first eigenfunctions. When dealing with a nonlinear differential operator, this step is much more delicate and relies on some results on Schauder bases which are derived from Fourier analysis theory and topological isomorphism, for instance between $W^{2,p}((0,1))$ and $L^{p}((0,1))\times \R^2$ when dealing with the fourth order quasilinear operator as in \eqref{eq2}. One of the main ingredient then for the proof of Theorem \ref{sistema-DE} is that for every $1 < p < \infty$, $\{ \sin (n \pi t): n \geq 1 \}$ is a Schauder basis for $W^{1,p}_0((0,1))$ and for $W^{2,p}((0,1)) \cap W^{1,p}_0((0,1))$. \emph{It is this step which would require new ideas if one wants to improve {\rm Theorem \ref{sistema-DE}} to higher dimension}. Indeed, for instance, if $N=2$ and $\Omega = (0,1) \times (0,1)$, the sequence of eigenfunctions of $( -\Delta, H^1_0(\Omega))$, ordered according to the corresponding increasing value of the sequence of eigenvalues, is not a Schauder base for $L^{(p+1)/p} (\Omega)$ if $p \neq 1$, since the process of ``ball summation'' for the double Fourier series does not work; see \cite[Section 3.3 \& Theorem 3.5.6]{Krantz}. We refer to \cite{BonheureSantos} for the complete proof of Theorem \ref{sistema-DE}.

\section{Sign-changing solutions}

In this final section we briefly describe two results about sign-changing solutions of Hamiltonian systems. In the first subsection, we report a recent work dealing with least energy nodal solution for an H\'enon--type system, proving existence and symmetry properties. In the second subsection, we go back to the symmetric problem \eqref{symmsystem}, showing the existence of infinitely many sign-changing solutions. The latter result is proved with the Lyapunov-Schmidt type reduction approach, while the former uses the dual method.

\subsection{Least energy nodal solutions}\label{sec:lens}

In some of the first sections we addressed, from several points of view, the question of existence and symmetry of ground state solutions (or least energy solutions). Recently in \cite{BonheureSantosRamosTavares}, the authors together with Miguel Ramos have succeeded  in proving similar results for least energy nodal solutions, that is, solutions which minimize the energy among the set of all solutions which change sign. More precisely, the results hold for the H\'enon--type systems in a bounded domain $\Omega$:
\begin{equation}\label{eq:Henon_least_energy_nodal_solution}
\left\{
\begin{array}{ll}
-\Delta u=|x|^\beta |v|^{q-1}v & \text{ in } \Omega\\
-\Delta v=|x|^\alpha |u|^{p-1}u & \text{ in } \Omega\\
 u,v=0  & \text{ on } \partial \Omega
 \end{array}
\right.
\end{equation}
under the assumptions
\[
\alpha,\beta\geq 0,\qquad 1>\frac{1}{p+1}+\frac{1}{q+1}>\frac{N-2}{N}
\]
(which include the biharmonic case, namely when $\beta =0$ and $q=1$). We have used the dual method to treat the problem, and we will follow closely the notations from Section \ref{sec:DualMethod}.
Given $r,\gamma>0$, define
\[
L^r(\Omega,|x|^\gamma):=\{u:\Omega\to \R \text{ measurable}:\ \int_\Omega |u|^r|x|^{-\gamma}\, dx<\infty\}
\]
which is a Banach space equipped with the norm
\[
\|u\|_{r,\gamma}:= \left(\int_\Omega |u|^r|x|^{-\gamma}\, dx\right)^{1/r}.
\]
Observe that, since $\Omega$ is bounded and $\gamma>0$, we have the inclusions $L^r(\Omega,|x|^{-\gamma})\subset L^r(\Omega)$, where the last is the usual $L^r$--space. Define
\[
X:=L^{(p+1)/p}(\Omega, |x|^{-\alpha/p})\times L^{(q+1)/q}(\Omega, |x|^{-\beta/q}),
\]
\[ 
\|(w_1,w_2)\|_X:=\|w_1\|_{\frac{p+1}{p},\frac{\alpha}{p}}+ \|w_2\|_{\frac{q+1}{q},\frac{\beta}{q}}\qquad \forall w=(w_1,w_2)\in X
  \]
and consider the map $T: X\to L^1(\Omega)$  given by
  \[
  Tw=w_1Kw_2+w_2Kw_1 \qquad w=(w_1,w_2)\in X
  \]
  where, with some abuse of notations,  $K$ denotes the inverse  of the Laplace operator with zero Dirichlet boundary conditions. Let $I: X\to \IR$ be the associated energy functional
  \begin{multline*}
I(w_1,w_2)=\frac{p}{p+1}\int_{\Omega} |w_1|^{(p+1)/p}|x|^{-\alpha/p}\, dx+\frac{q}{q+1}\int_{\Omega} |w_2|^{(q+1)/q}|x|^{-\beta/q}\, dx\\-\frac{1}{2}\int_{\Omega}Tw\,dx.
\end{multline*}
Then the least energy nodal level can be defined by
\[
c_{\rm nod}=\inf\{I(w_1,w_2):\ w_1^\pm,w_2^\pm\neq 0,\ I'(w_1w_2)=0\}.
\]
As in the case of ground states (cf. Section \ref{sec:DualMethod}), this level can be characterized via a fiber-type map. Having this in mind, consider the constants
 \[
 \lambda:=\frac{2p(q+1)}{p+q+2pq}, \qquad \mu=\frac{2q(p+1)}{p+q+2pq},
 \]
 so that
 \[
 \gamma:=\lambda \, \frac{p+1}{p}=\mu\, \frac{q+1}{q}\in ]1,2[ \quad \mbox{ and } \qquad \lambda+\mu=2.
 \]
Given $w\in X$, define $\theta=\theta_w:\R_0^+\times \R_0^+\to \R$ by
\begin{equation}\label{eq:definition_of_theta}
\theta(t,s)=I(t^\lambda w_1^+-s^\lambda w_1^-,t^\mu w_2^+-s^\mu w_2^-),
\end{equation}
and observe that $(t,s)$ is a critical point of $\theta$ if and only if 
\[
(t^\lambda w_1^+-s^\lambda w_1^-,t^\mu w_2^+-s^\mu w_2^-)\in \Ncal_\textrm{nod},
\] where
\[
\Ncal_{\rm nod}:=\{(w_1,w_2)\in X:\ w_i^\pm \neq 0 \ \text{ and } I'(w)(\lambda w_1^+,\mu w_2^+)=I'(w)(\lambda w_1^-,\mu w_2^-)=0\}.
\]
Since $\theta_w$ may not have a global maximum for some $w\in \Ncal_\textrm{nod}$, we need to consider the following auxiliary set
\[
\Ncal_0=\left\{(w_1,w_2)\in X:\ \begin{array}{c} \lambda \int_\Omega w_1^+ Kw_2\, dx+\mu \int_\Omega w_1 Kw_2^+\, dx >0\\[0.1cm] 
			\lambda \int_\Omega w_1^- Kw_2\, dx+\mu \int_\Omega w_1 Kw_2^-\, dx <0  \end{array}\right\}.
\]
Observe that $w_i^\pm\not\equiv 0$ $\forall i=1,2,\ (w_1,w_2)\in \Ncal_0$. It can be proved that for each $w\in \Ncal_0$, $\theta_w$ admits a unique global maximum, which corresponds to a point in $\Ncal_\textrm{nod}$. The main results in \cite{BonheureSantosRamosTavares} deal with existence and symmetry of least energy nodal solutions, and provide several equivalent characterizations of the corresponding energy level.

\begin{theorem}
The number $c_{\rm nod}$ is attained by a function $w\in \Ncal_\textrm{nod}$, and
\begin{align*}
c_{\rm nod} &= \inf_{\Ncal_{\rm nod}} I =\inf_{w\in \Ncal_0} \sup_{t,s>0} I(t^\lambda w_1^+-s^\lambda w_1^-,t^\mu w_2^+-s^\mu w_2^-).
\end{align*}
Moreover, if $\Omega$ is a ball, then each least energy nodal solution $(u,v)$ is such that both $u,v$ are foliated Schwarz symmetric with respect to the same $p\in \partial B_1(0)$.
\end{theorem}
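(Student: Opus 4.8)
The plan is to transpose to the sign--changing setting the dual--method apparatus of Section~\ref{sec:DualMethod}, replacing the one--parameter fiber map and the Nehari set $\Ncal_\Phi$ of \eqref{neharidual} by the two--parameter map $\theta=\theta_w$ of \eqref{eq:definition_of_theta} and the sets $\Ncal_{\rm nod}$, $\Ncal_0$. First I would record the elementary facts that every nontrivial sign--changing critical point of $I$ lies in $\Ncal_{\rm nod}$; that $\Ncal_{\rm nod}\subset\Ncal_0$, because testing $I'(w)=0$ with $(\lambda w_1^+,\mu w_2^+)$ gives $\lambda\int_\Omega w_1^+Kw_2\,dx+\mu\int_\Omega w_1 Kw_2^+\,dx=\lambda\|w_1^+\|_{\frac{p+1}{p},\frac{\alpha}{p}}^{\frac{p+1}{p}}+\mu\|w_2^+\|_{\frac{q+1}{q},\frac{\beta}{q}}^{\frac{q+1}{q}}>0$ and symmetrically for the negative parts; and that for $w\in\Ncal_{\rm nod}$ the point $(1,1)$ is the unique global maximum of $\theta_w$ (existence and uniqueness of this maximum over $\Ncal_0$ being the statement quoted just before the theorem). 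Together with the obvious inequality $\inf_{\Ncal_{\rm nod}}I\le \inf_{w\in\Ncal_0}\sup_{t,s>0}\theta_w(t,s)$ and the identity $\sup_{t,s>0}\theta_w(t,s)=I(w)$ valid on $\Ncal_{\rm nod}$, this closes the chain of equalities for $c_{\rm nod}$; positivity $c_{\rm nod}>0$ then follows from uniform lower bounds for the weighted norms of $w_i^\pm$ on $\Ncal_0$, exactly as in Lemma~\ref{lemma:properties_of_N_Phi}(i).

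Next I would prove attainment. Using that $w\mapsto(t_w,s_w)$ (the unique maximizer of $\theta_w$) is continuous, so that $m:\Ncal_0\to\Ncal_{\rm nod}$, $m(w)=(t_w^\lambda w_1^+-s_w^\lambda w_1^-,\,t_w^\mu w_2^+-s_w^\mu w_2^-)$, is a continuous surjection with $\inf_{\Ncal_0}(I\circ m)=c_{\rm nod}$, I would apply Ekeland's variational principle on the open set $\Ncal_0$ to obtain a Palais--Smale sequence $(w^n)\subset\Ncal_{\rm nod}$ for $I$ at level $c_{\rm nod}$. The functional $I$ satisfies (PS): boundedness comes from $I(w^n)=O(1)$ combined with $I'(w^n)(\tfrac{p}{p+1}w_1^n,\tfrac{q}{q+1}w_2^n)=o(1)\|w^n\|_X$ and $pq>1$, while compactness follows because $I'=\Psi-T$ with $\Psi$ a homeomorphism and $T$ compact --- here the H\'enon weights $|x|^{-\gamma}$ only \emph{improve} the embeddings into $L^{p+1}$ and $L^{q+1}$, so the argument of Proposition~\ref{psconditionphi} carries over. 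Hence $w^n\to w$ in $X$; the uniform bounds on $\|(w_1^n)^\pm\|$ and $\|(w_2^n)^\pm\|$ force $w_i^\pm\neq0$, so $w\in\Ncal_{\rm nod}$ realizes $c_{\rm nod}$. Finally, to see that $\Ncal_{\rm nod}$ is a natural constraint I would write the Lagrange multiplier relation for $I|_{\Ncal_{\rm nod}}$, test it against the tangent directions $(\lambda w_1^{\pm},\mu w_2^{\pm})$, and use that the Hessian of $\theta_w$ at $(1,1)$ is negative definite to conclude that all multipliers vanish; thus $w$ is a genuine least energy nodal solution.

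For the foliated Schwarz symmetry I would assume $\Omega=B$ and let $w$ be a least energy nodal solution, with $u:=Kw_2$, $v:=Kw_1$ classical, running a polarization comparison in the spirit of Theorems~\ref{Th:radialsymmetryRN} and \ref{GSHENON}. Choosing a direction $e\in\partial B_1$ adapted to a maximum point of $u+v$ and fixing $H\in\mathcal{H}_e$, I would note that polarization with respect to $H$ preserves every weighted norm $\|\cdot\|_{r,\gamma}$ (the weight $|x|^{-\gamma}$ and the ball $B$ being invariant under the reflection $\sigma_H$, since $0\in\partial H$), commutes with the increasing maps $s\mapsto|s|^{p-1}s$ and $s\mapsto|s|^{q-1}s$, and --- via the comparison principle of Lemma~\ref{simetrizacao} and the polarization inequalities of Lemmas~\ref{lemma_brock1}--\ref{lemma_brock2} applied to $-\Delta u=|x|^{\beta}|v|^{q-1}v$ and $-\Delta v=|x|^{\alpha}|u|^{p-1}u$ --- does not decrease $\int_\Omega Tw\,dx$. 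Consequently the polarized datum, once rescaled into $\Ncal_{\rm nod}$, produces a sign--changing solution with energy $\le\sup_{t,s>0}\theta_w(t,s)=c_{\rm nod}$, hence equal to $c_{\rm nod}$; all intermediate inequalities are then equalities, and the equality case in Lemma~\ref{simetrizacao}/Lemma~\ref{lemma_brock1} forces $u=u_H$ and $v=v_H$. Since $H\in\mathcal{H}_e$ was arbitrary, Proposition~\ref{Prop:polarizationXsymmetry}(ii) yields that $u$ and $v$ are both foliated Schwarz symmetric with respect to $e$.

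The hard part will be this last step. Polarization of sign--changing functions need not preserve membership in $\Ncal_0$, nor the non--triviality of all four parts $w_i^\pm$; making sure that $(w_{1,H})^\pm$ and $(w_{2,H})^\pm$ remain nonzero and that the polarized datum can be brought back into $\Ncal_{\rm nod}$ is precisely what the strict inequalities defining $\Ncal_0$ are there to absorb, and requires care. Secondly, the monotonicity of $w\mapsto\int_\Omega Tw\,dx$ under polarization needs a sign--sensitive strengthening of Lemmas~\ref{lemma_brock1}--\ref{lemma_brock2}, which are stated only for nonnegative data. Thirdly, the direction $e$ must be selected so that the identities $u=u_H$ and $v=v_H$ hold simultaneously for \emph{all} $H\in\mathcal{H}_e$ --- not merely an ordering of $u$ and $\overline u$ --- and so that the \emph{same} $e$ works for both components; this is where the strong coupling of the system, together with a maximum--point argument, enters decisively.
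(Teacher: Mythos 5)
Your overall strategy coincides with the one the paper points to: the survey does not prove this theorem but defers to \cite{BonheureSantosRamosTavares}, indicating only that the framework is the dual method of Section \ref{sec:DualMethod} with the two--parameter fiber map \eqref{eq:definition_of_theta} replacing the one--parameter map of \eqref{eq:gamma(t)=}, and that the symmetry statement is proved by polarization. Your first half (the inclusion $\Ncal_{\rm nod}\subset\Ncal_0$, the identity $\sup_{t,s}\theta_w=I(w)$ on $\Ncal_{\rm nod}$, the resulting chain of equalities, and attainment via Palais--Smale with uniform lower bounds on $\|w_i^\pm\|$) is essentially the intended argument. One step there is not correct as written: the constraint functionals $w\mapsto I'(w)(\lambda w_1^\pm,\mu w_2^\pm)$ contain the terms $\int_\Omega w_1^\pm Kw_2\,dx$ and $\int_\Omega w_1 Kw_2^\pm\,dx$, and the maps $w_i\mapsto w_i^\pm$ are Lipschitz but not differentiable in the Lebesgue spaces making up $X$; hence $\Ncal_{\rm nod}$ is not a $C^1$ submanifold and the Lagrange multiplier rule you invoke is not available. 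The standard repair is to drop the multiplier argument entirely and show directly, via a quantitative deformation lemma combined with the strict two--dimensional maximality of $\theta_w$ at $(1,1)$, that a minimizer of $I$ on $\Ncal_{\rm nod}$ must be a free critical point; this replaces, and cannot be reduced to, your ``test the multiplier relation against tangent directions'' step.

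The genuine gap is in the symmetry half, and you have correctly located it yourself without closing it. Lemma \ref{laplaciano polarizado} and the inequality \eqref{lemma_brock2} are proved only for \emph{nonnegative} data $f$, and the ground--state argument of Theorems \ref{thm:qualitativeprop_dual} and \ref{GSHENON} uses positivity at every comparison. For a sign--changing critical point one must (i) prove a signed version of the polarization inequality for the quadratic term $\int_\Omega Tw\,dx$, (ii) verify that the polarized pair still has all four parts $(w_{i,H})^\pm$ nontrivial and can be projected back onto $\Ncal_{\rm nod}$ by the two--parameter map without increasing the energy, and (iii) extract from the resulting equality case a trichotomy ($u>\overline u$, $u<\overline u$, or $u\equiv\overline u$ on $H$, simultaneously for $u$ and $v$) strong enough to select a single axis $e$ for which $u=u_H$ and $v=v_H$ for \emph{every} $H\in\mathcal{H}_e$, as required by Proposition \ref{Prop:polarizationXsymmetry}(ii). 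Steps (i)--(iii) are the substance of the theorem's second assertion; as your proposal stands they are announced as ``the hard part'' rather than carried out, so the proof of the foliated Schwarz symmetry is not complete.
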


The proof of foliated Schwarz symmetry uses the notion of polarization, which has been introduced in Subsection \ref{workingwithpolarization}. In \cite{BonheureSantosRamosTavares}, several examples of symmetry breaking are provided.

\begin{remark}
It is not clear if any approach different from the dual method could have been used to solve this problem. For instance if one tried the Lyapunov-Schmidt type reduction {\rm (Section \ref{sec:Reduced Functional})}, one would have to deal with the functions $\Psi_{u^+}$, $\Psi_{u^-}$, which seem difficult to characterize and to compare. On the other hand, by choosing the reduction by inversion approach {\rm (Section \ref{sec:QuartaOrdem})}, one could not have worked with positive and negative parts of functions, since these are no longer in the domain of the corresponding energy functional; instead, one would have to deal with the projections on the cones of positive and negative functions.
\end{remark}

\subsection{Multiplicity results}\label{sec:infinitely_sign_changing}

Next we go back to the task of obtaining  multiplicity results for the system \eqref{eq:main_system_particularcase}, which we repeat here for convenience of the reader:
\begin{equation}\label{eq:system:BahriLions}
\left\{
\begin{array}{rcll}
-\Delta u & = & \left| v\right|^{q-1} v  & \hbox{in} \,\, \Omega, \\
-\Delta v & = & \left| u\right|^{p-1} u  & \hbox{in} \,\, \Omega, \\
u, v &= & 0 & \hbox{on} \,\, \partial \Omega.
\end{array}
\right.
\end{equation}
We will assume \eqref{eq:p_and_q_reduzidoGERAL}, which we recall as being
\begin{equation}\label{eq:p_and_q_reduzidoGERAL9}
p,q>1,\qquad \frac{1}{p+1}+\frac{1}{q+1}>\frac{N-2}{N} \tag{H4}.
\end{equation}

In Subsections \ref{se:Galer}-\ref{sec:symreduc}-\ref{sec:symfount} we have seen several ways of proving that \eqref{eq:system:BahriLions} admits infinitely many solutions (which have increasing energy). On the other hand, a priori bounds for positive solutions are known to hold under \eqref{eq:(p,q)_for_W^1,s_0} for $N\leq 4$ (see \cite{PolacikQuittnerSouplet, Souplet}). For $N\geq 5$, there are some partial results which do not cover entirely the case $(p,q)$ satisfying \eqref{eq:(p,q)_for_W^1,s_0}, namely \cite{BuscaManasevich,Souplet}, to which we refer for a more complete history of the subject\footnote{These questions are strongly related with the so called \emph{Lane-Emden conjecture}, which affirms that there are no positive solutions in the entire space to \eqref{eq:system:BahriLions} under \eqref{eq:(p,q)_for_W^1,s_0}; this is completely established by now in dimension $N\leq 4$.}. In the additional assumptions that $\Omega$ is convex and \eqref{eq:p_and_q_reduzidoGERAL9} holds, a priori bounds of positive solutions are known for all space dimensions \cite{ClementdeFigueiredoMitidieri}. 

In these cases, the a priori bounds combined with the previously mentioned multiplicity results yield the existence of infinitely many \emph{sign-changing} solutions. These results do not cover all space dimensions; nevertheless, the existence of infinitely many sign-changing solutions was proved \emph{directly} with success in \cite[Theorem 4]{RamosTavaresZou} by using the approach introduced in Section \ref{sec:Reduced Functional}:

\begin{theorem}\label{thm:BahriLions}
Assume that $(p,q)$ satisfies \eqref{eq:p_and_q_reduzidoGERAL9}. Then \eqref{eq:system:BahriLions} admits an unbounded sequence of sign changing solutions $(u_k,v_k)$ in the sense that $(u_k+v_k)^+\neq 0$, $(u_k+v_k)^-\neq 0$ for every $k$. 
\end{theorem}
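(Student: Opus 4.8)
The plan is to work with the reduced functional $\J$ from Section~\ref{sec:Reduced Functional} and to mimic the classical argument that produces sign-changing solutions of the scalar superlinear equation via a linking/minimax scheme over the nodal Nehari set. By Remark~\ref{rem:p=q} we may assume without loss of generality that $1<p=q<2^*-1$, so that the whole machinery of Section~\ref{sec:Reduced Functional} applies in $H=H^1_0(\Omega)\times H^1_0(\Omega)$; the general case $p\neq q$ satisfying \eqref{eq:p_and_q_reduzidoGERAL9} is then recovered by the truncation-and-$L^\infty$-bound argument of Subsection~\ref{sec:p_diferente_q} (Theorem~\ref{thm:aprioribounds}), exactly as in the proof of Theorem~\ref{h=k=0}: one solves the truncated problems, obtains finitely many solutions with a priori bounds independent of the truncation parameter, and passes to the limit. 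So the core of the proof is a multiplicity statement for the reduced functional $\J:H^1_0(\Omega)\to\R$ defined in \eqref{eq:reducedfunctional_J}, together with a translation of the sign condition on $u$ into the sign condition on $u_k+v_k$ via the homeomorphism $\eta$ of Lemma~\ref{lem:homeo}.

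First I would recall that $\J$ is an even $C^1$ functional (since $f(s)=g(s)=|s|^{p-1}s$ is odd, $\Psi_{-u}=-\Psi_u$, hence $\J(-u)=\J(u)$), it satisfies the Palais--Smale condition, has a mountain-pass geometry at the origin, and its critical points correspond bijectively (via $\eta$) to solutions of \eqref{eq:system:BahriLions}. Moreover, if $u\in H^1_0(\Omega)$ and $(u+\Psi_u, u-\Psi_u)$ is the corresponding solution $(U,V)$, then $U+V=2u$, so $u$ changes sign if and only if $U+V$ changes sign, i.e. $(u_k+v_k)^\pm\neq 0$. Thus it suffices to produce an unbounded sequence of \emph{sign-changing} critical points of $\J$. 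For this I would follow the scheme of Bartsch--Weth--Willem type arguments on the nodal Nehari set: introduce
\[
\mathcal M:=\{u\in H^1_0(\Omega):\ u^\pm\neq 0,\ \J'(u)u^+=0,\ \J'(u)u^-=0\},
\]
observe (using $(fg3)$, here with $\delta=p-1$) that on $\mathcal M$ the functional $\J$ is bounded below by a positive constant and coercive along rays, so that every $u$ with $u^\pm\neq 0$ has a unique pair of positive dilation parameters placing $t\,u^+ - s\,u^-$ on $\mathcal M$; the infimum of $\J$ over $\mathcal M$ is then a critical value giving a least-energy nodal solution. To get \emph{infinitely many} sign-changing solutions, I would instead run an equivariant minimax over the symmetric index: build a decreasing sequence of minimax values
\[
\beta_k:=\inf_{A\in\Gamma_k}\ \max_{u\in A}\J(u),
\]
where $\Gamma_k$ is a family of symmetric subsets of the sphere of $H^1_0(\Omega)$ of genus $\geq k$ intersected appropriately with the region where sign-changing behaviour is enforced, using a Krasnoselskii-genus / $\mathbb Z_2$-Lusternik--Schnirelmann argument as in \cite{AmbrosettiRabinowitz}. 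Because $\J$ has the geometry of the scalar subcritical superlinear functional, the classical symmetric mountain pass lemma gives $\beta_k\to+\infty$. The sign-changing character is then obtained by the standard deformation argument: a Palais--Smale sequence at level $\beta_k$ lying near the nodal region cannot converge to a one-signed or trivial function because the mountain-pass level $c(\Omega)$ of one-signed solutions is strictly below (indeed, any solution obtained this way would, if one-signed, have energy $\geq$ that of the ground state but the minimax sets are designed to avoid the cone of positive and negative functions), so the limit is a genuine sign-changing critical point.

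The main obstacle, and the step I expect to require the most care, is exactly this last point: ensuring that the equivariant minimax values $\beta_k$ are \emph{attained by sign-changing solutions} and not by one-signed ones or by spurious critical points — i.e. controlling the interaction between the $\mathbb Z_2$-symmetry used to get multiplicity and the cone structure (positive cone $\mathcal P$, negative cone $-\mathcal P$) used to get the sign change. The clean way to handle this is the approach of Bartsch and collaborators adapted to $\J$: work in $D:=H^1_0(\Omega)\setminus(\mathcal P\cup(-\mathcal P))$, construct a pseudo-gradient flow for $\J$ that leaves $D$ (more precisely the region at distance $\geq\rho$ from $\pm\mathcal P$) positively invariant — this uses that for $u$ near $\mathcal P$ the gradient of $\J$ points away from $\mathcal P$, which follows from the fact that $\eta(u)=(u+\Psi_u,u-\Psi_u)$ and the second equation of the system couples the sign of the components, together with the strong maximum principle and the subcritical growth — and then run the genus-based minimax \emph{inside} this invariant region. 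This is precisely the content of \cite[Theorem 4]{RamosTavaresZou}, which I would invoke for the detailed flow-invariance estimates; the remaining bookkeeping (evenness of $\J$, PS condition from Lemma~\ref{lemma:I_PalaisSmale} transported to $\J$, the geometry from Steps~1--2 of Lemma~\ref{lemma:Existence_result}, and the reduction to $p=q$ plus truncation for general $(p,q)$) is routine given Section~\ref{sec:Reduced Functional}.
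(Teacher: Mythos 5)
Your proposal is correct and follows essentially the same route as the paper: reduce to $1<p=q<2^*-1$ (Remark \ref{rem:p=q} plus the truncation/$L^\infty$-bound argument), pass to the reduced functional $\J$ of \eqref{eq:reducedfunctional_J}, observe via Lemma \ref{lem:homeo} that $u_k+v_k=2u_k$ so that sign-changing critical points of $\J$ give exactly the stated conclusion, and then run a symmetric minimax confined to the complement of a neighbourhood of the cones $\pm\mathcal{P}$, with the cone-invariance of the descending flow as the delicate step — which you, like the paper, ultimately delegate to \cite[Theorem 4]{RamosTavaresZou}. The one genuine divergence is how the unboundedness of the critical levels is obtained. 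You propose a genus-based minimax and claim $\beta_k\to+\infty$ directly from the symmetric mountain pass geometry of $\J$; the paper instead uses a linking minimax over $Q_k=B_{R_k}\cap E_k$ whose direct estimate only yields a $k$-independent lower bound $c_k>-c_0$, and extracts the divergence from Bahri--Lions type Morse index estimates (the sign-changing solution $u_k^*$ has augmented Morse index at least $k$, and \cite[Proposition 9]{BonheureRamos} bounds $\J(u_k^*)$ from below by a positive power of the Morse index). Your direct route is plausible for $\J$ since $\J(u)\geq \I(u,u)$ and the embedding constants improve on $E_{k-1}^\perp$, but you should be aware that the intersection/deformation lemmas must be checked to be compatible with the cone-avoidance constraint, which is precisely why the paper (following \cite{RamosTavaresZou}) prefers the Morse index mechanism: it decouples the growth of the levels from the construction of the admissible minimax classes.
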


\begin{remark}
For $(u,v)$ solution of the problem, one has that $u+v$ changes sign if and only if both $u$ and $v$ change sign. In fact, the direct implication can be proved by using the maximum principle. As for the reverse implication, suppose that $u^\pm,v^\pm\neq 0$. One must have $\{u>0\}\cap\{v>0\}\neq \emptyset$, otherwise $\{u>0\}\subseteq \{v\leq 0\}$ and, by multiplying the equation of $u$ by $u^+$, we would get $\int_\Omega |\nabla u^+|^2\, dx=\int_\Omega |v|^{q-1}v u^+\, dx\leq 0$ and $u^+\equiv 0$, a contradiction. Analogously, $\{u<0\}\cap \{v<0\}\neq \emptyset$, and the claim follows.
\end{remark}

\begin{proof}[Sketch of the proof of {\rm Theorem \ref{thm:BahriLions}}]
As explained in Section \ref{sec:Reduced Functional} (see in particular Remark \ref{rem:p=q}), we can suppose without loss of generality that $1<p=q<2^*-1$. We will use the notations of that section, recalling for instance the notion of reduced functional
\[
\mathcal{J}(u):=I(u+\Psi_u,u-\Psi_u),\qquad u\in H^1_0(\Omega)
\]
(cf. \eqref{eq:reducedfunctional_J}). For $k\in \N$, let $E_k:=\spann\{\phi_1,\ldots, \phi_k\}$, where $\phi_i$ is again the $i$--th eigenvalue of $(-\Delta,H^1_0(\Omega))$, and $S_k:=\{E_k^\perp:\ \|u\|_{L^p}=1\}$. It can be proved that there exists $c_0>0$ (independent of $k$) such that
\[
\inf_{S_k} \mathcal{J}>-c_0
\]
and that (for large $R_k>0$)
\[
\sup_{\partial Q_k} \mathcal{J}<-c_0,\qquad \text{ for } Q_k:=B_{R_k}\cap E_k.
\]
In order to prove existence of sign-changing solutions, it is important to have some estimates outside the set $\mathcal{P}$, the cone of positive solutions. One proves that there exists $M_k$ and $\mu_k$ such that
\[
\textrm{dist}(u,\mathcal{P})\geq 2\mu_k, \quad \forall u\in S_k,\ \mathcal{J}(u)\leq M_k,
\]
and that the cone $\mathcal{P}$ is invariant for the flow, in the sense that if $\sigma(t,u)$ solves
\[
\frac{\partial \sigma}{\partial t}(t,u)=-\chi(\sigma(t,u))\frac{\nabla \mathcal{J}(\sigma(t,u))}{\|\nabla \mathcal{J}(\sigma(t,u))\|},\quad \sigma(0,u)=0
\]
for some smooth function $\chi:H^1_0(\Omega)\to [0,1]$, then 
\[
\textrm{dist}(u,\mathcal{P})\leq \mu_k \Rightarrow \textrm{dist}(\sigma(t,u),\mathcal{P})\leq \mu_k \qquad \forall t\geq 0.
\]
Then it can be proved that
\[
c_k=\inf_{\gamma\in \Gamma_k}\sup_{\gamma(Q_k)\cap \{\textrm{dist}(u,\mathcal{P})\geq \mu_k\}} \mathcal{J},
\]
with
\[
\Gamma_k=\{\gamma\in C(Q_k,H^1_0(\Omega)):\ \gamma \text{ is odd }, \gamma|_{\partial Q_k}=Id,\ \sup_{\gamma(Q_k)} \mathcal{J}<M_k\},
\]
is a critical level of $\mathcal{J}$, having a sign changing critical point $u_k$ with Morse index less than or equal to $k$. By combining this with a suitable notion of linking, it is proved the existence of a sign-changing solution $u_k^*$  with augmented Morse index $m^*(u_k)$ greater than or equal to $k$, and such that
\[
C' k^\frac{2(p+1)(q+1)}{(pq-1)N}\leq C' (m^*(u))^\frac{2(p+1)(q+1)}{(pq-1)N}\leq \mathcal{J}(u_k^*)\leq c_k \leq C k^\frac{2(p+1)}{N(p-1)}+C,
\]
where in the second inequality \cite[Proposition 9]{BonheureRamos} is used.
\end{proof}

It should be noted that the strategy of the proof is flexible enough to be applied to obtain ``perturbation of symmetry'' results in the case of single equation problems involving the harmonic or the biharmonic operator \cite{RamosTavaresZou}.

\vspace{0.5cm}

\textbf{Acknowledgments}  
D. Bonheure was supported by MIS - ARC - INRIA - FRFC. E. M. dos Santos was supported by CNPq - FAPESP. H. Tavares was supported by FCT/Portugal through project PEst-OE/EEI/LA0009/2013 and through the program \emph{Investigador FCT}. 
\newline We would like to thank B. Sirakov, P. Souplet and M. Willem for their comments and for providing us additional references, and to one of the referees for his valuable suggestions on a first version of this paper.
\newline Miguel Ramos was our colleague, collaborator and most of all friend. For all what he taught and gave to us, this paper is dedicated to him.




\begin{thebibliography}{100}
\bibliographystyle{plain}

\bibitem{Abbondandolo1}
Alberto Abbondandolo.
\newblock A new cohomology for the {M}orse theory of strongly indefinite
  functionals on {H}ilbert spaces.
\newblock {\em Topol. Methods Nonlinear Anal.}, 9(2):325--382, 1997.

\bibitem{Abbondandolo2}
Alberto Abbondandolo and Juan Molina.
\newblock Index estimates for strongly indefinite functionals, periodic orbits
  and homoclinic solutions of first order {H}amiltonian systems.
\newblock {\em Calc. Var. Partial Differential Equations}, 11(4):395--430,
  2000.

\bibitem{Ahlfors}
Lars~V. Ahlfors.
\newblock {\em Conformal invariants}.
\newblock AMS Chelsea Publishing, Providence, RI, 2010.
\newblock Topics in geometric function theory, Reprint of the 1973 original,
  With a foreword by Peter Duren, F. W. Gehring and Brad Osgood.

\bibitem{AlvesSoaresYang}
C.~O. Alves, S.~H.~M. Soares, and Jianfu Yang.
\newblock On existence and concentration of solutions for a class of
  {H}amiltonian systems in {${\Bbb R}^N$}.
\newblock {\em Adv. Nonlinear Stud.}, 3(2):161--180, 2003.

\bibitem{AlvesSoares}
Claudianor~O. Alves and S{{\'e}}rgio H.~M. Soares.
\newblock Singularly perturbed elliptic systems.
\newblock {\em Nonlinear Anal.}, 64(1):109--129, 2006.

\bibitem{Alvesetal}
Claudianor~Oliveira Alves, Paulo~C{{\'e}}sar Carri{\~a}o, and
  Ol{\'{\i}}mpio~Hiroshi Miyagaki.
\newblock On the existence of positive solutions of a perturbed {H}amiltonian
  system in {${\Bbb R}^N$}.
\newblock {\em J. Math. Anal. Appl.}, 276(2):673--690, 2002.

\bibitem{AlvinoLionsTrombetti}
A.~Alvino, P.-L. Lions, and G.~Trombetti.
\newblock A remark on comparison results via symmetrization.
\newblock {\em Proc. Roy. Soc. Edinburgh Sect. A}, 102(1-2):37--48, 1986.

\bibitem{AmbrosettiMalchiodiSurvey}
Antonio Ambrosetti and Andrea Malchiodi.
\newblock Concentration phenomena for nonlinear {S}chr{\"o}dinger equations:
  recent results and new perspectives.
\newblock In {\em Perspectives in nonlinear partial differential equations},
  volume 446 of {\em Contemp. Math.}, pages 19--30. Amer. Math. Soc.,
  Providence, RI, 2007.

\bibitem{AmbrosettiRabinowitz}
Antonio Ambrosetti and Paul~H. Rabinowitz.
\newblock Dual variational methods in critical point theory and applications.
\newblock {\em J. Functional Analysis}, 14:349--381, 1973.

\bibitem{AngenentvanderVorst}
Sigurd Angenent and Robertus van~der Vorst.
\newblock A superquadratic indefinite elliptic system and its
  {M}orse-{C}onley-{F}loer homology.
\newblock {\em Math. Z.}, 231(2):203--248, 1999.

\bibitem{AngenentvanderVorst2000}
Sigurd~B. Angenent and Robertus van~der Vorst.
\newblock A priori bounds and renormalized {M}orse indices of solutions of an
  elliptic system.
\newblock {\em Ann. Inst. H. Poincar{\'e} Anal. Non Lin{\'e}aire},
  17(3):277--306, 2000.

\bibitem{AvilaYang}
Andr{{\'e}}s~I. {{\'A}}vila and Jianfu Yang.
\newblock On the existence and shape of least energy solutions for some
  elliptic systems.
\newblock {\em J. Differential Equations}, 191(2):348--376, 2003.

\bibitem{Baernstein1}
Albert Baernstein, II.
\newblock Integral means, univalent functions and circular symmetrization.
\newblock {\em Acta Math.}, 133:139--169, 1974.

\bibitem{Baernstein2}
Albert Baernstein, II.
\newblock A unified approach to symmetrization.
\newblock In {\em Partial differential equations of elliptic type ({C}ortona,
  1992)}, Sympos. Math., XXXV, pages 47--91. Cambridge Univ. Press, Cambridge,
  1994.

\bibitem{BahriLions}
A.~Bahri and P.-L. Lions.
\newblock Morse index of some min-max critical points. {I}. {A}pplication to
  multiplicity results.
\newblock {\em Comm. Pure Appl. Math.}, 41(8):1027--1037, 1988.

\bibitem{Bahri}
Abbas Bahri.
\newblock Topological results on a certain class of functionals and
  application.
\newblock {\em J. Funct. Anal.}, 41(3):397--427, 1981.

\bibitem{BahriBerestycki}
Abbas Bahri and Henri Berestycki.
\newblock A perturbation method in critical point theory and applications.
\newblock {\em Trans. Amer. Math. Soc.}, 267(1):1--32, 1981.

\bibitem{BartoloBenciFortunato}
P.~Bartolo, V.~Benci, and D.~Fortunato.
\newblock Abstract critical point theorems and applications to some nonlinear
  problems with ``strong'' resonance at infinity.
\newblock {\em Nonlinear Anal.}, 7(9):981--1012, 1983.

\bibitem{Bartsch}
Thomas Bartsch.
\newblock Infinitely many solutions of a symmetric {D}irichlet problem.
\newblock {\em Nonlinear Anal.}, 20(10):1205--1216, 1993.

\bibitem{BartschdeFigueiredo}
Thomas Bartsch and Djairo~G. de~Figueiredo.
\newblock Infinitely many solutions of nonlinear elliptic systems.
\newblock In {\em Topics in nonlinear analysis}, volume~35 of {\em Progr.
  Nonlinear Differential Equations Appl.}, pages 51--67. Birkh{\"a}user, Basel,
  1999.

\bibitem{BartschWethWillem}
Thomas Bartsch, Tobias Weth, and Michel Willem.
\newblock Partial symmetry of least energy nodal solutions to some variational
  problems.
\newblock {\em J. Anal. Math.}, 96:1--18, 2005.

\bibitem{BenciFortunato}
Vieri Benci and Donato Fortunato.
\newblock The dual method in critical point theory. {M}ultiplicity results for
  indefinite functionals.
\newblock {\em Ann. Mat. Pura Appl. (4)}, 132:215--242 (1983), 1982.

\bibitem{BenciRabinowitz}
Vieri Benci and Paul~H. Rabinowitz.
\newblock Critical point theorems for indefinite functionals.
\newblock {\em Invent. Math.}, 52(3):241--273, 1979.

\bibitem{BerchioGazzolaWeth}
Elvise Berchio, Filippo Gazzola, and Tobias Weth.
\newblock Radial symmetry of positive solutions to nonlinear polyharmonic
  {D}irichlet problems.
\newblock {\em J. Reine Angew. Math.}, 620:165--183, 2008.

\bibitem{Bidaut-VeronFrancoise}
Marie~Fran{\c{c}}oise Bidaut-Veron and Hector Giacomini.
\newblock A new dynamical approach of {E}mden-{F}owler equations and systems.
\newblock {\em Adv. Differential Equations}, 15(11-12):1033--1082, 2010.

\bibitem{BonheureSantos}
Denis Bonheure and Ederson~Moreira dos Santos.
\newblock Representation theorems for {S}obolev spaces on intervals and
  multiplicity results for nonlinear {ODE}s.
\newblock {\em J. Differential Equations}, 249(12):3148--3173, 2010.

\bibitem{BonheureSantosRamosTAMS}
Denis Bonheure, Ederson Moreira~dos Santos, and Miguel Ramos.
\newblock Ground state and non-ground state solutions of some strongly coupled
  elliptic systems.
\newblock {\em Trans. Amer. Math. Soc.}, 364(1):447--491, 2012.

\bibitem{BonheureSantosRamosJFA}
Denis Bonheure, Ederson Moreira~dos Santos, and Miguel Ramos.
\newblock Symmetry and symmetry breaking for ground state solutions of some
  strongly coupled elliptic systems.
\newblock {\em J. Funct. Anal.}, 264(1):62--96, 2013.

\bibitem{BonheureSantosRamosTavares}
Denis Bonheure, Ederson Moreira~dos Santos, Miguel Ramos, and Hugo Tavares.
\newblock Existence and symmetry of least energy nodal solutions for
  hamiltonian systems.
\newblock {\em in preparation}, 2014.

\bibitem{BonheureRamosErratum}
Denis Bonheure and Miguel Ramos.
\newblock Erratum to: ``{M}ultiple critical points of perturbed symmetric
  strongly indefinite functionals''
  [http://dx.doi.org/10.1016/j.anihpc.2008.06.002] [mr2504048].
\newblock {\em Ann. Inst. H. Poincar{\'e} Anal. Non Lin{\'e}aire},
  26(3):1049--1054, 2009.

\bibitem{BonheureRamos}
Denis Bonheure and Miguel Ramos.
\newblock Multiple critical points of perturbed symmetric strongly indefinite
  functionals.
\newblock {\em Ann. Inst. H. Poincar{\'e} Anal. Non Lin{\'e}aire},
  26(2):675--688, 2009.

\bibitem{BrockSolynin}
Friedemann Brock and Alexander~Yu. Solynin.
\newblock An approach to symmetrization via polarization.
\newblock {\em Trans. Amer. Math. Soc.}, 352(4):1759--1796, 2000.

\bibitem{BuscaManasevich}
J{{\'e}}r{\^o}me Busca and Raul Man{{\'a}}sevich.
\newblock A {L}iouville-type theorem for {L}ane-{E}mden systems.
\newblock {\em Indiana Univ. Math. J.}, 51(1):37--51, 2002.

\bibitem{BuscaSirakov2000}
J{{\'e}}r{\^o}me Busca and Boyan Sirakov.
\newblock Symmetry results for semilinear elliptic systems in the whole space.
\newblock {\em J. Differential Equations}, 163(1):41--56, 2000.

\bibitem{CalanchiRuf}
Marta Calanchi and Bernhard Ruf.
\newblock Radial and non radial solutions for {H}ardy-{H}{\'e}non type elliptic
  systems.
\newblock {\em Calc. Var. Partial Differential Equations}, 38(1-2):111--133,
  2010.

\bibitem{Cerami}
Giovanna Cerami.
\newblock An existence criterion for the critical points on unbounded
  manifolds.
\newblock {\em Istit. Lombardo Accad. Sci. Lett. Rend. A}, 112(2):332--336
  (1979), 1978.

\bibitem{clarke}
Frank~H. Clarke.
\newblock Solution p{\'e}riodique des {\'e}quations hamiltoniennes.
\newblock {\em C. R. Acad. Sci. Paris S{\'e}r. A-B}, 287(14):A951--A952, 1978.

\bibitem{ClarkeEkeland}
Frank~H. Clarke and Ivar Ekeland.
\newblock Hamiltonian trajectories having prescribed minimal period.
\newblock {\em Comm. Pure Appl. Math.}, 33(2):103--116, 1980.

\bibitem{ClementdeFigueiredoMitidieri}
Ph. Cl{{\'e}}ment, D.~G. de~Figueiredo, and E.~Mitidieri.
\newblock Positive solutions of semilinear elliptic systems.
\newblock {\em Comm. Partial Differential Equations}, 17(5-6):923--940, 1992.

\bibitem{ClementMitidieri}
Ph. Cl{{\'e}}ment and E.~Mitidieri.
\newblock On a class of nonlinear elliptic systems.
\newblock {\em S\=urikaisekikenky\=usho K\=oky\=uroku}, 1009:132--140, 1997.
\newblock Nonlinear evolution equations and their applications (Japanese)
  (Kyoto, 1996).

\bibitem{ClementvanderVorst}
Ph. Cl{{\'e}}ment and R.~C. A.~M. Van~der Vorst.
\newblock On a semilinear elliptic system.
\newblock {\em Differential Integral Equations}, 8(6):1317--1329, 1995.

\bibitem{ClementFelmerMitidieri}
Philippe Cl{{\'e}}ment, Patricio Felmer, and Enzo Mitidieri.
\newblock Homoclinic orbits for a class of infinite-dimensional {H}amiltonian
  systems.
\newblock {\em Ann. Scuola Norm. Sup. Pisa Cl. Sci. (4)}, 24(2):367--393, 1997.

\bibitem{Cwikel}
Michael Cwikel.
\newblock Weak type estimates for singular values and the number of bound
  states of {S}chr{\"o}dinger operators.
\newblock {\em Ann. of Math. (2)}, 106(1):93--100, 1977.

\bibitem{dalmasso2000}
Robert Dalmasso.
\newblock Existence and uniqueness of positive solutions of semilinear elliptic
  systems.
\newblock {\em Nonlinear Anal.}, 39(5, Ser. A: Theory Methods):559--568, 2000.

\bibitem{dalmasso}
Robert Dalmasso.
\newblock Existence and uniqueness of positive radial solutions for the
  {L}ane-{E}mden system.
\newblock {\em Nonlinear Anal.}, 57(3):341--348, 2004.

\bibitem{deFigueiredoDing}
D.~G. De~Figueiredo and Y.~H. Ding.
\newblock Strongly indefinite functionals and multiple solutions of elliptic
  systems.
\newblock {\em Trans. Amer. Math. Soc.}, 355(7):2973--2989, 2003.

\bibitem{deFigueiredoNodea1994}
Djairo~G. de~Figueiredo.
\newblock Monotonicity and symmetry of solutions of elliptic systems in general
  domains.
\newblock {\em NoDEA Nonlinear Differential Equations Appl.}, 1(2):119--123,
  1994.

\bibitem{deFigueiredo}
Djairo~G. de~Figueiredo.
\newblock Semilinear elliptic systems: existence, multiplicity, symmetry of
  solutions.
\newblock In {\em Handbook of differential equations: stationary partial
  differential equations. {V}ol. {V}}, Handb. Differ. Equ., pages 1--48.
  Elsevier/North-Holland, Amsterdam, 2008.

\bibitem{deFigdoORuf}
Djairo~G. de~Figueiredo, Jo{\~a}o~Marcos do~{\'O}, and Bernhard Ruf.
\newblock An {O}rlicz-space approach to superlinear elliptic systems.
\newblock {\em J. Funct. Anal.}, 224(2):471--496, 2005.

\bibitem{deFigueiredoFelmer}
Djairo~G. de~Figueiredo and Patricio~L. Felmer.
\newblock On superquadratic elliptic systems.
\newblock {\em Trans. Amer. Math. Soc.}, 343(1):99--116, 1994.

\bibitem{deFigueiredoPeralRossi}
Djairo~G. de~Figueiredo, Ireneo Peral, and Julio~D. Rossi.
\newblock The critical hyperbola for a {H}amiltonian elliptic system with
  weights.
\newblock {\em Ann. Mat. Pura Appl. (4)}, 187(3):531--545, 2008.

\bibitem{deFigueiredoSantosMiyagaki}
Djairo~Guedes de~Figueiredo, Ederson~Moreira dos Santos, and
  Ol{\'{\i}}mpio~Hiroshi Miyagaki.
\newblock Critical hyperbolas and multiple symmetric solutions to some strongly
  coupled elliptic systems.
\newblock {\em Adv. Nonlinear Stud.}, 13(2):359--371, 2013.

\bibitem{delPinoFelmerTop2}
Manuel del Pino and Patricio Felmer.
\newblock Semi-classical states of nonlinear {S}chr{\"o}dinger equations: a
  variational reduction method.
\newblock {\em Math. Ann.}, 324(1):1--32, 2002.

\bibitem{delPinoFelmerUmpico}
Manuel del Pino and Patricio~L. Felmer.
\newblock Local mountain passes for semilinear elliptic problems in unbounded
  domains.
\newblock {\em Calc. Var. Partial Differential Equations}, 4(2):121--137, 1996.

\bibitem{delPinoFelmerTop1}
Manuel del Pino and Patricio~L. Felmer.
\newblock Semi-classical states for nonlinear {S}chr{\"o}dinger equations.
\newblock {\em J. Funct. Anal.}, 149(1):245--265, 1997.

\bibitem{delPinoFelmerMultipicos}
Manuel Del~Pino and Patricio~L. Felmer.
\newblock Multi-peak bound states for nonlinear {S}chr{\"o}dinger equations.
\newblock {\em Ann. Inst. H. Poincar{\'e} Anal. Non Lin{\'e}aire},
  15(2):127--149, 1998.

\bibitem{delPinoFelmerDirichletNeumann}
Manuel Del~Pino and Patricio~L. Felmer.
\newblock Spike-layered solutions of singularly perturbed elliptic problems in
  a degenerate setting.
\newblock {\em Indiana Univ. Math. J.}, 48(3):883--898, 1999.

\bibitem{Ding}
Yanheng Ding.
\newblock Infinitely many entire solutions of an elliptic system with symmetry.
\newblock {\em Topol. Methods Nonlinear Anal.}, 9(2):313--323, 1997.

\bibitem{Ederson2008}
E.~M. dos Santos.
\newblock Multiplicity of solutions for a fourth-order quasilinear
  nonhomogeneous equation.
\newblock {\em J. Math. Anal. Appl.}, 342(1):277--297, 2008.

\bibitem{dosSantosNodea2009}
Ederson~Moreira dos Santos.
\newblock On a fourth-order quasilinear elliptic equation of concave-convex
  type.
\newblock {\em NoDEA Nonlinear Differential Equations Appl.}, 16(3):297--326,
  2009.

\bibitem{Felmer}
Patricio~L. Felmer.
\newblock Periodic solutions of ``superquadratic'' {H}amiltonian systems.
\newblock {\em J. Differential Equations}, 102(1):188--207, 1993.

\bibitem{FerreroGazzolaWeth}
Alberto Ferrero, Filippo Gazzola, and Tobias Weth.
\newblock Positivity, symmetry and uniqueness for minimizers of second-order
  {S}obolev inequalities.
\newblock {\em Ann. Mat. Pura Appl. (4)}, 186(4):565--578, 2007.

\bibitem{FloerWeinstein}
Andreas Floer and Alan Weinstein.
\newblock Nonspreading wave packets for the cubic {S}chr{\"o}dinger equation
  with a bounded potential.
\newblock {\em J. Funct. Anal.}, 69(3):397--408, 1986.

\bibitem{GarciaPeral}
Jes{{\'u}}s Garc{\'{\i}}a~Azorero and Ireneo Peral~Alonso.
\newblock Comportement asymptotique des valeurs propres du {$p$}-laplacien.
\newblock {\em C. R. Acad. Sci. Paris S{\'e}r. I Math.}, 307(2):75--78, 1988.

\bibitem{GidasNiNirenberg1979}
B.~Gidas, Wei~Ming Ni, and L.~Nirenberg.
\newblock Symmetry and related properties via the maximum principle.
\newblock {\em Comm. Math. Phys.}, 68(3):209--243, 1979.

\bibitem{GilbargTrudinger}
David Gilbarg and Neil~S. Trudinger.
\newblock {\em Elliptic partial differential equations of second order}.
\newblock Classics in Mathematics. Springer-Verlag, Berlin, 2001.
\newblock Reprint of the 1998 edition.

\bibitem{HardyLittlewoodPolya}
G.~H. Hardy, J.~E. Littlewood, and G.~P{{\'o}}lya.
\newblock {\em Inequalities}.
\newblock Cambridge Mathematical Library. Cambridge University Press,
  Cambridge, 1988.
\newblock Reprint of the 1952 edition.

\bibitem{HulshofvanderVorst}
Josephus Hulshof and Robertus van~der Vorst.
\newblock Differential systems with strongly indefinite variational structure.
\newblock {\em J. Funct. Anal.}, 114(1):32--58, 1993.

\bibitem{hulshof96}
Josephus Hulshof and Robertus C. A.~M. van~der Vorst.
\newblock Asymptotic behaviour of ground states.
\newblock {\em Proc. Amer. Math. Soc.}, 124(8):2423--2431, 1996.

\bibitem{KajikiyaTanaka}
Ryuji Kajikiya and Kazunaga Tanaka.
\newblock Existence of infinitely many solutions for some superlinear elliptic
  equations.
\newblock {\em J. Math. Anal. Appl.}, 149(2):313--321, 1990.

\bibitem{Krantz}
Steven~G. Krantz.
\newblock {\em A panorama of harmonic analysis}, volume~27 of {\em Carus
  Mathematical Monographs}.
\newblock Mathematical Association of America, Washington, DC, 1999.

\bibitem{Lieb}
Elliott Lieb.
\newblock Bounds on the eigenvalues of the {L}aplace and {S}chroedinger
  operators.
\newblock {\em Bull. Amer. Math. Soc.}, 82(5):751--753, 1976.

\bibitem{lions85}
P.-L. Lions.
\newblock The concentration-compactness principle in the calculus of
  variations. {T}he limit case. {I}.
\newblock {\em Rev. Mat. Iberoamericana}, 1(1):145--201, 1985.

\bibitem{LiuYang}
Fang Liu and Jianfu Yang.
\newblock Nontrivial solutions of {H}ardy-{H}enon type elliptic systems.
\newblock {\em Acta Math. Sci. Ser. B Engl. Ed.}, 27(4):673--688, 2007.

\bibitem{doo-ub}
Jo{\~a}o Marcos~do {{\'O}} and Pedro Ubilla.
\newblock A multiplicity result for a class of superquadratic {H}amiltonian
  systems.
\newblock {\em Electron. J. Differential Equations}, pages No. 15, 14 pp.
  (electronic), 2003.

\bibitem{Mitidieri}
Enzo Mitidieri.
\newblock A {R}ellich type identity and applications.
\newblock {\em Comm. Partial Differential Equations}, 18(1-2):125--151, 1993.

\bibitem{NiTakagi}
Wei-Ming Ni and Izumi Takagi.
\newblock Locating the peaks of least-energy solutions to a semilinear
  {N}eumann problem.
\newblock {\em Duke Math. J.}, 70(2):247--281, 1993.

\bibitem{NiWei}
Wei-Ming Ni and Juncheng Wei.
\newblock On the location and profile of spike-layer solutions to singularly
  perturbed semilinear {D}irichlet problems.
\newblock {\em Comm. Pure Appl. Math.}, 48(7):731--768, 1995.

\bibitem{Oh1}
Yong-Geun Oh.
\newblock Existence of semiclassical bound states of nonlinear
  {S}chr{\"o}dinger equations with potentials of the class {$(V)_a$}.
\newblock {\em Comm. Partial Differential Equations}, 13(12):1499--1519, 1988.

\bibitem{Oh2}
Yong-Geun Oh.
\newblock Correction to: ``{E}xistence of semiclassical bound states of
  nonlinear {S}chr{\"o}dinger equations with potentials of the class
  {$(V)_a$}''.
\newblock {\em Comm. Partial Differential Equations}, 14(6):833--834, 1989.

\bibitem{Oh3}
Yong-Geun Oh.
\newblock On positive multi-lump bound states of nonlinear {S}chr{\"o}dinger
  equations under multiple well potential.
\newblock {\em Comm. Math. Phys.}, 131(2):223--253, 1990.

\bibitem{Pankov}
A.~Pankov.
\newblock Periodic nonlinear schr\"odinger equation with application to
  photonic crystals.
\newblock {\em Milan J. Math.}, 73:259--287, 2005.

\bibitem{PeletiervanderVorst}
L.~A. Peletier and R.~C. A.~M. Van~der Vorst.
\newblock Existence and nonexistence of positive solutions of nonlinear
  elliptic systems and the biharmonic equation.
\newblock {\em Differential Integral Equations}, 5(4):747--767, 1992.

\bibitem{PistoiaRamosNeumann}
Angela Pistoia and Miguel Ramos.
\newblock Locating the peaks of the least energy solutions to an elliptic
  system with {N}eumann boundary conditions.
\newblock {\em J. Differential Equations}, 201(1):160--176, 2004.

\bibitem{PistoiaRamosDirichlet}
Angela Pistoia and Miguel Ramos.
\newblock Locating the peaks of the least energy solutions to an elliptic
  system with {D}irichlet boundary conditions.
\newblock {\em NoDEA Nonlinear Differential Equations Appl.}, 15(1-2):1--23,
  2008.

\bibitem{PolacikQuittnerSouplet}
Peter Pol{{\'a}}{\v{c}}ik, Pavol Quittner, and Philippe Souplet.
\newblock Singularity and decay estimates in superlinear problems via
  {L}iouville-type theorems. {I}. {E}lliptic equations and systems.
\newblock {\em Duke Math. J.}, 139(3):555--579, 2007.

\bibitem{pucci-serrin}
Patrizia Pucci and James Serrin.
\newblock A general variational identity.
\newblock {\em Indiana Univ. Math. J.}, 35(3):681--703, 1986.

\bibitem{Rabinowitz2}
Paul~H. Rabinowitz.
\newblock Multiple critical points of perturbed symmetric functionals.
\newblock {\em Trans. Amer. Math. Soc.}, 272(2):753--769, 1982.

\bibitem{RabinowitzBook}
Paul~H. Rabinowitz.
\newblock {\em Minimax methods in critical point theory with applications to
  differential equations}, volume~65 of {\em CBMS Regional Conference Series in
  Mathematics}.
\newblock Published for the Conference Board of the Mathematical Sciences,
  Washington, DC, 1986.

\bibitem{Rabinowitz}
Paul~H. Rabinowitz.
\newblock On a class of nonlinear {S}chr{\"o}dinger equations.
\newblock {\em Z. Angew. Math. Phys.}, 43(2):270--291, 1992.

\bibitem{RamosTavaresZou}
M.~Ramos, H.~Tavares, and W.~Zou.
\newblock A {B}ahri-{L}ions theorem revisited.
\newblock {\em Adv. Math.}, 222(6):2173--2195, 2009.

\bibitem{RamosJMAA}
Miguel Ramos.
\newblock On singular perturbations of superlinear elliptic systems.
\newblock {\em J. Math. Anal. Appl.}, 352(1):246--258, 2009.

\bibitem{RamosAPriori}
Miguel Ramos.
\newblock A priori bounds via the relative {M}orse index of solutions of an
  elliptic system.
\newblock {\em Topol. Methods Nonlinear Anal.}, 34(1):21--39, 2009.

\bibitem{RamosSoares}
Miguel Ramos and S{{\'e}}rgio H.~M. Soares.
\newblock On the concentration of solutions of singularly perturbed
  {H}amiltonian systems in {$\Bbb R^N$}.
\newblock {\em Port. Math. (N.S.)}, 63(2):157--171, 2006.

\bibitem{RamosTavares}
Miguel Ramos and Hugo Tavares.
\newblock Solutions with multiple spike patterns for an elliptic system.
\newblock {\em Calc. Var. Partial Differential Equations}, 31(1):1--25, 2008.

\bibitem{RamosYang}
Miguel Ramos and Jianfu Yang.
\newblock Spike-layered solutions for an elliptic system with {N}eumann
  boundary conditions.
\newblock {\em Trans. Amer. Math. Soc.}, 357(8):3265--3284 (electronic), 2005.

\bibitem{Rosenbljum}
G.~Rosenbljum.
\newblock The distribution of the discrete spectrum for singular differential
  operators.
\newblock {\em J. Sov. Math. Dokl.}, 13:245--249, 1972.

\bibitem{Ruf}
Bernhard Ruf.
\newblock Superlinear elliptic equations and systems.
\newblock In {\em Handbook of differential equations: stationary partial
  differential equations. {V}ol. {V}}, Handb. Differ. Equ., pages 211--276.
  Elsevier/North-Holland, Amsterdam, 2008.

\bibitem{Shaker}
A.~W. Shaker.
\newblock On symmetry in elliptic systems.
\newblock {\em Appl. Anal.}, 41(1-4):1--9, 1991.

\bibitem{Simon}
Barry Simon.
\newblock {\em Functional integration and quantum physics}, volume~86 of {\em
  Pure and Applied Mathematics}.
\newblock Academic Press Inc. [Harcourt Brace Jovanovich Publishers], New York,
  1979.

\bibitem{Sirakov}
Boyan Sirakov.
\newblock On the existence of solutions of {H}amiltonian elliptic systems in
  {$\bold R^N$}.
\newblock {\em Adv. Differential Equations}, 5(10-12):1445--1464, 2000.

\bibitem{SirakovSoares}
Boyan Sirakov and S{{\'e}}rgio H.~M. Soares.
\newblock Soliton solutions to systems of coupled {S}chr{\"o}dinger equations
  of {H}amiltonian type.
\newblock {\em Trans. Amer. Math. Soc.}, 362(11):5729--5744, 2010.

\bibitem{SmetsWillem}
Didier Smets and Michel Willem.
\newblock Partial symmetry and asymptotic behavior for some elliptic
  variational problems.
\newblock {\em Calc. Var. Partial Differential Equations}, 18(1):57--75, 2003.

\bibitem{SmetsSuWillem}
Didier Smets, Michel Willem, and Jiabao Su.
\newblock Non-radial ground states for the {H}{\'e}non equation.
\newblock {\em Commun. Contemp. Math.}, 4(3):467--480, 2002.

\bibitem{Souplet}
Philippe Souplet.
\newblock The proof of the {L}ane-{E}mden conjecture in four space dimensions.
\newblock {\em Adv. Math.}, 221(5):1409--1427, 2009.

\bibitem{Struwe}
Michael Struwe.
\newblock Infinitely many critical points for functionals which are not even
  and applications to superlinear boundary value problems.
\newblock {\em Manuscripta Math.}, 32(3-4):335--364, 1980.

\bibitem{SzulkinWeth}
Andrzej Szulkin and Tobias Weth.
\newblock Ground state solutions for some indefinite variational problems.
\newblock {\em J. Funct. Anal.}, 257(12):3802--3822, 2009.

\bibitem{SzulkinWethsurvey}
Andrzej Szulkin and Tobias Weth.
\newblock The method of {N}ehari manifold.
\newblock In {\em Handbook of nonconvex analysis and applications}, pages
  597--632. Int. Press, Somerville, MA, 2010.

\bibitem{Talenti}
Giorgio Talenti.
\newblock Elliptic equations and rearrangements.
\newblock {\em Ann. Scuola Norm. Sup. Pisa Cl. Sci. (4)}, 3(4):697--718, 1976.

\bibitem{Tanaka}
Kazunaga Tanaka.
\newblock Morse indices at critical points related to the symmetric mountain
  pass theorem and applications.
\newblock {\em Comm. Partial Differential Equations}, 14(1):99--128, 1989.

\bibitem{Tarsi}
Cristina Tarsi.
\newblock Perturbation of symmetry and multiplicity of solutions for strongly
  indefinite elliptic systems.
\newblock {\em Adv. Nonlinear Stud.}, 7(1):1--30, 2007.

\bibitem{TavaresThesis}
Hugo Tavares.
\newblock {\em Nonlinear elliptic systems with a variational structure:
  existence, asymptotics and other qualitative properties}.
\newblock PhD thesis, University of Lisbon, 2010. Available online at http://repositorio.ul.pt/handle/10451/2246

\bibitem{Troy}
William~C. Troy.
\newblock Symmetry properties in systems of semilinear elliptic equations.
\newblock {\em J. Differential Equations}, 42(3):400--413, 1981.

\bibitem{vanderVorst1}
R.~C. A.~M. Van~der Vorst.
\newblock Variational identities and applications to differential systems.
\newblock {\em Arch. Rational Mech. Anal.}, 116(4):375--398, 1992.

\bibitem{vanderVorstdie}
R.~C. A.~M. Van~der Vorst.
\newblock Best constant for the embedding of the space {$H^2\cap
  H^1_0(\Omega)$} into {$L^{2N/(N-4)}(\Omega)$}.
\newblock {\em Differential Integral Equations}, 6(2):259--276, 1993.

\bibitem{Wang}
Xuefeng Wang.
\newblock On concentration of positive bound states of nonlinear
  {S}chr{\"o}dinger equations.
\newblock {\em Comm. Math. Phys.}, 153(2):229--244, 1993.

\bibitem{WethSurvey}
Tobias Weth.
\newblock Symmetry of solutions to variational problems for nonlinear elliptic
  equations via reflection methods.
\newblock {\em Jahresber. Dtsch. Math.-Ver.}, 112(3):119--158, 2010.

\end{thebibliography}
\end{document}